\documentclass[12pt]{article}
\usepackage{graphicx}
\usepackage{amsmath}
\usepackage{amssymb}
\usepackage{theorem}
\usepackage{enumerate} 
\usepackage{color}

  \pagestyle{plain}

  \sloppy

\numberwithin{equation}{section}

 \textheight=8.5in
\textwidth=6.0in
\addtolength{\oddsidemargin}{-.25in}

\newtheorem{theorem}{Theorem}[section]
\newtheorem{lemma}[theorem]{Lemma}

\newtheorem{prop}[theorem]{Proposition}
\newtheorem{cor}[theorem]{Corollary}
{\theorembodyfont{\rmfamily}

\newtheorem{example}[theorem]{Example}

\newtheorem{rmk}[theorem]{Remark}
}

\renewcommand{\thesubsection}{\arabic{section}.\arabic{subsection}}

\def\subjclass#1{{\renewcommand{\thefootnote}{}%
\footnote{\emph{Mathematics Subject Classification (2010):} #1}}}
\def\keywords#1{\par\medskip
\noindent\textbf{Keywords.} #1}

\newcommand{\qed}{\hfill \mbox{\raggedright \rule{.07in}{.1in}}}
 
\newenvironment{proof}{\vspace{1ex}\noindent{\bf
Proof}\hspace{0.5em}}{\hfill\qed\vspace{1ex}}
\newenvironment{pfof}[1]{\vspace{1ex}\noindent{\bf Proof of
#1}\hspace{0.5em}}{\hfill\qed\vspace{1ex}}

\newcommand{\R}{{\mathbb R}}
\newcommand{\N}{{\mathbb N}}
 \newcommand{\C}{{\mathbb C}}
 \newcommand{\Z}{{\mathbb Z}}
\newcommand{\cB}{{\mathcal B}}
 \newcommand{\cF}{{\mathcal cF}}
 \newcommand{\cG}{{\mathcal G}}
\renewcommand{\H}{{\mathbb H}}
\newcommand{\barH}{{\overline{\H}}}

\newcommand{\tY}{{\widetilde Y}}
\newcommand{\tZ}{{\widetilde Z}}
\newcommand{\hY}{{\widehat Y}}

\newcommand{\esssup}{\operatorname{ess\, sup}}
\newcommand{\essinf}{\operatorname{ess\, inf}}
\newcommand{\infa}{\SMALL \inf_a}
\newcommand{\suma}{{\textstyle \sum_a}}
\newcommand{\supa}{\SMALL \sup_a}
\newcommand{\sumI}{\SMALL \sum_I}
\newcommand{\supI}{\SMALL \sup_I}
\newcommand{\infI}{\SMALL \inf_I}
\newcommand{\Var}{\operatorname{Var}}
\newcommand{\Leb}{\operatorname{Leb}}
\newcommand{\diam}{\operatorname{diam}}
 
 \newcommand{\spec}{\operatorname{spec}}
 \newcommand{\supp}{\operatorname{supp}}
 \renewcommand{\Re}{\operatorname{Re}}

\newcommand{\eps}{{\epsilon}}

\newcommand{\SMALL}{\textstyle}

\newcommand{\vertiii}[1]{{\left\vert\kern-0.25ex\left\vert\kern-0.25ex\left\vert #1
    \right\vert\kern-0.25ex\right\vert\kern-0.25ex\right\vert}}

\title{Rates of mixing for nonMarkov \\ infinite measure  semiflows}

\author{
Henk Bruin\thanks{Faculty of Mathematics, University of Vienna, 1090 Vienna, AUSTRIA}
\and
Ian Melbourne\thanks{Mathematics Institute, University of Warwick, Coventry, CV4 7AL, UK}
\and
Dalia Terhesiu\thanks{Mathematics Department, University of Exeter, EX4 4QF, UK}
}

\date{22 May 2017, updated 22 September 2018}

\begin{document}

\maketitle
 
\begin{abstract}
We develop an abstract framework for obtaining optimal rates of mixing and higher order asymptotics for
infinite measure semiflows.
Previously, such results were restricted to the situation where there is a first return Poincar\'e map 
that is uniformly expanding and Markov.
As illustrations of the method, we consider semiflows over nonMarkov Pomeau-Manneville intermittent maps 
with infinite measure, and we also obtain mixing rates for semiflows over Collet-Eckmann maps with 
nonintegrable roof function.
\end{abstract}

\subjclass{Primary 37A25; Secondary 37A40, 37A50, 37D25}
\keywords{mixing, semiflow, renewal theory, nonMarkov, intermittency}

\section{Introduction}\label{sec:intro}

Decay of correlations is a delicate phenomenon for continuous time dynamical systems.  Exponential decay of correlations has been established  for certain classes of Anosov flows~\cite{BW,Dolgopyat98a,Liverani04,Tsujii},
and the techniques have been extended to
various (non)uniformly hyperbolic flows~\cite{ABV16,AraujoMapp, AGY, BDL, BL14, BMMW}.
Nevertheless, the class of flows for which exponential decay has been established is very restricted.

The situation for superpolynomial decay of correlations (rapid mixing) is somewhat better.  
Rapid mixing for (nontrivial) basic sets for typical Axiom A flows
was established in~\cite{Dolgopyat98b,FMT07},
and was extended in~\cite{M07} to nonuniformly hyperbolic flows given by a suspension over a Young tower with exponential tails~\cite{Young98}. 

For slowly mixing nonuniformly hyperbolic flows (suspensions over Young towers with 
polynomial tails~\cite{Young99}), the method of~\cite{Dolgopyat98b,M07} was used
in~\cite{M09} to establish polynomial decay of correlations.

Recently~\cite{MT17} developed operator renewal theory for continuous time dynamical systems, extending the discrete time theory of~\cite{Gouezel04, Sarig02}.
The framework in~\cite{MT17} applies to slowly mixing nonuniformly hyperbolic semiflows that can be modelled as first return suspensions over full branch Gibbs-Markov maps 
(uniformly expanding Markov maps with an at most countable Markov partition
satisfying bounded distortion, see Remark~\ref{rmk:GM} for more details).  For this class of continuous time systems,~\cite{MT17} shows that
the polynomial decay rates in~\cite{M09} are sharp.

The paper of~\cite{MT17} also addresses mixing and rates of mixing for infinite measure nonuniformly expanding semiflows, extending the work of~\cite{Gouezel11,MT12} in the discrete time setting.
Again, the results in~\cite{MT17} are restricted to the Gibbs-Markov setting.

In the current paper, we introduce a functional analytic framework that
dispenses with the Gibbs-Markov structure in~\cite {MT17}.
The statement and proof of our first main result, Theorem~\ref{thm:rate} below, is somewhat technical, but the main conclusions are easily described.
The prototypical class of dynamical systems with infinite measure are those that
are intermittent 
in the sense of Pomeau-Manneville~\cite{PomeauManneville80}.  
Previous results~\cite{Gouezel11,MT12,MT17} deal with such systems for either discrete time 
or in the continuous time Markov case.  For continuous time nonMarkov examples, 
the previous theory is insufficient.  The new methods in this paper are able to handle such cases, 
as exemplified by the example below.

\begin{example} \label{ex:AFN}
Let $X=[0,1]$ and consider the map $f:X\to X$ given by
\begin{align}\label{eq:f-neutral}
\textstyle f(x) = x(1+a x^{1/\beta}) \bmod 1 \quad\text{where
$\beta \in (\frac12,1),\, a >0$}.
\end{align}
This is an example of an AFN map~\cite{Zweimuller98}, namely a nonuniformly expanding one-dimensional map with 
at most countably (in this case finitely) many branches with finite images and satisfying Adler's 
distortion condition $\esssup |f''|/|f'|^2 < \infty$.
Up to scaling, there is a unique absolutely continuous invariant measure
$\mu_X$.  The measure $\mu_X$ is infinite and the density has a singularity at the neutral fixed point $0$.

Let $\tau_0:[0,1]\to[2,\infty)$ be a continuous roof function
and let $f_t$ denote the suspension semiflow on $X^{\tau_0}$ with
invariant measure $\mu_X^{\tau_0}=\mu_X\times{\rm Lebesgue}$.
Note that there is now a neutral periodic solution of period $\tau_0(0)$.

If $a$ is a positive integer, then $f$ is Markov and the semiflow $f_t$ is covered by the framework in~\cite{MT17}.
Otherwise, we are in the nonMarkov setting and new methods are required.
Roughly speaking, we show that for any $\eps>0$, almost any sufficiently regular roof function $\tau_0$, and
sufficiently regular observables $v,$ $w:X^{\tau_0}\to\R$ supported away from the neutral periodic solution,
there exist explicit constants
$d_1>0$ and $d_2,d_3,\ldots \in\R$ (typically nonzero), such that
 \begin{align*}
\textstyle \int v\,w\circ f_t=
\sum_j d_jt^{-j(1-\beta)}
\int v\int w +O(t^{-(\frac12-\eps)}).
\end{align*}
Here, 
the sum is over those $j\ge1$ with $j(1-\beta)\le \frac12-\eps$.
\end{example}

A more challenging class of examples is provided by suspensions over unimodal maps with 
nonintegrable roof functions. Here there does not exist a uniformly expanding first return 
Poincar\'e map, and the functional-analytic setting is considerable more complicated. 
Our second main result, Theorem~\ref{thm:rateA}, allows us to establish rates of mixing in such examples:

\begin{example} \label{ex:Collet}
Let $X=[0,1]$ and let $f:X\to X$ be a $C^2$ unimodal map with unique non-flat critical point $x_0\in(0,1)$.   
We suppose that $f$ satisfies Collet-Eckmann and slow recurrence conditions with a mixing acip $\mu_X$.   
(See Section~\ref{sec:Collet} for precise formulations of these assumptions.)

Consider a roof function $\tau_0:X\to\R^+$ of the form
$\tau_0=g(x)|x-x_0|^{-1/\beta}$ where $\beta\in(\frac12,1)$ and $g:[0,1]\to(1,\infty)$ is differentiable.
Form the suspension semiflow $f_t:X^{\tau_0}\to X^{\tau_0}$ as in Example~\ref{ex:AFN}.
Define $\kappa_0=\begin{cases}
\hphantom{XXXX} 2\beta(1-\beta) & \beta \ge \frac12(\sqrt 5-1) \\ 
\frac12(\sqrt 5-1)-\beta(\sqrt 5-2) & \beta \le\frac12(\sqrt 5-1)
\end{cases}$.
Then we show that for typical choices of~$g$ and all $\eps>0$,
\[
\textstyle \int v\,w\circ f_t=
d_1t^{-(1-\beta)} \int v\int w +O(t^{-(\kappa_{{\textsc{\tiny 0}}}-\eps)}),
\]
for sufficiently regular observables $v,\,w:X^{\tau_0}\to\R$ supported in $X\times[0,1]$,
where $d_1>0$ is an explicit constant.
(Note that $\kappa_0>1-\beta$ for all $\beta \in (\frac12,1)$.)
\end{example}

\vspace{1ex}
There are two ingredients that make possible the generalisation to semiflows that do not possess a Gibbs-Markov first return Poincar\'e map:
\begin{itemize}
\item[(a)] In Section~\ref{sec:small}, we incorporate ideas
from~\cite{LT} (based on~\cite{KellerLiverani99}) for dealing with perturbation theory of transfer operators, thereby significantly relaxing the functional-analytic hypotheses.
\item[(b)] In Sections~\ref{sec:large} and~\ref{sec:UNI}, we incorporate the idea of
using a second (reinduced) suspension semiflow model for the study of high Fourier modes.
This method was introduced in~\cite{MTtoralsub} for
the study of toral extensions
of nonMarkov slowly mixing dynamical systems  (with finite and infinite measure).   As in~\cite{MTtoralsub}, reinducing facilitates the use of Dolgopyat-type arguments.

In Sections~\ref{sec:abstr} to~\ref{sec:rateA}, we work with a ``superpolynomial'' Dolgopyat assumption~\cite{Dolgopyat98b,M07}, condition (H4) below, whereas in Section~\ref{sec:UNI} we consider an ``exponential'' Dolgopyat assumption (UNI)~\cite{Dolgopyat98a}.   The former has the advantage of applying to a much larger class of dynamical systems (given the current technology), whereas the latter permits a much larger class of observables $v$, $w$.
\end{itemize}

The focus in this paper is on rates of mixing for infinite measure semiflows.  
As pointed out to us by Dima Dolgopyat, P\'eter N\'andori and Doma Sz\'asz, mixing itself does not require Dolgopyat-type arguments, so reinducing is not required for mixing without rates.  However, ingredient (a) remains useful for studying mixing of infinite measure semiflows, see~\cite{MTsub}.  
Also, we anticipate that ingredient~(b) will be useful for
future work on
rates of mixing (correlation decay) for finite measure semiflows, extending~\cite{MT17} from the Gibbs-Markov setting.

The remainder of the paper is organised as follows.
In Section~\ref{sec:abstr} we introduce the abstract functional analytic setting, and state our main results, Theorems~\ref{thm:rate} and~\ref{thm:rateA}.
An outline of the proof of Theorem~\ref{thm:rate} is presented in
Section~\ref{sec:strategy}.  
Sections~\ref{sec:small} and~\ref{sec:large} contain proofs of
the main lemmas (Lemma~\ref{lem:small} and~\ref{lem:large}) dealing with small and large Fourier modes respectively.
In Section~\ref{sec:rateA}, we prove Theorem~\ref{thm:rateA}.
Section~\ref{sec:UNI} shows how to enlarge the class of observables $v$ and~$w$
under the stronger assumption (UNI) for the underlying dynamics.
In Section~\ref{sec:Young}, we show how the hypotheses for Theorem~\ref{thm:rateA} can be verified in examples, such as those in Example~\ref{ex:Collet}, which can be modelled as a suspension over an exponential Young tower.
In Sections~\ref{sec:AFN} and~\ref{sec:Collet}, we apply our main results to 
Examples~\ref{ex:AFN} and~\ref{ex:Collet} respectively.

\vspace{-2ex}
\paragraph{\bf Notation}
We use ``big O'' and $\ll$ notation interchangeably, writing $a_n=O(b_n)$ or $a_n\ll b_n$
if there is a constant $C>0$ such that
$a_n\le Cb_n$ for all $n\ge1$.

\section{Abstract set-up}
\label{sec:abstr}

Let $(Y,d_Y)$ be a bounded metric space with Borel probability measure $\mu$ and let
$F:Y\to Y$ be an ergodic and mixing measure-preserving transformation.
Let $\tau:Y\to\R$ be a nonintegrable roof function bounded away from zero.
For convenience, we suppose that $\essinf \tau>1$.
Throughout we assume that
\begin{itemize}
\item[]
 $\mu(y\in Y:\tau(y)>t)=ct^{-\beta}+O(t^q)$ where $c>0$, $\beta\in(\frac12,1)$
and $q\in(1,2\beta]$.
\end{itemize}
In particular, $\tau\in L^p(Y)$ for all $p<\beta$.

Define the suspension 
$Y^\tau=\{(y,u)\in Y\times\R: 0\le u\le \tau(y)\}/\sim$
where $(y,\tau(y))\sim(Fy,0)$.  The suspension semiflow $F_t:Y^\tau\to Y^\tau$ is  given by $F_t(y,u)=(y,u+t)$, computed modulo identifications. 
The measure 
$\mu^\tau=\mu\times{\rm Lebesgue}$ is ergodic and $F_t$-invariant. 

Next, let $Z\subset Y$ be a subset of positive measure (possibly $Z=Y$).
Let $\sigma:Z\to\Z^+$ be an inducing time such that
$F^{\,\sigma(z)}(z)\in Z$ for all $z\in Z$, yielding the induced map $G=F^\sigma:Z\to Z$.
Throughout we assume that
\begin{itemize}
\item[]
 $\mu(z\in Z:\sigma(z)>n) =O(e^{-dn})$ for some $d>0$.
\end{itemize}

\begin{rmk}
We are specifically interested in the case when $\sigma$ is {\em not\/} the first return time. Otherwise, in light of hypothesis (H2) below, we could induce to a first return semiflow over a Gibbs-Markov map $G$ and proceed as in~\cite{MT17}.
\end{rmk}

Define the induced roof function 
\[
\textstyle \varphi=\tau_\sigma:Z\to\R^+, \qquad \varphi(z)=\sum_{j=0}^{\sigma(z)-1}\tau(F^jz).
\]
Then we can form the suspension semiflow $G_t:Z^\varphi\to Z^\varphi$ with
$Z^\varphi=\{(z,u)\in Z\times\R: 0\le u\le \varphi(z)\}/\sim$
where $(z,\varphi(z))\sim(Gz,0)$,  and $G_t:Z^\varphi\to Z^\varphi$ is  given by $G_t(z,u)=(z,u+t)$, computed modulo identifications. 

\subsection{Rates of mixing and higher order asymptotics}

In this subsection, we state our first main result.
\paragraph{Assumptions on $F$ and $\tau$} \mbox{}
\noindent 
Let $\H=\{\Re s>0\}$ and $\barH=\{\Re s\ge0\}$.
Let $R:L^1(Y)\to L^1(Y)$ denote the transfer operator for $F:Y\to Y$, that is $\int_Y Rv\,w\,d\mu=\int_Y v\,w\circ F\,d\mu$.
Define the twisted transfer operators 
$\hat R(s):L^1(Y)\to L^1(Y)$, $s\in\barH$,
\[
\hat R(s)v= R(e^{-s\tau}v). 
\]

We assume that there exists $p_0\ge1$, and for each $p\in(p_0,\infty)$ and $\eps\in(0,\beta)$ there exists a Banach space $\cB(Y)$ 
containing constant functions with norm $\|\,\|_{\cB(Y)}$, 
and constants $\delta>0$, $\gamma_0\in(0,1)$ and $C>0$ such that
\begin{itemize}

\parskip = -2pt
\item[\textbf{(H1)}]
\begin{itemize}
\item[(i)] 
$\cB(Y)$ is compactly embedded in $L^p(Y)$.
\end{itemize}
\item[(ii)]  
$\|\hat R(s)^n v\|_{\cB(Y)}\le C(|v|_{L^p(Y)}+\gamma_0^n \|v\|_{\cB(Y)})$
for all $s\in\barH\cap B_\delta(0)$, $v\in\cB(Y)$, $n\ge1$.
\item[(iii)] 
$|R(\tau^{\beta-\eps}|v|)|_p\le C\|v\|_{\cB(Y)}$ for all $v\in\cB(Y)$.
\end{itemize}

\begin{rmk} \label{rmk:p}
It is clear from the arguments in this paper that the assumption that~(H1) holds for all $p$ and $\eps$ can be relaxed.  Indeed, there exist $p_0\ge1$ and $\eps_0>0$ depending only on $\beta$ such that (H1) is required to hold only for one value of $p>p_0$ and one $\eps\in(0,\eps_0)$. 
\end{rmk}

\begin{rmk} \label{rmk:H1}
Condition~(H1)(ii) is equivalent to the existence of constants $\gamma_1\in(0,1)$, $n_0\ge1$ and $\tilde C>0$ (with $\gamma_1=\gamma_0^{n_0}$)
such that
$\|\hat R(s)\|_{\cB(Y)}\le \tilde C$ and
$\|\hat R(s)^{n_0} v\|_{\cB(Y)}\le \tilde C|v|_{L^p(Y)}+\gamma_1 \|v\|_{\cB(Y)}$
for all $s\in\barH\cap B_\delta(0)$, $v\in\cB(Y)$.

In particular, if the family $s\mapsto \hat R(s)$ of operators on $\cB(Y)$ is continuous at $s=0$, then (H1)(ii) holds for sufficiently small $\delta$ if and only it holds for $s=0$.
\end{rmk}

\paragraph{Assumptions on $G$ and $\sigma$}

Let $d_Z$ be a metric on $Z$.   
For $\ell\ge1$, define $\tau_\ell=\sum_{j=0}^{\ell-1}\tau\circ F^j$.  We assume
\begin{itemize}
\item[\textbf{(H2)}] 
There is an at most countable measurable partition $\alpha$
of $Z$ with $\mu(a)>0$ for all $a\in\alpha$ such that $\sigma$ is constant on partition elements.
Moreover, there are constants $\lambda>1$, $\eta\in(0,1]$, $C>0$,
such that for each $a\in\alpha$,
\begin{itemize}
\item[(i)] $G=F^\sigma$ restricts to a (measure-theoretic) bijection from $a$  onto $Z$.
\item[(ii)] $d_Z(Gz,Gz')\ge \lambda d_Z(z,z')$ for all $z,z'\in a$.
\item[(iii)] $\xi=\frac{d\mu|_Z}{d\mu|_Z\circ G}$
satisfies  
$|\log \xi(z) - \log \xi(z')|  \le C  d_Z(Gz,Gz')^\eta$
for all $z,z'\in a$.
\item[(iv)] $d_Y(F^\ell z,F^\ell z')\le Cd_Z(Gz,Gz')$ for all $z,z'\in a$,
$0\le \ell <\sigma(a)$.
\end{itemize}
\item[\textbf{(H3)}]
There exists $C>0$ such that 
$|\tau_\ell(z)-\tau_\ell(z')|\le C(\infa\varphi)d_Z(Gz,Gz')^\eta$
for all $a\in\alpha$, $z,z'\in a$, $1\le \ell\le \sigma(a)$.  
\item[\textbf{(H4)}] 
\setcounter{footnote}{0}
Approximate eigenfunction condition~\cite{Dolgopyat98b,M07,rapid}.  It suffices~\cite[Proposition~5.2]{rapid} that
 there exist three fixed points $z_i\in \bigcup_{a\in\alpha}a$, $i=1,2,3$ for $G:Z\to Z$ with periods $p_i=\varphi(z_i)$ for the suspension semiflow $G_t:Z^\varphi\to Z^\varphi$ such that
that $(p_1-p_3)/(p_2-p_3)$ is Diophantine.\footnote{In~\cite{MT17}, the corresponding assumption~(A.2) is misstated in terms of two periodic orbits, and should be replaced by the condition here (see~\cite{rapid} for more details).  The general approximate eigenfunction condition in~\cite[Definition~4.2]{MT17} is stated correctly.}
\end{itemize}

\begin{rmk}
\label{rmk:GM}
Assumptions (H2)(i)-(iii) mean that $G:Z\to Z$ is a Gibbs-Markov map
(standard references for background material on Gibbs-Markov maps
are~\cite[Chapter~4]{Aaronson} and~\cite{AaronsonDenker01}).
Let $\alpha_n$ denote the partition of $Z$ into $n$-cylinders, and define
$\xi_n=\prod_{j=0}^{n-1}\xi\circ G^j$.
The transfer operator $R^G:L^1(Z)\to L^1(Z)$ for $G$ satisfies
$((R^G)^nv)(z)=\sum_{a\in\alpha_n} \xi_n(z_a)v(z_a)$ where $z_a$ is the unique preimage in $a$ of $z$ under $G^n$,  and there exists $C>0$ such that
for all $z,z'\in a$, $a\in\alpha_n$, $n\ge1$,
\begin{align} \label{eq:GM}
\xi_n(z)\le C\mu(a), \quad |\xi_n(z)-\xi_n(z')|\le C\mu(a)d_Z(G^nz,G^nz')^\eta.
\end{align} 

If $F$ is Gibbs-Markov, then 
we can take $Z=Y$, $\sigma\equiv1$, $R^G=R$, and it suffices to verify (H3) with $\ell=1$ and (H4), thereby reducing to the hypotheses in~\cite{MT17}.    Indeed it is immediate that condition (H2) is redundant and that (H3) reduces to the case $\ell=1$.
Taking $\cB(Y)$ to be a H\"older space, (H1)(i,ii) are standard,
 see for instance~\cite[Proposition 3.5]{MT17}.
Also,
$|R(\tau^{\beta-\eps}v)|_\infty\le \sum_{a\in\alpha} \supa\xi\,\supa(\tau^{\beta-\eps}v)
\le C|v|_\infty\sum_a\mu(a)\supa\tau^{\beta-\eps}$.
Assuming (H3) with $\ell=1$, we have $\supa\tau-\infa\tau\le C\infa\tau(\diam Y)^\eta$,
so $\sup_a\tau\ll\infa\tau$ and $|R(\tau^{\beta-\eps}v)|_\infty\ll|v|_\infty
\sum_a\mu(a)\infa\tau^{\beta-\eps}\le |v|_\infty\int_Y\tau^{\beta-\eps}\,d\mu\ll\|v\|_{\cB(Y)}$ verifying (H1)(iii).
\end{rmk}

\paragraph{Observables}
Let $\eta\in(0,1]$.
Define $\tY=Y\times [0,1]$. 
Given $v:\tY\to\R$, we define 
\[
\|v\|_{C^\eta}=|v|_\infty+|v|_{C^\eta}, \quad
|v|_{C^\eta}=\sup_{y,y'\in Y,\,y\neq y'}\sup_{u\in[0,1]} |v(y,u)-v(y',u)|/d_Y(y,y')^\eta.
\]
Let $C^\eta(\tY)$ be the space of observables $v:\tY\to\R$ for which
$\|v\|_{C^\eta(\tY)}<\infty$.

Next, let $\cB(Y)$ be the Banach space in (H1).
Define $\|v\|_{\cB(\tY)}=\sup_{u\in[0,1]}\|v(\cdot,u)\|_{\cB(Y)}$. 
Then $\cB(\tY)$ is the space consisting of those 
$v\in L^1(\tY)$ with $\|v\|_{\cB(\tY)}<\infty$. 

If $v\in \cB(\tY)\cap C^\eta(\tY)$, then we define
$\vertiii{v}  = \|v\|_{\cB(\tY)}+ \|v\|_{C^\eta(\tY)}$.

For $w:Y\times(0,1)\to\R$, $m\ge0$,
set $|w|_{\infty,m}=\max_{j=0,\dots,m}|\partial_u^jw|_\infty$.
We write $w\in L^{\infty,m}(\tY)$ if 
$\supp w\subset Y\times (0,1)$ and
$|w|_{\infty,m}<\infty$.

\vspace{2ex}
Define 
\[
\rho_{v,w}(t)=\int_{Y^\tau} v\,w\circ F_t\,d\mu^\tau.
\]

We can now state the first main result.

\begin{theorem}\label{thm:rate}
Suppose that (H1)--(H4) hold.
Define 
\[
\kappa=\begin{cases} \beta(1-2\beta+q)/q & q<2\beta \\
\frac12-\eps & q=2\beta \end{cases}\, ,
\enspace \qquad \text{where $\eps>0$ is arbitrarily small.}
\]

Then there exist constants
$d_1=\frac{1}{c\pi}\sin\beta\pi$, $d_2,d_3,\ldots \in\R$, and
there exists \mbox{$m\ge2$}, such that
 \begin{align*}
\textstyle \rho_{v,w}(t)=
\sum_j d_jt^{-j(1-\beta)}
\int_{\tY} v\,d\mu^\tau\, \int_{\tY} w \,d\mu^\tau
+O(\vertiii v |w|_{\infty,m}\,t^{-\kappa}),
\end{align*}
for all $v\in\cB(\tY)\cap C^\eta(\tY)$,
$w\in L^{\infty,m}(\tY)$, $t>0$.
Here, the sum is over those $j\ge1$ with $j(1-\beta)< \kappa$.
\end{theorem}

\begin{rmk} As indicated in the introduction, for a more restricted class of dynamical systems satisfying a uniform nonintegrability (UNI) condition, we obtain a stronger result; namely the conclusion of Theorem~\ref{thm:rate} holds for $m=2$.  See Section~\ref{sec:UNI} for a precise statement.
\end{rmk}

\begin{rmk}  
(a) 
Suppose that $\mu(\varphi>t)=ct^{-\beta}+O(t^{-2\beta})$ and that $\beta>\frac34$.
If in addition
$d_2\neq0$ (as is typically the case), then we obtain
second order asymptotics in Theorem~\ref{thm:rate} and the mixing rate is sharp.
These results are identical to the ones obtained in~\cite[Section~9]{MT12} in the discrete time context.

We note that the proof of Theorem~\ref{thm:rate} gives explicit (but not particularly nice) formulas for the constants $d_j$, $j\ge2$ 
(cf.\ \cite[top of p.~89]{MT12}). 
\\
(b) Mixing rates and higher order asymptotics for the case $\beta=1$ can be also obtained  along the
lines of~\cite[Section~9.2]{MT12}. We omit this case here.
\end{rmk}

\subsection{Alternative hypotheses for mixing rates}

Theorem~\ref{thm:rate} gives explicit, often optimal, rates of mixing as well as high order asymptotics.
In this subsection, we state an alternative hypothesis in place of (H1) under which it is still possible to obtain rates of mixing, though the estimates are rougher.

We assume that for every (sufficiently large) $p\in(1,\infty)$, there exists a Banach space $\cB(Y)$ containing constant functions, with norm $\|\,\|_{\cB(Y)}$,
and constants $\delta>0$, $\gamma_0\in(0,1)$ and $C>0$ such that
\begin{itemize}

\parskip = -2pt
\item[\textbf{(A1)}]
\begin{itemize}
\item[(i)]
$\cB(Y)$ is compactly embedded in $L^p(Y)$.
\end{itemize}
\item[(ii)]
$\|\hat R(s)^n v\|_{\cB(Y)}\le C(|v|_{L^1(Y)}+\gamma_0^n \|v\|_{\cB(Y)})$
for all $s\in\barH\cap B_\delta(0)$, $v\in\cB(Y)$, $n\ge1$.
\end{itemize}

It follows from these assumptions (see Lemma~\ref{lem:continfA} below), that
(after possibly shrinking $\delta$) there is a continuous family of simple eigenvalues $\lambda(s)$ for $\hat R(s):\cB(Y)\to\cB(Y)$, $s\in\barH\cap B_\delta(0)$, with $\lambda(0)=1$.
Let $\zeta(s)\in\cB(Y)$ be the corresponding family of eigenfunctions normalized so that $\int_Y\zeta(s)\,d\mu=1$.
We assume further that there exists $\beta_+\in(\beta,1)$ such that
\begin{itemize}
\item[\textbf{(A1)}]
\begin{itemize}
\item[(iii)] $\big|\int_Y(e^{-s\tau}-1)(\zeta(s)-1)\,d\mu\big|\le C|s|^{\beta_+}$ for all $s\in \barH\cap B_\delta(0)$.
\end{itemize}
\end{itemize}

\begin{theorem}\label{thm:rateA}
Suppose that (A1) and (H2)--(H4) hold.
Let $\kappa=\beta(1-2\beta+\beta_+)/\beta_+$,
$d_1=\frac{1}{c\pi}\sin\beta\pi$.
There exists
$m\ge2$, such that for all $\eps>0$,
 \begin{align*}
\textstyle \rho_{v,w}(t)=
d_1t^{-(1-\beta)}
\int_{\tY} v\,d\mu^\tau\, \int_{\tY} w \,d\mu^\tau
+O(\vertiii v |w|_{\infty,m}\, t^{-(\kappa-\eps)}),
\end{align*}
for all $v\in\cB(\tY)\cap C^\eta(\tY)$,
$w\in L^{\infty,m}(\tY)$, $t>0$.
\end{theorem}

\begin{rmk} \label{rmk:rateA}
Note that $\beta(1-2\beta+x)/x=1-\beta$ when $x=\beta$, and hence this expression is strictly greater than $1-\beta$ for $x=\beta_+>\beta$.

If $q=2\beta$ and (A1)(iii) holds with $\beta_+\ge2\beta$ then 
it follows from the methods in this paper that 
we obtain the same error rates and asymptotic expansions as in Theorem~\ref{thm:rate}.  
If $\min\{q,\beta_+\}\in(1,2\beta)$, then 
we obtain essentially the same error rates (up to an~$\epsilon$) and asymptotic expansions as in Theorem~\ref{thm:rate}.  

However, the criteria in Section~\ref{sec:Young} for verifying (A1)(iii) hold only for $\beta_+\in(\beta,1)$, hence our restriction to this range in (A1)(iii) and
Theorem~\ref{thm:rateA}.
\end{rmk}

\subsection{Semiflows on ambient manifolds}
\label{sec:ambient}

In applications, we are often given a semiflow $f_t:M\to M$ on
a finite-dimensional manifold $M$, with codimension one cross-section $X$ and first hit time $\tau_0:X\to \R^+$ and Poincar\'e map $f:X\to X$.
Here $\tau_0(x)>0$ is least such that $f_{\tau_0(x)}(x)\in X$ and
$f(x)=f_{\tau_0(x)}(x)$.

We are particularly interested in the situation where $f:X\to X$ possesses a conservative ergodic absolutely continuous  infinite Borel measure $\mu_X$.
In this case, we fix a subset $Y\subset X$ with $\mu_X(Y)\in(0,\infty)$.
Define the first return time $r:Y\to\Z^+$
and the first return map $F=f^r:Y\to Y$ with ergodic invariant probability
measure $\mu=(\mu_X|Y)/\mu_X(Y)$.
The induced roof function $\tau(y)=\sum_{j=0}^{r(y)-1}\tau_0(f^jy)$
and suspension semiflow $F_t:Y^\tau\to Y^\tau$ is as defined above.

Let $\pi_M:Y^\tau\to M$ denote the semiconjugacy between $F_t$ and $f_t$
given by $\pi_M(y,u)=f_uy$.
We assume that the suspension semiflow $F_t:Y^\tau\to Y^\tau$ falls into the abstract setting above.
Let $\widetilde M=\bigcup_{t\in[0,1]}f_t(Y)=\pi_M(\tY)$.
Let $v,w:\widetilde M\to\R$ be observables such that
$w\in L^{\infty,m}(\widetilde M)$ and 
 $v\circ\pi_M\in\cB(\tY)\cap C^\eta(\tY)$ where $\cB(Y)$ is the Banach space in~(H1).
Then it is immediate that Theorem~\ref{thm:rate} applies
to $\int_M v\,w\circ f_t\,d\mu_X$.

\section{Strategy of proof for Theorem~\ref{thm:rate}}
\label{sec:strategy}

Recall that $F_t:Y^\tau\to Y^\tau$ is a suspension semiflow over a mixing map $F:Y\to Y$ with roof function $\tau:Y\to\R^+$, and that
$G_t:Z^\varphi\to Z^\varphi$ is a suspension semiflow over 
$G=F^\sigma:Z\to Z$ with roof function $\varphi=\tau_\sigma:Z\to\R^+$.
Recall also that $\mu^\tau=\mu\times{\rm Lebesgue}$ is an ergodic $F_t$-invariant measure on $Y^\tau$.

Assumption (H2)(i)-(iii) guarantees that there is a unique ergodic $G$-invariant
probability measure $\mu_Z$ on $Z$ that is absolutely continuous with respect to $\mu|_Z$.  
We obtain
an ergodic $G_t$-invariant measure 
$\mu_Z^\varphi=(\mu_Z\times{\rm Lebesgue})/\int_Z\sigma\,d\mu_Z$ on $Z^\varphi$.

The projection $\pi:Z^\varphi\to Y^\tau$ given by $\pi(z,u)=F_u(z,0)$ defines a semiconjugacy between the suspension semiflows 
$G_t:Z^\varphi\to Z^\varphi$ and $F_t:Y^\tau\to Y^\tau$. 
The following result, proved in Section~\ref{sec:large}, shows that
$\pi$ is measure-preserving.

\begin{prop} \label{prop:mu}
$\pi_*\mu_Z^\varphi=\mu^\tau$.
\end{prop}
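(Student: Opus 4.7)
The plan is to test $\pi_*\mu_Z^\varphi$ against a bounded measurable $v:Y^\tau\to\R$ and compare with $\int v\,d\mu^\tau$. Using $\pi(z,u)=F_u(z,0)$ and the suspension structure on $Y^\tau$, each orbit segment $\{F_u(z,0):0\le u\le\varphi(z)\}$ decomposes (via the translation $s=u-\tau_\ell(z)$) into pieces $\{(F^\ell z,s):0\le s\le \tau(F^\ell z)\}$ for $0\le\ell<\sigma(z)$. Setting $\tilde v(y):=\int_0^{\tau(y)} v(y,u)\,du$ this yields
\[
\int_{Z^\varphi} v\circ \pi\,d\mu_Z^\varphi
=\frac{1}{\int_Z\sigma\,d\mu_Z}\int_Z \sum_{\ell=0}^{\sigma(z)-1}\tilde v(F^\ell z)\,d\mu_Z(z),
\]
while $\int_{Y^\tau} v\,d\mu^\tau=\int_Y \tilde v\,d\mu$. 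Thus the proposition reduces to showing that the auxiliary Borel measure $\hat\mu$ on $Y$ defined by $\int_Y w\,d\hat\mu:=(\int_Z\sigma\,d\mu_Z)^{-1}\int_Z\sum_{\ell=0}^{\sigma(z)-1} w(F^\ell z)\,d\mu_Z(z)$ coincides with $\mu$.

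Next I verify that $\hat\mu$ is an $F$-invariant Borel probability measure absolutely continuous with respect to $\mu$. Normalization follows immediately: $\hat\mu(Y)=(\int_Z\sigma\,d\mu_Z)^{-1}\int_Z\sigma\,d\mu_Z=1$. For $F$-invariance one uses the standard Kac-type telescoping
\[
\hat\mu(F^{-1}A)-\hat\mu(A)=\frac{1}{\int_Z\sigma\,d\mu_Z}\int_Z\bigl(1_A(F^{\sigma(z)}z)-1_A(z)\bigr)\,d\mu_Z(z)=0,
\]
since $F^{\sigma(z)}z=Gz$ and $\mu_Z$ is $G$-invariant. For absolute continuity, if $\mu(B)=0$ then by $F$-invariance of $\mu$ every set $F^{-\ell}B\cap Z$ has $\mu|_Z$-measure zero, hence $\mu_Z$-measure zero via $\mu_Z\ll\mu|_Z$ (the Gibbs-Markov conclusion from (H2)); summing over $\ell$ gives $\hat\mu(B)=0$.

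To conclude, the density $h:=d\hat\mu/d\mu\in L^1(\mu)$ satisfies $Rh=h$ $\mu$-a.e., where $R$ is the transfer operator of $F$ acting on $L^1(\mu)$. Ergodicity of $F$ with respect to $\mu$ (via the mean ergodic theorem applied to $R$: $h=\frac{1}{n}\sum_{k=0}^{n-1} R^k h\to \int h\,d\mu$ in $L^1$) forces $h$ to be $\mu$-a.e.\ constant, and then $\int h\,d\mu=\hat\mu(Y)=1=\mu(Y)$ gives $h\equiv 1$, so $\hat\mu=\mu$ and $\pi_*\mu_Z^\varphi=\mu^\tau$. The only mildly delicate point is the unfolding identity in the first step: one must verify that the boundary identifications $(z,\varphi(z))\sim(Gz,0)$ and $(y,\tau(y))\sim(Fy,0)$ contribute nothing (they lie on sets of zero product measure), and observe that the specific normalizing factor $\int_Z\sigma\,d\mu_Z$ in $\mu_Z^\varphi$ is precisely what arranges $\hat\mu$ to be a probability measure; the rest is a standard Kac manipulation combined with uniqueness of the invariant density under ergodicity.
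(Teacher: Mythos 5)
Your proof is correct and follows essentially the same route as the paper: the unfolding of $\int_{Z^\varphi}v\circ\pi\,d\mu_Z^\varphi$ via the substitution $u\mapsto u-\tau_\ell(z)$ is exactly the paper's computation. The only difference is that the paper simply quotes the Kac-type identity $\int_Y h\,d\mu=\bar\sigma^{-1}\int_Z\sum_{\ell=0}^{\sigma(z)-1}h(F^\ell z)\,d\mu_Z(z)$ as the standard relation between $\mu$ and $\mu_Z$, whereas you actually prove it ($F$-invariance and normalization of $\hat\mu$, absolute continuity via $\mu_Z\ll\mu|_Z$, and ergodicity of $(F,\mu)$ forcing the fixed density of the transfer operator to be constant); since $\sigma$ is not assumed to be a first return time, this justification is a worthwhile supplement rather than a redundancy.
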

It follows that
\[
\rho_{v,w}(t)=\int_{Y^\tau}v\,w\circ F_t\,d\mu^\tau
=\int_{Z^\varphi}\hat v\,\hat w\circ G_t\,d\mu_Z^\varphi \quad\text{
where $\hat v=v\circ\pi$, $\hat w=w\circ\pi$.}
\]

Let $\hat\rho_{v,w}(s)=\int_0^\infty e^{-st}\rho_{v,w}(t)\,dt$ denote the Laplace transform of $\rho_{v,w}(t)$.  This is analytic on $\H$.  We are particularly interested in the behaviour of $\hat\rho_{v,w}(s)$ for $s=ib$ purely imaginary.
As indicated in the introduction (ingredient~(b)) the strategy in this paper is to analyse $\hat\rho_{v,w}(ib)$ using the two different expressions for $\rho_{v,w}(t)$.  For $b$ in a neighbourhood of $0$ ($b$ ``small''), we use the
representation
$\rho_{v,w}(t)=\int_{Y^\tau}v\,w\circ F_t\,d\mu^\tau$.
 For $b$ outside a neighbourhood of $0$  ($b$ ``large'')
we use the representation
$\rho_{v,w}(t) =\int_{Z^\varphi}\hat v\,\hat w\circ G_t\,d\mu_Z^\varphi$.

The resulting estimates are stated in Lemmas~\ref{lem:small} and~\ref{lem:large} below, and are proved in
Sections~\ref{sec:small} and~\ref{sec:large}.

In the remainder of this section, we state the key estimates for small and large $b$ (Lemmas~\ref{lem:small} and~\ref{lem:large})
and use them to prove Theorem~\ref{thm:rate}.
Except in the proof of Lemma~\ref{lem:infty} below, we write $\rho(t)$ and $\hat\rho(s)$, suppressing the dependence on $v$ and $w$.

We assume (H1)--(H4) throughout.  
Let $c_\beta=i\int_0^\infty e^{-i\sigma}\sigma^{-\beta}\,d\sigma$.
Recall that $\mu(\tau>t)=ct^{-\beta}+O(t^{-q})$ where $c>0$, $\beta\in(\frac12,1)$, $q\in(1,2\beta]$.

\begin{lemma} \label{lem:small}
For all $\eps>0$, there exists $C$, $\delta>0$, such that 
for $v\in \cB(\tY)$, $w\in L^\infty(\tY)$,
\begin{itemize}
\item[(a)] 
$|\hat\rho(s)|  \le C |s|^{-\beta}\|v\|_{\cB(\tY)}\,|w|_{L^\infty(\tY)}$
for all $s\in\barH\cap B_\delta(0)$.
\item[(b)] 
$|\hat \rho(i(b+h))-\hat \rho(ib)|\le C 
\{b^{-2\beta}h^\beta +b^{-\beta} h^{\beta-\eps} \}\|v\|_{\cB(\tY)}|w|_{L^\infty(\tY)}$
for all \mbox{$0<h<b<\delta$}.
\item[(c)]
There are constants $c_j\in\C$ with $c_0=c^{-1}c_\beta^{-1}$ such that
for all $a\in(0,\delta t)$, $\eps>0$,
\begin{align*}
\int_0^{a/t}  e^{ibt}\hat\rho(ib)\,db
 =\sum_j c_j & \int_0^{a/t}  b^{-((j+1)\beta-j)}  e^{ibt}\,db\int_{\tY}v\,d\mu^\tau \int_{\tY}w\,d\mu^\tau \\ & +
O\big(\{(a/t)^{1-2\beta+q}+(a/t)^{1-\eps}\}\|v\|_{\cB(\tY)}\,|w|_{L^\infty(\tY)}\big),
\end{align*}
where the sum is over those $j\ge0$ with $(j+1)\beta-j\ge 2\beta-q$.
\end{itemize}
\end{lemma}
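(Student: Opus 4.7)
The plan is to pass to the Laplace transform side and exploit spectral perturbation theory for the twisted operators $\hat R(s)$ at $s=0$. Decomposing the integral defining $\rho(t)$ according to the number of roof crossings and then Laplace transforming yields the standard suspension-renewal identity
$$\hat\rho(s)=\int_Y V_s\,\bigl((I-\hat R(s))^{-1}W_s\bigr)\,d\mu,$$
where $V_s,W_s:Y\to\C$ are built from $v,w$ by fibrewise integration against $e^{\pm su}$ on $[0,1]$ and satisfy $\|V_s\|_{\cB(Y)}\ll\|v\|_{\cB(\tY)}$ and $|W_s|_\infty\ll|w|_{L^\infty(\tY)}$ uniformly for $s\in\barH\cap B_\delta(0)$. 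All three parts then reduce to analyzing $(I-\hat R(s))^{-1}$ on $\cB(Y)$ near $s=0$.

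Assumption (H1)(i)-(ii) is a Lasota-Yorke inequality that, combined with compactness of the embedding $\cB(Y)\hookrightarrow L^p(Y)$ and continuity of $s\mapsto\hat R(s)$ at $0$, places us in the Keller-Liverani framework. Hence for $\delta$ small $\hat R(s)$ has a simple leading eigenvalue $\lambda(s)$ near $1$, with rank-one projection $P(s)$ and complementary part $N(s)$ of spectral radius uniformly $<1$, giving
$$(I-\hat R(s))^{-1}=\frac{P(s)}{1-\lambda(s)}+(I-N(s))^{-1}.$$
The second term is uniformly bounded on $\cB(Y)$, so the singularity comes entirely from $1-\lambda(s)$. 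With $\zeta(s)$ the normalised eigenfunction ($\int\zeta(s)\,d\mu=1$), one has $1-\lambda(s)=\int_Y(1-e^{-s\tau})\zeta(s)\,d\mu$. The tail $\mu(\tau>t)=ct^{-\beta}+O(t^{-q})$ combined with a Karamata/Abel-style computation yields $\int(1-e^{-s\tau})\,d\mu=cc_\beta s^\beta+O(s^q)$, while the correction $\int(1-e^{-s\tau})(\zeta(s)-1)\,d\mu$ is controlled by iterating the eigenvalue equation and applying (H1)(iii), which supplies the needed Hölder bound in $s$ on $\zeta(s)-1$. Assembling these pieces yields an expansion
$$1-\lambda(s)=cc_\beta s^\beta+\sum_{k\ge 2}e_k s^{\gamma_k}+O(s^{2\beta-\eps}),\qquad \gamma_k=k\beta-(k-1),$$
valid on $\barH\cap B_\delta(0)$. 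Part (a) is now immediate from $|1-\lambda(s)|^{-1}\ll|s|^{-\beta}$ together with the uniform bounds on $V_s$ and $W_s$.

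For part (b) I estimate the difference $\hat R(i(b+h))-\hat R(ib)=R((e^{-ih\tau}-1)e^{-ib\tau}\,\cdot\,)$ by interpolating $|e^{-ih\tau}-1|\le C_\eps(h\tau)^{\beta-\eps}\wedge 2$ and invoking (H1)(iii), which gives $\|\hat R(i(b+h))-\hat R(ib)\|_{\cB(Y)\to L^p(Y)}\ll h^{\beta-\eps}$. A resolvent identity propagates this to $(I-\hat R(s))^{-1}$: the $b^{-2\beta}h^\beta$ term in (b) comes from differentiating the pole factor $(1-\lambda(s))^{-1}$ using the sharper estimate $|\lambda(i(b+h))-\lambda(ib)|\ll h^\beta$ available for $\lambda$ itself, while the $b^{-\beta}h^{\beta-\eps}$ term comes from the Hölder variation of $P(s)$ and of $V_s,W_s$. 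For part (c), I substitute the full expansion of $1-\lambda(s)$ into $(1-\lambda(s))^{-1}$, invert by geometric series up to remainder $O(s^{2\beta-q})$ (or $O(s^{1-\eps})$ when $q=2\beta$), read off the coefficients $c_j$ with $c_0=(cc_\beta)^{-1}$, and integrate each monomial $b^{-((j+1)\beta-j)}e^{ibt}$ over $(0,a/t)$; the contribution of $(I-N(s))^{-1}$ and the residual error collapse into the stated $O(\cdot)$ term, and $\int V_0\,d\mu\cdot\int W_0\,d\mu$ is identified with $\int_{\tY}v\,d\mu^\tau\cdot\int_{\tY}w\,d\mu^\tau$.

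The main obstacle is the book-keeping of the correction terms in the expansion of $\lambda(s)$. Since $\zeta(s)$ is controlled only through the Hölder-type estimate (H1)(iii), reaching the sharp remainder $O(s^{2\beta-q})$ (rather than the weaker $O(s^{2\beta-\eps})$ that a crude application of (H1)(iii) gives) requires separating the correction $\int(1-e^{-s\tau})(\zeta(s)-1)\,d\mu$ order by order and fully exploiting the rate $O(t^{-q})$ in the tail assumption, not merely its leading term. Pinning down the exponents $\gamma_k=k\beta-(k-1)$ and identifying the $c_j$ as genuine, explicit constants rather than absorbing them into the error is the delicate technical core of the argument, and also the step most sensitive to any weakening of (H1).
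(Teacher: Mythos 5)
Your overall architecture is the same as the paper's: the Pollicott renewal identity, Keller--Liverani perturbation theory for $\hat R(s)$ on $\cB(Y)$ via (H1), Gou\"ezel's identity $\lambda(s)=\int_Y e^{-s\tau}\,d\mu+\chi(s)$ with $\chi(s)=\int_Y(e^{-s\tau}-1)(\zeta(s)-1)\,d\mu$, and a geometric-series inversion of the pole factor. Two small slips first: the resolvent must act on the $\cB$-side factor $v_s$, not on $w_s$ (which is only in $L^\infty$, so your operator bounds $\cB(Y)\to L^1(Y)$ would not apply as written), and the $n=0$ term of the renewal sum produces an extra bounded term $\hat J(s)$ which you have dropped; both are harmless but should be fixed.

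The genuine error is in your claimed expansion $1-\lambda(s)=cc_\beta s^\beta+\sum_{k\ge2}e_k s^{\gamma_k}+O(s^{2\beta-\eps})$ with $\gamma_k=k\beta-(k-1)$. Since $\gamma_2=2\beta-1<\beta$ for $\beta\in(\frac12,1)$, such a term would \emph{dominate} $s^\beta$ as $s\to0$, contradicting $1-\lambda(s)\sim c_\beta c\, s^\beta$; the expansion is internally inconsistent. In fact $1-\int_Y e^{-s\tau}\,d\mu=cc_\beta s^\beta+e_1 s+O(s^q)$ has exactly \emph{one} genuine correction term, $e_1 s$ with $e_1=i\int_0^\infty H(t)\,dt$ where $H(t)=1-G(t)-ct^{-\beta}$, and $\chi(s)=O(s^{2\beta-\eps})$ is dumped into the error. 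The exponents $(j+1)\beta-j$ appear only in $(1-\lambda(s))^{-1}$, generated by the geometric series $\big(1+e_2 s^{1-\beta}+O(s^{q-\beta})+O(s^{\beta-\eps})\big)^{-1}=\sum_j(-e_2)^j s^{j(1-\beta)}+\cdots$, which is what produces the $c_j b^{-((j+1)\beta-j)}$ in part (c). Relatedly, the "delicate technical core" you identify --- refining $\chi(s)$ beyond $O(s^{2\beta-\eps})$ by an order-by-order analysis --- is not needed: the $O(s^{\beta-\eps})$ relative error from $\chi$ contributes $O(b^{-\eps})$ to $(1-\lambda(ib))^{-1}$, which after integration is precisely the $(a/t)^{1-\eps}$ term already present in the statement of (c); only the $O(s^{q-\beta})$ relative error, coming from the tail remainder $H$, produces the $(a/t)^{1-2\beta+q}$ term. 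With the expansion corrected in this way your argument for (a), (b) and (c) goes through as in the paper.
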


\begin{lemma} \label{lem:large}
Let $\delta$, $\eps>0$.  There exists $C$, $\omega>0$ such that
\[
|\hat\rho(i(b+h))-\hat\rho(ib)|\le Cb^\omega h^{\beta-\eps}
\|v\|_{C^\eta(\tY)}\,|w|_{L^\infty(\tY)},
\]
for all $0<h<\delta<b$,
$v\in C^\eta(\tY)$, $w\in L^\infty(\tY)$.
\end{lemma}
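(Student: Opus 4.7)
The plan is to exploit the alternative representation from Proposition~\ref{prop:mu},
\[
\rho(t) = \int_{Z^\varphi} \hat v \cdot \hat w \circ G_t \, d\mu_Z^\varphi, \qquad \hat v = v\circ\pi,\ \hat w = w\circ\pi,
\]
because on the $Z$-side the base map $G$ is Gibbs--Markov by (H2) with H\"older roof $\varphi$ by (H3), so the Dolgopyat-type machinery powered by (H4) becomes available. A standard suspension-semiflow Fubini --- write $G_t(z,u)=(G^nz,\,u+t-\varphi_n(z))$ on the level set $\{\varphi_n(z)-u\le t<\varphi_{n+1}(z)-u\}$ --- rewrites the Laplace transform as
\[
\hat\rho(s) = E(s) + \int_Z (I-M_\varphi(s))^{-1}M_\varphi(s)V_s\cdot W_s\,d\mu_Z,
\]
where $M_\varphi(s)f=R^G(e^{-s\varphi}f)$ is the twisted transfer operator for $G$, and $V_s,W_s:Z\to\C$ are the $s$-dependent fibrewise ``Fourier transforms'' of $\hat v,\hat w$ on $[0,\varphi(z)]$. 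Because $v,w$ are supported in $\tY=Y\times[0,1]$, the $u$-support of $\hat v(z,\cdot)$ and $\hat w(z,\cdot)$ is the union of $\sigma(z)$ unit intervals, so $V_s,W_s$ and the boundary term $E(s)$ are pointwise controlled by polynomials in $\sigma(z)$, which integrates comfortably thanks to the exponential tail of $\sigma$. The $(\beta-\eps)$-H\"older continuity of $E(s)$ in $s$ then follows from $|e^{-ih\varphi}-1|\le 2(h\varphi)^{\beta-\eps}$ and finiteness of $\int_Z\varphi^{\beta-\eps}\sigma^2\,d\mu_Z$, so the task reduces to controlling the resolvent difference $(I-M_\varphi(i(b+h)))^{-1}-(I-M_\varphi(ib))^{-1}$ together with the increments $V_{i(b+h)}-V_{ib}$, $W_{i(b+h)}-W_{ib}$.

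First I would establish the Dolgopyat-type bound. The hypotheses (H2)--(H3) on $G$ and $\varphi$ together with (H4) place us in the setting of \cite{Dolgopyat98b,M07,rapid}; that machinery yields a Banach space $\cF$ of piecewise H\"older functions on $Z$ and constants $\alpha\ge 0$, $\theta\in(0,1)$ such that
\[
\|M_\varphi(ib)^n\|_{\cF}\le C|b|^{\alpha}\theta^n \quad\text{for } |b|\ge\delta,\ n\ge1,
\]
whence $\|(I-M_\varphi(ib))^{-1}\|_{\cF}\le C|b|^{\alpha'}$ for some $\alpha'>0$. Using (H3) and the support properties above, one checks that $V_{ib}\in\cF$ and $W_{ib}$ acts as a continuous linear functional with norms bounded by $C\|v\|_{C^\eta(\tY)}$ and $C|w|_{L^\infty(\tY)}$ respectively; here the H\"older hypothesis on $v$ is essential to absorb the $C^\eta(Z)$-contribution arising from the variation of $\varphi$ and $\tau_k$ across partition elements of $\alpha$ (controlled by (H3)).

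Second, I would establish the $(\beta-\eps)$-H\"older continuity in $b$ of $M_\varphi(ib)$ and $V_{ib}$, $W_{ib}$. Since $\sigma$ has exponential tails and $\tau$ has $\beta$-tail, $\varphi$ inherits a $\beta$-tail under $\mu_Z$, so $\int_Z\varphi^{\beta-\eps}\sigma^k\,d\mu_Z<\infty$ for every $k\ge 0$ and $\eps>0$. Combined with $|e^{-ih\varphi}-1|\le 2(h\varphi)^{\beta-\eps}$, this yields
\[
\|M_\varphi(i(b+h))-M_\varphi(ib)\|_{\cF}\lesssim h^{\beta-\eps},
\]
and analogous bounds on $V_{i(b+h)}-V_{ib}$, $W_{i(b+h)}-W_{ib}$. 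The resolvent identity
\[
(I-M_\varphi(i(b+h)))^{-1}-(I-M_\varphi(ib))^{-1}=(I-M_\varphi(i(b+h)))^{-1}[M_\varphi(i(b+h))-M_\varphi(ib)](I-M_\varphi(ib))^{-1}
\]
then combines these bounds into the claimed estimate, with $\omega$ being a suitable multiple of $\alpha'$ absorbing the polynomial loss.

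The principal obstacle is the interplay between the Dolgopyat estimate and the fibrewise observables. Although (H4) is tailored to furnish the oscillatory bound on $M_\varphi(ib)^n$, one must choose the Banach space $\cF$ so that it is simultaneously good enough to carry that bound and rich enough to accommodate $V_{ib},W_{ib}$ with norms controlled by $\|v\|_{C^\eta(\tY)}$ and $|w|_{L^\infty(\tY)}$. This is where the H\"older assumption on $v$ (not shared by $w$) plays its role: the $C^\eta(Z)$-regularity of $V_{ib}$ is acquired through the joint variation of $\hat v$ in $z$ and of the H\"older-controlled sums $\tau_k$ on partition elements, and it is only this regularity that allows $V_{ib}$ to enter the Dolgopyat framework with a polynomial bound that is independent of $b$.
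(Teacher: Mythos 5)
Your strategy is sound and reaches the same destination as the paper, but by a genuinely different decomposition. The paper does not use the direct Pollicott/renewal representation of $\hat\rho(s)$ over the base $Z$ that you propose; instead it works at the level of the suspension $Z^\varphi$ and splits the transfer semigroup as $L^G_t=(A\star T^G\star B)_t+E_t$ (Proposition~\ref{prop:ABEt}, a continuous-time analogue of Gou\"ezel's discrete decomposition), so that $\hat L^G(s)=\hat A(s)\hat T^G(s)\hat B(s)+\hat E(s)$ and the Dolgopyat input enters through $\hat T^G(ib):C^{\eps\eta}(\tZ)\to L^\infty(\tZ)$ (Proposition~\ref{prop:TG}, quoting~\cite[Corollary~5.10]{MT17}). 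Your version sums over lap numbers directly, producing $\hat\rho(s)=E(s)+\bar\sigma^{-1}\int_Z(I-\hat R^G(s))^{-1}\hat R^G(s)V_s\,W_s\,d\mu_Z$ with $V_s(z)=\int_0^{\varphi(z)}e^{su}\hat v(z,u)\,du$, $W_s(z)=\int_0^{\varphi(z)}e^{-su}\hat w(z,u)\,du$; this is correct and somewhat more elementary in structure, and both routes ultimately rest on the same resolvent bound $\|(I-\hat R^G(ib))^{-1}\|\ll b^{\omega}$ furnished by (H4). What the paper's convolution decomposition buys is a clean separation of roles: $w$ enters only through $\hat A$, $\hat E$ and the final $L^1$--$L^\infty$ pairing (so $w\in L^\infty$ suffices with no fibrewise Fourier transform of $\hat w$ needed), while all the H\"older bookkeeping for $v$ is isolated in $\hat B(s)=\tilde R\,\hat D(s)$ (Lemma~\ref{lem:ABEt}(c)); it also lets the authors reuse the $\hat T^G$ estimates of~\cite{MT17} verbatim. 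Your route requires redoing those estimates for $V_s$, $W_s$ and the lap-number-zero term $E(s)$, which is feasible via exactly the moment bounds you cite (Proposition~\ref{prop:rhophi} extended to $\varphi^{\beta-\eps}\sigma^2$).

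One assertion needs correcting, although your displayed formula already contains the cure. You claim ``$V_{ib}\in\cF$ with norm bounded by $C\|v\|_{C^\eta(\tY)}$''. This is false: since $\hat v$ is supported on $\hY$, which meets the column over $z$ in $\sigma(z)$ unit intervals, one only has $|V_{ib}|_{L^\infty(a)}\le\sigma(a)|v|_\infty$ and a local H\"older constant of order $(1+b^\eps)\sigma(a)\infa\varphi^{\eps}$ (cf.\ Proposition~\ref{prop:vb}); these weights are unbounded over $a\in\alpha$, so $V_{ib}$ is not even in $L^\infty(Z)$, let alone in a space carrying the Dolgopyat bound. The resolvent must therefore be applied to $\hat R^G(ib)V_{ib}$, not to $V_{ib}$: one application of the transfer operator converts the weights $\sigma(a)\infa\varphi^{\eps}$ into the summable quantities $\mu_Z(a)\sigma(a)\infa\varphi^{\eps}$ (Proposition~\ref{prop:rhophi}), which is precisely the role of $\tilde R$ inside $\hat B(s)$ in the paper's proof of Lemma~\ref{lem:ABEt}(c). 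Since your representation writes $(I-\hat R^G(s))^{-1}\hat R^G(s)V_s$, the fix is already built in; you should just state that it is $\hat R^G(ib)V_{ib}$ that lies in $C^{\eps\eta}(Z)$, with norm $O(b^\eps)\|v\|_{C^\eta(\tY)}$ rather than $O(1)$ (the $b^\eps$ is harmless, being absorbed into $b^\omega$), and carry the same correction through the H\"older-in-$b$ increments.
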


Lemmas~\ref{lem:small} and~\ref{lem:large} are proved in Sections~\ref{sec:small} and~\ref{sec:large} respectively.
We now have the necessary prerequisites for completing the proof
of Theorem~\ref{thm:rate}.

\begin{prop}[{{cf.~\cite[Proposition~6.2]{MT17}}}] \label{prop:0cx} 
The analytic function $\hat\rho$ on $\H$ extends to a continuous function
on $\barH\setminus\{0\}$, and
\[
\rho(t)=\frac{1}{2\pi}\int_{-\infty}^\infty e^{ibt}\hat\rho(ib)\,db
= \frac{1}{\pi}\int_0^\infty \Re(e^{ibt}\hat\rho(ib))\,db.
\]
\end{prop}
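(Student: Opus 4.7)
The plan is to split the argument into two steps, establishing a continuous extension of $\hat\rho$ from $\H$ to $\barH\setminus\{0\}$ and then using Fourier inversion to recover $\rho(t)$. This follows the template of Proposition~6.2 in~\cite{MT17}.

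For the continuous extension, I would define $\hat\rho(ib_0)$ for $b_0\ne 0$ as the horizontal limit $\lim_{\epsilon\to0^+}\hat\rho(\epsilon+ib_0)$. Existence of this limit, and joint continuity of the extended function on $\barH\setminus\{0\}$, would follow from the modulus-of-continuity estimates in the two main lemmas: Lemma~\ref{lem:small}(a),(b) give uniform H\"older control on $\barH\cap\{r\le|s|\le\delta\}$ for any $r>0$, while Lemma~\ref{lem:large} provides the analogous control on $\barH\cap\{\delta\le|\Im s|\le R\}$ for any $R>0$. Continuity in the horizontal direction $\Re s\to0^+$ is then a consequence of these vertical-direction estimates combined with the analyticity of $\hat\rho$ on $\H$ and Cauchy's integral formula.

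For the inversion formula, I would start from the elementary Fourier inversion applied to the $L^1$ function $f_\epsilon(t):=e^{-\epsilon t}\rho(t)\mathbf{1}_{t>0}$ (for $\epsilon>0$), whose Fourier transform in $b$ is $\hat\rho(\epsilon+ib)$. At any point of continuity of $f_\epsilon$ this yields
\[
e^{-\epsilon t}\rho(t) \;=\; \frac{1}{2\pi}\int_{-\infty}^\infty e^{ibt}\,\hat\rho(\epsilon+ib)\,db \qquad (t>0).
\]
The assumption $w\in L^{\infty,m}$ with $m\ge2$ ensures $\rho$ is $C^m$ in $t$, so the left-hand side converges pointwise to $\rho(t)$ as $\epsilon\to0^+$. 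To pass to the limit on the right I would split $\R$ into $[-\delta,\delta]$, $\{\delta\le|b|\le A\}$, and $\{|b|\ge A\}$: Lemma~\ref{lem:small}(a) dominates the first piece by the integrable function $|b|^{-\beta}$ (since $\beta<1$), the continuous extension from step~1 handles the compact middle piece by dominated convergence, and the tail is controlled by iterated integration by parts in $t$, which uses the $m$ flow-direction derivatives of $w$ to trade each derivative for a factor of $s^{-1}$ in $\hat\rho(s)$. The second equality is then immediate from the reality of $\rho$, which gives $\hat\rho(-ib)=\overline{\hat\rho(ib)}$ so that the $b<0$ and $b>0$ halves of the inversion integral are complex conjugates.

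The main obstacle I anticipate is the tail control as $|b|\to\infty$: integration by parts produces boundary terms of the form $\rho^{(k)}(0)/s^{k+1}$ whose imaginary-axis traces decay only like $|b|^{-1}$, so the integrand is not in $L^1$ at infinity for generic $v,w$. Legitimizing the inversion will require interpreting the outer integral in a principal-value or Abel-summable sense (using the $e^{-\epsilon t}$ regulator precisely as above and letting $\epsilon\to0^+$ only after a careful dyadic decomposition in $b$), or extracting additional oscillatory cancellation from Lemma~\ref{lem:large}.
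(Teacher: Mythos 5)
Your overall template (continuous extension, then Laplace/Fourier inversion with the $|s|^{-\beta}$ bound of Lemma~\ref{lem:small}(a) controlling the singularity at $s=0$) is the same as the paper's, which simply imports the proof of \cite[Proposition~6.2]{MT17}. However, your first step contains a genuine circularity. Lemmas~\ref{lem:small}(b) and~\ref{lem:large} are statements about the boundary function $b\mapsto\hat\rho(ib)$, i.e.\ about the very extension whose existence you are trying to establish; they cannot be used as input to construct it. Moreover, even granting uniform-in-$\Re s$ versions of those vertical H\"older estimates, analyticity plus a vertical modulus of continuity on each line $\Re s=\eps$ does not by itself force the horizontal limits $\lim_{\eps\to0^+}\hat\rho(\eps+ib_0)$ to exist (Cauchy's integral formula gives you nothing here; at best one could invoke Fatou-type boundary-value theorems for bounded analytic functions, which is not what you propose). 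The correct mechanism, and the one underlying the proofs of Lemmas~\ref{lem:small} and~\ref{lem:large}, is that the explicit representations of $\hat\rho$ extend continuously: for $|s|\le\delta$ one uses~\eqref{eq:rhohat} together with $\hat T(s)=(1-\lambda(s))^{-1}P(s)+(I-\hat R(s))^{-1}Q(s)$, which is continuous on $\barH\cap B_\delta(0)\setminus\{0\}$ by Lemmas~\ref{lem:estKL} and~\ref{lem:T} (since $1-\lambda(s)\neq0$ for $s\neq0$ small); for $|\Im s|\ge\delta$ one uses~\eqref{eq:hatrholarge} and~\eqref{eq:ABEt} together with the continuity of $\hat A$, $\hat T^G$, $\hat B$, $\hat E$ on $\barH$ furnished by Proposition~\ref{prop:TG} and Lemma~\ref{lem:ABEt}.

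Your second step is essentially correct and matches the intended argument: the $e^{-\eps t}$ regularisation, the split of the $b$-integral at $0$ and at infinity, domination near $b=0$ by the integrable bound $|b|^{-\beta}$ from Lemma~\ref{lem:small}(a), and dominated convergence on compacta. Two small caveats: pointwise Fourier inversion at a point of continuity of an $L^1$ function is not automatic, so you should explicitly invoke the local Dini (or bounded variation) condition supplied by the $C^m$ regularity of $\rho$ in $t$; and the non-absolute convergence at infinity that you flag is real but is exactly what the decomposition $\hat\rho=\hat p_m+\hat r_m$ in the proof of Lemma~\ref{lem:infty} resolves --- $\hat r_m(ib)=O(b^{-(m-\omega)})$ is absolutely integrable, while the finitely many terms $(ib)^{-j}$ in $\hat p_m$ are handled as improper oscillatory integrals --- so the displayed integrals in the proposition are to be read in that improper sense, not as Lebesgue integrals.
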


\begin{proof} 
This is the same as the proof of~\cite[Proposition~6.2]{MT17} with~\cite[Proposition~6.1]{MT17}
replaced by Lemma~\ref{lem:small}(a).
\end{proof}

\begin{lemma}[{{cf.~\cite[Proposition~6.4]{MT17}}}] 
\label{lem:DT}
Let $\eps>0$.
There exists $C$, $\delta>0$ such that 
for all $a\ge1$, $t>(a+\pi)/\delta$, $v\in \cB(\tY)$, $w\in L^\infty(\tY)$,
\[
\Big|\int_{a/t}^{\delta}  e^{ibt}\hat\rho(ib)\,db\Big|
\le C\{ t^{-(1-\beta)}a^{-(2\beta-1)} + t^{-(\beta-\eps)}\}\|v\|_{\cB(\tY)}|w|_{L^\infty(\tY)}.
\]
\end{lemma}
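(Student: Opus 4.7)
The plan is to exploit cancellation in the oscillatory factor $e^{ibt}$ via the standard substitution trick: replace $b$ by $b+\pi/t$, which turns $e^{ibt}$ into $-e^{ibt}$, then average against the original expression. Writing $I = \int_{a/t}^{\delta} e^{ibt}\hat\rho(ib)\,db$, the change of variables gives
\[
I = -\int_{a/t+\pi/t}^{\delta+\pi/t} e^{ibt}\hat\rho\bigl(i(b-\pi/t)\bigr)\,db,
\]
so, after adding,
\[
2I = \int_{a/t+\pi/t}^{\delta} e^{ibt}\bigl(\hat\rho(ib)-\hat\rho(i(b-\pi/t))\bigr)\,db + B_0 - B_1,
\]
where $B_0 = \int_{a/t}^{a/t+\pi/t} e^{ibt}\hat\rho(ib)\,db$ and $B_1 = \int_{\delta}^{\delta+\pi/t} e^{ibt}\hat\rho(i(b-\pi/t))\,db$ are boundary pieces of length $\pi/t$. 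The hypothesis $t>(a+\pi)/\delta$ ensures $a/t+\pi/t<\delta$, so the overlap interval is nonempty and the decomposition is meaningful.

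For the main (overlap) integral, I would apply Lemma~\ref{lem:small}(b) with $h=\pi/t$, bounding the increment $|\hat\rho(ib)-\hat\rho(i(b-\pi/t))|$ by a constant multiple of $\bigl\{b^{-2\beta}(\pi/t)^\beta + b^{-\beta}(\pi/t)^{\beta-\eps}\bigr\}\|v\|_{\cB(\tY)}|w|_{L^\infty(\tY)}$ and integrating. Since $\beta>\tfrac12$, the first contribution is
\[
(\pi/t)^\beta \int_{a/t}^{\delta} b^{-2\beta}\,db \ll (\pi/t)^\beta (a/t)^{1-2\beta} = \pi^\beta\, t^{-(1-\beta)}a^{-(2\beta-1)},
\]
matching the first summand of the claimed bound. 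Since $\beta<1$, the second contribution gives $(\pi/t)^{\beta-\eps}\int_{a/t}^\delta b^{-\beta}\,db \ll t^{-(\beta-\eps)}$, matching the second summand.

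The boundary terms $B_0$ and $B_1$ are handled by the pointwise bound of Lemma~\ref{lem:small}(a), which yields $|\hat\rho(ib)|\ll b^{-\beta}\|v\|_{\cB(\tY)}|w|_{L^\infty(\tY)}$. Thus $|B_0|\ll (\pi/t)(a/t)^{-\beta}=\pi\, t^{-(1-\beta)}a^{-\beta}$; since $a\ge 1$ and $\beta\le 1$ imply $a^{-\beta}\le a^{-(2\beta-1)}$, this is absorbed into the main term. Similarly $|B_1|\ll (\pi/t)\delta^{-\beta}\ll t^{-1}$, which is negligible relative to $t^{-(\beta-\eps)}$ for $t$ large. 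The only obstacle is this bookkeeping: verifying that every subsidiary contribution is dominated by the two claimed error terms; no new dynamical or spectral input beyond Lemma~\ref{lem:small} is required, since the entire range $b\in[a/t,\delta]$ lies in the small-$b$ regime where parts (a) and (b) apply.
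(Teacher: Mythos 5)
Your proposal is correct and is essentially identical to the paper's own proof: the same shift-by-$\pi/t$ decomposition into an overlap integral plus two boundary pieces, with Lemma~\ref{lem:small}(b) (with $h=\pi/t$) handling the overlap term and Lemma~\ref{lem:small}(a) handling the boundary terms. The bookkeeping you flag (e.g.\ $a^{-\beta}\le a^{-(2\beta-1)}$ for $a\ge1$, and $t^{-1}\ll t^{-(\beta-\eps)}$) goes through exactly as you indicate.
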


\begin{proof} 
Throughout, we suppress the factor $\|v\|_{\cB(\tY)}|w|_{L^\infty(\tY)}$.
Write
\[
I=\int_{a/t}^{\delta} e^{ibt}\hat\rho(ib)\,db
=-\int_{(a+\pi)/t}^{\delta+\pi/t} e^{ibt}\hat\rho(i(b-\pi/t))\,db.
\]
Then $2I=I_1 + I_2 +  I_3$, where
\begin{align*} 
I_1 & = -\int_{\delta}^{\delta+\pi/t} e^{ibt}\hat\rho(i(b-\pi/t))\,db, \qquad
I_2  = \int_{a/t}^{(a+\pi)/t} e^{ibt}\hat\rho(ib)\,db, \\
I_3 & = \int_{(a+\pi)/t}^{\delta} e^{ibt}(\hat\rho(ib)-\hat\rho(i(b-\pi/t)))\,db.
\end{align*}
Clearly $I_1=O(t^{-1})$. By Lemma~\ref{lem:small}(a),
$|I_2|  \ll \int_{a/t}^{(a+\pi)/t} b^{-\beta}\,db \le (\pi/t)(a/t)^{-\beta}\le
 t^{-(1-\beta)}a^{-\beta}$.
By Lemma~\ref{lem:small}(b) with $h=\pi/t$,
\begin{align*}
|I_3| & \ll 
t^{-\beta}\int_{a/t}^\infty b^{-2\beta}\,db+ t^{-(\beta-\eps)}\int_0^{\delta}b^{-\beta}\,db 
\ll t^{-(1-\beta)}a^{-(2\beta-1)} + t^{-(\beta-\eps)}.
\end{align*}
This completes the proof.
\end{proof}

\begin{lemma}[{{cf.~\cite[Proposition~6.5]{MT17}}}] 
\label{lem:infty}  For any $\delta>0$,  $\eps\in (0,\beta)$, there exists $C>0$, $m\ge2$,
such that
for all $t>0$, $v\in \cB(\tY)\cap C^\eta(\tY)$, $w\in L^{\infty,m}(\tY)$,
\[
\textstyle |\int_{\delta}^\infty  e^{ibt}\hat\rho(ib)\,db|
\le C t^{-(\beta-\eps)}\|v\|_{C^\eta(\tY)}\,|w|_{L^{\infty,m}(\tY)}.
\]
\end{lemma}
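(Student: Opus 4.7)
The strategy is to combine two ingredients---a polynomial decay estimate $|\hat\rho(ib)|\ll|b|^{-m}$ at large $b$, coming from smoothness of $w$ in the flow direction, and the Hölder modulus of continuity supplied by Lemma~\ref{lem:large}---via a split-and-optimize argument.

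\textbf{Step 1 (decay from smoothness).} Since $\supp w\subset Y\times(0,1)$ and $\tau>1$, the function $w$ vanishes across the roof identifications, so $w\circ F_t$ is $C^m$ in $t$ pointwise with $\partial_t^k(w\circ F_t)=(\partial_u^k w)\circ F_t$ for $k\le m$. Using this identity to integrate by parts $m$ times in the flow direction within the representation of $\hat\rho(ib)$ afforded by the reinduced transfer operator of Section~\ref{sec:large}, each integration by parts trades a factor $(ib)^{-1}$ for a flow derivative of $w$. This gives, for each $m\ge1$,
\[
|\hat\rho(ib)|\le C_m|b|^{-m}\vertiii{v}\,|w|_{L^{\infty,m}(\tY)},\qquad b\ge\delta,
\]
with $C_m$ independent of $b$, $v$, $w$.

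\textbf{Step 2 (Dolgopyat smoothing + split).} Fix a parameter $B>\delta$ to be chosen, and write $\int_\delta^\infty e^{ibt}\hat\rho(ib)\,db=J_1+J_2$ with $J_1=\int_\delta^B$ and $J_2=\int_B^\infty$. Step~1 yields $|J_2|\ll B^{-(m-1)}\vertiii{v}\,|w|_{L^{\infty,m}}$. For $J_1$, use the substitution $b\mapsto b-\pi/t$ (so $e^{i(b-\pi/t)t}=-e^{ibt}$) and average with the original expression to obtain
\[
2J_1=\int_{\delta+\pi/t}^{B}e^{ibt}\bigl[\hat\rho(ib)-\hat\rho(i(b-\pi/t))\bigr]\,db+\bigl(\text{two boundary integrals of length }\pi/t\bigr).
\]
For $t>\pi/\delta$ the boundary pieces are $O(t^{-1})$ by Step~1 (using boundedness of $\hat\rho(ib)$ at $b=\delta,B$). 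Lemma~\ref{lem:large} applied with $h=\pi/t<\delta$ to the main term gives
\[
|J_1|\ll t^{-(\beta-\eps)}\|v\|_{C^\eta}|w|_\infty\int_\delta^B b^\omega\,db+t^{-1}\ll t^{-(\beta-\eps)}B^{\omega+1}+t^{-1}.
\]

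\textbf{Step 3 (optimize).} Combining,
\[
\Bigl|\int_\delta^\infty e^{ibt}\hat\rho(ib)\,db\Bigr|\ll t^{-(\beta-\eps)}B^{\omega+1}+B^{-(m-1)}+t^{-1}.
\]
Choose $B=t^{(\beta-\eps)/(m+\omega)}$ to balance the first two terms, producing the common value $t^{-(\beta-\eps)(m-1)/(m+\omega)}$. Given the target $\eps\in(0,\beta)$, run the argument with $\eps/2$ in place of $\eps$ and pick $m=m(\eps,\omega,\beta)\ge 2$ large enough that $(\beta-\eps/2)(m-1)/(m+\omega)\ge\beta-\eps$; the $O(t^{-1})$ term is dominated since $\beta<1$. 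For $t\le\pi/\delta$ the claimed estimate follows trivially from Step~1 alone.

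\textbf{Main obstacle.} The crux is Step~1. Lemma~\ref{lem:large} by itself only controls differences of $\hat\rho$ with a \emph{growing} factor $b^\omega$, so some decay mechanism in $b$ is indispensable to make the smoothed integral converge on $[B,\infty)$. The delicate point in justifying the $m$-fold integration by parts is interchanging the order of integration in the defining expression for $\hat\rho(ib)$: because $\mu^\tau$ is infinite this cannot be done by naive Fubini, and one instead works on the reinduced side, where each trip of $G_t$ through $\tY$ contributes an absolutely convergent term and the spectral machinery of Section~\ref{sec:large} provides uniform control.
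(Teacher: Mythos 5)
Your Steps 2--3 (the $b\mapsto b-\pi/t$ averaging combined with Lemma~\ref{lem:large}, a split at $B$, and optimisation over $B$) are sound in outline, but Step 1 contains a genuine gap: the claimed bound $|\hat\rho(ib)|\le C_m|b|^{-m}$ for $b\ge\delta$ is false in general. Integrating by parts $m$ times in the Laplace/time variable produces boundary terms at $t=0$: one obtains the exact identity $\hat\rho_{v,w}(s)=\hat p_m(s)+s^{-m}\hat\rho_{v,\partial_u^m w}(s)$, where $\hat p_m(s)$ is a linear combination of $s^{-1},\dots,s^{-m}$ whose leading coefficient is $\rho_{v,w}(0)=\int_{\tY}v\,w\,d\mu^\tau$, which is generically nonzero (this is \cite[Proposition~3.7]{MT17}, invoked in the paper's proof). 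Hence $\hat\rho(ib)$ decays only like $b^{-1}$, and your estimate $|J_2|\ll B^{-(m-1)}$ fails because $\int_B^\infty b^{-1}\,db$ diverges. Note also that the remainder $\hat r_m(ib)=(ib)^{-m}\hat\rho_{v,\partial_u^m w}(ib)$ is not known to be $O(1)$ for large $b$ times $b^{-m}$ with a clean constant; what is available from Section~\ref{sec:large} is a growth bound of order $b^{\omega+\eps}$ for the correlation transform, so the true decay of $\hat r_m$ is of order $b^{-(m-\omega-\eps)}$. Your ``Main obstacle'' paragraph correctly senses that Step~1 is the crux but misdiagnoses the difficulty as a Fubini issue rather than the boundary terms.

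The repair is exactly the paper's route: treat $\hat p_m$ and $\hat r_m$ separately. The explicit polynomial part contributes $|\int_\delta^\infty e^{ibt}\hat p_m(ib)\,db|\ll t^{-1}$ by a direct oscillatory-integral estimate. For $\hat r_m$, apply Lemma~\ref{lem:large} to the pair $(v,\partial_u^m w)$; combined with the prefactor $(ib)^{-m}$ this gives $|\hat r_m(ib)-\hat r_m(i(b-\pi/t))|\ll b^{-(m-\omega)}t^{-(\beta-\eps)}$, which for $m>\omega+1$ is integrable over all of $[\delta,\infty)$. Your averaging trick then closes the argument in one step, with no split at $B$ and no optimisation (and no loss $(m-1)/(m+\omega)$ in the exponent). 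If you insist on running your Steps 2--3, they must be applied to $\hat r_m$ rather than to $\hat\rho$ itself.
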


\begin{proof} 
Let $\omega$ be as in Lemma~\ref{lem:large}.
Choose $m$ such that $m>\omega+1$.
By~\cite[Proposition~3.7]{MT17},
$\hat\rho_{v,w}(s)=\hat p_m(s)+\hat r_m(s)$, where
$\hat p_m(s)$ is a linear combination of $s^{-j}$, $j=1,\dots,m$, and 
$\hat r_m(s)=s^{-m}\hat \rho_{v,\partial_u^mw}(s)$. 

By the proof of~\cite[Proposition 6.5]{MT17}, 
 $|\int_{\delta}^\infty  e^{ibt}\hat p_m(ib)\,db|\ll t^{-1}|v|_{L^\infty(\tY)}|w|_{L^{\infty,m}(\tY)}$.

By Proposition~\ref{prop:0cx}, $\hat r_m$ is well-defined and continuous on
$\barH\setminus\{0\}$.
 By Lemma~\ref{lem:large}  with $h=\pi/t$, 
\[
|\hat r_m(ib)-\hat r_m(i(b-\pi/t))|\ll
b^{-(m-\omega)} t^{-(\beta-\eps)}\|v\|_{C^\eta(\tY)}|\partial_u^mw|_{L^\infty(\tY)}.
\]
Suppressing the factor $\|v\|_{C^\eta(\tY)}|\partial_u^mw|_{L^\infty(\tY)}$,
\begin{align*}
\textstyle |2\int_{\delta}^\infty e^{ibt}\hat r_m(ib)\,db| & \textstyle \le \int_{\delta}^\infty|\hat r_m(ib)-\hat r_m(i(b-\pi/t))|\,db
+ \int_{\delta}^{\delta+\pi/t} |\hat r_m(i(b-\pi/t))|\,db \\ &
\textstyle \ll  t^{-(\beta-\eps)}\int_{\delta}^\infty b^{-(m-\omega)}\,db
+O(t^{-1})= O( t^{-(\beta-\eps)}),
\end{align*}
where in the last inequality we have used that $m>\omega+1$.
\end{proof}

\begin{pfof}{Theorem~\ref{thm:rate}}
We let $a=t^\gamma$, where $\gamma\in(0,1)$ is chosen later.
A calculation (see for example~\cite[Proposition~9.5]{MT12}) shows that
for every $j\ge 0$, there exists $C_j\in\C$, with 
$C_0=\int_0^\infty e^{i\sigma}\sigma^{-\beta}\,d\sigma=i\Gamma(1-\beta)e^{-i\beta\pi/2}$,
such that
\begin{align*}
 \int_0^{a/t}b^{-((j+1)\beta-j)}  e^{ibt}db
 -C_j\,t^{-(j+1)(1-\beta)}
&  \ll t^{-(j+1)(1-\beta)}a^{-((j+1)\beta-j)}
\\ & =t^{-1}(a/t)^{j-(j+1)\beta}
\ll  t^{-(1-\beta)}a^{-\beta}.
\end{align*}
By Lemma~\ref{lem:small}(c), writing $c_j'=c_jC_j\int_{\tY}v\,d\mu^\tau \int_{\tY}w\,d\mu^\tau$,
\[
\int_0^{a/t} e^{ibt}\hat\rho(ib)\,db-\sum_{j\ge0} c_j' t^{-(j+1)(1-\beta)}
\ll t^{-(1-\beta)}a^{-\beta} +(a/t)^{1-r},
\]
where $r=\begin{cases}2\beta-q & q<2\beta \\ \eps & q=2\beta\end{cases}$, and
$c_0=c^{-1}c_\beta^{-1}$, $c_\beta=i\int_0^\infty e^{-i\sigma}\sigma^{-\beta}\,d\sigma=\Gamma(1-\beta)e^{i\beta\pi/2}$.

By Lemmas~\ref{lem:DT} and~\ref{lem:infty}, 
\begin{align*}
\int_0^\infty e^{ibt}\hat \rho(ib)\,db-\sum_{j\ge0} c_j'  t^{-(j+1)(1-\beta)} 
& \ll 
(a/t)^{1-r}+ t^{-(1-\beta)}a^{-(2\beta-1)} + t^{-(\beta-\eps)} 
\\ & = t^{-(1-r)\beta/(2\beta-r)}+  t^{-(\beta-\eps)}
\ll t^{-\kappa},
\end{align*}
where we have taken
$a=t^{(\beta-r)/(2\beta-r)}$ and $\kappa$ is as in the statement of the theorem.

The result now follows from Proposition~\ref{prop:0cx} with
\[
\textstyle d_1=\frac{1}{\pi}\Re(c_0C_0)=\frac{1}{c\pi}\Re(ie^{-i\beta\pi})
=\frac{1}{c\pi}\sin\beta\pi,
\]
as desired.
\end{pfof}

\section{Estimates for small $b$: proof of Lemma~\ref{lem:small}}
\label{sec:small}

Throughout this section, we suppose that hypothesis 
(H1) holds and that 
$\mu(\tau>t)=ct^{-\beta}$ where $c>0$, $\beta\in(\frac12,1)$
and $q\in(1,2\beta]$.

\subsection{Representation of $\hat\rho_{v,w}(ib)$ for small $b$}

\paragraph{Transfer operators}
Recall that $R:L^1(Y)\to L^1(Y)$ denotes the transfer operator for $F:Y\to Y$.
Also, for $s\in\barH$, we have the families of operators 
$\hat R(s)v=R(e^{-s\tau}v)$
on $L^1(Y)$,
Note that $\hat R$ is analytic on $\H$ and well-defined on $\barH$.

Given observables $v,w:\tY\to\R$, we define
$v_s,w_s:Y\to\C$ for $s\in\C$, setting
\[
v_s(y)=\int_0^1 e^{su}v(y,u)\,du, \quad
w_s(y)=\int_0^1 e^{-su}w(y,u)\,du.
\]
Also, define
\[
\hat J(s)=-\int_Y\int_0^1\int_0^u e^{su}v(y,u)\, e^{-st}w(y,t) \,dt\,du\,d\mu.
\]

Define $\hat T(s)=(I-\hat R(s))^{-1}$ for $s\in\H$.
Following~\cite{Pollicott85},
\begin{align} \label{eq:rhohat}
\textstyle \hat\rho_{v,w}(s)=\hat J(s)+\int_Y \hat T(s)v_s\, w_s\,d\mu,
\end{align}
for all $v\in L^1(Y)$, $w\in L^\infty(Y)$, 
$s\in\H$.
(See Appendix~\ref{sec:rhohat} for a proof.)

\begin{prop}  \label{prop:v}
$\|v_s\|_{\cB(Y)}\le e^{|\Re s|}\|v\|_{\cB(\tY)}$, for
all $s\in \C$, and 
$\|v_{i(b+h)}-v_{ib}\|_{\cB(Y)}\le h\|v\|_{\cB(\tY)}$
for all $b,h\ge0$.

The same result holds with $\cB$ changed to $L^p$, $1\le p\le \infty$,
and/or $v$ changed to $w$.
\end{prop}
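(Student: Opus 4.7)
The plan is to establish all four variants of the two estimates simultaneously by one application of the Minkowski-type integral inequality for Banach-space valued integrals. First I would observe that the definition $v_s(y)=\int_0^1 e^{su}v(y,u)\,du$ can be interpreted, for each fixed $s\in\C$, as a Bochner integral of the $\cB(Y)$-valued map $u\mapsto e^{su}v(\cdot,u)$. The integrand is measurable in $u$ (since $v\in\cB(\tY)$ by hypothesis, and the norm $\|v\|_{\cB(\tY)}$ is defined as $\sup_{u\in[0,1]}\|v(\cdot,u)\|_{\cB(Y)}$, guaranteeing essential boundedness), so the Bochner integral exists and commutes with continuous linear functionals; in particular one has the norm bound
\[
\|v_s\|_{\cB(Y)}\le \int_0^1 |e^{su}|\,\|v(\cdot,u)\|_{\cB(Y)}\,du.
\]

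For the first estimate, I would then use $|e^{su}|=e^{u\Re s}\le e^{|\Re s|}$ for $u\in[0,1]$ to pull the exponential factor out, and bound the remaining integral by $\sup_{u\in[0,1]}\|v(\cdot,u)\|_{\cB(Y)}=\|v\|_{\cB(\tY)}$. For the second estimate, I would write
\[
v_{i(b+h)}(y)-v_{ib}(y)=\int_0^1 e^{ibu}(e^{ihu}-1)v(y,u)\,du,
\]
apply the same Bochner integral norm inequality, and use the elementary bound $|e^{ihu}-1|\le hu\le h$ valid for $h\ge0$, $u\in[0,1]$.

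For the $L^p$ versions, exactly the same proof works: the classical Minkowski integral inequality
$\bigl|\int_0^1 f(\cdot,u)\,du\bigr|_{L^p(Y)}\le \int_0^1 |f(\cdot,u)|_{L^p(Y)}\,du$
is the standard substitute for the Bochner inequality above. For the $w$ variants, the only change is that the exponent becomes $-su$ in place of $su$, but $|e^{-su}|=e^{-u\Re s}\le e^{|\Re s|}$ and $|e^{-i(b+h)u}-e^{-ibu}|\le h$ hold identically, so the same argument applies verbatim.

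The only potential obstacle is justifying the Bochner integrability of $u\mapsto v(\cdot,u)\in\cB(Y)$; this requires strong measurability of this map, which is not explicitly assumed, but is a mild regularity condition implicitly built into the definition of $\cB(\tY)$ as consisting of $v\in L^1(\tY)$ with $\sup_u\|v(\cdot,u)\|_{\cB(Y)}<\infty$. In any case, for the applications one may restrict to a dense subspace of continuous-in-$u$ observables and extend by continuity, or interpret $v_s$ in $\cB(Y)$ as a pointwise (in $y$) limit of Riemann sums and apply Fatou-type arguments adapted to the norm on $\cB(Y)$. None of this alters the explicit constants $e^{|\Re s|}$ and $h$ appearing in the statement.
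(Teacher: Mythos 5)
Your proof is correct and follows essentially the same route as the paper: both pass the norm through the integral in $u$ (a Minkowski/Bochner-type inequality) and use the elementary bounds $|e^{su}|\le e^{|\Re s|}$ and $|e^{i(b+h)u}-e^{ibu}|\le hu\le h$. The extra discussion of Bochner measurability is a reasonable precaution that the paper leaves implicit, but it does not change the argument or the constants.
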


\begin{proof}
We have
$\|v_s\|_{\cB(Y)}\le \int_0^1 e^{\Re s\, u} \|v(\cdot,u)\|_{\cB(Y)}\,du
\le e^{|\Re s|} \|v\|_{\cB(\tY)}$,
and 
$\|v_{i(b+h)}-v_{ib}\|_{\cB(Y)}\le \int_0^1 |e^{i(b+h)u}-e^{ibu}| \|v(\cdot,u)\|_{\cB(Y)}\,du
\le h\int_0^1 u\|v\|_{\cB(\tY)}\,du
\le h \|v\|_{\cB(\tY)}$.
\end{proof}

\begin{prop} \label{prop:hatJ}
\begin{itemize}
\item[(a)]
$|\hat J(s)|\le e^{|\Re s|}|v|_{L^1(\tY)} |w|_{L^\infty(\tY)}$ for all $s\in \C$,
$v\in L^1(\tY)$, $w\in L^\infty(\tY)$, and

\item[(b)] $|\hat J((i(b+h))-\hat J(ib)|\le h|v|_{L^1(\tY)}|w|_{L^\infty(\tY)}$
for all $b,h>0$,
$v\in L^1(\tY)$, $w\in L^\infty(\tY)$.
\end{itemize}
\end{prop}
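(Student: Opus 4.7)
The plan is to verify both inequalities by straightforward pointwise estimation inside the triple integral defining $\hat J(s)$, since no dynamics enters and the integrand has an explicit structure. The key observation in both parts is that the exponential factors combine into $e^{s(u-t)}$, where the exponent $u-t$ always lies in $[0,1]$ by the constraint $0\le t\le u\le 1$.

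For part (a), I would first rewrite the integrand of $\hat J(s)$ as $-e^{s(u-t)}v(y,u)w(y,t)$, then note that for any $s\in\C$,
\[
|e^{s(u-t)}|=e^{\Re s\,(u-t)}\le e^{|\Re s|},\qquad 0\le t\le u\le 1.
\]
Pulling this bound out of the integral and then estimating $|w(y,t)|\le |w|_{L^\infty(\tY)}$ leaves the integral
\[
\int_Y\int_0^1\int_0^u |v(y,u)|\,dt\,du\,d\mu\le \int_Y\int_0^1 |v(y,u)|\,du\,d\mu=|v|_{L^1(\tY)},
\]
giving the desired bound.

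For part (b), I would form the difference and factor the two exponentials to get
\[
\hat J(i(b+h))-\hat J(ib)=-\int_Y\int_0^1\int_0^u e^{ib(u-t)}\bigl(e^{ih(u-t)}-1\bigr)v(y,u)w(y,t)\,dt\,du\,d\mu.
\]
Since $|e^{ib(u-t)}|=1$ and, by the elementary inequality $|e^{i\theta}-1|\le|\theta|$, we have
\[
|e^{ih(u-t)}-1|\le h(u-t)\le h
\]
in the region of integration, the same $L^1\cdot L^\infty$ estimate as in (a) (with the extra factor $h$) yields the claim.

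There is no real obstacle here: the proposition amounts to checking two standard integral estimates. The only tiny subtlety is that one must not separately bound $|e^{su}|$ and $|e^{-st}|$ when $\Re s<0$, since the former is $\le 1$ but the latter can be as large as $e^{|\Re s|}$; the clean bound comes from treating the product via the difference $u-t\in[0,1]$.
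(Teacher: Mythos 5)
Your proposal is correct and follows essentially the same route as the paper: both parts bound the combined exponential $e^{s(u-t)}$ using $u-t\in[0,1]$, and part (b) uses $|e^{ih(u-t)}-1|\le h|u-t|\le h$ before the same $L^1\cdot L^\infty$ estimate. No issues.
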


\begin{proof}
\begin{align*}
|\hat J(s)| & \le 
\int_Y\int_0^1\int_0^1 e^{\Re s\,(u-t)}|v(y,u)||w(y,t)| \,dt\,du\,d\mu
\\ & \le
e^{|\Re s|}|w|_\infty \int_Y\int_0^1 |v(y,u)|\,du\,d\mu
= e^{|\Re s|}|v|_1|w|_\infty.
\end{align*}
Similarly,
\begin{align*}
|\hat J(i(b+h))-\hat J(ib)| & \le h\int_Y\int_0^1\int_0^1 |u-t||v(y,u)||w(y,t)| \,dt\,du\,d\mu  \le h|v|_1|w|_\infty.
\end{align*}
\end{proof}

\subsection{Estimates for $\hat T(s)$}
\label{sec:T}

Viewing $\hat R(s)$, $s\in\barH$, as a family of operators from $\cB(Y)$ to $L^\infty(Y)$, we first study its continuity properties using (H1).
We begin with $\eps,\delta$ small and $p$ large as in (H1).
During the subsection, these values change finitely many times.
Also $C>0$ is a constant whose value changes finitely many times.

\begin{lemma}\label{lem:continf} 
$\|\hat R(s_1)-\hat R(s_2)\|_{\cB(Y)\to L^p(Y)}\le C\, |s_1-s_2|^{\beta-\eps}$
for all $s_1,s_2\in\barH$.
\end{lemma}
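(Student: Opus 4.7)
The plan is to reduce the estimate directly to hypothesis (H1)(iii), using only positivity of the transfer operator and an elementary pointwise bound on the twist factor $e^{-s\tau}$.

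First I would write
\[
(\hat R(s_1)-\hat R(s_2))v \;=\; R\big((e^{-s_1\tau}-e^{-s_2\tau})v\big),
\]
so the key is a pointwise bound on $e^{-s_1\tau(y)}-e^{-s_2\tau(y)}$. Since $s_1,s_2\in\barH$, the line segment joining $-s_1\tau(y)$ and $-s_2\tau(y)$ lies in $\{\Re z\le0\}$, giving $|e^{-s_1\tau}-e^{-s_2\tau}|\le|s_1-s_2|\tau$ by integrating $e^{-z}$ along this segment. Combined with the trivial bound $|e^{-s_1\tau}-e^{-s_2\tau}|\le 2$, interpolation yields
\[
|e^{-s_1\tau(y)}-e^{-s_2\tau(y)}| \;\le\; 2\,|s_1-s_2|^{\beta-\eps}\tau(y)^{\beta-\eps}
\]
for every $y\in Y$ (where $\beta-\eps\in(0,1]$).

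Next, using that $R$ is a positive operator (being the transfer operator of a nonsingular map), I obtain the pointwise bound
\[
\big|(\hat R(s_1)-\hat R(s_2))v\big| \;\le\; R\big(|e^{-s_1\tau}-e^{-s_2\tau}|\,|v|\big) \;\le\; 2\,|s_1-s_2|^{\beta-\eps}\,R(\tau^{\beta-\eps}|v|).
\]
Taking $L^p(Y)$ norms and invoking (H1)(iii), which gives $|R(\tau^{\beta-\eps}|v|)|_{L^p(Y)}\le C\|v\|_{\cB(Y)}$, produces the desired estimate with a constant independent of $s_1,s_2$.

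There is no real obstacle here: the lemma is essentially a packaging of (H1)(iii) together with H\"older interpolation of the twist factor. The one point worth double-checking is that the interpolation only requires $\Re s_j\ge 0$ (so the line segment bound $|e^{-s_1\tau}-e^{-s_2\tau}|\le|s_1-s_2|\tau$ is valid on all of $\barH$, not just on a neighborhood of $0$), which is why the conclusion holds uniformly for all $s_1,s_2\in\barH$ rather than only on $\barH\cap B_\delta(0)$.
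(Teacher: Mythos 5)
Your proposal is correct and follows essentially the same route as the paper: positivity of $R$, the pointwise bound $|e^{-s_1\tau}-e^{-s_2\tau}|\le 2|s_1-s_2|^{\beta-\eps}\tau^{\beta-\eps}$ valid on $\barH$, and then (H1)(iii) to pass to the $L^p$ norm. The only difference is that you spell out the interpolation step that the paper leaves implicit.
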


\begin{proof}
 Recall that $\hat R(s)v=R(e^{-s\tau}v)$.  Since $R$ is a positive operator,
\begin{align*}
|(\hat R(s_1) -\hat R(s_2))v| 
& \le  R(|e^{-s_1\tau}-e^{-s_2\tau}||v|)
  \le  2|s_1-s_2|^{\beta-\eps}R(\tau^{\beta-\eps} |v|).
\end{align*}
By (H1)(iii), $|(\hat R(s_1) -\hat R(s_2))v|_p\le 2|s_1-s_2|^{\beta-\eps}
|R(\tau^{\beta-\eps} |v|)|_p\ll |s_1-s_2|^{\beta-\eps} \|v\|_{\cB(Y)}$.~
\end{proof}

\begin{lemma}\label{lem:estKL} 
There exists a continuous family $\lambda(s)$, $s\in\barH\cap B_\delta(0)$, of simple eigenvalues for $\hat R(s):\cB(Y)\to\cB(Y)$ with $\lambda(0)=1$.
The corresponding family of spectral projections $P(s)$ are bounded linear operators on $\cB(Y)$ for all
$s\in\barH\cap B_\delta(0)$ and $\sup_{s\in\barH\cap B_\delta(0)}\|P(s)\|_{\cB(Y)}<\infty$.
Moreover, 
\begin{align*}
\|P(s_1)-P(s_2)\|_{\cB(Y)\to L^p(Y)}
\le C |s_1-s_2|^{\beta-\eps}
\quad\text{for all $s_1,s_2\in\barH\cap B_\delta(0)$.}
\end{align*}
\end{lemma}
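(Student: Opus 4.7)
The plan is to apply the Keller--Liverani perturbation theorem~\cite{KellerLiverani99} (in the refined quantitative form used in~\cite{LT}) to the family $\hat R(s)$, $s\in\barH\cap B_\delta(0)$. The prerequisites to verify are: (i)~a uniform Lasota--Yorke inequality on $\cB(Y)$ relative to the weak norm $|\,|_{L^p(Y)}$, supplied by (H1)(ii); (ii)~compact embedding $\cB(Y)\hookrightarrow L^p(Y)$, supplied by (H1)(i); (iii)~continuity of $s\mapsto\hat R(s)$ in the operator norm $\cB(Y)\to L^p(Y)$, supplied by Lemma~\ref{lem:continf} with quantitative rate $|s_1-s_2|^{\beta-\eps}$; and (iv)~a spectral gap for $\hat R(0)$.

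For (iv), observe that $\hat R(0)=R$ is the transfer operator of the measure-preserving, mixing map $F$. By (H1)(i,ii) and Hennion's theorem, $R$ is quasi-compact on $\cB(Y)$ with essential spectral radius at most $\gamma_0<1$. Since $\cB(Y)$ contains constants, $1$ is an eigenvalue with spectral projection $P(0)v=\int_Y v\,d\mu$, and mixing of $F$ rules out any other peripheral eigenvalue; hence $R$ has a genuine spectral gap. Keller--Liverani then yields, for $\delta$ sufficiently small, a continuous family of simple eigenvalues $\lambda(s)$ with $\lambda(0)=1$ and corresponding spectral projections $P(s)$ bounded on $\cB(Y)$ uniformly in $s$.

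The Hölder bound on $P(s_1)-P(s_2)$ follows from the quantitative Keller--Liverani estimate
\[
\|P(s_1)-P(s_2)\|_{\cB(Y)\to L^p(Y)}\le C\,\|\hat R(s_1)-\hat R(s_2)\|_{\cB(Y)\to L^p(Y)}^{1/q},
\]
valid for any $q>1$ with constants depending on $q$ and the spectral-gap data of $R$. Combining with Lemma~\ref{lem:continf} produces a rate $|s_1-s_2|^{(\beta-\eps)/q}$; choosing $q$ close to $1$ and relabelling the arbitrary small parameter $\eps$ gives the stated exponent $\beta-\eps$.

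The main obstacle is precisely this loss of exponent in the perturbation estimate: the classical Keller--Liverani bound does not recover the full perturbation rate, so one must invoke the sharpened form adapted in~\cite{LT} (or tighten the spectral-gap window around $1$) to render $1/q$ arbitrarily close to $1$. A minor but essential point is that (H1) is formulated on the closed half-plane $\barH$, so all estimates extend uniformly up to the boundary $s\in i\R$, and no separate argument is needed there.
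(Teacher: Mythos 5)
Your proposal follows the paper's proof essentially verbatim: the paper likewise verifies hypotheses (2)--(5) of Keller--Liverani's Corollary~1 using the uniform weak-norm bound $\|\hat R(s)\|_{L^p(Y)}\le 1$, compactness and the Lasota--Yorke inequality from (H1)(i,ii), and the continuity estimate of Lemma~\ref{lem:continf}, deducing simplicity of $\lambda(s)$ from mixing of $F$. Your explicit discussion of the H\"older-exponent loss in the Keller--Liverani bound is consistent with the paper, which absorbs that loss into the arbitrarily small $\eps$ via its stated convention that $\eps$ (and $\delta$, $p$) change finitely many times during the section.
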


\begin{proof}
We verify the hypotheses~(2)--(5) of~\cite[Corollary 1]{KellerLiverani99}, thereby obtaining the required estimates for the family $P(s)$.
Simplicity of the family of eigenvalues $\lambda(s)$ is a consequence of
$F$ being mixing.

Hypothesis~(2) is immediate since $\|\hat R(s)\|_{L^p(Y)}\le1$ for all $s\in\barH\cap B_\delta(0)$ and assumption (H1)(ii)
corresponds to hypothesis~(3). 
Hypothesis~(4) follows from (H1)(i),(ii), and hypothesis~(5) follows from
Lemma~\ref{lem:continf}.  
\end{proof}

Let $\zeta(s)$ denote the corresponding family of
eigenfunctions normalized so that $\int_Y \zeta(s)\,d\mu=1$.
In particular, $\zeta(0)\equiv1$ and $P(0)v=\int_Y v\,d\mu$ for all $v\in \cB(Y)$.
Also, define the complementary projections $Q(s)=I-P(s)$.
It is immediate that $\zeta(s)$ and $Q(s)$ inherit the estimate obtained for
$P(s)$.  In particular $|\zeta(s_1)-\zeta(s_2)|_{L^p(Y)}\ll |s_1-s_2|^{\beta-\eps}$.

Following~\cite{Gouezel10b} (a simplification of~\cite{AaronsonDenker01}), 
\begin{align}\label{eq:ev-Gou}
\textstyle \lambda(s) &  =\int_{Y}\lambda(s)\zeta(s)\,d\mu
=\int_{Y}\hat R(s)\zeta(s)\,d\mu
 \\ & =\int_{Y}\hat R(s)\zeta(0)\,d\mu+\int_Y(\hat R(s)-\hat R(0))(\zeta(s)-\zeta(0))\,d\mu
=\int_{Y}e^{-s\tau}\,d\mu +\chi(s), \nonumber
\end{align}
 where 
$\chi(s) =\int_Y (e^{-s\tau}-1)(\zeta(s)-1)\,d\mu$.

\begin{prop} \label{prop:chi}
\begin{itemize}
\item[(a)] $|\chi(s)|\le C|s|^{2\beta-\eps}$ for all $s\in\barH\cap B_\delta(0)$,
\item[(b)] $|\chi(i(b+h))-\chi(ib)|\le C b^\beta h^{\beta-\eps}$ for all $0<h<b<\delta$.
\end{itemize}
\end{prop}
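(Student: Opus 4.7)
The plan is to prove both parts of Proposition~\ref{prop:chi} by H\"older's inequality applied to the integral defining $\chi$, combined with Lemma~\ref{lem:estKL} (which, applied to $v\equiv 1\in\cB(Y)$, gives $|\zeta(s_1)-\zeta(s_2)|_{L^p(Y)}\ll|s_1-s_2|^{\beta-\eps}$) and the following key estimate: for conjugate exponents $p,p'$ with $p'>\beta$ and $s\in\barH\cap B_\delta(0)$,
\[
|e^{-s\tau}-1|_{L^{p'}(Y)}\ll|s|^{\beta/p'}.
\]
To prove this, use the pointwise bound $|e^{-s\tau}-1|\le\min(|s|\tau,2)$ and split $\int_Y|e^{-s\tau}-1|^{p'}\,d\mu$ over $\{\tau\le 1/|s|\}$ (estimating the integrand by $(|s|\tau)^{p'}$ and using $\int_{\tau\le T}\tau^{p'}\,d\mu\ll T^{p'-\beta}$, which follows from $\mu(\tau>t)\le Ct^{-\beta}$) and $\{\tau>1/|s|\}$ (where $2^{p'}\mu(\tau>1/|s|)\ll|s|^\beta$); both pieces contribute $O(|s|^\beta)$.

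For (a), H\"older's inequality gives
\[
|\chi(s)|\le|e^{-s\tau}-1|_{L^{p'}}\cdot|\zeta(s)-1|_{L^p}\ll|s|^{\beta/p'}|s|^{\beta-\eps}=|s|^{\beta(1+1/p')-\eps}.
\]
Choosing $p'$ close to $1$ (so that $\beta/p'$ is close to $\beta$) and absorbing the small shortfall into a relabeled $\eps$ yields $|\chi(s)|\ll|s|^{2\beta-\eps}$.

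For (b), decompose $\chi(i(b+h))-\chi(ib)=I_1+I_2$, where
\[
I_1=\int_Y e^{-ib\tau}(e^{-ih\tau}-1)(\zeta(i(b+h))-1)\,d\mu,\quad I_2=\int_Y(e^{-ib\tau}-1)(\zeta(i(b+h))-\zeta(ib))\,d\mu,
\]
using $e^{-i(b+h)\tau}-e^{-ib\tau}=e^{-ib\tau}(e^{-ih\tau}-1)$ together with $|e^{-ib\tau}|=1$.  Applying H\"older together with the key estimate (with $s=ih$, resp.\ $s=ib$) and Lemma~\ref{lem:estKL} gives $|I_1|\ll h^{\beta/p'}(b+h)^{\beta-\eps}\ll h^{\beta/p'}b^{\beta-\eps}$ and $|I_2|\ll b^{\beta/p'}h^{\beta-\eps}$.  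Finally, exploit $h<b<1$ to shuffle exponents---for example $b^{\beta-\eps}\le b^\beta h^{-\eps}$ (since $h^{-\eps}\ge b^{-\eps}$), and similarly $b^{\beta/p'}\le b^\beta h^{-\beta(1-1/p')}$---and take $p'$ sufficiently close to $1$; this yields $|I_1|+|I_2|\ll b^\beta h^{\beta-\eps}$ after relabeling $\eps$.

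The main obstacle is that $\tau\notin L^1(Y)$ (only $\tau\in L^p$ for $p<\beta$), so the $L^{p'}$-norm of $e^{-s\tau}-1$ can only be made to scale as $|s|^{\beta/p'}$ rather than the ideal $|s|^\beta$; this forces $p'\to 1$ and hence limits the use of Lemma~\ref{lem:estKL} to large $p$.  The sharp $b^\beta$ factor in (b) is recovered only by exploiting $h<b$ to shift the unavoidable $\eps$-loss entirely onto the $h$-exponent.
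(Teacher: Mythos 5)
Your proof is correct and follows essentially the same route as the paper: a H\"older split putting the exponential factor in $L^{p'}$ with $p'$ near $1$ and the eigenfunction factor in $L^p$ with $p$ large (via Lemma~\ref{lem:estKL}), together with the same telescoping decomposition in (b) and the same use of $h<b$ to shift the $\eps$-loss. The only cosmetic difference is that you obtain $|e^{-s\tau}-1|_{L^{p'}}\ll|s|^{\beta/p'}$ by tail-splitting, whereas the paper uses the pointwise interpolation bound $|e^{-s\tau}-1|\le 2|s|^{\beta-\eps}\tau^{\beta-\eps}$ and places $\tau^{\beta-\eps}$ in $L^r$ for $r$ near $1$; both yield the same exponent up to a relabeled $\eps$.
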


\begin{proof}
Choose $r>1$ such that $(\beta-\eps)r<\beta$ with conjugate exponent $r'$.
Then $\tau^{(\beta-\eps)r}\in L^1$ and it follows from
H\"older's inequality that 
\begin{align*}
\textstyle |\chi(s)| \le 2|s|^{\beta-\eps}| \tau^{\beta-\eps}(\zeta(s)-1)|_1
\le 2|s|^{\beta-\eps}|\tau^{(\beta-\eps)}|_r|\zeta(s)-1|_{r'}
\ll |s|^{2(\beta-\eps)},
\end{align*}
yielding part~(a).   Here we used that $|\zeta(s)-1|_p=O(|s|^{\beta-\eps})$ for $p$ as large as desired.
Similarly,
\begin{align*}
|\chi(i(b+h))-\chi(ib)|
& \le |(e^{i(b+h)\tau}-1)(\zeta(i(b+h))-\zeta(ib))|_1
+ |(e^{ih\tau}-1)(\zeta(ib)-1)|_1
\\ & \ll (b+h)^{\beta-\eps}h^{\beta-\eps}+h^{\beta-\eps}b^{\beta-\eps}
 \ll b^{\beta-\eps}h^{\beta-\eps}
 \le b^{\beta}h^{\beta-2\eps},
\end{align*}
proving part~(b).
\end{proof}

Recall that $c_\beta=i\int_0^\infty e^{-i\sigma}\sigma^{-\beta}\, d\sigma$.

\begin{lemma}[{{cf.~\cite[Lemma 5.5]{MT17}}}]
\label{lem:T} 
For $s\in\barH\cap B_\delta(0)$, 
 \begin{align*}
1-\lambda(s)\sim c_\beta s^\beta\enspace\text{as $s\to0$}, \qquad
  \hat T(s)  =(1-\lambda(s))^{-1}P(0) +E(s),
 \end{align*}
where  
$\|E(s)\|_{\cB(Y)\to L^1(Y)}\le C|s|^{-\eps}$.
\end{lemma}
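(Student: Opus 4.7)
The strategy combines an asymptotic expansion of the leading eigenvalue $\lambda(s)$ with the standard Nagaev--Guivarc'h decomposition of $\hat R(s)$. First, starting from~\eqref{eq:ev-Gou} we have $1-\lambda(s)=\int_Y(1-e^{-s\tau})\,d\mu-\chi(s)$. Applying Fubini to rewrite the integral as $s\int_0^\infty e^{-st}\mu(\tau>t)\,dt$ and substituting the tail $\mu(\tau>t)=ct^{-\beta}+O(t^{-q})$, the change of variables $u=st$ together with the identity $\int_0^\infty e^{-i\sigma}\sigma^{-\beta}\,d\sigma=\Gamma(1-\beta)e^{-i(1-\beta)\pi/2}$ yields a leading constant multiple of $s^\beta$, matching $c_\beta s^\beta$. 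The $O(t^{-q})$ part of the tail contributes $O(|s|^{\min(q,1)})$, and Proposition~\ref{prop:chi}(a) gives $|\chi(s)|\ll |s|^{2\beta-\eps}$. Since $\beta<1$ and $2\beta-\eps>\beta$ both errors are $o(|s|^\beta)$, yielding $1-\lambda(s)\sim c_\beta s^\beta$ and in particular $|1-\lambda(s)|^{-1}\ll|s|^{-\beta}$.

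Next I use the commuting decomposition $\hat R(s)=\lambda(s)P(s)+\hat R(s)Q(s)$ with $Q(s)=I-P(s)$. Mixing of $F$ together with the Lasota--Yorke inequality~(H1)(ii) (via a Hennion-type argument) forces $\hat R(0)=R$ to have $\{1\}$ as a simple isolated eigenvalue on $\cB(Y)$, so $\hat R(0)Q(0)$ has spectral radius strictly less than $1$. The Keller--Liverani perturbation argument already used for Lemma~\ref{lem:estKL}, whose hypotheses are~(H1) together with the continuity estimate of Lemma~\ref{lem:continf}, then ensures (after shrinking $\delta$ if necessary) that the spectral radius of $\hat R(s)Q(s)$ stays bounded uniformly below $1$ for all $s\in\barH\cap B_\delta(0)$. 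Consequently $(I-\hat R(s))^{-1}Q(s)$ is a uniformly bounded operator on $\cB(Y)$.

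Inverting $I-\hat R(s)$ one piece at a time then gives
\[
\hat T(s)=(1-\lambda(s))^{-1}P(s)+(I-\hat R(s))^{-1}Q(s),
\]
and splitting $(1-\lambda(s))^{-1}P(s)=(1-\lambda(s))^{-1}P(0)+(1-\lambda(s))^{-1}(P(s)-P(0))$ identifies the main term, leaving
$E(s)=(1-\lambda(s))^{-1}(P(s)-P(0))+(I-\hat R(s))^{-1}Q(s)$. The first summand, by Step~1 and Lemma~\ref{lem:estKL}, satisfies
$\|(1-\lambda(s))^{-1}(P(s)-P(0))\|_{\cB(Y)\to L^p(Y)}\ll |s|^{-\beta}\cdot|s|^{\beta-\eps}=|s|^{-\eps}$; composing with the continuous embeddings $\cB(Y)\hookrightarrow L^p(Y)\hookrightarrow L^1(Y)$ (the second using finiteness of $\mu$) upgrades this to the advertised $\cB(Y)\to L^1(Y)$ bound. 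The second summand is uniformly bounded $\cB(Y)\to\cB(Y)$ by the previous paragraph and hence $\cB(Y)\to L^1(Y)$ by~(H1)(i). Summing the two yields $\|E(s)\|_{\cB(Y)\to L^1(Y)}\ll|s|^{-\eps}$ as required.

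The main obstacle is the uniform spectral gap for $\hat R(s)Q(s)$, but this is handled entirely by the Keller--Liverani framework already set up in the proof of Lemma~\ref{lem:estKL}. The remaining subtlety is purely bookkeeping: the H\"older-in-$s$ continuity of the spectral projections is only available in the weaker $\cB(Y)\to L^p(Y)$ norm, which is precisely why the $|s|^{-\eps}$ bound must be stated in the mixed norm $\cB(Y)\to L^1(Y)$ rather than $\cB(Y)\to\cB(Y)$.
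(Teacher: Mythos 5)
Your proposal is correct and follows essentially the same route as the paper: the same decomposition $\hat T(s)=(1-\lambda(s))^{-1}P(s)+(I-\hat R(s))^{-1}Q(s)$, the same splitting of $E(s)$, the bound $\|P(s)-P(0)\|_{\cB(Y)\to L^p(Y)}=O(|s|^{\beta-\eps})$ from Lemma~\ref{lem:estKL}, and the uniform boundedness of $(I-\hat R(s))^{-1}Q(s)$ from the Keller--Liverani setup. The only cosmetic difference is that you sketch the derivation of $1-\int_Y e^{-s\tau}\,d\mu\sim c_\beta s^\beta$ directly, where the paper simply cites \cite[Lemma~2.4]{MT13}.
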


 \begin{proof}
For similar arguments we refer to, for instance,~\cite{AaronsonDenker01,LT,MT12, MT13, MT17}.

By~\cite[Lemma~2.4]{MT13},
 $1-\int_Y e^{-s\tau}\,d\mu\sim c_\beta s^{\beta}$ as $s\to 0$.
It follows from~\eqref{eq:ev-Gou} and Proposition~\ref{prop:chi} that
$1-\lambda(s)\sim  c_\beta s^{\beta}$.
Next, 
\begin{align*}
\hat T(s) & =(1-\lambda(s))^{-1}P(s)+(I-\hat R(s))^{-1}Q(s)
 =(1-\lambda(s))^{-1}P(0)+E(s),
\end{align*}
where
\[
 E(s)= (1-\lambda(s))^{-1}(P(s)-P(0))+(I-\hat R(s))^{-1}Q(s).
\]
By (H1), $\|(I-\hat R(s))^{-1}Q(s)\|_{\cB(Y)}=O(1)$.
By  Lemma~\ref{lem:estKL}, $\|P(s)-P(0)\|_{\cB(Y)\to L^1(Y)}=O(|s|^{\beta-\eps})$.  Hence
$\|E(s)\|_{\cB(Y)\to L^1(Y)}=O(|s|^{-\eps})$. 
\end{proof}

\begin{lemma}[{{cf.~\cite[Proposition 3.7]{LT}}}]
\label{lem:Tcont} 
 For all  $0<h< b<\delta$,
  \begin{align*}
 \|\hat T(i(b+h)) & -\hat T(ib)\|_{\cB(Y)\to L^1(Y)}
\le C( b^{-2\beta}h^\beta + b^{-\beta} h^{\beta-\eps}).
\end{align*}
\end{lemma}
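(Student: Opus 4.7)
The plan is to split $\hat T(s) = (1-\lambda(s))^{-1}P(s) + S(s)$, where $S(s) = (I-\hat R(s))^{-1}Q(s)$, and to write
\begin{align*}
\hat T(i(b+h))-\hat T(ib) = & \ \bigl[(1-\lambda(i(b+h)))^{-1} - (1-\lambda(ib))^{-1}\bigr]P(ib) \\
& + (1-\lambda(i(b+h)))^{-1}\bigl(P(i(b+h))-P(ib)\bigr) \\
& + \bigl(S(i(b+h))-S(ib)\bigr).
\end{align*}
I then control each of the three summands in the $\cB(Y)\to L^1(Y)$ norm. The singular scalar factor produces exactly the claimed bound; the remaining contributions are absorbed by it.

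The first task is to estimate $|\lambda(i(b+h))-\lambda(ib)|$. Using~\eqref{eq:ev-Gou},
\[
\lambda(i(b+h))-\lambda(ib) = \int_Y e^{-ib\tau}(e^{-ih\tau}-1)\,d\mu + \bigl(\chi(i(b+h))-\chi(ib)\bigr).
\]
For the first piece I bound $\bigl|\int_Y(e^{-ih\tau}-1)\,d\mu\bigr| \le Ch^\beta$ by a layer-cake argument using $\mu(\tau>t)\sim ct^{-\beta}$ (equivalently,~\cite[Lemma~2.4]{MT13} at $s=ih$). For the second I invoke Proposition~\ref{prop:chi}(b), giving $Cb^\beta h^{\beta-\eps}$. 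Combined with the asymptotic $|1-\lambda(ib)|\sim|c_\beta|b^\beta$ from Lemma~\ref{lem:T} and the identity $(1-\lambda_1)^{-1}-(1-\lambda_2)^{-1}=(\lambda_1-\lambda_2)/[(1-\lambda_1)(1-\lambda_2)]$, this yields
\[
\bigl|(1-\lambda(i(b+h)))^{-1} - (1-\lambda(ib))^{-1}\bigr| \le C\bigl(b^{-2\beta}h^\beta + b^{-\beta}h^{\beta-\eps}\bigr).
\]
Since $\|P(ib)\|_{\cB(Y)\to L^1(Y)}$ is uniformly bounded (by Lemma~\ref{lem:estKL}, the triangle inequality, and $\|P(0)\|_{\cB(Y)\to L^1(Y)}\le 1$, using that $\mu$ is a probability measure so $L^p\hookrightarrow L^1$), the first summand is bounded by the target. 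The second summand is controlled by $|1-\lambda(i(b+h))|^{-1}\le Cb^{-\beta}$ times $\|P(i(b+h))-P(ib)\|_{\cB(Y)\to L^1(Y)}\le Ch^{\beta-\eps}$, which follows from Lemma~\ref{lem:estKL} and the same embedding.

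For the third summand, since $\hat R(s)Q(s)$ has spectral radius bounded away from $1$ uniformly in $s$ near $0$, $S(s)$ is a uniformly bounded operator on $\cB(Y)$, representable by $S(s)=\sum_{n\ge 0}(\hat R(s)Q(s))^n$. A resolvent identity in the spirit of~\cite[Proposition~3.7]{LT} expresses $S(s_1)-S(s_2)$ in terms of $P(s_1)-P(s_2)$ and $\hat R(s_1)-\hat R(s_2)$; feeding in the H\"older estimates of Lemma~\ref{lem:continf} and Lemma~\ref{lem:estKL} yields $\|S(s_1)-S(s_2)\|_{\cB(Y)\to L^1(Y)} \le C|s_1-s_2|^{\beta-\eps}$. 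For $s_1=i(b+h)$, $s_2=ib$ this is $Ch^{\beta-\eps}\le C\delta^\beta b^{-\beta}h^{\beta-\eps}$, absorbed by the target.

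The main obstacle I expect is the bookkeeping of mapping properties between $\cB(Y)$, $L^p(Y)$ and $L^1(Y)$ in the $S$-difference step, since $(I-\hat R(s))^{-1}Q(s)$ is naturally bounded on $\cB(Y)$ but regularity must be transferred to $L^p$ via the compact embedding in~(H1)(i) before combining with the H\"older continuity of $\hat R(s)$ and $P(s)$. The blueprint of~\cite[Proposition~3.7]{LT} carries over directly, with only the exponent $\beta-\eps$ needing to be tracked through the estimates.
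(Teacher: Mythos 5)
Your proposal is correct and follows essentially the same route as the paper: the identical decomposition $\hat T(s)=(1-\lambda(s))^{-1}P(s)+(I-\hat R(s))^{-1}Q(s)$, the same estimate $|\lambda(i(b+h))-\lambda(ib)|\ll h^\beta+b^\beta h^{\beta-\eps}$ obtained from the tail of $\tau$ together with Proposition~\ref{prop:chi}(b), the same treatment of the $P$-part via Lemmas~\ref{lem:estKL} and~\ref{lem:T}, and the same deferral to the resolvent argument of~\cite{LT} for the $Q$-part. One cosmetic point: the quantity you actually need is $\big|\int_Y e^{-ib\tau}(e^{-ih\tau}-1)\,d\mu\big|$, which is dominated by $\int_Y|e^{-ih\tau}-1|\,d\mu$ (absolute value inside the integral) rather than by $\big|\int_Y(e^{-ih\tau}-1)\,d\mu\big|$; your layer-cake computation bounds the former by $Ch^\beta$ as well, so nothing is lost.
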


\begin{proof} 
First,  $|\int_Y (e^{-i(b+h)\tau}-e^{-ib\tau})d\mu|
\ll h^\beta$ by the argument used
in the proof of~\cite[Lemma 3.3.2]{GarsiaLamperti62}.
By~\eqref{eq:ev-Gou} and Proposition~\ref{prop:chi}(b), 
\begin{align} \label{eq:Dlambda}
|\lambda(i(b+h))-\lambda(ib)| \ll 
h^\beta +b^\beta h^{\beta-\eps}.
\end{align}

Next recall as in Lemma~\ref{lem:T} that $\hat T(ib)=A_1(b)+A_2(b)$ where
\[
A_1(b)=(1-\lambda(ib))^{-1}P(ib),
\qquad A_2(b)=(I-\hat R(ib))^{-1}Q(ib).
\]
Using~\eqref{eq:Dlambda} and Lemmas~\ref{lem:estKL} and~\ref{lem:T}, 
\[
\|A_1(b+h)-A_1(b)\|_{\cB(Y)\to L^1(Y)}\ll 
b^{-2\beta}h^\beta + b^{-\beta}h^{\beta-\eps}.
\]
The argument in~\cite[Proposition 3.8]{LT} shows that
$\|A_2(b+h)-A_2(b)\|_{\cB(Y)\to L^1(Y)}\ll h^{\beta-\eps}$, completing the proof.
\end{proof}

\begin{lemma}
\label{lem:T-higherorder}
There are constants $c_j\in\C$ with $c_0=c^{-1}c_\beta^{-1}$ such that
 \begin{align*}
  \hat T(ib) & =\textstyle \sum_jc_j b^{-((j+1)\beta-j)}P(0) +E(ib)
\quad\text{for all $b\in [0,\delta)$}, 
 \end{align*}
where  
$\|E(ib)\|_{\cB(Y)\to L^1(Y)}\le C(b^{-(2\beta-q)}+b^{-\eps})$.
\end{lemma}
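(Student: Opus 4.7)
The plan is to combine Lemma~\ref{lem:T} with a refined scalar asymptotic expansion of $(1-\lambda(ib))^{-1}$. Lemma~\ref{lem:T} already gives $\hat T(ib) = (1-\lambda(ib))^{-1}P(0) + E_0(ib)$ with $\|E_0(ib)\|_{\cB(Y)\to L^1(Y)}\le C b^{-\eps}$, which accounts for the $b^{-\eps}$ summand of the claimed error. It therefore suffices to prove the scalar expansion
\[
(1 - \lambda(ib))^{-1} = \sum_j c_j\, b^{-((j+1)\beta - j)} + O(b^{-(2\beta - q)} + b^{-\eps}),
\]
with $c_0 = c^{-1}c_\beta^{-1}$, and then multiply through by the bounded projection $P(0)$.

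To refine $\lambda(ib)$, I start from \eqref{eq:ev-Gou}: $1 - \lambda(ib) = 1 - \int_Y e^{-ib\tau}\,d\mu - \chi(ib)$, where Proposition~\ref{prop:chi}(a) gives $\chi(ib) = O(b^{2\beta-\eps})$. For the Laplace piece I write $\int_Y(1 - e^{-ib\tau})\,d\mu = ib\int_0^\infty e^{-ibt}\mu(\tau>t)\,dt$, split the integration at $t = 1$, Taylor-expand $e^{-ibt}$ on $[0,1]$, and on $[1,\infty)$ substitute $\mu(\tau>t) = ct^{-\beta} + O(t^{-q})$, using the scaling $\sigma = bt$ to extract $cc_\beta b^\beta$ from the $ct^{-\beta}$ contribution. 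Because $q \le 2\beta < 2$, only the linear polynomial correction is relevant, producing
\[
1 - \int_Y e^{-ib\tau}\,d\mu = cc_\beta b^\beta + \gamma_1\, ib + O(b^q),
\]
for an explicit $\gamma_1 \in \C$. Choosing $\eps$ small enough that $2\beta - \eps > q$ when $q < 2\beta$ absorbs $\chi(ib)$ into the $O(b^q)$ remainder, while in the case $q = 2\beta$ the $\chi$ error remains as $O(b^{2\beta-\eps})$ and later converts to $b^{-\eps}$. Either way, $1 - \lambda(ib) = cc_\beta b^\beta(1 + u(b))$ with $u(b) = (cc_\beta)^{-1}\gamma_1\, ib^{1-\beta} + O(b^{q-\beta} + b^{\beta-\eps})$, which is $o(1)$ since $\beta > \tfrac12$.

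Expanding geometrically, $(1 + u(b))^{-1} = \sum_{k=0}^N (-u(b))^k + O(|u(b)|^{N+1})$ with $N$ chosen so that $(N+1)(1-\beta) \ge \min(q-\beta,\, \beta-\eps)$, the pure powers of the leading $ib^{1-\beta}$ term contribute $\tilde c_k b^{k(1-\beta)}$, while every mixed contribution carries a factor $O(b^{q-\beta} + b^{\beta-\eps})$ and is absorbed, as is the remainder $O(|u|^{N+1})$. Multiplying by $(cc_\beta)^{-1}b^{-\beta}$ converts $b^{k(1-\beta)}$ into $b^{-((k+1)\beta - k)}$ with coefficient $c_k$ (in particular $c_0 = c^{-1}c_\beta^{-1}$), and converts the error into $O(b^{-(2\beta - q)} + b^{-\eps})$. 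Any $j$ with $(j+1)\beta - j < 2\beta - q$ is absorbed, so the sum truncates as in the statement. Combining with Lemma~\ref{lem:T} then yields the operator-valued expansion.

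The main obstacle I anticipate is the refined Laplace-transform asymptotic for $1 - \int_Y e^{-ib\tau}\,d\mu$ to order $O(b^q)$ from only the tail information $\mu(\tau>t) = ct^{-\beta} + O(t^{-q})$: this goes one order beyond \cite[Lemma~2.4]{MT13} and requires careful handling of the truncation on $[0,1]$ and the cross-terms that produce the polynomial corrections $\gamma_k b^k$. Once that expansion is in hand, the geometric-series inversion and the bookkeeping that combines the error scales $b^{-(2\beta-q)}$ (from the tail refinement) and $b^{-\eps}$ (from both $E_0(ib)$ and the boundary case $q=2\beta$) are elementary.
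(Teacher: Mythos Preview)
Your proposal is correct and follows essentially the same route as the paper: combine Lemma~\ref{lem:T} with a scalar expansion of $(1-\lambda(ib))^{-1}$ obtained from \eqref{eq:ev-Gou}, Proposition~\ref{prop:chi}(a), and a refined tail computation giving $1-\int_Y e^{-ib\tau}\,d\mu = cc_\beta b^\beta + e_1 b + O(b^q)$, then invert geometrically. The only cosmetic difference is in the Laplace asymptotic: the paper writes $1-G(t)=ct^{-\beta}+H(t)$ globally and splits the $H$-integral as $ib\int_0^\infty H\,dt + ib\int_0^\infty(e^{ibt}-1)H\,dt$, whereas you split at $t=1$ and Taylor-expand on $[0,1]$; both yield the same three-term expansion and $O(b^q)$ remainder.
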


\begin{proof} 
Let $G(t)=\mu(\tau\le t)$, so $1-G(t)=ct^{-\beta}+H(t)$ where
$H(t)=O((t+1)^{-q})$.  Then proceeding as in~\cite[Section~5]{AaronsonDenker01} and~\cite[Lemma~3.2]{MT12},
\begin{align*}
\int_Y(e^{ib\tau}-1)\,d\mu & 
 = -\int_0^\infty(e^{ibt}-1)\,d(1-G(t))
 = ib\int_0^\infty e^{ibt}(1-G(t))\,dt
\\ & = icb\int_0^\infty e^{ibt}t^{-\beta}\,dt + 
ib\int_0^\infty H(t)\,dt
+ib\int_0^\infty (e^{ibt}-1)H(t)\,dt
\\ & = icb^\beta\int_0^\infty e^{i\sigma}\sigma^{-\beta}\,d\sigma + ib\int_0^\infty H(t)\,dt+O(b^q).
\end{align*}
Hence
\begin{align} \label{eq:expand}
\textstyle 1-\int_Ye^{-ib\tau}\,d\mu=cc_\beta b^\beta+e_1b+O(b^q),
\end{align}
where $c_\beta=i\int_0^\infty e^{-i\sigma}\sigma^{-\beta}\,d\sigma$ and
$e_1=i\int_0^\infty H(t)\,dt$.
By~\eqref{eq:ev-Gou} and Proposition~\ref{prop:chi}(a),
\begin{align*}
1-\lambda(ib)
& \textstyle
=1-\int_Y e^{-ib\tau}\,d\mu+ O(b^{2\beta-\eps})
\\ &
=cc_\beta b^\beta(1+e_2 b^{1-\beta}+O(b^{q-\beta})+O(b^{\beta-\eps})),
\end{align*}
where $e_2=c^{-1}c_\beta^{-1}e_1$.  Thus,
\[
 (1-\lambda(ib))^{-1}={\textstyle\sum}_j c_j b^{-((j+1)\beta-j)}+ O(b^{-(2\beta-q)})+O(b^{-\eps}),
\]
for constants $c_0,c_1,\ldots\in\C$ with $c_0=c^{-1}c_\beta^{-1}$.
Now apply Lemma~\ref{lem:T}.
\end{proof}

We can now complete the proof of Lemma~\ref{lem:small}.

\begin{pfof}{Lemma~\ref{lem:small}}
\noindent
(a)  By~\eqref{eq:rhohat}, Propositions~\ref{prop:v} and~\ref{prop:hatJ}, and Lemma~\ref{lem:T},
\begin{align*}
|\hat \rho(s)| & \le |\hat J(s)|
+ \|\hat T(s)\|_{\cB(Y)\to L^1(Y)}\|v_s\|_{\cB(Y)}
|w_s|_{L^\infty(Y)}
 \ll |s|^{-\beta} \|v\|_{\cB(\tY)}|w|_{L^\infty(\tY)}.
\end{align*}

\vspace{1ex}
\noindent
(b) 
By~\eqref{eq:rhohat}, Propositions~\ref{prop:v} and~\ref{prop:hatJ}, and Lemmas~\ref{lem:T} and~\ref{lem:Tcont}, 
\begin{align*}
& |\hat\rho  (i(b+h))  -\hat\rho(ib)| \le |\hat J(i(b+h))-\hat J(ib)|
\\ & \qquad\qquad
+ \|\hat T(i(b+h))-\hat T(ib)\|_{\cB(Y)\to L^1(Y)}\|v_{i(b+h)}\|_{\cB(Y)}|w_{i(b+h)}|_{L^\infty(Y)}
\\ & \qquad\qquad
+ \|\hat T(ib)\|_{\cB(Y)\to L^1(Y)}\|v_{i(b+h)}-v_{ib}\|_{\cB(Y)}|w_{i(b+h)}|_{L^\infty(Y)}
\\ & \qquad\qquad
+ \|\hat T(ib)\|_{\cB(Y)\to L^1(Y)}\|v_{ib}\|_{\cB(Y)}|w_{i(b+h)}-w_{ib}|_{L^\infty(Y)}
\\[.75ex] & 
\ll 
 \big\{h+b^{-2\beta}h^\beta+b^{-\beta}h^{\beta-\eps}+
 b^{-\beta}h\}\|v\|_{\cB(\tY)}|w|_{L^\infty(\tY)},
\end{align*}
yielding the required estimate.

\vspace{1ex}
\noindent
(c) By Propositions~\ref{prop:v} and~\ref{prop:hatJ}, and Lemma~\ref{lem:T},
$|\hat J(ib)|\ll |v|_{L^1(\tY)}|w|_{L^\infty(\tY)}$ and
\begin{align*}
 |\hat T(ib)v_{ib} & \,w_{ib} - \hat T(ib)v_0\,w_0|_{L^1(Y)}  
\le 
|\hat T(ib)(v_{ib}-v_0)\,w_{ib}|_{L^1(Y)} 
+ |\hat T(ib)v_0\,(w_{ib}-w_0)|_{L^1(Y)} 
\\ & \le
 \|\hat T(ib)\|_{\cB(Y)\to L^1(Y)}\{\|v_{ib}-v_0\|_{\cB(Y)}|w_{ib}|_{L^\infty(Y)}
+\|v_0\|_{\cB(Y)}|w_{ib}-w_0|_{L^\infty(Y)}\}
\\ & \ll b^{1-\beta}\|v\|_{\cB(\tY)}|w|_{L^\infty(\tY)}.
\end{align*}
Substituting into~\eqref{eq:rhohat}, we obtain 
\begin{align*} 
\textstyle \hat\rho(ib)  =\int_Y \hat T(ib)v_0\,w_0\,d\mu
+O(\|v\|_{\cB(\tY)}|w|_{L^\infty(\tY)}).
\end{align*}

Let $r=\max\{2\beta-q,\eps\}\in[0,1)$.
By Lemma~\ref{lem:T-higherorder},
\begin{align*}
\int_Y \hat T(ib)v_0\,w_0\,d\mu
& = \sum_jc_j b^{-((j+1)\beta-j)}\int_Y P(0)v_0\,w_0\,d\mu+\int_Y E(ib) v_0\,w_0\,d\mu,
\end{align*}
where 
$\int_Y |E(ib)v_0 \,w_0|\,d\mu\ll b^{-r}\|v\|_{\cB(\tY)} |w|_{L^\infty(\tY)}$.
Also, $\int_Y P(0)v_0\,w_0\,d\mu=\int_{Y^\tau}v\,d\mu^\tau \int_{Y^\tau}w\,d\mu^\tau$.
Hence
\[
\hat\rho_{v,w}(ib)= \sum_jc_j b^{-((j+1)\beta-j)}\int_{\tY} v\,d\tilde\mu\int_{\tY}w\,d\tilde\mu+ O(b^{-r}\|v\|_{\cB(\tY)}|w|_{L^\infty(\tY)}),
\]
and the result follows.
\end{pfof}

\section{Estimates for large $b$: proof of Lemma~\ref{lem:large}}
\label{sec:large}

Throughout this section, we suppose that assumptions (H2)--(H4) hold.

\subsection{Lifting from $Y^\tau$ to $Z^\varphi$}

Recall that $F_t:Y^\tau\to Y^\tau$ is the suspension semiflow over $F$ with roof function $\tau$ and
$G_t:Z^\varphi\to Z^\varphi$ is the suspension semiflow over 
$G=F^\sigma$ with roof function $\varphi=\tau_\sigma$.  
Here, $G:Z\to Z$ is a Gibbs-Markov map with partition $\alpha$.
Also $\pi:Z^\varphi\to Y^\tau$ is the semiconjugacy $\pi(z,u)=F_u(z,0)$.

We defined ergodic $F_t$- and $G_t$-invariant measures
$\mu^\tau=\mu\times{\rm Lebesgue}$ on $Y^\tau$
and $\mu_Z^\varphi=(\mu_Z\times{\rm Lebesgue})/\int_Z\sigma\,d\mu_Z$ on $Z^\varphi$.  As promised, we now verify that
$\pi_*\mu_Z^\varphi=\mu^\tau$.

\begin{pfof}{Proposition~\ref{prop:mu}}
The measures $\mu_Z$ and $\mu$ are related by the formula
\[
\int_Y h\,d\mu = \bar\sigma^{-1} \int_Z\sum_{\ell=0}^{\sigma(z)-1}h(F^\ell z)\,d\mu_Z(z)\quad\text{for all $h\in L^1(Y)$},
\]
where $\bar\sigma=\int_Z\sigma\,d\mu_Z$.
For $g\in L^1(Y^\tau)$,
\begin{align*}
& \bar\sigma  \int_{Y^\tau}g\,d\mu^\tau  =
\bar\sigma\int_Y\int_0^{\tau(y)}g(y,u)\,du\,d\mu
  = \int_Z\sum_{\ell=0}^{\sigma(z)-1}\int_0^{\tau(F^\ell z)}\!\!g(F^\ell z,u)\,du\,d\mu_Z(z)
\\ & = \int_Z\sum_{\ell=0}^{\sigma(z)-1}\int_{\tau_\ell(z)}^{\tau_{\ell+1}(z)}\!\!g(F^\ell z,u-\tau_\ell(z))\,du\,d\mu_Z(z)
\\ & 
= \int_Z\sum_{\ell=0}^{\sigma(z)-1}\int_{\tau_\ell(z)}^{\tau_{\ell+1}(z)}\!\!g(F_u(z,0))\,du\,d\mu_Z(z)
= \int_Z\int_0^{\varphi(z)}g(F_u(z,0))\,du\,d\mu_Z(z)
\\ & 
= \int_Z\int_0^{\varphi(z)}g\circ\pi(z,u)\,du\,d\mu_Z(z)
=\bar\sigma \int_{Z^\varphi}g\circ\pi\,d\mu_Z^\varphi
=\bar\sigma \int_{Y^\varphi}g\,d\pi_*\mu_Z^\varphi.
\end{align*}
Hence $\pi_*\mu_Z^\varphi=\mu^\tau$ as required.
\end{pfof}

Define 
\[
\hY=\pi^{-1}(\tY)=\bigcup_{z\in Z}\bigcup_{\ell=0}^{\sigma(z)-1}\{z\}\times[\tau_\ell(z),\tau_\ell(z)+1].
\]
We note that
\begin{align} \label{eq:hatY}
\textstyle \int_0^{\varphi(z)}1_{\hY}(z,u)\,du=\sigma(a) \quad\text{
for all $a\in\alpha$, $z\in a$}.
\end{align}

Observables $v:Y^\tau\to\R$ supported in $\tY$ lift to
observables $\hat v=v\circ\pi:Z^\varphi\to\R$ supported in $\hY$.
Define $\hat v(b):Z\to\C$, $b\in\R$,
\[
\textstyle \hat v(b)(z)=\int_0^{\varphi(z)}e^{ibu}\hat v(z,u)\,du
=\int_0^{\varphi(z)}e^{ibu}(1_{\hY}\hat v)(z,u)\,du.
\]
In this paper, we require estimates for $\hat v$ and $\hat v'$, but it is no extra work to estimate $\hat v^{(k)}$ for all $k\ge0$; this will be used elsewhere.

\begin{prop} \label{prop:vb}
Let $k\ge0$, $\eps>0$. There exists $C>0$ such that
\begin{align*}
& |\hat v^{(k)}(b)(z)|  \le C\sigma(a)(\infa\varphi^k)|v|_\infty,\\
& |\hat v^{(k)}(b)(z)-\hat v^{(k)}(b)(z')|  \le C (b^\eps+1)\sigma(a)(\infa\varphi^{k+\eps})\|v\|_{C^\eta} d_Z(Gz,Gz')^{\eps\eta},
\end{align*}
for all $v\in C^\eta(\tY)$,
$a\in\alpha$, $z,z'\in\alpha$, $b>0$.
\end{prop}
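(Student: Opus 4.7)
The plan is to differentiate under the integral and reduce everything to estimates on the suspension structure via the identification $(1_{\hY}\hat v)(z,u)=v(F^\ell z, u-\tau_\ell(z))$ for $u\in[\tau_\ell(z),\tau_\ell(z)+1]$, $0\le\ell<\sigma(z)$. Thus, after the change of variable $u\mapsto u+\tau_\ell(z)$,
\[
\hat v^{(k)}(b)(z)=\sum_{\ell=0}^{\sigma(a)-1}\int_0^1 (i(u+\tau_\ell(z)))^k e^{ib(u+\tau_\ell(z))}\,v(F^\ell z,u)\,du,
\]
for $z\in a\in\alpha$, and the analogous formula holds at $z'\in a$. All subsequent estimates will work on this sum.

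First I would dispose of the sup-norm bound. Since $u+\tau_\ell(z)\le\varphi(z)\le\supa\varphi$ on the range of integration, each summand is bounded by $(\supa\varphi)^k|v|_\infty$, and there are $\sigma(a)$ summands. Setting $\ell=\sigma(a)$ in (H3) and using that $\diam Z$ is bounded yields $\supa\varphi\le(1+C(\diam Z)^\eta)\infa\varphi$, so $\supa\varphi\ll\infa\varphi$. This gives the first bound.

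For the Hölder estimate I would, in each summand, write the integrand difference as a telescoping sum of three pieces: (i) the polynomial factor $(u+\tau_\ell(z))^k-(u+\tau_\ell(z'))^k$, (ii) the oscillatory factor $e^{ib\tau_\ell(z)}-e^{ib\tau_\ell(z')}$, (iii) the observable difference $v(F^\ell z,u)-v(F^\ell z',u)$. By (H3), $|\tau_\ell(z)-\tau_\ell(z')|\le C(\infa\varphi)d_Z(Gz,Gz')^\eta$; combined with the mean value theorem and $\supa\varphi\ll\infa\varphi$ this bounds (i) by $C(\infa\varphi^k)d_Z(Gz,Gz')^\eta|v|_\infty$. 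By (H2)(iv), (iii) is bounded by $\|v\|_{C^\eta}d_Z(Gz,Gz')^\eta$ times $(\supa\varphi^k)\ll(\infa\varphi^k)$. The key step is (ii): using the elementary inequality $\min(2,x)\le 2x^\eps$ for $x\ge0$, $\eps\in(0,1]$, combined with (H3) gives
\[
|e^{ib\tau_\ell(z)}-e^{ib\tau_\ell(z')}|\le 2\bigl(b\,C(\infa\varphi)d_Z(Gz,Gz')^\eta\bigr)^\eps\ll b^\eps(\infa\varphi^\eps)d_Z(Gz,Gz')^{\eps\eta}.
\]
Multiplying by the polynomial factor $(\supa\varphi^k)\ll(\infa\varphi^k)$ and $|v|_\infty$ gives the $b^\eps$ contribution. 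Summing over $0\le\ell<\sigma(a)$ yields the prefactor $\sigma(a)$, and using that $d_Z(Gz,Gz')$ is bounded together with $\essinf\tau>1$ (so $\infa\varphi^k\le\infa\varphi^{k+\eps}$) lets me replace $d_Z^\eta$ by $d_Z^{\eps\eta}$ and collect everything into the claimed bound $C(b^\eps+1)\sigma(a)(\infa\varphi^{k+\eps})\|v\|_{C^\eta}d_Z(Gz,Gz')^{\eps\eta}$.

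The main obstacle is item (ii): one cannot use a Lipschitz bound directly on $e^{ib\cdot}$ without losing a factor of $b$, which would destroy the estimate at large Fourier modes; the interpolation $\min(2,x)\le 2x^\eps$ trades the full $b$ for $b^\eps$ at the cost of weakening $d_Z^\eta$ to $d_Z^{\eps\eta}$, which is exactly the form in which the estimate is stated and, crucially, is what will make the large-$b$ (Dolgopyat) estimates of Section~\ref{sec:large} go through.
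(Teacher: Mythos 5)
Your proposal is correct and follows essentially the same route as the paper: the same splitting of each summand $I_\ell$ into the polynomial, oscillatory, and observable differences, with (H3) controlling the first two (via the interpolation $\min(2,x)\le 2x^\eps$ for the exponential factor, which is exactly where the paper gets its $b^\eps$) and (H2)(iv) controlling the third. The only cosmetic difference is that the paper derives the sup-norm bound directly from the identity $\int_0^{\varphi(z)}1_{\hY}(z,u)\,du=\sigma(a)$ rather than by counting summands, which amounts to the same estimate.
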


\begin{proof}
By~\eqref{eq:hatY},
$|\hat v^{(k)}(b)(z)|\le |v|_\infty\int_0^{\varphi(z)}u^k1_{\hY}(z,u)\,du\le 
\sigma(a)(\supa\varphi^k)|v|_\infty\ll
\sigma(a)(\infa\varphi^k)|v|_\infty$.
Next,
\begin{align*}
\hat v^{(k)}(b)(z) &
 =
\sum_{\ell=0}^{\sigma(a)-1} \int_{\tau_\ell(z)}^{\tau_\ell(z)+1}(iu)^ke^{ibu}\hat v(z,u)\,du
  \\ & =
i^k\sum_{\ell=0}^{\sigma(a)-1} \int_0^1 (u+\tau_\ell(z))^ke^{ibu}e^{ib\tau_\ell(z)}\hat v(z,u+\tau_\ell(z))\,du
  =
i^k\sum_{\ell=0}^{\sigma(a)-1} I_\ell(z),
\end{align*}
where $I_\ell(z)= \int_0^1 (u+\tau_\ell(z))^ke^{ibu}e^{ib\tau_\ell(z)}v(F^\ell z,u)\,du$.
By (H2)(iv) and (H3),
\begin{align*}
|I_\ell(z)  -I_\ell(z')|  & \le k|\tau_\ell(z)-\tau_\ell(z')|(\supa\varphi)^{k-1}|v|_\infty
\\ & \!\!\!\!\!\!\!\!\!\! + 2b^\eps|\tau_\ell(z)-\tau_\ell(z')|^\eps(\supa\varphi+1)^k|v|_\infty
+(\supa\varphi+1)^k|v|_\eta d_Y(F^\ell z,F^\ell z')^\eta
\\[.75ex] & \ll 
(b^\eps+1)(\infa\varphi^{k+\eps})\|v\|_{C^\eta(\tY)} d_Z(Gz,Gz')^{\eps\eta},
\end{align*}
for $0\le \ell\le\sigma(a)-1$.
The estimate for $|\hat v^{(k)}(b)(z)-\hat v^{(k)}(b)(z')|$ follows.
\end{proof}

For $v:\tZ\to\C$, $\eta\in(0,1)$, define
\[
\|v\|_{C^\eta(\tZ)}=|v|_\infty+|v|_{\eta}, \quad
|v|_{\eta}=\sup_{z,z'\in Z,\, z\neq z'}\sup_{u\in[0,1]}|v(z,u)-v(z',u)|/d_Z(z,z')^\eta.
\]
Define $C^\eta(\tZ)$ to be the space of 
observables $v:\tZ\to\C$ such that $\|v\|_{C^\eta(\tZ)}<\infty$.

\subsection{Representation of $\hat\rho_{v,w}(ib)$ for large $b$}
\label{sec:rhoG}

In this subsection we obtain an expression for $\hat\rho_{v,w}(ib)$ using transfer operators related to the induced suspension semiflow $G_t:Z^\varphi\to Z^\varphi$ over $G=F^\sigma$ with roof function $\varphi=\tau_\sigma$.

Let $L_t^G:L^1(Z^\varphi)\to L^1(Z^\varphi)$ denote the family of transfer operators corresponding to the suspension semiflow $G_t:Z^\varphi\to Z^\varphi$.
Define $\hat L^G(s)= \int_0^\infty L_t^G e^{-st}\,dt$ and note that for $v\in L^1(\tY)$
and $w\in L^\infty(\tY)$,
\begin{align}
\label{eq:hatrholarge}
\hat\rho_{v,w}(s)=\int_{Z^\varphi}1_{\hY}\hat L^G(s)(1_{\hY}\hat v)\,\hat w\,d\mu_Z^\varphi \quad\text{where $\hat v=v\circ\pi$, $\hat w=w\circ\pi$.}
\end{align}

Let  $\tZ=Z\times[0,1]$.
Define 
$T_t^G=1_{\tZ}L^G_t1_{\tZ}$
and let $\hat T^G(s)$ be the corresponding  Laplace transform.
Condition (H4) is required for the next result.

\begin{prop} \label{prop:TG}
Let $\eps\in(0,1)$, $\delta>0$.  Then there exists $C>0$, $\omega>0$ such that
\[
\| \hat T^G(ib)\|_{C^{\eps\eta}(\tZ)\to L^\infty(\tZ)}
\le Cb^\omega, \quad
\|\hat T^G(i(b+h))-\hat T^G(ib)\|_{C^{\eps\eta}(\tZ)\to L^\infty(\tZ)}\le C b^\omega h^{\beta-\eps},
\]
for all $0<h<\delta<b$.
\end{prop}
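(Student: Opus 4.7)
The plan is to reduce the problem to operator-theoretic estimates for the underlying Gibbs--Markov map $G$ and then apply the Dolgopyat-type machinery enabled by (H4). Introduce the twisted transfer operator $\hat R^G(s)h = R^G(e^{-s\varphi}h)$ for $G$ with roof $\varphi$, so that $(\hat R^G(s))^n h = (R^G)^n(e^{-s\varphi_n}h)$ with $\varphi_n=\sum_{j=0}^{n-1}\varphi\circ G^j$. Mimicking the derivation of Pollicott's formula~\eqref{eq:rhohat} but now for the suspension $G_t$ over $G$, one obtains the renewal representation
$$(\hat T^G(s) v)(z, u) = 1_{\tZ}(z,u)\, e^{-su}\bigl((I - \hat R^G(s))^{-1} V_s\bigr)(z), \qquad V_s(z) = \int_0^1 e^{su'} v(z, u')\, du',$$
valid for $v$ supported in $\tZ$. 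Since integration in $u'$ does not affect the H\"older estimate in $z$, we have $\|V_{ib}\|_{C^{\eps\eta}(Z)}\le \|v\|_{C^{\eps\eta}(\tZ)}$ uniformly in $b$, and the two claims reduce to corresponding estimates on $(I-\hat R^G(ib))^{-1}$ as an operator $C^{\eps\eta}(Z)\to L^\infty(Z)$.

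Next I would invoke (H4). By \cite[Proposition~5.2]{rapid}, the three-periodic-point Diophantine condition verifies the approximate eigenfunction hypothesis for the suspension semiflow $G_t$, and the Dolgopyat/Melbourne arguments of~\cite{Dolgopyat98b,M07,rapid} then yield constants $C,\omega>0$ and $\gamma_0\in(0,1)$ such that
$$\|(\hat R^G(ib))^n\|_{C^{\eps\eta}(Z)\to C^{\eps\eta}(Z)}\le C|b|^\omega\gamma_0^n\quad\text{for all $|b|>\delta$, $n\ge 1$.}$$
Summing the Neumann series gives $\|(I-\hat R^G(ib))^{-1}\|_{C^{\eps\eta}(Z)\to C^{\eps\eta}(Z)}\le C'|b|^\omega$, and composing with the bounded embedding $C^{\eps\eta}(Z)\hookrightarrow L^\infty(Z)$ delivers the first bound of the proposition (with a possibly larger $\omega$).

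For the H\"older-in-$b$ modulus of continuity I would first establish an analogue of Lemma~\ref{lem:continf} for $\hat R^G$: the pointwise bound $|e^{-i(b+h)\varphi}-e^{-ib\varphi}|\le 2h^{\beta-\eps}\varphi^{\beta-\eps}$, combined with $\varphi\in L^{\beta-\eps}(Z,\mu_Z)$ (consequent on $\mu(\sigma>n)=O(e^{-dn})$ and $\tau\in L^p$ for all $p<\beta$) and standard Gibbs--Markov distortion from (H2)(iii), yields
$$\|\hat R^G(i(b+h))-\hat R^G(ib)\|_{C^{\eps\eta}(Z)\to L^\infty(Z)}\ll h^{\beta-\eps}.$$
Then telescoping
$$(\hat R^G(i(b+h)))^n-(\hat R^G(ib))^n=\sum_{k=0}^{n-1}(\hat R^G(i(b+h)))^k\bigl(\hat R^G(i(b+h))-\hat R^G(ib)\bigr)(\hat R^G(ib))^{n-1-k}$$
and applying the polynomial bound on each of the outer iterates (at both $b$ and $b+h<2b$) produces a summand of size $C|b|^{2\omega}\gamma_0^{n-1}h^{\beta-\eps}$; summation over $n$ absorbs the geometric factor and yields the second bound with $\omega$ replaced by $2\omega$.

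The main obstacle is the polynomial-in-$b$ bound on $(\hat R^G(ib))^n$; the renewal representation is an essentially mechanical computation and the telescoping is standard once the Dolgopyat bound is in hand. The substantive input is the Dolgopyat--Melbourne framework, which is set up precisely for the Gibbs--Markov-suspension situation to which (H2)--(H4) reduce us, and this is the unique place where (H4) is used.
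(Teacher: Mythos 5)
The paper's own proof is essentially a citation: since $G_t:Z^\varphi\to Z^\varphi$ is a first-return suspension over a full-branch Gibbs--Markov map satisfying (H4), the $L^1$ version of both estimates is \cite[Corollary~5.10]{MT17}, and the upgrade to $L^\infty$ is a standard computation using~\eqref{eq:GM}. Your proposal reconstructs the content of that citation, and the architecture (reduce to $(I-\hat R^G(ib))^{-1}$ on $C^{\eps\eta}(Z)$ via a Pollicott-type renewal identity, then invoke the Dolgopyat bound supplied by (H4)) is the right one. Two small points: your representation of $\hat T^G(s)$ omits the lap-number-zero contribution (the analogue of $\hat J(s)$ and the $\int_Y v_s w_s\,d\mu$ term in Proposition~\ref{prop:rhohat}); these extra terms are bounded by $|v|_\infty$ and Lipschitz in $b$, hence harmless, but they should be recorded. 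Also, the intermediate claim $\|(\hat R^G(ib))^n\|_{C^{\eps\eta}\to C^{\eps\eta}}\le Cb^\omega\gamma_0^n$ for \emph{all} $n$ with a $b$-independent $\gamma_0$ is stronger than what \cite{Dolgopyat98b,M07,rapid} provide; what (H4) actually delivers is uniform boundedness of the powers in the $b$-weighted norms together with the resolvent bound $\|(I-\hat R^G(ib))^{-1}\|\ll b^\omega$, which suffices for the first estimate.

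The genuine gap is in the telescoping argument for the modulus of continuity. You control $D=\hat R^G(i(b+h))-\hat R^G(ib)$ only as an operator $C^{\eps\eta}(Z)\to L^\infty(Z)$. In the summand $(\hat R^G(i(b+h)))^k\,D\,(\hat R^G(ib))^{n-1-k}$ the left factor therefore receives an input that is merely in $L^\infty$, so no Dolgopyat-type bound applies to it; all that is available is $\|(\hat R^G(i(b+h)))^k\|_{L^\infty\to L^\infty}\le C$ uniformly in $k$ (the spectral radius on $L^\infty$ is $1$, so the geometric factor $\gamma_0^k$ you implicitly use for this factor is not there). With only the uniform bound, each fixed $n$ contributes $\sum_{k=0}^{n-1}C\,h^{\beta-\eps}\,Cb^\omega\gamma_0^{n-1-k}\asymp b^\omega h^{\beta-\eps}$, independent of $n$, and the sum over $n$ diverges. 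The standard repair, which is exactly the ``additional calculation'' the paper alludes to, is to estimate the H\"older seminorm of $Dv$ as well as its sup norm (this brings in factors of $b$ and of $\varphi$, controlled via (H2)(iii), (H3) and Proposition~\ref{prop:rhophi}, as in Proposition~\ref{prop:DR} or \cite[Lemma~5.8]{MT17}), so that $D$ is bounded $C^{\eps\eta}\to C^{\eps\eta}$ in the $b$-weighted norms with norm $O(b^{\eps}h^{\beta-2\eps})$; then the resolvent identity $(I-\hat R_1)^{-1}-(I-\hat R_2)^{-1}=(I-\hat R_1)^{-1}(\hat R_1-\hat R_2)(I-\hat R_2)^{-1}$ yields the second bound (with a larger $\omega$) without requiring geometric decay of individual powers.
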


\begin{proof}  Since $G$ is Gibbs-Markov, the result with $L^\infty(\tZ)$ replaced by $L^1(\tZ)$ is covered by~\cite[Corollary~5.10]{MT17}.
Using~\eqref{eq:GM}, an additional but standard calculation 
(for the term $\hat U$ in~\cite[Lemma~5.8(b)]{MT17}) 
specialised to the Gibbs-Markov setting
yields the $L^\infty$ estimates.~
\end{proof}

In a continuous-time analogue of~\cite{GouezelPhD}, we define 
the operators 
\[
A_t:L^1(\tZ)\to L^1(Z^\varphi),\quad
B_t:L^1(Z^\varphi)\to L^1(\tZ), \quad
E_t:L^1(Z^\varphi)\to L^1(Z^\varphi),
\]
where 
\[
A_t=1_{Z_t}L_t^G1_{\tZ}, \quad B_t=1_{\tZ}L_t^G1_{\Delta_t},
\quad 
E_t=L_t^G((1-\xi(t))v).
\]
Here 
\begin{align*}
Z_t & =\{(z,u)\in Z^\varphi:t\le u\le t+1,\,u>1\}, \\
\Delta_t & =\{(z,u)\in Z^\varphi:\varphi(z)-t\le u \le \varphi(z)-t+1\}, 
\end{align*} 
correspond to certain rows and diagonals in the suspension $Z^\varphi$ and
\[
\xi(t)=\int_0^t\int_0^x 1_{\Delta_y}\,1_{Z_{t-x}}\circ G_t \,dy\,dx\in[0,1].
\]

Given Banach spaces $\cB_1$, $\cB_2$, $\cB_3$ and families of operators
$\alpha_t:\cB_2\to\cB_3$, $\beta_t:\cB_1\to\cB_2$, $t\ge0$,
we define the convolution $(\alpha\star\beta)_t:\cB_1\to\cB_3$, $t\ge0$, by
$(\alpha\star\beta)_t=\int_0^t \alpha_{t-x}\beta_x\,dx$.

\begin{prop} \label{prop:ABEt}
$L_t^G=(A\star T^G\star B)_t+E_t$, for all $t\ge0$.
\end{prop}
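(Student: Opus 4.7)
The plan is to establish the operator identity by pairing both sides against an arbitrary test function $w\in L^\infty(Z^\varphi)$ and reducing to a common integrand. This is the continuous-time analogue of the discrete-time decomposition in~\cite{GouezelPhD}. The only tools needed are transfer-operator duality $\int(L_s^G f)\,g\,d\mu_Z^\varphi=\int f\,(g\circ G_s)\,d\mu_Z^\varphi$, the semigroup relation $G_s\circ G_r=G_{s+r}$, and two pointwise indicator identities that reflect the geometry of the suspension.

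From the definitions $A_t=1_{Z_t}L_t^G 1_{\tZ}$, $T_t^G=1_{\tZ}L_t^G 1_{\tZ}$, $B_y=1_{\tZ}L_y^G 1_{\Delta_y}$, I unfold, for $0\le y\le x\le t$,
\[
A_{t-x}T^G_{x-y}B_y(v)=1_{Z_{t-x}}L^G_{t-x}\bigl(1_{\tZ}L^G_{x-y}\bigl(1_{\tZ}L^G_y(1_{\Delta_y}v)\bigr)\bigr).
\]
Pairing with $w$ and applying duality three times, so that the composition $G_{t-x}\circ G_{x-y}\circ G_y=G_t$ collapses, this rewrites as
\[
\int w\cdot A_{t-x}T^G_{x-y}B_y(v)\,d\mu_Z^\varphi=\int v\cdot 1_{\Delta_y}\cdot (1_{\tZ}\circ G_y)\cdot (1_{\tZ}\circ G_x)\cdot (1_{Z_{t-x}}\circ G_t)\cdot (w\circ G_t)\,d\mu_Z^\varphi.
\]

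The crucial observation is the pair of pointwise identities
\[
1_{\Delta_y}\cdot (1_{\tZ}\circ G_y)=1_{\Delta_y},\qquad (1_{Z_{t-x}}\circ G_t)\cdot (1_{\tZ}\circ G_x)=1_{Z_{t-x}}\circ G_t.
\]
For the first, a point $p=(z,u)\in\Delta_y$ satisfies $u+y\in[\varphi(z),\varphi(z)+1]$, so $G_y p=(Gz,u+y-\varphi(z))$ has height in $[0,1]$; since $\essinf\varphi>1$, no further roof is crossed and $G_y p\in\tZ$. For the second, if $G_t p=(z',u')\in Z_{t-x}$ then $u'\in[t-x,t-x+1]$, so $G_x p=(z',u'-(t-x))$ has height in $[0,1]\subset[0,\varphi(z')]$ and therefore lies in $\tZ$.

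Using these identities, the two middle indicators drop out. Integrating over the triangle $\{0\le y\le x\le t\}$ produces exactly the factor $\xi(t)(p)$, giving
\[
\int w\cdot(A\star T^G\star B)_t(v)\,d\mu_Z^\varphi=\int v\,\xi(t)\,(w\circ G_t)\,d\mu_Z^\varphi=\int L_t^G(\xi(t)v)\,w\,d\mu_Z^\varphi.
\]
Since $w$ is arbitrary, $(A\star T^G\star B)_t(v)=L_t^G(\xi(t)v)$; adding $E_t(v)=L_t^G((1-\xi(t))v)$ yields the claim. The proof is essentially formal, and the substantive content is confined to the two indicator identities above, which rest on the standing hypothesis $\essinf\tau>1$ and the specific shapes of $\tZ$, $\Delta_y$, and $Z_{t-x}$ inside $Z^\varphi$.
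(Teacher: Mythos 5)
Your proof is correct and follows essentially the same route as the paper's: unfold the definitions, pair against a test function, use duality and the semigroup property to collapse $L^G_{t-x}L^G_{x-y}L^G_y$ into $L^G_t$, and integrate over the triangle to recover $\xi(t)$. The only difference is that you make explicit, via the two pointwise indicator identities, the step where the paper silently drops the two factors of $1_{\tZ}$, which is a welcome clarification rather than a divergence.
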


\begin{proof}
Let $v\in L^1(Z^\varphi)$, $w\in L^\infty(Z^\varphi)$.
For all $0\le y\le x\le t$,
\begin{align*}
& \int_{Z^\varphi}A_{t-x}T_{x-y}^GB_yv\,w\,d\mu_Z^\varphi
 =
 \int_{Z^\varphi}1_{Z_{t-x}}L^G_{t-x}(1_{\tZ}L_{x-y}^G(1_{\tZ}L^G_y(1_{\Delta_y}v)))\,w\,d\mu_Z^\varphi
\\ & \qquad  =
 \int_{Z^\varphi}1_{Z_{t-x}}L_{t-x}^G(L_{x-y}^G(L^G_y(1_{\Delta_y}v)))\,w\,d\mu_Z^\varphi
 =
 \int_{Z^\varphi}1_{Z_{t-x}}L_t^G(1_{\Delta_y}v)\,w\,d\mu_Z^\varphi
\\ & \qquad  =
 \int_{Z^\varphi}1_{\Delta_y}v\,1_{Z_{t-x}}\circ G_t\, w\circ G_t\,d\mu_Z^\varphi.
\end{align*}
Hence
\begin{align*}
& \int_{Z^\varphi}(A\star T^G\star B)_tv\,w\,d\mu_Z^\varphi
 =\int_{Z^\varphi}\int_0^t\int_0^x A_{t-x}T_{x-y}^GB_yv\,w\,dy\,dx\,d\mu_Z^\varphi
 \\ & =\int_{Z^\varphi}\xi(t)v\,w\circ G_t\,d\mu_Z^\varphi
=\int_{Z^\varphi}L_t^G(\xi(t))v\,w\,d\mu_Z^\varphi
=\int_{Z^\varphi}L_t^Gv\,w\,d\mu_Z^\varphi-
\int_{Z^\varphi}E_tv\,w\,d\mu_Z^\varphi
\end{align*}
as required.
\end{proof}

Let   $\hat A, \hat B, \hat E$ be the Laplace transforms of $A_t, B_t, E_t$. 
By Proposition~\ref{prop:ABEt}, 
\begin{align} \label{eq:ABEt}
\hat L^G(s)=\hat A(s)\hat T^G(s)\hat B(s)+\hat E(s).
\end{align}

Let $\hat B(ib)1_{\hY}\pi^*:L^1(\tY)\to L^1(\tZ)$ denote the operator 
$(\hat B(ib)1_{\hY}\pi^*)(v)=\hat B(ib)(1_{\hY}\hat v)$ where
$\hat v=\pi^*v=v\circ \pi:\hY\to\R$.
Similarly, define 
$1_{\hY}\hat E(ib)1_{\hY}\pi^*:L^1(\tY)\to L^1(\hY)$.
The following result is proved in Subsections~\ref{sec:CE} and~\ref{sec:B}.

\begin{lemma} \label{lem:ABEt}
Let $\eps\in(0,1)$, $\delta>0$.  Then there exists $C>0$ such that
\begin{itemize}
\item[(a)] $\int_{t_0}^\infty \|1_{\hY}A_t\|_{L^\infty(\tZ)\to L^1(\hY)}\,dt\le C {t_0}^{-(\beta-\eps)}$,
\item[(b)] $\int_{t_0}^\infty \|1_{\hY}E_t1_{\hY}\pi^*\|_{L^\infty(\tY)\to L^1(\hY)}\,dt\le C t_0^{-(\beta-\eps)}$,
\item[(c)] $\| \hat B(ib)1_{\hY}\pi^*\|_{C^\eta(\tY)\to C^{\eps\eta}(\tZ)}
\le Cb^\eps$, and 
\newline $\|\hat B(i(b+h))1_{\hY}\pi^*-\hat B(ib)1_{\hY}\pi^*\|_{C^\eta(\tY)\to C^{\eps\eta}(\tZ)}\le C b^\eps h^{\beta-\eps}$,
\end{itemize}
for all $t_0>0$, $0<h<\delta<b$.
\end{lemma}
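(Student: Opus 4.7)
The plan is to write each of $A_t$, $B_t$, $E_t$ as an explicit transfer-operator kernel on $Z^\varphi$ and read off the bounds from the polynomial tail $\mu(\tau>t)=O(t^{-\beta})$, the exponential tail of $\sigma$, and the Gibbs-Markov estimates (H2)--(H3). Since $\essinf\tau>1$ and hence $\varphi>1$, no iterate of $G_t$ crosses the roof of $Z^\varphi$ more than once during the time-$t$ transitions encoded by $A_t$ or $B_t$, so the integral kernels are elementary.

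The central step is (c). Unravelling $B_t=1_{\tZ}L_t^G 1_{\Delta_t}$ and changing variables $s=u'+\varphi(z'')-t$ in the Laplace integral yields the clean identity
\[
\hat B(ib)(1_{\hY}\hat v)(z',u') \;=\; e^{-ibu'}\,\hat R^G(ib)\,\tilde v(b)(z'),
\]
where $\hat R^G(s)w=R^G(e^{-s\varphi}w)$ is the twisted transfer operator of $G$ and $\tilde v(b)(z)=\int_0^{\varphi(z)}e^{ibu}(1_{\hY}\hat v)(z,u)\,du$ is exactly the quantity controlled in Proposition~\ref{prop:vb}. Expanding $\hat R^G(ib)$ via the Gibbs-Markov formula, the $L^\infty$ bound follows from $\sum_a\mu(a)\sigma(a)<\infty$ (exponential tail of $\sigma$). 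The H\"older seminorm in $z'\in\tZ$ receives three contributions to be controlled uniformly in $b$: (i)~the distortion bound~\eqref{eq:GM}; (ii)~the interpolation $|e^{-ib\varphi(z_1'')}-e^{-ib\varphi(z_2'')}|\le (b\,|\varphi(z_1'')-\varphi(z_2'')|)^\eps\cdot 2^{1-\eps}$ combined with (H3), giving $b^\eps(\infa\varphi)^\eps d_Z(z'_1,z'_2)^{\eps\eta}$; and (iii)~the H\"older clause of Proposition~\ref{prop:vb}. Summability over $a\in\alpha$ reduces to $\int_Z\sigma\,\varphi^\eps\,d\mu_Z<\infty$ for sufficiently small $\eps$. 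The H\"older-in-$b$ estimate follows by writing $\hat B(i(b+h))-\hat B(ib)$ as a telescoping sum in which either $e^{-ibu'}$, $e^{-ib\varphi}$, or $\tilde v(b)$ is incremented; in each case interpolating $\min\{2,\,h\cdot\text{amplitude}\}$ produces $h^{\beta-\eps}$ times an $L^1$-integrable amplitude ($\varphi^{\beta-\eps}$ or $\sigma\,\varphi^{\beta-\eps}$).

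For (a) and (b), positivity of $L_t^G$ together with duality gives
\[
\|1_{\hY}A_t\|_{L^\infty(\tZ)\to L^1(\hY)}\le\mu_Z^\varphi\bigl(\tZ\cap G_t^{-1}(\hY\cap Z_t)\bigr),
\]
and an analogous bound for $E_t$ by $\int(1-\xi(t))\,1_{\hY}\,1_{\hY}\!\circ\!G_t\,d\mu_Z^\varphi$. Integrating in $t$ and applying Fubini, both reduce to integrals of the form $\int_Z\sigma(z)\,1_{\varphi(z)>t_0}\,d\mu_Z$, where the factor $\sigma(z)$ counts the positions $\ell\in\{0,\dots,\sigma(z)-1\}$ of $\hY$ in the fibre. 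H\"older's inequality combined with the exponential tail of $\sigma$ and $\mu_Z(\varphi>t_0)\ll t_0^{-(\beta-\eps)}$ (inherited from $\mu(\tau>t)$) yields the required $t_0^{-(\beta-\eps)}$ decay. The main technical obstacle lies in part~(c): the two sources of $b^\eps$ (from $e^{-ib\varphi}$ and from Proposition~\ref{prop:vb}) must combine into a \emph{single} $b^\eps$ factor, which is precisely what forces the drop from $C^\eta(\tY)$ to $C^{\eps\eta}(\tZ)$ in the target norm, and the interpolation has to be performed uniformly over the countable partition $\alpha$ using (H3).
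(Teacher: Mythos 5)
Your treatment of parts (a) and (c) follows essentially the same route as the paper. In particular your identity $\hat B(ib)(1_{\hY}\hat v)(z,u)=e^{-ibu}\,\hat R^G(ib)\hat v(b)(z)$ is exactly the paper's factorisation $\hat B(s)=\tilde R\,\hat D(s)$ combined with $(\hat D(ib)\hat v)(z,u)=e^{-ib(\varphi(z)+u)}\hat v(b)(z)$, and the subsequent bookkeeping (distortion from~\eqref{eq:GM}, the $b^\eps$-interpolation of $e^{-ib\varphi}$ against (H3) which forces the $C^{\eps\eta}$ target, the $h^{\beta-\eps}$-interpolation $\min\{1,(\infa\varphi)h\}\le(\infa\varphi^{\beta-\eps})h^{\beta-\eps}$, and summability via $\int_Z\sigma\,\varphi^{\beta-\eps}\,d\mu_Z<\infty$) is the paper's Propositions~\ref{prop:vb},~\ref{prop:fa} and~\ref{prop:rhophi}. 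The only thing you assert rather than prove is the integrability $\int_Z\sigma\,\varphi^{\beta-\eps}\,d\mu_Z<\infty$, which needs a short argument (Proposition~\ref{prop:rhophi}) since $\varphi\notin L^1$; this is minor.

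Part (b), however, has a genuine gap. Your bound $\int_{Z^\varphi}(1-\xi(t))\,1_{\hY}\,1_{\hY}\circ G_t\,d\mu_Z^\varphi$ is correct, but the claimed Fubini reduction to $\int_Z\sigma\,1_{\{\varphi>t_0\}}\,d\mu_Z$ does not follow from it: for fixed $(z,u)$ the quantity $\int_{t_0}^\infty 1_{\hY}(G_t(z,u))\,dt$ equals roughly $\sum_{k\ge0}\sigma(G^kz)$, summed over \emph{all} future laps, which is infinite almost everywhere. The integral is rescued only by the factor $1-\xi(t)$, and the essential observation — which you never make — is that $\xi(t)\equiv1$ once the orbit has completed at least two laps of the induced suspension (in the paper's notation, $E_{t,k}=0$ for lap number $k\ge2$; this follows from unravelling the constraints defining $\Delta_y$ and $Z_{t-x}$ and noting that the full unit square of $(x,y)$-values is then admissible). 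Only the lap numbers $k=0,1$ contribute, and for those the $t$-integral reduces to $\int_Z 1_{\{\varphi_{k+1}>t_0\}}\,\sigma\circ G^k\cdot\sigma\,d\mu_Z$, which is then handled by H\"older exactly as in part (a). Without this vanishing, the argument as you have written it does not close.
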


\begin{pfof}{Lemma~\ref{lem:large}}
It follows from Lemma~\ref{lem:ABEt} that in the appropriate norms,
\begin{alignat*}{2}
& \|1_{\hY}\hat A(ib)\|\le C, \quad
& &\|1_{\hY}\hat A(i(b+h))-1_{\hY}\hat A(ib)\|\le C h^{\beta-\eps}, \\
& \|1_{\hY}\hat E(ib)1_{\hY}\pi^*\|\le C, \quad
&& \|1_{\hY}\hat E(i(b+h))1_{\hY}\pi^*-1_{\hY}\hat E(ib)1_{\hY}\pi^*\|\le C h^{\beta-\eps}, \\
& \|\hat B(ib)1_{\hY}\pi^*\|\le Cb^\eps, \quad
& &\|\hat B(i(b+h))1_{\hY}\pi^*-\hat B(ib)1_{\hY}\pi^*\|\le Cb^\eps h^{\beta-2\eps}.
\end{alignat*}
Combining these with the estimates for $\hat T^G$ in Proposition~\ref{prop:TG} and
substituting into equation~\eqref{eq:ABEt}, there exist (new) constants $C,\omega>0$ such that
\[
\|1_{\hY}\hat L^G(i(b+h))1_{\hY}\pi^*-1_{\hY}\hat L^G(ib))1_{\hY}\pi^*\|_{C^\eta(\tY)\to L^1(\hY)}\le Cb^\omega h^{\beta-2\eps}.
\]
By~\eqref{eq:hatrholarge},
\begin{align*}
|\hat\rho_{v,w}(i(b+h))-\hat\rho_{v,w}(ib)|
& \le 
|\hat w|_\infty ||1_{\hY}\hat L^G(i(b+h))1_{\hY}\pi^*v-1_{\hY}\hat L^G(ib))1_{\hY}\pi^*v|_{L^1(\hY)}
\\ & \le Cb^\omega h^{\beta-2\eps}\|v\|_{C^\eta(\tY)}|w|_\infty,
\end{align*}
for all $v\in C^\eta(\tY)$, $w\in L^\infty(\tY)$.
\end{pfof}

\subsection{Estimates for $1_{\hY}A_t$, $1_{\hY}E_t1_{\hY}\pi^*$: proof of Lemma~\ref{lem:ABEt}(a,b)}
\label{sec:CE}

\begin{pfof}{Lemma~\ref{lem:ABEt}(a)}
Let $t\ge0$.  Then
\begin{align*}
  |1_{\hY}A_tv|_1   & \le \int_{Z^\varphi}1_{\hY}1_{Z_t}L^G_t(1_{\tZ}|v|)\,d\mu_Z^\varphi
\le |v|_\infty\int_{\tZ}1_{\hY}\circ G_t\,1_{Z_t}\circ G_t\,d\mu_Z^\varphi.
\end{align*}

Let $(z,u)\in\tZ$.  If $G_t(z,u)\in Z_t$, then $u+t<\varphi(z)$
and $G_t(z,u)=(z,u+t)$.
Hence
\[
\|1_{\hY}A_t\|\le \int_{\tZ}
1_{\hY}(z,u+t)\,1_{\{u+t\in[0,\varphi(z)]\}}\,d\mu_Z^\varphi.
\]

Now,
\begin{align*}
\int_{t_0}^\infty 1_{\hY}(z,u+t)1_{\{u+t\in[0,\varphi(z)]\}}\,dt
\le 1_{\{\varphi(z)>t_0\}}\int_0^{\varphi(z)} 1_{\hY}(z,t)\,dt
 \le 
1_{\{\varphi(z)>t_0\}}\sigma(z),
\end{align*}
by~\eqref{eq:hatY}, and so
 $\int_{t_0}^\infty \|1_{\hY}A_t\|\,dt
\le  \int_Z1_{\{\varphi>t_0\}}\sigma\,d\mu_Z$.

Choose $\eps'>0$ and $p,q>1$ with $p^{-1}+q^{-1}=1$
such that $(\beta-\eps')/p=\beta-\eps$.   Since $\mu$ is $G$-invariant, by H\"older's inequality,
\begin{align*}
 \int_{t_0}^\infty \|1_{\hY}A_t\|\,dt
  & 
 \le \mu(\varphi>t_0)^{1/p}|\sigma|_q\ll t_0^{-(\beta-\eps)},
\end{align*}
completing the proof.
 \end{pfof}

\begin{pfof}{Lemma~\ref{lem:ABEt}(b)}
It follows from the definitions that
\begin{align*}
|1_{\hY}E_t(1_{\hY}\pi^*v)|_1 & \le \int_{Z^\varphi}1_{\hY}L^G_t(|(1-\xi(t))1_{\hY}\hat v|)\,d\mu_Z^\varphi
\le |v|_\infty\int_{Z^\varphi}1_{\hY}\circ G_t\,|1-\xi(t)|1_{\hY}\,d\mu_Z^\varphi.
\end{align*}

It is convenient to break this expression into terms with the same lap number.
Define $\varphi_k=\sum_{j=0}^{k-1}\varphi\circ G^j$.
Given $(z,u)\in Z^\varphi$ and $t>0$, we define the lap number
$N_t=N_t(z,u)$ to be the integer $N_t=k$ such that
\[
u+t\in[\varphi_k(z),\varphi_{k+1}(z)).
\]
Define $E_{t,k}v=E_t(1_{\{N_t=k\}}v)$.
Then $\|1_{\hY}E_t1_{\hY}\pi^*\|=\sum_{k=0}^\infty 
\|1_{\hY}E_{t,k}1_{\hY}\pi^*\|$
where 
\[
\|1_{\hY}E_{t,k}1_{\hY}\pi^*\|\le 
\int_{Z^\varphi}1_{\{N_t=k\}}1_{\hY}\circ G_t(1-\xi(t))1_{\hY}\,d\mu_Z^\varphi.
\]

The desired result is immediate from the following two claims.
\begin{itemize}
\item[(i)]
For each $k\ge0$, there is a constant $C_k>0$ such that
$\int_{t_0}^\infty \|1_{\hY}E_{t,k}1_{\hY}\pi^*\|\,dt\le C_k t_0^{-(\beta-\eps)}$.
\item[(ii)]
$E_{t,k}=0$ for all $k\ge2$.
\end{itemize}

First, we prove claim (i).
When $N_t=k$, we have 
\[
G_t(z,u)=(G^kz,u+t-\varphi_k(z)) \quad\text{and}\quad u+t-\varphi_k(z)\in[0,\varphi(G^kz)].
\]
Hence,
\begin{align*}
 & \int_{t_0}^{\infty}\|1_{\hY}E_{t,k}1_{\hY}\pi^*\|\,dt
  \le
\int_{Z^\varphi}\int_{t_0}^\infty 1_{\{u+t\in[\varphi_k(z),\varphi_{k+1}(z)]\}}1_{\hY}(G^kz,u+t-\varphi_k(z))\,dt\, 1_{\hY}\,d\mu_Z^\varphi
\\ & \qquad \le
\int_{Z^\varphi}1_{\{\varphi_{k+1}>t_0\}}1_{\hY}\int_0^{\varphi(G^kz)} 1_{\hY}(G^kz,t)\,dt\, d\mu_Z^\varphi
=
\int_{Z^\varphi}1_{\{\varphi_{k+1}>t_0\}}1_{\hY}\sigma(G^kz)\,d\mu_Z^\varphi
\\ & 
\qquad =
\int_Z 1_{\{\varphi_{k+1}>t_0\}}\sigma\circ G^k\int_0^{\varphi(z)}1_{\hY}(z,u)\,du\,d\mu_Z
=\int_Z 1_{\{\varphi_{k+1}>t_0\}}\sigma\circ G^k\sigma\,d\mu_Z,
\end{align*}
where we have used~\eqref{eq:hatY} twice.
Using H\"older's inequality as in the proof of Lemma~\ref{lem:ABEt}(a) completes the proof of claim~(i).

To prove claim~(ii), we show that 
$\xi(t)\equiv1$ when $k\ge2$.
The constraints $(z,u)\in\Delta_y$, $G_t(z,u)\in Z_{t-x}$ can be restated as
\[
\varphi(z)-u<y<\varphi(z)-u+1, \qquad \varphi_k(z)-u<x<\varphi_k(z)-u+1<t.
\]
These conditions alone imply that $x,y>0$, $x<t$.  Since $k\ge2$, we have also that
$y<\varphi(z)-u+1\le \varphi_k(z)-u<x$.  
Hence
\[
\xi(t)=\int_0^t\int_0^x 1_{\Delta_y}1_{Z_{t-x}}\circ G_t\,dy\,dx
=\int_{\varphi_k(z)-u}^{\varphi_k(z)-u+1}\int_{\varphi(z)-u}^{\varphi(z)-u+1}1 \,dy\,dx=1,
\]
as required.
\end{pfof}

\subsection{Estimates for $\hat B(s)1_{\hY}\pi^*$: proof of Lemma~\ref{lem:ABEt}(c)}
\label{sec:B}

For $s\in\C$, define the operator
$\hat D(s):L^1(Z^\varphi)\to L^1(\tZ)$, 
\[
\textstyle (\hat D(s)v)(z,u)=e^{-s(\varphi(z)+u)}\int_0^{\varphi(z)} e^{st} v(z,t)\,dt,\quad (z,u)\in\tZ.
\]
Define $\tilde G:\tZ\to\tZ$, 
by $\tilde G(z,u)=(Gz,u)$,
and let $\tilde R:L^1(\tZ)\to L^1(\tZ)$ denote the corresponding transfer operator.

\begin{prop} \label{prop:hatB}
$\hat B(s)= \tilde R\,\hat D(s)$.
\end{prop}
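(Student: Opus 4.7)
The plan is to verify the operator identity $\hat B(s) = \tilde R \hat D(s)$ by a direct duality computation: pair both sides against an arbitrary test function $g \in L^\infty(\tZ)$, unpack the definition $\hat B(s) = \int_0^\infty e^{-st} 1_{\tZ} L_t^G 1_{\Delta_t}\,dt$, and recognise the resulting integral as the pairing of $\tilde R \hat D(s) v$ with $g$. The computation hinges on one clean geometric observation about the slab $\Delta_t$.

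The key observation is that on $\Delta_t$ the semiflow advances by exactly one discrete step: for $(z,u) \in \Delta_t$, the condition $\varphi(z) - t \le u \le \varphi(z) - t + 1$ forces $G_t(z,u) = (Gz,\, u + t - \varphi(z))$ with the second coordinate in $[0,1]$. Hence $G_t$ maps $\Delta_t$ into $\tZ$ (so the prefactor $1_{\tZ}$ in the definition of $B_t$ is automatic on $\Delta_t$), and the duality defining $L_t^G$ gives
\begin{align*}
\int_{\tZ} B_t v \cdot g \, d\mu_Z^\varphi
 = \int_{\Delta_t} v(z,u)\, g(Gz,\, u + t - \varphi(z))\, d\mu_Z^\varphi(z,u).
\end{align*}
Taking the Laplace transform, using $\mu_Z^\varphi = \bar\sigma^{-1}(\mu_Z \times \Leb)$ with $\bar\sigma = \int_Z \sigma\, d\mu_Z$, and applying Fubini to swap $(t,u)$, I substitute $\tau = u + t - \varphi(z)$ (so that, for fixed $(z,u)$, the condition $(z,u)\in\Delta_t$ becomes $\tau \in [0,1]$). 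This yields
\begin{align*}
\int_{\tZ} \hat B(s) v \cdot g \, d\mu_Z^\varphi
 = \bar\sigma^{-1} \int_Z \int_0^1 g(Gz,\tau)\, e^{-s\tau} e^{-s\varphi(z)} \!\int_0^{\varphi(z)} \!\! e^{su} v(z,u)\, du\, d\tau\, d\mu_Z(z),
\end{align*}
and the inner factor $e^{-s(\varphi(z)+\tau)}\!\int_0^{\varphi(z)}\! e^{su} v(z,u)\,du$ is precisely $(\hat D(s) v)(z,\tau)$ by the definition of $\hat D$.

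It remains to recognise the result as the pairing of $\tilde R \hat D(s) v$ with $g$ on $\tZ$. Since $\tilde G(z,u) = (Gz,u)$ acts trivially on the fibre coordinate, the defining duality of $\tilde R$ (with respect to $\mu_Z \times \Leb$ on $\tZ$) reads
\begin{align*}
\int_{\tZ} \tilde R h \cdot g \, d(\mu_Z \times \Leb)
 = \int_Z \int_0^1 h(z,\tau)\, g(Gz,\tau)\, d\tau\, d\mu_Z(z).
\end{align*}
Applied with $h = \hat D(s) v$ this matches the expression above (the $\bar\sigma^{-1}$ reassembling $\mu_Z^\varphi$ on the right), so $\int_{\tZ} \hat B(s) v \cdot g\, d\mu_Z^\varphi = \int_{\tZ} \tilde R \hat D(s) v \cdot g\, d\mu_Z^\varphi$ for every $g \in L^\infty(\tZ)$, giving the proposition.

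There is no real obstacle here: the identity is essentially forced once the geometry of $\Delta_t$ is pinned down, since $\Delta_t$ selects exactly those $(z,u)$ whose $G_t$-image sits in the very next base cell. The only bookkeeping to monitor is the normalisation $\bar\sigma^{-1}$ in $\mu_Z^\varphi$ and the fact that $\tilde R$ (rather than a higher iterate) suffices because the slab $\Delta_t$ corresponds to a single step of $G$.
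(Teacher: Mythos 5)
Your proposal is correct and follows essentially the same route as the paper: both pair $\hat B(s)v$ against a test function in $L^\infty(\tZ)$, use that $G_t(z,u)=(Gz,u+t-\varphi(z))$ maps $\Delta_t$ into $\tZ$, perform the change of variables reducing the Laplace-transformed integral to $\int_Z\int_0^1 (\hat D(s)v)(z,\tau)\,g(Gz,\tau)\,d\tau\,d\mu_Z$, and identify this as the duality pairing defining $\tilde R$. The only difference is the order of the substitution and the Laplace transform, which is immaterial.
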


\begin{proof} 
Let $v\in L^1(Z^\varphi)$, $w\in L^\infty(\tZ)$.  
We can regard $w$ as a function on $Z^\varphi$ supported on $\tZ$.
Since $G_t(z,u)=(Gz,u+t-\varphi(z))$ for $(z,u)\in\Delta_t$,
\begin{align*}
 \int_{\tZ} B_tv\, w\,d\mu_Z^\varphi & =
 \int_{Z^\varphi} L^G_t(1_{\Delta_t}v)\, w\,d\mu_Z^\varphi 
= \int_Z\int_0^{\varphi(z)} 1_{\{0\le u-\varphi(z)+t\le 1\}}(v\, w\circ G_t)(z,u)\,du\,d\mu_Z \\ & =
 \int_Z\int_{t-\varphi(z)}^t 1_{\{0\le u\le 1\}}v(z,u+\varphi(z)-t)\, w\circ \tilde G(z,u)\,du\,d\mu_Z \\ & =
 \int_Z\int_0^1 1_{\{t-\varphi(z)\le u\le t\}}v(z,u+\varphi(z)-t)\, w\circ \tilde G(z,u)\,du\,d\mu_Z.
\end{align*}
Hence
\begin{align*}
\int_{\tZ} \hat B(s) v\,w\, d\mu_Z^\varphi & =
\int_{\tZ} \int_u^{\varphi(z)+u} e^{-st} v(z,u+\varphi(z)-t)\,dt\, w\circ \tilde G(z,u)\,d\mu_Z^\varphi \\ & =
\int_{\tZ}e^{-s(\varphi(z)+u)} \int_0^{\varphi(z)} e^{st} v(z,t)\,dt\, w\circ \tilde G(z,u)\,d\mu_Z^\varphi \\ & =
 \int_{\tZ}\hat D(s)v\,w\circ \tilde G\,d\mu_Z^\varphi
= \int_{\tZ}\tilde R\,\hat D(s)v\,w\,d\mu_Z^\varphi,
\end{align*}
as required.
\end{proof}

For $b\in\R$, define $f(b):L^1(\tY)\to L^1(\tZ)$,
\[
f(b)v=\hat D(ib)(1_{\hY}\pi^*v)=\hat D(ib)\hat v, \quad \hat v=v\circ\pi.
\]

\begin{prop} \label{prop:fa}
 For any $\eps\in (0,\beta]$, $\delta>0$, there exists $C>0$ such that
\begin{itemize}
\item[(a)]  $|(f(b)v)(z,u)|\le \sigma(a)|v|_\infty$, 
\item[(b)] 
$|(f(b)v)(z,u)- (f(b)v)(z',u))| \le Cb^\eps\sigma(a)(\infa\varphi^\eps)\|v\|_{C^\eta(\tY)}d_Z(Gz,Gz')^{\eps\eta}$, 
\item[(c)] 
$|\{f(b+h)v-f(b)v\}(z,u)|
\le Ch^{\beta-\eps}\sigma(a)(\infa\varphi^{\beta-\eps})|v|_\infty$,
\item[(d)] 
$|\{f(b+h)v-f(b)v\}(z,u)
-\{f(b+h)v-f(b)v\}(z',u)|
 \vspace*{1ex}
\newline \mbox{} \qquad\qquad\qquad \le Cb^\eps h^{\beta-2\eps}\sigma(a)(\infa\varphi^{\beta-\eps})\|v\|_{C^\eta(\tY)}d_Z(Gz,Gz')^{\eps\eta}$, 
\end{itemize}
for all $a\in\alpha$, $z,z'\in a$, $u\in[0,1]$,
$0<h<\delta<b$, and $v\in C^\eta(\tY)$.
\end{prop}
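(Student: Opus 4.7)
My plan is to express $(f(b)v)(z,u)$ in terms of $\hat v(b)(z)$ from Proposition~\ref{prop:vb}, and then derive the four estimates from that proposition together with the elementary inequality $|e^{ix}-e^{iy}|\le 2|x-y|^\theta$ (valid for any $\theta\in[0,1]$) combined with (H3).

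Since $\hat v=v\circ\pi$ is supported on $\hY$, unwinding the definitions gives $(f(b)v)(z,u)=e^{-ib\psi(z)}\hat v(b)(z)$, where $\psi(z):=\varphi(z)+u$. Part~(a) is then immediate from Proposition~\ref{prop:vb} with $k=0$. For part~(b) I decompose the $z$-difference as
\[
[e^{-ib\varphi(z)}-e^{-ib\varphi(z')}]\hat v(b)(z)+e^{-ib\varphi(z')}[\hat v(b)(z)-\hat v(b)(z')]
\]
(after factoring out $e^{-ibu}$), control the first piece by combining (H3) with the interpolation inequality at $\theta=\eps$, and bound the second by the H\"older estimate in Proposition~\ref{prop:vb}. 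Part~(c) is analogous: splitting the $b$-difference as $[e^{-i(b+h)\psi(z)}-e^{-ib\psi(z)}]\hat v(b+h)(z)+e^{-ib\psi(z)}[\hat v(b+h)(z)-\hat v(b)(z)]$, the first factor is $\le Ch^{\beta-\eps}(\inf_a\varphi)^{\beta-\eps}$ by the interpolation with $\theta=\beta-\eps$, and the second is bounded using $|e^{ihu}-1|\le 2(hu)^{\beta-\eps}$.

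Part~(d) is the main obstacle. Write $H(b,z):=e^{-ib\psi(z)}$ and $g(b,z):=\hat v(b)(z)$, set $\delta g(z):=g(b+h,z)-g(b,z)$ and $\delta H(z):=H(b+h,z)-H(b,z)$, and expand
\begin{align*}
& [f(b+h)v-f(b)v](z,u)-[f(b+h)v-f(b)v](z',u) \\
& \qquad = [H(b+h,z)-H(b+h,z')]\delta g(z) + H(b+h,z')[\delta g(z)-\delta g(z')] \\
& \qquad\quad +[\delta H(z)-\delta H(z')]g(b,z)+\delta H(z')[g(b,z)-g(b,z')].
\end{align*}
Call these four terms (i)--(iv). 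For (i), (iii) and (iv) each factor is bounded by the interpolation inequality, choosing exponent $\eps$ for the $z$-H\"older piece (via (H3)) and exponent $\beta-2\eps$ (rather than $\beta-\eps$) for the $h$-dependence. This particular choice forces the total exponent of $\inf_a\varphi$ to be exactly $\eps+(\beta-2\eps)=\beta-\eps$, matching the conclusion, and accounts for the weaker $h^{\beta-2\eps}$ compared with $h^{\beta-\eps}$ in part~(c). For (iii), the bracket further factors as $[e^{-ib\psi(z)}-e^{-ib\psi(z')}](e^{-ih\psi(z)}-1)+e^{-ib\psi(z')}(e^{-ih\psi(z)}-e^{-ih\psi(z')})$, and each piece is estimated by a two-factor interpolation.

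Term~(ii) is a genuine double difference of $g$ and requires further expansion. Using that $\hat v$ is supported on $\hY$, we have $g(b,z)=\sum_{\ell=0}^{\sigma(a)-1}e^{ib\tau_\ell(z)}J_{b,\ell}(z)$ with $J_{b,\ell}(z):=\int_0^1 e^{ibs}v(F^\ell z,s)\,ds$, so $\delta g(z)=\sum_\ell e^{ib\tau_\ell(z)}W_\ell(z)$ where $W_\ell(z):=e^{ih\tau_\ell(z)}J_{b+h,\ell}(z)-J_{b,\ell}(z)$. Then each summand of $\delta g(z)-\delta g(z')$ splits as $[e^{ib\tau_\ell(z)}-e^{ib\tau_\ell(z')}]W_\ell(z)+e^{ib\tau_\ell(z')}[W_\ell(z)-W_\ell(z')]$, and the inner $z$-difference $W_\ell(z)-W_\ell(z')$ decomposes into three standard pieces controlled using the interpolation inequality together with (H2)(iv), (H3) and the H\"older regularity of $v$; summing over $0\le\ell\le\sigma(a)-1$ produces the prefactor $\sigma(a)$. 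The principal bookkeeping burden throughout part~(d) is the consistency of exponents: since $h<\delta<1$ and $\inf_a\varphi\ge 1$, a power of $h^\eps$ can be traded for a power of $(\inf_a\varphi)^\eps$; since $b>\delta$, the constant $\delta^\eps$ is absorbed into $b^\eps$; and since $Z$ has bounded diameter, any $d_Z(Gz,Gz')^\alpha$ with $\alpha>\eps\eta$ is bounded by a constant times $d_Z(Gz,Gz')^{\eps\eta}$.
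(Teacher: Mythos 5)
Your proposal is correct, and all four estimates do come out with the stated exponents, but your route through parts (c) and (d) is genuinely different from the paper's. The paper first proves, alongside (a) and (b), two auxiliary bounds (e) and (f) for the $b$-derivative $f'(b)v$ (using $f'(b)v=-i(\varphi+u)f(b)v+e^{-ib(\varphi+u)}\hat v'(b)$ together with the $k=1$ case of Proposition~\ref{prop:vb}); it then obtains (c) from (a) and (e), and (d) from (b) and (f), by the mean value theorem in $b$ followed by the interpolation $\min\{1,(\infa\varphi)\,h\}\le(\infa\varphi)^{\beta-\eps}h^{\beta-\eps}$ (respectively $(\infa\varphi)^{\beta-2\eps}h^{\beta-2\eps}$ when combined with the $b^\eps(\infa\varphi)^\eps$ already present in (b)). You instead work entirely with finite differences, applying the product-difference identity twice and the inequality $|e^{ix}-e^{iy}|\le2|x-y|^\theta$ in place of the MVT-plus-interpolation step; this buys you a proof that never differentiates under the integral, at the cost of the four-term expansion (i)--(iv) and the extra layer of bookkeeping in term~(ii). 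The two approaches distribute the exponents identically ($\eps+(\beta-2\eps)=\beta-\eps$ on $\infa\varphi$ in part (d)), so nothing is lost. The one place where you should be careful is the "three standard pieces" for $W_\ell(z)-W_\ell(z')$: if you split off $e^{ih\tau_\ell(z')}[J_{b+h,\ell}(z)-J_{b+h,\ell}(z')]$ and $[J_{b,\ell}(z)-J_{b,\ell}(z')]$ as separate pieces, each is only $O(\|v\|_{C^\eta}d_Z(Gz,Gz')^{\eps\eta})$ with no smallness in $h$, which is not enough; you must group via $W_\ell=(e^{ih\tau_\ell}-1)J_{b+h,\ell}+(J_{b+h,\ell}-J_{b,\ell})$ so that every resulting piece carries both an $h^{\beta-2\eps}$ factor (from $|e^{ih\tau_\ell}-1|$, from $|e^{ih\tau_\ell(z)}-e^{ih\tau_\ell(z')}|$ via (H3), or from $|e^{ihs}-1|\le h$ inside $J_{b+h,\ell}-J_{b,\ell}$) and a $z$-H\"older factor. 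With that grouping your argument closes.
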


\begin{proof}
We establish the inequalities (a) and (b) together with
\begin{itemize}
\item[(e)] 
 $|(f'(b)v)(z,u)|\le C\sigma(a)(\infa\varphi)|v|_\infty$.
\item[(f)]
$|(f'(b)v)(z,u)- (f'(b)v)(z',u)|
\le Cb^\eps\sigma(a)(\infa\varphi^{1+\eps})\|v\|_{C^\eta(\tY)}
d_Z(Gz,Gz')^{\eps\eta}$.
\end{itemize}

By the mean value theorem, it follows from~(e) that
$|\{f(b_1)v-f(b_2)v\}(z,u)| \ll \sigma(a)(\infa\varphi)|v|_\infty|b_1-b_2|$.
Combining this with~(a), we obtain
\begin{align*}
|\{f(b_1)v-f(b_2)v\}(z,u)|
& \ll \sigma(a)|v|_\infty\min\{1,(\infa \varphi)|b_1-b_2|\}
 \\ & 
\le \sigma(a)|v|_\infty (\infa \varphi^{\beta-\eps})|b_1-b_2|^{\beta-\eps},
\end{align*}
proving (c).  Similarly, using~(b) and~(f),
\begin{align*}
|\{f(b_1)v & -f(b_2)v\}(z,u)-
\{f(b_1)v-f(b_2)v\}(z',u)|
\\ & \ll (|b_1|^\eps+|b_2|^\eps)\sigma(a)(\infa\varphi^\eps)\|v\|_{C^\eta(\tY)} d_Z(Gz,Gz')^{\eps\eta} \min\{1, (\infa \varphi)|b_1-b_2|\}
\\ & \le (|b_1|^\eps+|b_2|^\eps)\sigma(a)(\infa\varphi^{\beta-\eps})\|v\|_{C^\eta(\tY)} d_Z(Gz,Gz')^{\eps\eta} |b_1-b_2|^{\beta-2\eps},
\end{align*}
proving~(d).

It remains to carry out the estimates~(a),~(b),~(e),~(f).
In the notation of Proposition~\ref{prop:vb},
\[
(f(b)v)(z,u)  
=e^{-ib(\varphi(z)+u)} \int_0^{\varphi(z)}e^{ibt}(1_{\hY}\hat v)(z,t)\,dt
=e^{-ib(\varphi(z)+u)}\hat v(b)(z).
\]

\vspace{1ex}
\noindent(a)
By Proposition~\ref{prop:vb},
$|(f(b)v)(z,u)|= |\hat v(b)(z)|\le  \sigma(a)|v|_\infty$.

\vspace{1ex}
\noindent(e)  Write
\begin{align} \label{eq:f'}
(f'(b)v)(z,u) & =-i(\varphi(z)+u)(f(b)v)(z,u)+e^{-ib(\varphi(z)+u)}\hat v'(b)(z).
\end{align}
By part (a) and Proposition~\ref{prop:vb},
\[
|(f'(b)v)(z,u)|\le 
 (\supa\varphi+1)|(f(b)v)(z,u)|+|\hat v'(b)(z)|
\ll\sigma(a)(\infa\varphi)|v|_\infty.
\]

\vspace{1ex}
\noindent(b) 
Write $(f(b)v)(z,u)-(f(b)v)(z',u)= I_1+I_2$ where 
\begin{align*}
I_1 & = e^{-iu}( e^{-ib\varphi(z)}-e^{-ib\varphi(z')})\hat v(b)(z), \qquad
I_2   =  e^{-iu}e^{-ib\varphi(z')}(\hat v(b)(z)-\hat v(b)(z')).
\end{align*}
By (H3) and Proposition~\ref{prop:vb},
\begin{align*}
|I_1|  & \le 2b^\eps|\varphi(z)-\varphi(z')|^\eps |v|_\infty \sigma(a) 
\ll b^\eps|v|_\infty\sigma(a)(\infa\varphi^\eps)d_Z(Gz,Gz')^{\eps\eta}, \\
|I_2| & = |\hat v(b)(z)-\hat v(b)(z')|\ll b^\eps\sigma(a)(\infa\varphi^\eps)\|v\|_{C^\eta(\tY)} d_Z(Gz,Gz')^{\eps\eta}.
\end{align*}

\vspace{1ex}
\noindent(f) 
Using~\eqref{eq:f'}, estimating 
 $(f'(b)v)(z,u)-  (f'(b)v)(z',u)$ reduces to estimating the four terms
\begin{align*}
& (\varphi(z)-\varphi(z'))(f(b)v)(z,u), \quad
(\varphi(z')+u)((f(b)v)(z,u)-(f(b)v)(z',u)), \quad
\\ & (e^{-ib\varphi(z)}-e^{-ib\varphi(z')})\hat v'(b)(z), \quad
e^{-ib\varphi(z')}(\hat v'(b)(z)-\hat v'(b)(z')).
\end{align*}
By (H3) and Proposition~\ref{prop:vb}, we obtain the same estimates
as in part (b) with an extra factor of $\infa\varphi$.
\end{proof}

\begin{prop}   \label{prop:rhophi}
$\int_Z\sigma\,\varphi^{\beta-\eps}\,d\mu_Z<\infty$ for any $\eps>0$.
\end{prop}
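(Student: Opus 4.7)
The key inequality will be the elementary subadditivity bound $(a+b)^{p}\le a^p+b^p$ for $p=\beta-\eps\in(0,1)$, which gives
\[
\varphi^{\beta-\eps}=\Bigl(\sum_{j=0}^{\sigma-1}\tau\circ F^j\Bigr)^{\beta-\eps}\le \sum_{j=0}^{\sigma-1}\tau^{\beta-\eps}\circ F^j.
\]
This is the main trick: it lets us exchange a power of the unbounded sum $\varphi$ for a short sum of powers of $\tau$, each of which will lie in some $L^p(\mu)$.

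The plan then is to reduce the integral against $\mu_Z$ to one against $\mu$. Under the Gibbs-Markov hypothesis (H2)(i)-(iii), standard theory (see~\cite[Ch.~4]{Aaronson}) guarantees that the density $d\mu_Z/d\mu|_Z$ is bounded, so it suffices to bound $\int_Z \sigma\,\varphi^{\beta-\eps}\,d\mu$. Next I would decompose $Z=\bigsqcup_n\{\sigma=n\}$ (which is a countable union of elements of $\alpha$ by~(H2)) to get
\[
\int_Z\sigma\,\varphi^{\beta-\eps}\,d\mu\le \sum_{n\ge1}n\sum_{j=0}^{n-1}\int_{\{\sigma=n\}}\tau^{\beta-\eps}\circ F^j\,d\mu.
\]

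To estimate the inner integral, apply H\"older's inequality with an exponent $p>1$ chosen so small that $(\beta-\eps)p<\beta$; this is possible and guarantees $\tau^{\beta-\eps}\in L^p(\mu)$ since $\tau\in L^r(\mu)$ for every $r<\beta$. With $q$ the conjugate exponent and using $F$-invariance of~$\mu$,
\[
\int_{\{\sigma=n\}}\tau^{\beta-\eps}\circ F^j\,d\mu\le \mu(\sigma=n)^{1/q}\,|\tau^{\beta-\eps}|_{L^p(\mu)}.
\]
Combining the estimates,
\[
\int_Z\sigma\,\varphi^{\beta-\eps}\,d\mu\ll \sum_{n\ge1}n^2\,\mu(\sigma=n)^{1/q},
\]
and the assumed exponential decay $\mu(\sigma>n)=O(e^{-dn})$ makes this sum finite.

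The one point requiring a little care, and the only place where an outside fact is invoked, is the boundedness of $d\mu_Z/d\mu|_Z$; this is the standard consequence of (H2)(i)-(iii) via the distortion estimate~\eqref{eq:GM}, so I do not expect it to be a genuine obstacle. Otherwise the argument is a routine application of subadditivity plus H\"older, with the exponential tail of $\sigma$ absorbing the polynomial factor $n^2$.
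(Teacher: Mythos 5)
Your argument is correct, but it is genuinely different from the one in the paper. The paper estimates the tail $\mu_Z(\sigma\,\varphi^{\beta-\eps}>t)$ directly: it splits according to whether $\sigma\le q\log t$ or not, kills the second event with the exponential tail of $\sigma$, and on the first event bounds $\mu_Z(\tau_{[q\log t]}>ct^{(1-\delta)/(\beta-\eps)})$ by a union bound over the $[q\log t]$ summands using the polynomial tail $\mu(\tau>t)\ll t^{-\beta}$; choosing $q$ large and $\delta$ small gives a tail of order $t^{-r}$ with $r>1$, hence integrability. You instead push the exponent inside the Birkhoff sum via the subadditivity $(\sum a_j)^{\beta-\eps}\le\sum a_j^{\beta-\eps}$ (valid for $\beta-\eps\in(0,1)$; the case $\eps\ge\beta$ is trivial since $\varphi$ is bounded below by $1$), pass from $\mu_Z$ to the $F$-invariant measure $\mu$ using boundedness of the Gibbs-Markov density, and then apply H\"older on each set $\{\sigma=n\}$ so that the exponential tail of $\sigma$ absorbs the factor $n^2$. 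Both routes rest on the same two facts --- $\tau^{\,p}\in L^1$ for $p<\beta$ and exponential tails of $\sigma$ --- but yours avoids the tail/union-bound bookkeeping and is arguably cleaner; it is also slightly more careful than the paper on one point, since you work with the $F$-invariant measure $\mu$ when shifting $\tau\circ F^j$ back to $\tau$, whereas the paper writes this step for $\mu_Z$ (which is only $G$-invariant) and implicitly relies on the bounded density there as well. The paper's method yields the extra information of an explicit polynomial tail bound for $\sigma\,\varphi^{\beta-\eps}$, but that is not needed for the statement.
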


\begin{proof}
Let $\delta>0$, $q\ge1$.  Then
\begin{align*}
\mu_Z(\sigma\,\varphi^{\beta-\eps}>t) & \le 
\mu_Z(\sigma\,\varphi^{\beta-\eps}>t,\,\sigma\le q\log t)+\mu_Z(\sigma>q\log t)=g(t)+O(t^{-cq}),
\end{align*}
where 
\begin{align*}
g(t) & = \mu_Z(\sigma\,\varphi^{\beta-\eps}>t,\,\sigma\le q\log t)
 \le \mu_Z(\varphi>ct^{(1-\delta)/(\beta-\eps)},\,\sigma\le q\log t)
\\ & \le \mu_Z(\tau_{[q\log t]}>ct^{(1-\delta)/(\beta-\eps)})
 \le \sum_{j=0}^{[q\log t]-1}\mu_Z(\tau\circ F^j>c't^{(1-2\delta)/(\beta-\eps)})
\\ & = [q\log t]\,\mu_Z(\tau>c't^{(1-2\delta)/(\beta-\eps)})
 \ll t^{-(\beta-\delta)(1-2\delta)/(\beta-\eps)}.
\end{align*}
(Here, $c$, $c'>0$ are constants.)
Choosing $q$ sufficiently large and $\delta$ sufficiently small, we obtain that
$\mu_Z(\varphi^{\beta-\eps}\sigma>t)=O(t^{-r})$ for some $r>1$ and the result follows.
\end{proof}

\begin{pfof}{Lemma~\ref{lem:ABEt}(c)}
Using the formula for $\tilde R$ in Remark~\ref{rmk:GM}, we obtain
$(\hat B(s)v)(z,u)=\sum_{a\in\alpha}\xi(z_a)(\hat D(s)v)(z_a,u)$.
In particular, for $v\in C^\eta(\tY)$, $\hat v=v\circ \pi$,
\begin{align*} 
(\hat B(ib)1_{\hY}\pi^*v)(z,u)=(\hat B(ib)\hat v)(z,u)=\suma \xi(z_a)(f(b)v)(z_a,u).
\end{align*}
By~\eqref{eq:GM} and Propositions~\ref{prop:fa}(a,b) and~\ref{prop:rhophi}, for all $z,z'\in \tZ$, $u\in[0,1]$,
\[
|(\hat B(ib)\hat v)(z,u)|\le \suma\xi(z_a)|(f(b)v)(z_a,u)| \ll |v|_\infty\suma \mu_Z(a)\sigma(a) \ll |v|_\infty,
\]
and
\begin{align*}
& |(\hat B(ib)\hat v)(z,u)
-(\hat B(ib)\hat v)(z',u)|
\\ & \quad \le \suma |\xi(z_a)-\xi(z_a')||(f(b)v)(z_a,u)| 
+
 \suma  \xi(z_a')|(f(b)v)(z_a,u)-(f(b)v)(z_a',u)| 
 \\ & \quad \ll \suma \mu_Z(a)d_Z(Gz_a,Gz_a')^\eta\sigma(a)|v|_\infty\,
+\suma \mu_Z(a)b^\eps\sigma(a)(\infa\varphi^\eps)\|v\|_{C^\eta}d_Z(Gz_a,Gz_a')^{\eps\eta}
 \\ & \quad \ll b^\eps\|v\|_{C^\eta}d_Z(z,z')^{\eps\eta}.
\end{align*}
Hence
$\|\hat B(ib)\hat v\|_{C^{\eps\eta}(\tZ)}\ll b^\eps\|v\|_{C^\eta(\tY)}$.

Similarly, using~\eqref{eq:GM} and Propositions~\ref{prop:fa}(e,f) and~\ref{prop:rhophi},  
\begin{align*}
|\{\hat B(i(b+h))\hat v-
\hat B(ib)\hat v\}(z,u)| 
& \le \suma \xi(z_a)|\{f(b+h)v-f(b)v\}(z_a,u)|
\\ &
\ll \suma \mu_Z(a)h^{\beta-\eps}\sigma(a)(\infa\varphi^{\beta-\eps})|v|_\infty
\ll h^{\beta-\eps}|v|_\infty,
\end{align*}
and
\begin{align*}
 |\{\hat  B &(i(b+h))\hat v-  \hat B(ib)\hat v\}(z,u)
-\{\hat B(i(b+h))\hat v- \hat B(ib)\hat v\}(z',u)| 
\\ &
\le \suma |\xi(z_a)-\xi(z_a')| |\{f(b+h)v-f(b)v\}(z,u)|
\\ &\qquad  +
\suma \xi(z_a') 
|\{f(b+h)v-f(b)v\}(z,u)-
\{f(b+h)v-f(b)v\}(z',u)|
\\ &
\ll \suma \mu_Z(a)d_Z(Gz_a,Gz_a')^\eta h^{\beta-\eps}\sigma(a)(\infa\varphi^{\beta-\eps})|v|_\infty 
\\ &\qquad  +
\suma \mu_Z(a)
b^\eps h^{\beta-2\eps}\sigma(a)(\infa\varphi^{\beta-\eps})\|v\|_{C^\eta}d_Z(Gz_a,Gz_a')^{\eps\eta}
\\ &
\ll 
b^\eps h^{\beta-2\eps}\|v\|_{C^\eta}d_Z(z,z)^{\eps\eta}.
\end{align*}
Hence
$\|\hat B(i(b+h))\hat v- \hat B(ib)\hat v\|_{C^{\eps\eta}(\tZ)}\ll 
b^\eps h^{\beta-2\eps}\|v\|_{C^\eta(\tY)}$.
\end{pfof}

\section{Proof of Theorem~\ref{thm:rateA}}
\label{sec:rateA}

In this section, we consider the proof of Theorem~\ref{thm:rateA}.
The arguments are similar to those in Section~\ref{sec:small}, with hypothesis (H1) replaced by hypothesis (A1).
Our conventions regarding $\eps$, $\delta$, $p$, $C$ are the same as in Section~\ref{sec:T}.
Let  $c_0=c^{-1}c_\beta^{-1}$
where $c_\beta=i\int_0^\infty e^{-i\sigma}\sigma^{-\beta}\,d\sigma$.

Lemma~\ref{lem:continf} is replaced by:

\begin{lemma}\label{lem:continfA}
$\|\hat R(s_1)-\hat R(s_2)\|_{\cB(Y)\to L^1(Y)}\le C\, |s_1-s_2|^{\beta-\eps}$
for all $s_1,s_2\in\barH$,
\end{lemma}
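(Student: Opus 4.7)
The plan is to mimic the opening step of Lemma~\ref{lem:continf} but then switch from an $L^p$ estimate (which relied on the now-absent hypothesis (H1)(iii)) to an $L^1$ estimate that is compatible with (A1). First, since $s_1,s_2\in\barH$ we have $|e^{-s_j\tau}|\le 1$ and hence the elementary bound $|e^{-s_1\tau}-e^{-s_2\tau}|\le \min(2,|s_1-s_2|\tau)\le 2|s_1-s_2|^{\beta-\eps}\tau^{\beta-\eps}$. Combined with positivity of $R$ this yields
\[
|(\hat R(s_1)-\hat R(s_2))v|\le R(|e^{-s_1\tau}-e^{-s_2\tau}||v|)\le 2|s_1-s_2|^{\beta-\eps}\,R(\tau^{\beta-\eps}|v|).
\]

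Next, I would integrate against $\mu$ and use invariance, namely $\int_Y R g\,d\mu=\int_Y g\,d\mu$ for $g\ge0$, to get
\[
|(\hat R(s_1)-\hat R(s_2))v|_{L^1(Y)}\le 2|s_1-s_2|^{\beta-\eps}\int_Y \tau^{\beta-\eps}|v|\,d\mu.
\]
Then, by H\"older's inequality with conjugate exponents $p,p'$,
\[
\int_Y \tau^{\beta-\eps}|v|\,d\mu\le |\tau^{\beta-\eps}|_{L^{p'}(Y)}\,|v|_{L^p(Y)}.
\]
Finally, the tail assumption $\mu(\tau>t)=ct^{-\beta}+O(t^{-q})$ gives $\tau\in L^r(Y)$ for every $r<\beta$, so $\tau^{\beta-\eps}\in L^{p'}(Y)$ as soon as $(\beta-\eps)p'<\beta$, i.e.\ $p'<\beta/(\beta-\eps)$. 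Equivalently, one chooses $p$ large enough that this is satisfied, and (A1)(i) then supplies the continuous embedding $\cB(Y)\hookrightarrow L^p(Y)$, so $|v|_{L^p(Y)}\le C\|v\|_{\cB(Y)}$.

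The only subtlety is the quantifier juggling in (A1): the hypothesis permits choosing $\cB(Y)$ for each sufficiently large $p$, so I simply fix once and for all a $p$ large enough that $p'=p/(p-1)<\beta/(\beta-\eps)$ and work with the associated Banach space. There is no real obstacle here; this is a genuinely routine adaptation of Lemma~\ref{lem:continf} in which the slightly weaker output norm $L^1$ (vs.\ $L^p$) is precisely what allows one to dispense with (H1)(iii) and replace it by the $F$-invariance of $\mu$.
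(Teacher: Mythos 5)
Your argument is correct and is essentially the paper's own proof: the paper likewise bounds $|(\hat R(s_1)-\hat R(s_2))v|_1\le|(e^{-s_1\tau}-e^{-s_2\tau})v|_1\le 2|s_1-s_2|^{\beta-\eps}|\tau^{\beta-\eps}v|_1$ (using that $R$ is an $L^1$-contraction, which is the same fact as your positivity-plus-integral-preservation step) and then applies H\"older with exponents $r<\beta/(\beta-\eps)$ and its conjugate $r'$, invoking (A1)(i) for the embedding $\cB(Y)\hookrightarrow L^{r'}(Y)$. Your handling of the quantifier in (A1) matches the paper's convention of fixing $p$ large once at the outset, so there is nothing to add.
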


\begin{proof}
Since $R:L^1(Y)\to L^1(Y)$ is a contraction,
\begin{align*}
|(\hat R(s_1) -\hat R(s_2))v|_1 
& \le  |(e^{-s_1\tau}-e^{-s_2\tau})v|_1
  \le  2|s_1-s_2|^{\beta-\eps}|\tau^{\beta-\eps} v|_1.
\end{align*}
Choose $r>1$ such that $(\beta-\eps)r<\beta$ with conjugate exponent $r'$.
By H\"older's inequality and (A1)(i), 
$|\tau^{\beta-\eps} v|_1\le |\tau^{\beta-\eps}|_r|v|_{r'}\ll \|v\|_{\cB(Y)}$
completing the proof.
\end{proof}

Next, we proceed as in Lemma~\ref{lem:estKL},
using Lemma~\ref{lem:continfA} and hypotheses~(A1)(i),(ii).
This leads to continuous families of eigenvalues $\lambda(s)$, projections $P(s)$ and eigenfunctions $\zeta(s)$ with the same properties as before except that $L^p(Y)$ is replaced by $L^1(Y)$.
The main difference is Proposition~\ref{prop:chi} which is replaced by the following.

\begin{prop} \label{prop:chiA}
\begin{itemize}
\item[(a)] $|\chi(s)|\le C|s|^{\beta_+}$ for all $s\in\barH\cap B_\delta(0)$,
\item[(b)] $|\chi(i(b+h))-\chi(ib)|\le C h^{\beta-\eps}$ for all 
$0<h<b<\delta$. 
\end{itemize}
\end{prop}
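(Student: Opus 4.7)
Part (a) is immediate: by definition $\chi(s) = \int_Y(e^{-s\tau}-1)(\zeta(s)-1)\,d\mu$, and this is exactly the quantity bounded by $C|s|^{\beta_+}$ in hypothesis (A1)(iii), with no further work required.

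For part (b), I would split the difference telescopically as
\begin{align*}
\chi(i(b+h)) - \chi(ib) &= \int_Y(e^{-i(b+h)\tau}-e^{-ib\tau})(\zeta(i(b+h))-1)\,d\mu \\
&\quad + \int_Y(e^{-ib\tau}-1)(\zeta(i(b+h))-\zeta(ib))\,d\mu,
\end{align*}
and bound the two pieces separately.

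For the first piece, I would use the elementary pointwise estimate $|e^{-i(b+h)\tau(y)}-e^{-ib\tau(y)}| = |e^{-ih\tau(y)}-1| \le 2(h\tau(y))^{\beta-\eps}$, fix $r>1$ with $(\beta-\eps)r<\beta$ so that $\tau^{\beta-\eps}\in L^r(Y)$, and apply H\"older's inequality. The remaining factor $|\zeta(i(b+h))-1|_{r'}$ is uniformly bounded on $\barH\cap B_\delta(0)$: by (A1)(i), $\cB(Y)$ compactly embeds into $L^{r'}(Y)$ for any preselected $r'\in(1,\infty)$ (taking the $p$ in (A1) large enough), and $\|\zeta(s)\|_{\cB(Y)}$ is uniformly bounded by the Keller--Liverani construction recalled just above the proposition. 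This gives a contribution of order $h^{\beta-\eps}$.

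For the second piece, I would bound $|e^{-ib\tau}-1|\le 2$ trivially in $L^\infty(Y)$ and invoke the $L^1$-continuity of $\zeta$. Concretely, the version of Lemma~\ref{lem:estKL} adapted to hypothesis (A1) -- obtained by rerunning the Keller--Liverani perturbation argument with reference space $L^1(Y)$ and Lemma~\ref{lem:continfA} in place of Lemma~\ref{lem:continf} -- produces $\|P(s_1)-P(s_2)\|_{\cB(Y)\to L^1(Y)}\ll |s_1-s_2|^{\beta-\eps}$. Applying this to $\zeta(s)=P(s)\cdot 1$, using that the constant function $1$ lies in $\cB(Y)$, yields $|\zeta(i(b+h))-\zeta(ib)|_1 \ll h^{\beta-\eps}$, so the second piece is also $O(h^{\beta-\eps})$.

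The main conceptual point, and the reason the conclusion is weaker than Proposition~\ref{prop:chi}(b) (no $b^\beta$ prefactor), is that under (A1) the projection continuity is only available in $\cB(Y)\to L^1(Y)$; the mechanism in Section~\ref{sec:small} for extracting an extra $b^\beta$ via H\"older against $\tau^{\beta-\eps}\in L^r$ is unavailable on the $\zeta$ increment here. I expect no serious obstacle beyond verifying the $L^1$-Keller--Liverani setup, which has already been carried out in Lemma~\ref{lem:continfA} and the discussion immediately following it.
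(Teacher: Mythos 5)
Your proof is correct and follows essentially the same route as the paper: part (a) is read off directly from (A1)(iii), and part (b) is a telescoping of the product $(e^{-s\tau}-1)(\zeta(s)-1)$ with one term controlled by H\"older's inequality against $\tau^{\beta-\eps}\in L^r$ (using the uniform $\cB(Y)$-bound on $\zeta(s)$ and the embedding into $L^{r'}$) and the other by the $\cB(Y)\to L^1$ Keller--Liverani continuity of $P(s)$, hence of $\zeta(s)$. The only difference is which factor you freeze in the telescoping, which is immaterial, and your closing remark correctly identifies why no $b^\beta$ prefactor is available here.
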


\begin{proof}
Part~(a) is immediate from (A1)(iii).  
Since $\|\zeta(s)\|_{\cB(Y)}$ is bounded, it follows from H\"older's inequality as in the proof of Lemma~\ref{lem:continfA} that
\begin{align*}
|\chi(i(b+h))-\chi(ib)|
& \le |(e^{i(b+h)\tau}-1)(\zeta(i(b+h))-\zeta(ib))|_1
+|(e^{ih\tau}-1)(\zeta(ib)-1)|_1  \\
& \le 2|\zeta(i(b+h))-\zeta(ib)|_1+
2h^{\beta-\eps} |\tau^{\beta-\eps}|_r|(\zeta(ib)-1)|_{r'}
 \ll h^{\beta-\eps}.
\end{align*}
Hence we obtain part~(b).
\end{proof}

The statement and proof of Lemma~\ref{lem:T} goes through unchanged since $\beta_+>\beta$.
Lemma~\ref{lem:Tcont} becomes:

\begin{lemma} \label{lem:TcontA}
 $\|\hat T(i(b+h)) -\hat T(ib)\|_{\cB(Y)\to L^1(Y)}
\le Cb^{-2\beta} h^{\beta-\eps}$ for all  $0<h< b<\delta$.
\end{lemma}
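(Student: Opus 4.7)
The plan is to follow the template of Lemma~\ref{lem:Tcont} verbatim, but substituting the improved continuity estimate for $\chi$ that we now have available from Proposition~\ref{prop:chiA}(b). As in Section~\ref{sec:T}, the Garsia--Lamperti argument (cf.~the proof of~\cite[Lemma 3.3.2]{GarsiaLamperti62}) gives
\[
\Big|\int_Y (e^{-i(b+h)\tau}-e^{-ib\tau})\,d\mu\Big| \ll h^\beta,
\]
uniformly in $b$. Combining this with Proposition~\ref{prop:chiA}(b) via the Gou\"ezel identity~\eqref{eq:ev-Gou} yields
\[
|\lambda(i(b+h))-\lambda(ib)| \ll h^\beta + h^{\beta-\eps} \ll h^{\beta-\eps},
\]
which is sharper in~$b$ than the analogous~\eqref{eq:Dlambda}, since the $b^\beta$ factor of Proposition~\ref{prop:chi}(b) is now absent.

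Next, decompose $\hat T(ib)=A_1(b)+A_2(b)$ with $A_1(b)=(1-\lambda(ib))^{-1}P(ib)$ and $A_2(b)=(I-\hat R(ib))^{-1}Q(ib)$, exactly as in the proof of Lemma~\ref{lem:Tcont}. To estimate $A_1$ in the $\cB(Y)\to L^1(Y)$ norm, I would use $1-\lambda(ib)\sim c_\beta(ib)^\beta$ from Lemma~\ref{lem:T} (whose proof goes through unchanged since $\beta_+>\beta$) and the analog of Lemma~\ref{lem:estKL} under hypothesis (A1), which gives a continuous family of projections $P(s)$ with
\[
\|P(s_1)-P(s_2)\|_{\cB(Y)\to L^1(Y)}\le C|s_1-s_2|^{\beta-\eps}.
\]
Writing $A_1(b+h)-A_1(b)$ as the sum of a term controlled by the modulus of $1/(1-\lambda)$ and a term controlled by the modulus of $P$, this yields
\[
\|A_1(b+h)-A_1(b)\|_{\cB(Y)\to L^1(Y)} \ll b^{-2\beta}h^{\beta-\eps}+b^{-\beta}h^{\beta-\eps} \ll b^{-2\beta}h^{\beta-\eps}.
\]

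For $A_2$, I would quote the Keller--Liverani-style argument of~\cite[Proposition~3.8]{LT} applied in the $\cB(Y)\to L^1(Y)$ setting, which gives
\[
\|A_2(b+h)-A_2(b)\|_{\cB(Y)\to L^1(Y)} \ll h^{\beta-\eps};
\]
this is subsumed into the $A_1$ bound since $b<\delta$. Adding the two contributions completes the proof.

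The only place where care is needed is verifying that the hypotheses of~\cite[Proposition~3.8]{LT} actually hold under~(A1) rather than~(H1); since the argument of Lemma~\ref{lem:continfA} produces the requisite H\"older continuity of $\hat R(s)$ in the $\cB(Y)\to L^1(Y)$ norm (rather than $\cB(Y)\to L^p(Y)$), this should go through without change. I expect no genuine obstacle; this lemma is a cosmetic adaptation of Lemma~\ref{lem:Tcont} in which the improved $\chi$-estimate eliminates the mixed $b^{-\beta}h^{\beta-\eps}$ term and sharpens the leading term from $b^{-2\beta}h^\beta$ to $b^{-2\beta}h^{\beta-\eps}$.
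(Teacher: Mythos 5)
Your proof is correct and follows exactly the paper's route: the paper's proof of Lemma~\ref{lem:TcontA} consists precisely of replacing~\eqref{eq:Dlambda} by $|\lambda(i(b+h))-\lambda(ib)|\ll h^{\beta-\eps}$ via Proposition~\ref{prop:chiA}(b) and then repeating the $A_1+A_2$ decomposition of Lemma~\ref{lem:Tcont} verbatim. (One cosmetic quibble: the resulting bound $b^{-2\beta}h^{\beta-\eps}$ is \emph{weaker} in $h$ than the $b^{-2\beta}h^{\beta}+b^{-\beta}h^{\beta-\eps}$ of Lemma~\ref{lem:Tcont}, not a sharpening; this does not affect the argument.)
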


\begin{proof}
By Proposition~\ref{prop:chiA}(b), in place of~\eqref{eq:Dlambda} we have
$|\lambda(i(b+h))-\lambda(ib)| \ll h^{\beta-\eps} $.
Now proceed as before.
\end{proof}

Lemma~\ref{lem:T-higherorder} becomes:
\begin{lemma}
\label{lem:T-higherorderA}
  $\hat T(ib)  =c_0 b^{-\beta}P(0) +E(ib)$
for all $b\in [0,\delta)$, 
where  
$\|E(ib)\|_{\cB(Y)\to L^1(Y)}\le Cb^{-(2\beta-\beta_+)}$.
\end{lemma}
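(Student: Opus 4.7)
The plan is to mirror the proof of Lemma~\ref{lem:T-higherorder}, replacing the use of the sharp bound on $\chi$ from Proposition~\ref{prop:chi}(a) with the weaker one from Proposition~\ref{prop:chiA}(a); the tail expansion~\eqref{eq:expand} is still available since it only uses the tail condition on $\tau$, which is common to both hypothesis blocks.

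First, I would start from $\hat T(ib) = (1-\lambda(ib))^{-1}P(0) + E_1(ib)$, where $\|E_1(ib)\|_{\cB(Y)\to L^1(Y)} = O(b^{-\eps})$ by Lemma~\ref{lem:T} (which only requires (A1) in this section's context). I would then combine~\eqref{eq:ev-Gou},~\eqref{eq:expand} and Proposition~\ref{prop:chiA}(a) to write
\[
1-\lambda(ib) = cc_\beta b^\beta + e_1 b + O(b^q) - \chi(ib) = cc_\beta b^\beta\bigl(1 + O(b^{1-\beta}) + O(b^{q-\beta}) + O(b^{\beta_+ - \beta})\bigr).
\]
Since $\beta_+ \in (\beta,1)$ and $q>1$, the smallest of the error exponents inside the parenthesis is $\beta_+ - \beta$, so the expansion collapses to $1-\lambda(ib) = cc_\beta b^\beta(1 + O(b^{\beta_+-\beta}))$. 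Inverting yields $(1-\lambda(ib))^{-1} = c_0 b^{-\beta} + O(b^{-(2\beta-\beta_+)})$, with $c_0 = (cc_\beta)^{-1}$.

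Substituting back gives $\hat T(ib) = c_0 b^{-\beta} P(0) + E(ib)$ where
\[
E(ib) = \bigl((1-\lambda(ib))^{-1} - c_0 b^{-\beta}\bigr)P(0) + E_1(ib),
\]
and I would bound this in the $\cB(Y)\to L^1(Y)$ norm using that $P(0)$ is bounded on $\cB(Y)$ with values in constants. The first term is $O(b^{-(2\beta-\beta_+)})$ by the scalar expansion above, and the $E_1$ contribution is $O(b^{-\eps})$. To absorb $E_1$ into the desired bound one needs $\eps \le 2\beta-\beta_+$; this is permissible because $\beta > \frac12$ and $\beta_+ < 1$ force $2\beta-\beta_+ > 0$, so $\eps$ can be chosen small enough from the outset (recall our convention that $\eps$ is free to shrink). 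No real obstacle is anticipated: the calculation is purely scalar once Lemma~\ref{lem:T} and Proposition~\ref{prop:chiA}(a) are in hand, and the only point to double-check is the comparison of error exponents that identifies $b^{\beta_+-\beta}$ as dominant.
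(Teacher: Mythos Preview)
The proposal is correct and follows essentially the same approach as the paper: combine~\eqref{eq:ev-Gou},~\eqref{eq:expand} and Proposition~\ref{prop:chiA}(a) to obtain $1-\lambda(ib)=cc_\beta b^\beta(1+O(b^{\beta_+-\beta}))$, invert, and then apply Lemma~\ref{lem:T}. Your added remark about choosing $\eps\le 2\beta-\beta_+$ so that the $O(b^{-\eps})$ term from Lemma~\ref{lem:T} is absorbed into $O(b^{-(2\beta-\beta_+)})$ makes explicit what the paper leaves implicit in ``the rest of the proof is unchanged.''
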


\begin{proof} 
By~\eqref{eq:ev-Gou},~\eqref{eq:expand} and Proposition~\ref{prop:chiA}(a),
\[
1-\lambda(ib) =cc_\beta b^\beta(1+O(b^{\beta_+-\beta})), \qquad
 (1-\lambda(ib))^{-1}=c_0b^{-\beta}+O(b^{-(2\beta-\beta_+)}).
\]
The rest of the proof is unchanged.
\end{proof}

Lemma~\ref{lem:small} is replaced by the following result.

\begin{lemma} \label{lem:smallA}
Let $v\in \cB(\tY)$, $w\in L^\infty(\tY)$.
\begin{itemize}
\item[(a)]
$|\hat\rho(s)|  \le C |s|^{-\beta}\|v\|_{\cB(\tY)}\,|w|_{L^\infty(\tY)}$
for all $s\in\barH\cap B_\delta(0)$.
\item[(b)]
$|\hat \rho(i(b+h))-\hat \rho(ib)|\le C b^{-2\beta} h^{\beta-\eps} 
\|v\|_{\cB(\tY)}|w|_{L^\infty(\tY)}$
for all $0<h<b<\delta$.
\item[(c)] 
For all $a\in(0,\delta t)$,
$\int_0^{a/t}  e^{ibt}\hat\rho(ib)\,db
 =c_0 \int_0^{a/t}  b^{-\beta} e^{ibt}\,db\int_{\tY}v\,d\mu^\tau \int_{\tY}w\,d\mu^\tau 
+ O\big((a/t)^{1-2\beta+\beta_+}\|v\|_{\cB(\tY)}\,|w|_{L^\infty(\tY)}\big)$.
\end{itemize}
\end{lemma}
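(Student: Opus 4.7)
\begin{pfof}{Lemma~\ref{lem:smallA} (proposal)}
The proof runs in strict parallel with that of Lemma~\ref{lem:small}, but with Lemmas~\ref{lem:TcontA} and~\ref{lem:T-higherorderA} replacing Lemmas~\ref{lem:Tcont} and~\ref{lem:T-higherorder}, and with the $L^1(Y)$ range appearing throughout in place of $L^p(Y)$. The key structural input remains the Pollicott-type identity~\eqref{eq:rhohat}, namely
\[
\hat\rho_{v,w}(s) = \hat J(s) + \int_Y \hat T(s) v_s\, w_s\,d\mu,
\]
together with the uniform bounds on $v_s, w_s, \hat J$ furnished by Propositions~\ref{prop:v} and~\ref{prop:hatJ}. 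Since $\cB(\tY)$ embeds in $L^\infty$-norms in the $u$-variable by construction, and $w \in L^\infty(\tY)$ pairs with $\hat T(s) v_s \in L^1(Y)$, no change in the duality pairing is needed.

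\medskip
For part~(a): apply the triangle inequality to~\eqref{eq:rhohat}. The term $\hat J(s)$ is $O(\|v\|_{\cB(\tY)}|w|_{L^\infty(\tY)})$ by Proposition~\ref{prop:hatJ}(a), while Lemma~\ref{lem:T} (which goes through verbatim under (A1), since $\beta_+>\beta$ only improves the remainder in~\eqref{eq:ev-Gou}) gives $\|\hat T(s)\|_{\cB(Y)\to L^1(Y)} \ll |s|^{-\beta}$. Propositions~\ref{prop:v} bounds $\|v_s\|_{\cB(Y)}$ and $|w_s|_\infty$, yielding the claim.

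\medskip
For part~(b): expand $\hat\rho(i(b+h))-\hat\rho(ib)$ in the same four-piece telescoping sum as in the proof of Lemma~\ref{lem:small}(b):
\begin{align*}
\hat\rho(i(b+h))-\hat\rho(ib)
&= [\hat J(i(b+h)) - \hat J(ib)] \\
&\quad + \int_Y [\hat T(i(b+h)) - \hat T(ib)] v_{i(b+h)}\, w_{i(b+h)}\,d\mu\\
&\quad + \int_Y \hat T(ib)[v_{i(b+h)} - v_{ib}]\, w_{i(b+h)}\,d\mu\\
&\quad + \int_Y \hat T(ib) v_{ib}\,[w_{i(b+h)} - w_{ib}]\,d\mu.
\end{align*}
Bound the first summand by Proposition~\ref{prop:hatJ}(b), the second by Lemma~\ref{lem:TcontA} ($\ll b^{-2\beta}h^{\beta-\eps}$), and the last two by Lemma~\ref{lem:T} combined with the Lipschitz estimates in Proposition~\ref{prop:v} ($\ll b^{-\beta}h$). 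Since $b<\delta$ and $h<b$, the term $b^{-2\beta}h^{\beta-\eps}$ dominates, giving the claimed bound.

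\medskip
For part~(c): first replace $v_{ib}, w_{ib}$ by $v_0,w_0$ at the cost of an error. The difference
$|\hat T(ib) v_{ib}\,w_{ib} - \hat T(ib) v_0\,w_0|_{L^1(Y)}$ is bounded, using Lemma~\ref{lem:T} and Proposition~\ref{prop:v}, by $\ll b^{-\beta}\cdot b\cdot\|v\|_{\cB(\tY)}|w|_{L^\infty(\tY)} = b^{1-\beta}\|v\|_{\cB(\tY)}|w|_{L^\infty(\tY)}$, which is bounded for $b<\delta$. Combining with the bound on $\hat J(ib)$ gives
\[
\hat\rho(ib) = \int_Y \hat T(ib) v_0\, w_0\,d\mu + O(\|v\|_{\cB(\tY)}|w|_{L^\infty(\tY)}).
\]
Next, apply Lemma~\ref{lem:T-higherorderA} to write $\hat T(ib) = c_0 b^{-\beta} P(0) + E(ib)$ with $\|E(ib)\|_{\cB(Y)\to L^1(Y)} \ll b^{-(2\beta-\beta_+)}$. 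Using $\int_Y P(0)v_0\,w_0\,d\mu = \int_{\tY}v\,d\mu^\tau\int_{\tY}w\,d\mu^\tau$, this yields
\[
\hat\rho(ib) = c_0 b^{-\beta} \int_{\tY}v\,d\mu^\tau\int_{\tY}w\,d\mu^\tau + O\bigl(b^{-(2\beta-\beta_+)}\|v\|_{\cB(\tY)}|w|_{L^\infty(\tY)}\bigr),
\]
where we have absorbed the $O(1)$ terms into the larger $b^{-(2\beta-\beta_+)}$ error (valid since $2\beta-\beta_+>0$ and $b<\delta$). Multiplying by $e^{ibt}$ and integrating $b$ from $0$ to $a/t$, the error integrates to
\[
\int_0^{a/t} b^{-(2\beta-\beta_+)}\,db \ll (a/t)^{1-(2\beta-\beta_+)} = (a/t)^{1-2\beta+\beta_+},
\]
which is finite since $2\beta-\beta_+<1$ (as $\beta_+>\beta>2\beta-1$). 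This is the claimed error, and the main term produces exactly the advertised integral.

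\medskip
The only nontrivial check along the way is that $(a/t)^{1-2\beta+\beta_+}$ actually majorises the $O((a/t))$ contribution from the $v_s\to v_0$, $w_s\to w_0$ substitution; this holds because $1-2\beta+\beta_+ < 1$ and $a/t<1$. No Dolgopyat-type input enters, so I expect the entire argument to be essentially bookkeeping; the one point requiring genuine care is the integrability of $b^{-(2\beta-\beta_+)}$ at the origin, which is exactly the reason the hypothesis $\beta_+\in(\beta,1)$ suffices and why the rate $\kappa=\beta(1-2\beta+\beta_+)/\beta_+$ appears later in Theorem~\ref{thm:rateA}.
\end{pfof}
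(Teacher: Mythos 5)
Your proposal is correct and follows exactly the route the paper takes: the paper's own proof of this lemma simply says ``the proofs proceed exactly as before,'' meaning the argument of Lemma~\ref{lem:small} with Lemmas~\ref{lem:TcontA} and~\ref{lem:T-higherorderA} substituted for Lemmas~\ref{lem:Tcont} and~\ref{lem:T-higherorder}, which is precisely what you have written out. Your bookkeeping of the dominant error terms (in particular that $2\beta-\beta_+\in(0,1)$ so the $O(1)$ and $O(a/t)$ contributions are absorbed) is accurate.
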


\begin{proof}
The proofs proceed exactly as before.
\end{proof}

\begin{lemma} \label{lem:DTA}
For all $a\ge1$, $t>(a+\pi)/\delta$, $v\in \cB(\tY)$, $w\in L^\infty(\tY)$,
\[
\Big|\int_{a/t}^{\delta}  e^{ibt}\hat\rho(ib)\,db\Big|
\le C t^{-(1-\beta-\eps)}a^{-(2\beta-1)} \|v\|_{\cB(\tY)}|w|_{L^\infty(\tY)}.
\]
\end{lemma}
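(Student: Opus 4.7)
The plan is to follow the proof of Lemma~\ref{lem:DT} essentially verbatim, substituting Lemma~\ref{lem:smallA} for Lemma~\ref{lem:small}. Writing $I$ for the integral of interest and using $e^{i\pi}=-1$, I would perform the shift
\[
I=\int_{a/t}^{\delta} e^{ibt}\hat\rho(ib)\,db
=-\int_{(a+\pi)/t}^{\delta+\pi/t} e^{ibt}\hat\rho(i(b-\pi/t))\,db,
\]
and average the two expressions to obtain $2I=I_1+I_2+I_3$, where $I_1$ and $I_2$ are the boundary integrals over $[\delta,\delta+\pi/t]$ and $[a/t,(a+\pi)/t]$ respectively, and
\[
I_3=\int_{(a+\pi)/t}^{\delta} e^{ibt}\bigl(\hat\rho(ib)-\hat\rho(i(b-\pi/t))\bigr)\,db.
\]

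The boundary terms are handled by Lemma~\ref{lem:smallA}(a). Suppressing the factor $\|v\|_{\cB(\tY)}|w|_{L^\infty(\tY)}$, we have $|I_1|\ll t^{-1}$ trivially, while $|I_2|\ll \int_{a/t}^{(a+\pi)/t}b^{-\beta}\,db\ll t^{-(1-\beta)}a^{-\beta}$. For $I_3$ I would apply Lemma~\ref{lem:smallA}(b) with $h=\pi/t$: this is the crucial place where the proof diverges from that of Lemma~\ref{lem:DT}, since under (A1) the estimate has only the single term $b^{-2\beta}h^{\beta-\eps}$ (whereas Lemma~\ref{lem:small}(b) under (H1) carries the extra term $b^{-\beta}h^{\beta-\eps}$). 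This yields
\[
|I_3|\ll t^{-(\beta-\eps)}\int_{a/t}^{\infty}b^{-2\beta}\,db
\ll t^{-(\beta-\eps)}(a/t)^{1-2\beta}=t^{-(1-\beta-\eps)}a^{-(2\beta-1)}.
\]

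Finally I would reconcile the three contributions against the claimed bound $t^{-(1-\beta-\eps)}a^{-(2\beta-1)}$. Since $a\ge 1$ and $\beta<1$, we have $a^{-\beta}\le a^{-(2\beta-1)}$, so $I_2$ is absorbed. Since $a\le \delta t$, a short check shows $t^{-1}\ll t^{-(1-\beta-\eps)}a^{-(2\beta-1)}$, so $I_1$ is absorbed as well, and $I_3$ realises the stated bound. I do not anticipate any serious obstacles here: the only subtlety is tracking the loss of the $b^{-\beta}h^{\beta-\eps}$ term, which is precisely what forces the $\eps$-loss (i.e.\ $t^{-(1-\beta-\eps)}$ in place of $t^{-(1-\beta)}$) in the statement of Lemma~\ref{lem:DTA} as compared to Lemma~\ref{lem:DT}.
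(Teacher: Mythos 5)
Your proposal is correct and follows essentially the same argument as the paper: the paper's proof of Lemma~\ref{lem:DTA} reuses the decomposition $2I=I_1+I_2+I_3$ from Lemma~\ref{lem:DT} verbatim, keeps the estimates for $I_1$ and $I_2$, and bounds $I_3$ via Lemma~\ref{lem:smallA}(b) with $h=\pi/t$, obtaining exactly your bound $t^{-(\beta-\eps)}\int_{a/t}^\infty b^{-2\beta}\,db\ll t^{-(1-\beta-\eps)}a^{-(2\beta-1)}$. Your additional check that $I_1$ and $I_2$ are absorbed into the stated bound is a correct (and welcome) detail that the paper leaves implicit.
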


\begin{proof} 
We use the same decomposition 
$2I=I_1 + I_2 +  I_3$ as in the proof of Lemma~\ref{lem:DT}.
The estimates for $I_1$ and $I_2$ are unchanged.
By Lemma~\ref{lem:smallA}(b) with $h=\pi/t$,
\begin{align*}
|I_3| & \ll 
t^{-(\beta-\eps)}\int_{a/t}^\infty b^{-2\beta}\,db
\ll
t^{-(1-\beta-\eps)}a^{-(2\beta-1)},
\end{align*}
as required.
\end{proof}

\begin{pfof}{Theorem~\ref{thm:rateA}}
We argue as in the proof of Theorem~\ref{thm:rate}.
By Lemmas~\ref{lem:smallA}(c),~\ref{lem:DTA} and~\ref{lem:infty} (which is unchanged), 
\begin{align*}
\int_0^\infty e^{ibt}\hat \rho(ib)\,db-c_0'  t^{-(1-\beta)} 
& \ll (a/t)^{1-2\beta+\beta_+} +t^{-(1-\beta-\eps)}a^{-(2\beta-1)}.
\end{align*}
The result follows with $a=t^{(\beta_+-\beta)/\beta_+}$.
\end{pfof}

\section{Piecewise $C^{1+\eta}$ semiflows satisfying UNI}
\label{sec:UNI}

As mentioned in the introduction, exponential decay of correlations has been verified for a very restricted class of continuous time dynamical systems, whereas superpolynomial decay of correlations is better understood.   Hence, we have chosen in this paper to focus on the dynamical systems that are amenable to superpolynomial-type techniques (modulo the nonintegrability of the roof functions $\tau$ and $\varphi$).  The price we pay for this extra generality is the smoothness requirements on the observable $w$.

In this section, we consider the more restrictive class of dynamical systems for which exponential-type techniques are available.  Consequently, 
the smoothness assumptions on $w$ are relaxed.

We continue to assume the tail conditions on $\tau:Y\to\R^+$, $\varphi:Z\to\R^+$ and $\sigma:Z\to\Z^+$ as well as assumption (H1).
 As usual we require that $\inf\varphi\ge2$.

Assumption (H2) is replaced by the following:
Fix $\eta\in(0,1]$.    Let $\{(c_m,d_m)\}$ be a countable partition mod $0$
of $Z=[0,1]$ and 
suppose that $G:Z\to Z$ is $C^{1+\eta}$ on each subinterval $(c_m,d_m)$ and extends to a homeomorphism from $[c_m,d_m]$ onto $Z$.
(In~\cite{AraujoMapp}, the map is denoted by $F$ and is $C^{1+\alpha}$ but it is convenient here to use $\eta$ instead of $\alpha$.)
Suppose that 
$\sigma$ is constant on partition elements of $Z$ and that
the roof function $\varphi:Z\to\R^+$ is $C^1$ on partition elements.

Let $\cG_n$ denote the set of inverse branches for $G^n$ and write $\cG=\cG_1$.
We assume that there are constants $C>0$, $\rho_0\in(0,1)$ such that
\begin{itemize}
\item $|g'|_\infty\le C\rho_0^n$ for all $g\in\cG_n$,
\item $|\log|g'||_\eta\le C$ for all $g\in\cG$,
\item $|(\varphi\circ g)'|_\infty \le C$ for all $g\in\cG$.
\end{itemize}
These correspond to conditions (i)--(iii) from~\cite{AraujoMapp}.
In place of (H3), we require that 
\begin{itemize} 
\item $|\tau_\ell\circ g|_\eta \le C|\varphi\circ g|_\infty$
for all $\ell\le\sigma\circ g$ and all $g\in\cG$.
\end{itemize}
Finally, (H4) is replaced by the uniform nonintegrability condition
\begin{itemize}
\item[(UNI)]  For all $n_0\ge1$, there exists $n\ge n_0$ and $g_1,g_2\in\cG_n$ such that $\psi=
\varphi_n\circ g_1- \varphi_n\circ g_2$ satisfies
$\inf |\psi'|>0$.
(Here, $\varphi_n=\sum_{j=0}^{n-1}\varphi\circ G^j$.)
\end{itemize}

The main result in this section is:

\begin{theorem} \label{thm:UNI}
Under the above condition, Theorems~\ref{thm:rate} holds with $m=2$.
\end{theorem}

\begin{rmk}  
The assumption that $G$ is a one-dimensional map can be relaxed somewhat.  We have used the 
set-up in~\cite{AraujoMapp} which extends results of~\cite{BalVal} from $C^2$ maps to $C^{1+\alpha}$ maps.   In both references, the map $G$ is one-dimensional.
Higher-dimensional $C^2$ maps are treated in~\cite{AGY} but under very restrictive smoothness assumptions on the boundaries of the partition elements, and this also extends to $C^{1+\alpha}$ maps~\cite{BW}.  Hence Theorem~\ref{thm:UNI} can be shown to hold for semiflows over $C^{1+\alpha}$ piecewise expanding maps in arbitrary dimensions, subject to this restriction on the partition elements.
\end{rmk}

The proof of Theorem~\ref{thm:UNI} occupies the remainder of the section.
First, we define the family of equivalent norms on $C^\eta(Z)$:
\[
\|v\|_b=\max\{|v|_\infty,|v|_\eta/(1+|b|^\eta)\}, \quad b\in\R.
\]
If $L$ is an operator on $C^\eta(Z)$, we write
$\|L\|_b=\sup_{v\in C^\eta,\,\|v\|_b=1}\|Lv\|_b$.

Let $R^G$ denote the transfer operator corresponding to $G:Z\to Z$,
and define $\hat R^G(s)v=R^G(e^{-s\varphi}v)$.

The are various families of transfer operators 
$P_s$, $L_s$ and $Q_s$ in~\cite{AraujoMapp}.
We are only interested in the range $s\in\barH$.
Moreover, we are particularly interested in $s=ib$ imaginary.
It is easily checked that $L_{ib}=Q_{ib}=\hat R^G(ib)$.
In what follows, we use the notation $\hat R^G(ib)$.
Also, the operator $P_s$ in~\cite{AraujoMapp} satisfies 
$P_s=f_0\hat R^G(ib)f_0^{-1}$ where $f_0$, $f_0^{-1}\in C^\eta$.

The estimates in~\cite{AraujoMapp} are carried out for $\Re s\ge-\eps$; 
it is easy to check that the estimates for $s\in\barH$ hold even though
condition~(iv) from~\cite{AraujoMapp} is not assumed.  

\begin{prop} \label{prop:bound}
There exists $C>0$ such that
$\|\hat R^G(ib)^n\|_b\le C$ for all $b\in\R$, $n\ge1$.
\end{prop}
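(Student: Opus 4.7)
The plan is to derive the bound by a direct Lasota--Yorke-type computation on the iterated operator in the $b$-dependent norm $\|\cdot\|_b$; this is in essence the content of the Lasota--Yorke estimates of \cite{AraujoMapp} specialised to the imaginary axis, but since the paper only needs $s=ib$ (and even allows dropping the authors' condition~(iv)), a direct self-contained argument is cleanest. The starting point is the iterated formula
\[
(\hat R^G(ib))^n v(z)=\sum_{g\in\cG_n}|g'(z)|\,e^{-ib\varphi_n(g(z))}\,v(g(z)).
\]
The $L^\infty$ bound $|(\hat R^G(ib))^n v|_\infty\le|v|_\infty$ is immediate, because the twist factor has modulus one and the untwisted transfer operator $R^G$ is positive with $R^G 1=1$, so $|(\hat R^G(ib))^n v|\le (R^G)^n|v|\le|v|_\infty$ pointwise.

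For the H\"older seminorm I would decompose $(\hat R^G(ib))^n v(z)-(\hat R^G(ib))^n v(z')$ in the standard way into three terms corresponding to the differences $|g'(z)|-|g'(z')|$, $e^{-ib\varphi_n(g(z))}-e^{-ib\varphi_n(g(z'))}$, and $v(g(z))-v(g(z'))$. The first term is controlled by $C|v|_\infty|z-z'|^\eta$ using the hypothesis $|\log|g'||_\eta\le C$ for $g\in\cG$, distortion-iterated to $\cG_n$; the third term gives $C\rho_0^{n\eta}|v|_\eta|z-z'|^\eta$ via $|g'|_\infty\le C\rho_0^n$. For the middle (twist) term, the interpolation $|e^{i\theta_1}-e^{i\theta_2}|\le 2^{1-\eta}|\theta_1-\theta_2|^\eta$ converts matters to a bound of the form $C|b|^\eta|v|_\infty|z-z'|^\eta$, provided one controls $|(\varphi_n\circ g)'|_\infty$ uniformly in $n$ and $g\in\cG_n$.

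The main obstacle is precisely this uniform bound on $|(\varphi_n\circ g)'|_\infty$. Writing $g=g_n\circ\cdots\circ g_1$ with $g_j\in\cG$ and noting that $G^j\circ g=g_{n-j}\circ\cdots\circ g_1=:h_j\in\cG_{n-j}$,
\[
(\varphi_n\circ g)'(z)=\sum_{j=0}^{n-1}(\varphi\circ h_j)'(z)
=\sum_{j=0}^{n-1}(\varphi\circ g_{n-j})'(\tilde h_j(z))\,\tilde h_j'(z),
\]
where $\tilde h_j=g_{n-j-1}\circ\cdots\circ g_1\in\cG_{n-j-1}$ (with the empty composition understood as the identity when $j=n-1$). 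The hypotheses $|(\varphi\circ g_{n-j})'|_\infty\le C$ and $|\tilde h_j'|_\infty\le C\rho_0^{n-j-1}$ collapse the sum into a geometric series bounded independently of $n$ and of the chosen branch.

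Combining the three pieces yields $|(\hat R^G(ib))^nv|_\eta\le (C+C'|b|^\eta)|v|_\infty+C\rho_0^{n\eta}|v|_\eta$. Dividing by $1+|b|^\eta$ and invoking the definition of $\|\cdot\|_b$ produces $|(\hat R^G(ib))^nv|_\eta/(1+|b|^\eta)\le (C+C')\|v\|_b+\rho_0^{n\eta}\|v\|_b$, which together with the $L^\infty$ bound gives $\|(\hat R^G(ib))^nv\|_b\le C''\|v\|_b$, uniformly in $n\ge 1$ and $b\in\R$, as required.
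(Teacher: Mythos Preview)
Your argument is essentially correct and is exactly the Lasota--Yorke computation behind \cite[Corollary~2.8]{AraujoMapp}, which is all the paper invokes for its proof. So in substance you have simply unpacked the citation.

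One notational point to clean up: the pointwise formula you write, with weight $|g'(z)|$, is that of the \emph{unnormalized} transfer operator $P(ib)$ (with respect to Lebesgue), whereas the property $R^G1=1$ you use for the $L^\infty$ bound holds for the \emph{normalized} operator (with respect to the acip $\mu_Z$). In this paper's conventions $\hat R^G(ib)=f_0^{-1}P(ib)f_0$ with $f_0,f_0^{-1}\in C^\eta$ (see the end of the proof of Proposition~\ref{prop:DR}). So either carry out the three-term estimate for $P(ib)^n$ and conjugate by $f_0$ at the end (paying a fixed $C^\eta$ constant), or replace $|g'|$ by the Gibbs--Markov Jacobian $\xi_n$ throughout and use the distortion bound~\eqref{eq:GM} in place of $|\log|g'||_\eta\le C$. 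Either way the argument goes through unchanged; the uniform bound on $|(\varphi_n\circ g)'|_\infty$ via the geometric series, which you correctly identify as the crux, is unaffected.
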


\begin{proof}
See~\cite[Corollary~2.8]{AraujoMapp}.
\end{proof}

\begin{lemma} \label{lem:AM}
There exist constants $A,D>0$ and $\gamma\in(0,1)$ such that
\[
\|\hat R^G(ib)^n\|_b\le \gamma^n\;
\text{for all $n\ge A\ln b$, $b\ge D$}.
\]
\end{lemma}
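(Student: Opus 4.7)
The plan is to reduce the statement directly to the Dolgopyat-type estimate of \cite{AraujoMapp}, whose hypotheses are essentially identical to the assumptions imposed in this section. First I would verify that the operator $\hat R^G(ib)$ coincides (for purely imaginary argument) with the operator $L_{ib}=Q_{ib}$ studied in \cite{AraujoMapp}, as was already noted before Proposition~\ref{prop:bound}. The expansion bound $|g'|_\infty\le C\rho_0^n$, the H\"older distortion bound $|\log|g'||_\eta\le C$ and the roof regularity $|(\varphi\circ g)'|_\infty\le C$ are precisely conditions (i)--(iii) of \cite{AraujoMapp}, and (UNI) is their uniform nonintegrability hypothesis verbatim. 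Condition (iv) of \cite{AraujoMapp} is used only for $\Re s<0$, so it is irrelevant on $\barH$.

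Next I would invoke the main Dolgopyat estimate of \cite{AraujoMapp}, which under exactly this setup delivers $A,D>0$ and $\gamma\in(0,1)$ such that $\|\hat R^G(ib)^n\|_b\le \gamma^n$ for $n\ge A\ln b$, $b\ge D$. For the reader, I would sketch the mechanism in one paragraph: writing
\[
(\hat R^G(ib))^n v \;=\; \sum_{g\in\cG_n} e^{-ib\,\varphi_n\circ g}\,|g'|\,v\circ g,
\]
the UNI hypothesis provides a pair $g_1,g_2\in\cG_n$ with $\inf|\psi'|>0$ where $\psi=\varphi_n\circ g_1-\varphi_n\circ g_2$, so the two associated complex exponentials oscillate at different rates and produce pointwise cancellation on scale $1/b$. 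Partitioning $Z$ into intervals $J$ of length comparable to $1/b$ and incorporating this cancellation into cutoff weights, one builds a Dolgopyat operator $\mathcal M_{J,b}$ that dominates $|\hat R^G(ib)^n v|$ in an $L^2$ sense and contracts geometrically in $L^2$, i.e.\ $\int(\mathcal M_{J,b}^k 1)^2\,dx \le \theta^{2k}$ with $\theta<1$ uniform in $b$. Combining this $L^2$-contraction with the uniform bound from Proposition~\ref{prop:bound} and with the equivalence of $\|\cdot\|_b$-balls and the $L^2$-geometry at scale $1/b$ converts geometric $L^2$-decay into geometric $\|\cdot\|_b$-decay, at the cost of a logarithmic-in-$b$ burn-in period; this burn-in is exactly the factor $A\ln b$ in the statement.

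The main obstacle, and the reason I would rely on \cite{AraujoMapp} rather than recompute, is that the partition $\{(c_m,d_m)\}$ is only countable and $\varphi$ is unbounded, so the construction of the Dolgopyat operator and the $L^2$-contraction estimate require some care to control the Perron--Frobenius sum over infinitely many branches. The hypotheses $|g'|_\infty\le C\rho_0^n$ and $|(\varphi\circ g)'|_\infty\le C$ are precisely what make the sums $\sum_{g\in\cG_n}|g'|_\infty$ and the oscillatory term $|b(\varphi\circ g)'|$ manageable uniformly in $g$, and these are built into the verification of \cite{AraujoMapp}'s framework.

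Finally I would remark that the condition $|\tau_\ell\circ g|_\eta\le C|\varphi\circ g|_\infty$ is not used at this stage; it is only needed later when lifting estimates from $Z^\varphi$ back to $Y^\tau$ (the analogue of Lemma~\ref{lem:ABEt}), and thus plays no role in establishing Lemma~\ref{lem:AM} itself.
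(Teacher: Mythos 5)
Your proposal is correct and takes essentially the same route as the paper: the paper's entire proof is the citation "This is contained in the second paragraph of the proof of [Ara\'ujo--Melbourne, Theorem~2.16]", with the verification of conditions (i)--(iii), the identification $L_{ib}=Q_{ib}=\hat R^G(ib)$, and the observation that condition (iv) is not needed on $\barH$ all carried out in the surrounding text exactly as you describe. Your added sketch of the Dolgopyat-operator mechanism is accurate expository material but not a different argument.
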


\begin{proof}  This is contained in the second paragraph of the proof
of~\cite[Theorem~2.16]{AraujoMapp}.~
\end{proof}

\begin{cor} \label{cor:AM}
For any $\delta>0$,
there exists $C>0$ such that
\[
\|(I-\hat R^G(ib))^{-1}\|_b\le C\ln b\;
\text{for all $b>\delta$}.
\]
\end{cor}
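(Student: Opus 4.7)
The plan is to represent $(I - \hat R^G(ib))^{-1}$ as a Neumann series and split it at a cutoff $N \approx A\ln b$ given by Lemma~\ref{lem:AM}, using the crude uniform bound of Proposition~\ref{prop:bound} below the cutoff and the geometric tail estimate of Lemma~\ref{lem:AM} above it.

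Concretely, for $b \ge D$ (with $A, D, \gamma$ as in Lemma~\ref{lem:AM}), I would set $N = \lceil A\ln b \rceil$ and write
\[
(I - \hat R^G(ib))^{-1} \;=\; \sum_{n=0}^{N-1} \hat R^G(ib)^n \;+\; \sum_{n=N}^{\infty} \hat R^G(ib)^n.
\]
Proposition~\ref{prop:bound} bounds each of the first $N$ terms in $\|\cdot\|_b$ by a single constant independent of $b$ and $n$, so the initial block contributes $O(N) = O(\ln b)$. Lemma~\ref{lem:AM} controls the tail by the geometric series $\sum_{n \ge N} \gamma^n = \gamma^N/(1-\gamma)$, which is $O(1)$ (in fact $\le b^{A\ln\gamma}/(1-\gamma) \to 0$). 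Adding the two contributions yields $\|(I - \hat R^G(ib))^{-1}\|_b \ll \ln b$ on $\{b \ge D\}$.

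For the remaining compact range $\delta \le b \le D$, the task reduces to verifying that $(I - \hat R^G(ib))^{-1}$ is defined and has norm uniformly bounded in $b$. Taking $n \to \infty$ in Lemma~\ref{lem:AM} (or invoking the UNI-based aperiodicity from~\cite{AraujoMapp}) shows that $\hat R^G(ib)$ has spectral radius strictly less than $1$ for each $b > 0$, so the inverse exists. The map $b \mapsto \hat R^G(ib)$ is continuous into bounded operators on $C^\eta(Z)$, and the norms $\|\cdot\|_b$ are uniformly equivalent to the fixed norm $\|\cdot\|_{C^\eta}$ on the compact interval $[\delta, D]$, so $\|(I - \hat R^G(ib))^{-1}\|_b$ is uniformly bounded there; this uniform bound is absorbed into $C\ln b$ after enlarging $C$ (adjusting, if necessary, by working with $\ln(2+b)$ or simply by noting that only large $b$ is informative in the eventual application).

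The main obstacle is mild. The argument for $b \ge D$ is essentially bookkeeping given Proposition~\ref{prop:bound} and Lemma~\ref{lem:AM}. The only nontrivial input required on the compact range $[\delta, D]$ is the standard aperiodicity of $\hat R^G(ib)$ for $b \ne 0$, which is already embedded in the spectral analysis of~\cite{AraujoMapp} under (UNI); beyond this, continuity and compactness complete the argument.
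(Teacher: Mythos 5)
Your proof is correct and follows essentially the same route as the paper's: a cutoff at $m\approx A\ln b$ combining Proposition~\ref{prop:bound} for the first $O(\ln b)$ terms with the geometric decay of Lemma~\ref{lem:AM} beyond it (the paper packages this as the identity $(I-\hat R^G)^{-1}=(I+\hat R^G+\dots+(\hat R^G)^{m-1})(I-(\hat R^G)^m)^{-1}$ rather than a head-plus-tail Neumann series, which is the same computation), followed by aperiodicity, continuity and compactness on $[\delta,D]$. The only slip is that ``taking $n\to\infty$ in Lemma~\ref{lem:AM}'' does not apply on $[\delta,D)$ since that lemma is stated only for $b\ge D$; your alternative of invoking the aperiodicity statement of~\cite{AraujoMapp} (the paper cites the proof of Lemma~2.22 there to get $1\not\in\spec\hat R^G(ib)$ for $b>0$) is the correct and intended route.
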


\begin{proof}  
Let $A,D$ be as in Lemma~\ref{lem:AM}, increased if necessary so that 
$A\ln D\ge1$.

First suppose that $b\ge D$.
Let $m=m(b)=\lceil A\ln b\rceil\ge1$.
By Lemma~\ref{lem:AM}, $\|\hat R^G(ib)^m\|_b\le\gamma^m\le\gamma$ and so
$\|(I-\hat R^G(ib)^m)^{-1}\|_b\le (1-\gamma)^{-1}$.

Next we use the identity
$(I-\hat R^G)^{-1}=(I+\hat R^G+\dots+(\hat R^G)^{m-1})(I-(\hat R^G)^m)^{-1}$
and Proposition~\ref{prop:bound} to conclude that
$\|(I-\hat R^G(ib))^{-1}\|_b\le mC(1-\gamma)^{-1}\ll \ln b$.

To deal with the range $\delta<b<D$, we note by the proof of~\cite[Lemma~2.22]{AraujoMapp} that $1\not\in\spec \hat R^G(ib)$ for any $b>0$.
Hence by continuity of the family $b\mapsto\hat R^G(ib)$ and compactness of
$[\delta,D]$ it follows that there is a constant $C>0$ such that
$\|(I-\hat R^G(ib))^{-1}\|_b\le C$ for all $b\in[\delta,D]$.
\end{proof}

\begin{prop}   \label{prop:DR}
Let $\eps\in(0,\beta)$.  There exists $C>0$ such that
$\|\hat R^G(i(b+h))-\hat R^G(ib)\|_b\le Ch^{\beta-\eps}$ for all
$b,h>0$.
\end{prop}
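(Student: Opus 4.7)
}
The starting identity is
$(\hat R^G(i(b+h))-\hat R^G(ib))v = R^G\bigl(e^{-ib\varphi}(e^{-ih\varphi}-1)v\bigr),$
and I will bound $|\,\cdot\,|_\infty$ and $|\,\cdot\,|_\eta/(1+b^\eta)$ of this function separately by $Ch^{\beta-\eps}\|v\|_b$. The $L^\infty$ estimate is immediate from the interpolation $|e^{ix}-1|\le 2|x|^{\beta-\eps}$ (obtained from $|e^{ix}-1|\le 2$ and $|e^{ix}-1|\le|x|$): one gets
$\bigl|(\hat R^G(i(b+h))-\hat R^G(ib))v\bigr|_\infty \le 2h^{\beta-\eps}\bigl|R^G(\varphi^{\beta-\eps})\bigr|_\infty |v|_\infty,$
and $|R^G(\varphi^{\beta-\eps})|_\infty<\infty$ is a standard Gibbs--Markov estimate (cf.\ Remark~\ref{rmk:GM} and Proposition~\ref{prop:rhophi}) that uses the tail conditions on $\tau$ and $\sigma$ together with bounded distortion of $G$ and $(\varphi\circ g)'$.

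For the H\"older seminorm I expand via the inverse branches,
$((\hat R^G(i(b+h))-\hat R^G(ib))v)(z) = \textstyle\sum_{g\in\cG}u_g(z),\quad u_g(z)=|g'(z)|e^{-ib\varphi(gz)}\bigl(e^{-ih\varphi(gz)}-1\bigr)v(gz),$
and estimate $|u_g(z_1)-u_g(z_2)|$ by a four-term telescoping through the factors $|g'|,\,e^{-ib\varphi\circ g},\,e^{-ih\varphi\circ g}-1,\,v\circ g$. The individual oscillations I need are: (i) $||g'(z_1)|-|g'(z_2)||\le C|g'|_\infty d(z_1,z_2)^\eta$ from $|\log|g'||_\eta\le C$; (ii) interpolating $|\,\cdot\,|\le 2$ with $\le b|(\varphi\circ g)'|_\infty d(z_1,z_2)$ at exponent $\eta$ gives $|e^{-ib\varphi(gz_1)}-e^{-ib\varphi(gz_2)}|\le Cb^\eta d(z_1,z_2)^\eta$; (iii) interpolating $|e^{-ih\varphi\circ g}-1|\le 2(h\varphi)^{\beta-\eps}$ with the Lipschitz bound of size $Ch d(z_1,z_2)$ at exponent~$\eta$ produces $Ch^{(\beta-\eps)(1-\eta)+\eta}\varphi(gz)^{(\beta-\eps)(1-\eta)}d(z_1,z_2)^\eta \le Ch^{\beta-\eps}\varphi(gz)^{\beta-\eps}d(z_1,z_2)^\eta$ (for $h\le 1$, using $\varphi\ge 1$); (iv) $|v(gz_1)-v(gz_2)|\le|v|_\eta d(z_1,z_2)^\eta$.

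Assembling the four contributions, the only $b$-dependence enters through~(ii), and
$|u_g(z_1)-u_g(z_2)|\le C|g'|_\infty(1+b^\eta)h^{\beta-\eps}\varphi(gz)^{\beta-\eps}d(z_1,z_2)^\eta\bigl(|v|_\infty+|v|_\eta/(1+b^\eta)\bigr).$
Summing over $g$ and invoking $\sum_g|g'|_\infty\sup_{\text{branch}}\varphi^{\beta-\eps}<\infty$ (the same summability used for the $L^\infty$ part), this gives $|(\hat R^G(i(b+h))-\hat R^G(ib))v|_\eta\le C(1+b^\eta)h^{\beta-\eps}\|v\|_b$. Dividing by $(1+b^\eta)$ and combining with the $L^\infty$ bound settles $h\in(0,1]$; for $h\ge 1$, one uses Proposition~\ref{prop:bound} together with the comparison $\|\cdot\|_b\le(1+h^\eta)\|\cdot\|_{b+h}$ and (if necessary) a choice of $\eta\le\beta-\eps$, which may be arranged by replacing $\eta$ with a smaller exponent.

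\paragraph{Main obstacle.} The delicate step is the interpolation in~(iii): the resulting exponent of $h$ must be at least $\beta-\eps$, the spatial exponent must be exactly $\eta$, and the leftover power of $\varphi$ must remain summable against the branch weights. Interpolating at exponent $\eta$ simultaneously achieves all three, with the residual $\varphi^{(\beta-\eps)(1-\eta)}\le\varphi^{\beta-\eps}$ inheriting integrability from the tail assumptions. The good news is that the only $b$-enlarged factor comes from oscillating $e^{-ib\varphi\circ g}$ and yields precisely the $(1+b^\eta)$ growth that is cancelled by the normalisation of $\|\cdot\|_b$; this is exactly the reason for working with this family of equivalent norms.
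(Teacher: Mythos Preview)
Your argument is correct and follows the same overall strategy as the paper: estimate the contribution of each inverse branch separately, extract the factor $h^{\beta-\eps}$ from the oscillation of $e^{-ih\varphi\circ g}$, and sum against $\sum_g|g'|_\infty|\varphi\circ g|_\infty^{\beta-\eps}\ll\int_Z\varphi^{\beta-\eps}\,d\mu_Z<\infty$.

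The organisational differences are minor but worth noting. First, the formula you write for $\hat R^G$, with branch weights $|g'|$, is actually the formula for the \emph{unnormalised} operator $P(ib)$; the paper works with $P$ throughout and transfers the final estimate to $\hat R^G=f_0^{-1}Pf_0$ via the bounded multipliers $f_0,f_0^{-1}\in C^\eta$. Your estimates go through verbatim for $P$, so this is only a notational slip. Second, for the H\"older part the paper avoids your four-factor telescope by first quoting the bound $|A_g(b)v|_\eta\ll|g'|_\infty\{(1+|b|^\eta)|v|_\infty+|v|_\eta\}$ from~\cite[Proposition~2.5]{AraujoMapp}, then differentiating in $b$ (so $A_g'(b)=-i(\varphi\circ g)A_g(b)$) and interpolating via $\min\{1,|\varphi\circ g|_\infty h\}\le|\varphi\circ g|_\infty^{\beta-\eps}h^{\beta-\eps}$. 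This reuses existing machinery and is a bit cleaner, but your direct telescope and the interpolation in your step~(iii) achieve exactly the same bound. Your separate treatment of $h\ge1$ via Proposition~\ref{prop:bound} and a choice of small $\eta$ is a reasonable way to handle that range (which in any case is not the one needed downstream).
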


\begin{proof}  
The unnormalized transfer operator has the form
\[
\textstyle P(ib)=\sum_{g\in\cG} A_g(b),\quad  A_g(b)v=e^{-ib\varphi\circ g}|g'|v\circ g.
\]
Now,
\[
|A_g(b+h)v- A_g(b)v|_\infty\le h^{\beta-\eps}|\varphi^{\beta-\eps}\circ g|_\infty|g'|_\infty|v|_\infty.
\]
It follows from the assumptions on $G$ and $\varphi$, together with Proposition~\ref{prop:rhophi}, that $\sum_{g\in\cG}|\varphi^{\beta-\eps}\circ g|_\infty |g'|_\infty \ll \int_Z\varphi^{\beta-\eps}\,d\mu_Z<\infty$ and
so $|P(i(b+h))-P(ib)|_\infty\ll h^{\beta-\eps}$.

Next, the proof of~\cite[Proposition~2.5]{AraujoMapp} shows that
$|A_g(b)v|_\eta \ll |g'|_\infty\{(1+|b|^\eta)|v|_\infty +|v|_\eta\}$.
Also, $A_g'(b)=-i(\varphi\circ g) A_g(b)$, so
\begin{align*}
|A_g'(b)v|_\eta & \le |\varphi\circ g|_\infty
|A_g(b)v|_\eta
+|\varphi\circ g|_\eta|A_g(b)v|_\infty
\\ &
\ll |\varphi\circ g|_\infty|g'|_\infty\{(1+|b|^\eta)|v|_\infty +|v|_\eta\}.
\end{align*}
Therefore,
\begin{align*}
|(A_g(b+h)v)-(A_g(b)v)|_\eta
& \ll |g'|_\infty\{(1+|b|^\eta)|v|_\infty +|v|_\eta\}\min\{1,|\varphi\circ g|_\infty h\}
\\
& \le |g'|_\infty\{(1+|b|^\eta)|v|_\infty +|v|_\eta\}|\varphi\circ g|_\infty^{\beta-\eps} h^{\beta-\eps},
\end{align*}
and so
\[
|P(i(b+h))v-P(ib)v|_\eta \ll
\{(1+|b|^\eta)|v|_\infty +|v|_\eta\}h^{\beta-\eps}.
\]

It follows from these estimates that
$\|P(i(b+h))-P(ib)\|_b\le Ch^{\beta-\eps}$.
Finally, $\hat R^G(ib)=f_0^{-1} P(ib)f_0$
where $f_0,\,f_0^{-1}\in C^\eta$ and the result follows.
\end{proof}

\begin{prop} \label{prop:B}
$\| \hat B(ib)1_{\hY}\pi^*\|_{C^\eta(\tY)\to C^{\eps\eta}(\tZ)}
\le Cb^\eps$ and
$$
\|\hat B(i(b+h))1_{\hY}\pi^*-\hat B(ib)1_{\hY}\pi^*\|_{C^\eta(\tY)\to C^{\eps\eta}(\tZ)}\le C b^\eps h^{\beta-2\eps},
$$
for all $t_0>0$, $0<h<\delta<b$.
\end{prop}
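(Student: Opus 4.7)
The plan is to mirror the proof of Lemma~\ref{lem:ABEt}(c) in Subsection~\ref{sec:B}, replacing the Gibbs-Markov expansion of $\tilde R$ by the one induced by the piecewise $C^{1+\eta}$ structure of $G$. By Proposition~\ref{prop:hatB}, $\hat B(ib) = \tilde R\,\hat D(ib)$, so for $v\in C^\eta(\tY)$ with $\hat v = v\circ\pi$,
\[
(\hat B(ib)\hat v)(z,u) = \sum_{g\in\cG} |g'(z)|\,(f(b)v)(g(z), u), \qquad f(b)v = \hat D(ib)\hat v,
\]
with the same $f(b)$ as in Subsection~\ref{sec:B}.

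The first step is to re-establish the conclusions of Proposition~\ref{prop:fa}(a,b,e,f) in the current setting. That proof used only (H2)(iv), (H3), Proposition~\ref{prop:vb}, and the identity~\eqref{eq:hatY}; each has a direct substitute here. Namely, $\sigma$ is constant on partition elements (so $\sigma\circ g$ is constant on $Z$ for every $g\in\cG$); the hypothesis $|\tau_\ell\circ g|_\eta \le C|\varphi\circ g|_\infty$ for $\ell\le\sigma\circ g$ replaces (H3); and $|g'|_\infty\le C\rho_0$ controls $d_Z(g(z),g(z'))$ in place of (H2)(iv). Together these yield, for each $g\in\cG$, pointwise bounds of the shape $|(f(b)v)(g(z),u)| \ll (\sigma\circ g)|v|_\infty$, with $\eps\eta$-variation in $z$ bounded by $Cb^\eps(\sigma\circ g)|\varphi\circ g|_\infty^\eps\|v\|_{C^\eta(\tY)}$, and analogous estimates for $f(b+h)v-f(b)v$ carrying an additional factor $h^{\beta-\eps}$ (in sup) or $h^{\beta-2\eps}$ (in variation) and an extra $|\varphi\circ g|_\infty^{\beta-\eps}$, exactly as in Proposition~\ref{prop:fa}(c,d).

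I would then sum the branchwise estimates over $g\in\cG$. For the $L^\infty$ term I would use
\[
\sum_{g\in\cG}|g'|_\infty\,|\varphi\circ g|_\infty^{\beta-\eps}\,(\sigma\circ g) \ll \int_Z \sigma\,\varphi^{\beta-\eps}\,d\mu_Z < \infty,
\]
which follows from bounded distortion together with Proposition~\ref{prop:rhophi}. For the $C^{\eps\eta}$ seminorm, the distortion bound $|\log|g'||_\eta\le C$ controls the variation of $|g'(z)|$ in $z$ by a uniformly bounded multiplicative factor, so summing the branchwise H\"older bounds against the same summable weight produces the required estimate on $\|\hat B(ib)\hat v\|_{C^{\eps\eta}(\tZ)}$, and likewise for the difference.

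The only delicate point — not a genuine obstacle — is the choice of exponent $\eps\eta$ rather than $\eta$ on the target: the factor $e^{-ib(\varphi(z)+u)}$ inside $\hat D(ib)$ has $\eta$-Hölder seminorm of order $b^\eta$, and interpolating with the uniform bound gives a seminorm of order $b^\eps$ in $C^{\eps\eta}$. This interpolation is already carried out inside Proposition~\ref{prop:fa}; once it is inherited, both asserted estimates for $\hat B$ follow exactly as in the proof of Lemma~\ref{lem:ABEt}(c).
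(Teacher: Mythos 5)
Your proposal is correct and follows essentially the same route as the paper: the paper's own proof is a two-line remark stating that the argument of Lemma~\ref{lem:ABEt}(c) goes through verbatim once the Gibbs--Markov distortion bound $|\xi(z)-\xi(z')|\ll\mu(a)d_Z(Gz,Gz')^\eta$ is replaced by $|g'z-g'z'|\ll|g'z||z-z'|^\eta$ (from $|\log|g'||_\eta\le C$), which is exactly the substitution you identify, together with the same branchwise estimates from Proposition~\ref{prop:fa} and the summability from Proposition~\ref{prop:rhophi}.
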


\begin{proof}
This is almost identical to the argument in Section~\ref{sec:large}.
The only difference is that in the final arguments in the proof of
Lemma~\ref{lem:ABEt}(c), we have $g'=p\circ g$ and the estimate
$|\xi(z)-\xi(z')|\ll \mu(a)d_Z(Gz,Gz')$ for $a\in\alpha$, $z,z'\in\alpha$
is replaced by 
$|g'z-g'z'|\ll |g'z||z-z'|^\eta$ which holds for all $z,z'\in Z$.
\end{proof}

\begin{pfof}{Theorem~\ref{thm:UNI}}
It follows from Corollary~\ref{cor:AM} and Proposition~\ref{prop:DR}
(with $\eta$ replaced by $\eps\eta$) that
\[
\|(I-\hat R^G(i(b+h)))^{-1}-
(I-\hat R^G(ib))^{-1}\|_b\ll (\ln b)^2h^{\beta-\eps}.
\]
Since $|v|_\infty\le \|v\|_b\le \|v\|_\eta$,
\[
\|(I-\hat R^G(i(b+h)))^{-1}-
(I-\hat R^G(ib))^{-1}\|_{C^{\eps\eta}(Z)\to L^\infty(Z)}\ll (\ln b)^2h^{\beta-\eps}.
\]
Similarly,
$\|(I-\hat R^G(i(b+h)))^{-1}\|_{C^{\eps\eta}(Z)\to L^\infty(Z)}\ll \ln b$.
As in~\cite{MT17}, we obtain that
\[
\|\hat T^G(ib)\|_{C^{\eps\eta}(\tilde Z)\to L^\infty(\tilde Z)}\ll \ln b, \quad
\|\hat T^G(i(b+h))-
\hat T^G(ib)\|_{C^{\eps\eta}(\tilde Z)\to L^\infty(\tilde Z)}\ll (\ln b)^2h^{\beta-\eps}.
\]

We now proceed exactly as in Section~\ref{sec:large}, with
Lemma~\ref{lem:ABEt}(c) replaced by Proposition~\ref{prop:B},
to conclude that Lemma~\ref{lem:large} holds with $\omega\in(0,1)$.
(In fact, $\omega$ can be taken arbitrarily small.)
Hence in the proof of Lemma~\ref{lem:infty}, we can take $m=2$.
\end{pfof}

\section{Semiflows modelled by Young towers}
\label{sec:Young}

Let $f_t:M\to M$ be a semiflow on 
a bounded finite-dimensional Riemannian manifold $(M,d_M)$, with codimension one cross-section~$X$, first hit time $\tau_0:X\to \R^+$, and Poincar\'e map $f:X\to X$.
Here $\tau_0(x)>0$ is least such that $f_{\tau_0(x)}(x)\in X$ and
$f(x)=f_{\tau_0(x)}(x)$.  
In contrast to Section~\ref{sec:ambient}, we now suppose that 
$f:X\to X$ is a finite measure system with $\tau_0$ nonintegrable.
Since there is a natural semiconjugacy from $X^{\tau_0}$ to $M$, we focus attention on the semiflow $f_t:X^{\tau_0}\to X^{\tau_0}$.

Now suppose that $f:X\to X$ is modelled by a (one-sided) Young tower with exponential tails~\cite{Young98,Young99}.
That is, there is a subset $Z\subset X$ and a return time $\sigma:Z\to\Z^+$
(not necessarily the first return time)
such that
$G=f^\sigma:Z\to Z$ is a full branch Gibbs-Markov map.
Denote the corresponding partition and
ergodic $G$-invariant probability measure by $\alpha$ and $\mu_Z$ respectively.
We require that $\sigma:Z\to\Z^+$ is constant on partition elements and as usual that $\sigma$ has exponential tails: $\mu_Z(\sigma>n)=O(e^{-dn})$ for some $d>0$.

For $z,z'\in Z$, we
let $s(z,z')$ denote the least $n\ge0$ such that
$G^nz$ and $G^nz'$ lie in distinct elements of $\alpha$.
For $\theta\in(0,1)$ fixed, we
define the symbolic metric $d_\theta(z,z')=\theta ^{s(z,z')}$ on $Z$.
Our final assumption on the map $f$ is that there exists
$\theta_0\in(0,1)$ and $C>0$ such that
$d_M(f^\ell z,f^\ell z')\le Cd_{\theta_0}(z,z')$
for all $z,z'\in Z$, $0\le\ell<\sigma(z)-1$.

Define the Young tower map $F:Y\to Y$,
\[
Y=\{(z,\ell)\in Z\times\Z:0\le \ell\le \sigma(z)-1\},
\quad
F(z,\ell)=\begin{cases} (z,\ell+1) & \ell\le \sigma(z)-2 \\
(Gz,0) & \ell=\sigma(z)-1 \end{cases},
\]
with ergodic $F$-invariant probability measure
$\mu=(\mu_Z\times{\rm counting})/\bar\sigma$ where $\bar\sigma=\int_Z\sigma\,d\mu_Z$.
Let $\pi_Y:Y\to X$ be the semiconjugacy $\pi_Y(z,\ell)=f^\ell z$.
Then $\mu_X=(\pi_Y)_*\mu$ is an ergodic $f$-invariant probability measure on $X$.

Let $\tau=\tau_0\circ\pi_Y:Y\to\R^+$ be the lifted roof function.
Note that
$\mu(\tau>t)=\mu_X(\tau_0>t)$ and that
$\tau_\ell=\sum_{j=0}^{\ell-1}\tau_0\circ f^j = 
\sum_{j=0}^{\ell-1}\tau\circ F^j$ on $Z$.
Hence various assumptions are expressed equally well in terms of $\tau_0$ and $\tau$.
Define $\varphi=\tau_\sigma:Z\to\R^+$.
As usual, we assume $\essinf\tau_0>1$ and the tail condition
\begin{itemize}
\item[]
 $\mu_X(\tau_0>t)=ct^{-\beta}+O(t^q)$ where $c>0$, $\beta\in(\frac12,1)$
and $q\in(1,2\beta]$.
\end{itemize}
Also, we assume
(H3) for some $\theta_0\in(0,1)$,
namely,
\begin{itemize}
\item[]
There exists $C>0$ such that
$|\tau_\ell(z)-\tau_\ell(z')|\le C(\infa\varphi)d_{\theta_0}(z,z')$
for all $a\in\alpha$, $z,z'\in a$, $\ell\le \sigma(a)$.
\end{itemize}
Hypothesis (H2) is automatically satisfied in the above set-up, and
as usual hypothesis (H4) is satisfied in typical examples.

It remains to address hypothesis (H1) or alternatively (A1).
In fact, hypothesis (H1)(iii) is easily seen to fail for Young towers (for reasonable examples of roof functions $\tau_0$) and also Lemma~\ref{lem:continf} fails.
This is due to the lack of smoothing properties of the transfer operator $R$ during excursions up the tower.  Hence in the remainder of this section we discuss  hypothesis (A1).

\subsection{Hypotheses (A1)(i),(ii) for Young towers}

Young~\cite{Young98} introduced  a function space for Young towers with exponential tails and proved that this function space satisfies hypothesis (A1)(i) and (ii) with $s=0$.   B\'alint \& Gou\"ezel~\cite{BalintGouezel06} enlarged this function space and proved (A1)(i) and (ii) for $s\in\barH$.  
However, they require a strengthened form of (H3) which is not satisfied in Example~\ref{ex:Collet}.   In general it is easy to see that $\hat R(s)$ is an unbounded operator for the function spaces in~\cite{BalintGouezel06,Young98} for all $s\neq0$.
Hence, we define a new function space as follows.

Let $Y_\ell=\{(z,j)\in Y:j=\ell\}$ denote the $\ell$'th level of the tower.
Fix $\eps>0$ (to be specified later), $\beta'\in(0,\beta)$ and $\theta\in[\theta_0^{\beta'},1)$.
For $v:Y\to\R$, define the weighted $L^\infty$ norm
$\|v\|_{w,\infty}=\sup_{\ell\ge0} e^{-\ell\eps}|1_{Y_\ell}v|_\infty$
as in~\cite{Young98}.  Also, set
\[
\|v\|_\theta=\sup_{\ell\ge0}e^{-\eps\ell} \sup_{\stackrel{a\in \alpha}{\sigma(a)>\ell}} 
\sup_{\stackrel{z,z'\in a}{z\neq z'}} \varphi(z)^{-\beta'}|v(z,\ell)-v(z',\ell)|/d_\theta(z,z').
\]
Let $\cB(Y)$ denote the Banach space of functions $v:Y\to\R$ with
$\|v\|_{\cB(Y)}=\max\{\|v\|_{w,\infty},\|v\|_\theta\}<\infty$.

\begin{rmk}
Following~\cite{BalintGouezel06}, we have used the separation time in terms of iterates of $G$ as in~\cite{Young99} rather than iterates of $F$ as in~\cite{Young98}.   This increases the class of observables and more importantly the class of allowable roof functions $\tau_0$.
If $\beta'=0$, then $\cB(Y)$ is precisely the function space
defined in~\cite{BalintGouezel06}.

Since $\cB(Y)$ is larger than the spaces in~\cite{BalintGouezel06,Young98,Young99},
it is standard that H\"older observables $v:X\to\R$ lift to observables $v\circ\pi_Y\in\cB(Y)$.  Hence from now on we can work with $Y$ and $\tau$ instead of $X$ and $\tau_0$.
\end{rmk}

\begin{lemma}  \label{lem:Young}
Let $\beta'\in(0,\beta)$, $\theta\in[\theta_0^{\beta'},1)$.
For $\eps>0$ sufficiently small, conditions (A1)(i),(ii) hold for $\cB(Y)$.
\end{lemma}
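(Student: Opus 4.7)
The plan is to verify the compact embedding (A1)(i) and the Lasota-Yorke inequality (A1)(ii) for $\cB(Y)$, adapting the standard Young-tower strategy of~\cite{Young98,BalintGouezel06} to accommodate the weight $\varphi^{\beta'}$ in the H\"older seminorm and the twist $e^{-s\tau}$.

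For (A1)(i), the weighted $L^\infty$ bound gives $|1_{Y_\ell}v|_\infty\le e^{\eps\ell}\|v\|_{w,\infty}$, so combined with $\mu(Y_\ell)\ll e^{-d\ell}$ one has
\[
|v|_{L^p(Y)}^p\ll\sum_{\ell\ge0}e^{(p\eps-d)\ell}\|v\|_{w,\infty}^p,
\]
which is finite and uniformly bounded provided $p\eps<d$. For compactness I would truncate the tower at height $N$: the tail contributes $O(e^{-(d-p\eps)N/p})$ to the $L^p$-norm, uniformly small as $N\to\infty$, while the truncated family consists of countably many equicontinuous, uniformly bounded H\"older families on the cylinder-level pieces $a\times\{\ell\}$. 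A standard Arzel\`a-Ascoli argument plus a diagonal extraction produces a subsequence converging uniformly on each such piece, hence in $L^p$ on the truncation; combining with the tail estimate yields the compact embedding.

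For (A1)(ii), by Remark~\ref{rmk:H1} it suffices to establish the one-step inequality for a suitably large iterate $n_0$. I would decompose $(\hat R(s)^{n_0}v)(z,\ell)$ by the number $k\ge0$ of visits to the base $Z\times\{0\}$ made by the $n_0$-step preimage. The ``no-return'' term ($k=0$, existing only when $n_0\le\ell$) equals $e^{-s\tau_{n_0}(z,\ell-n_0)}v(z,\ell-n_0)$; since $|e^{-s\tau_{n_0}}|\le1$ on $\barH$, this contributes at most $e^{-n_0\eps}\|v\|_{w,\infty}$ to $\|\hat R(s)^{n_0}v\|_{w,\infty}$. Its contribution to $\|\cdot\|_\theta$ splits into a transport piece bounded by $e^{-n_0\eps}\|v\|_\theta$ and a twist piece that I would bound using the interpolation $|e^{-a}-e^{-b}|\ll|a-b|^{\beta'}$ (valid for $\Re a,\Re b\ge0$), (H3) lifted to $Y$, and the choice $\theta\ge\theta_0^{\beta'}$. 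The ``returning'' terms ($k\ge1$) factor through the Gibbs--Markov transfer operator $R^G$ associated with $G:Z\to Z$; using bounded distortion~\eqref{eq:GM} together with $\sum_a\mu_Z(a)e^{\eps\sigma(a)}<\infty$ (valid for $\eps<d$), these terms are dominated by a quantity of the form $C|v|_{L^1(Y)}$, which supplies the rough part of the Lasota-Yorke inequality. Taking $n_0$ so that $e^{-n_0\eps}<\gamma_1$ and invoking Remark~\ref{rmk:H1} finishes the verification.

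The main obstacle is the H\"older control of the ``no-return'' twisted term uniformly in $s\in\barH\cap B_\delta(0)$. The twist generates a modulus proportional to $|s|\cdot|\tau_{n_0}(z,\ell-n_0)-\tau_{n_0}(z',\ell-n_0)|$, which by (H3) is $O(|s|\cdot\varphi(z)\, d_{\theta_0}(z,z')^{?})$. This cannot be absorbed without the $\varphi^{-\beta'}$ weight in $\|\cdot\|_\theta$: interpolating between the trivial bound $|e^{-a}-e^{-b}|\le2$ and the Lipschitz bound converts the factor $|s|\cdot\varphi$ into $\varphi^{\beta'}$ (times bounded $|s|^{\beta'}$), and the choice $\theta\ge\theta_0^{\beta'}$ converts $d_{\theta_0}^{n_0\beta'}$ into $d_\theta^{n_0}$, matching the metric in $\|\cdot\|_\theta$. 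This is precisely the reason for the constraints $\beta'<\beta$ and $\theta\ge\theta_0^{\beta'}$, and is the step that would have to be executed carefully.
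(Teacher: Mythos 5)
Your treatment of (A1)(i) matches the paper's (Proposition~\ref{prop:AA}): the embedding follows from the weighted sup norm and exponential tails of $\sigma$, and compactness is the same truncation plus Arzel\`a--Ascoli plus diagonal extraction. No issues there.

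For (A1)(ii) there is a genuine gap, and you have also inverted where the difficulty lies. The ``no-return'' branches are the easy part: for a preimage that stays within a single excursion one has $\ell<\sigma(a)$, so (H3) applies directly and the variation of the twist is $O(|s|^{\beta'}\varphi^{\beta'}d_{\theta_0}^{\beta'})$ with a \emph{single} factor $\varphi^{\beta'}$, which is exactly absorbed by the weight in $\|\cdot\|_\theta$ (this is Proposition~\ref{prop:ABE}(i),(ii) in the paper, each a few lines). The hard part is the branches that return to $Z$, say $m\ge1$ times, i.e.\ the induced operator $\hat T(s)_n=1_Z\hat R(s)^n(1_Z\,\cdot\,)$ on $\cF_\theta(Z)$. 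There the accumulated twist is $e^{-s\varphi_m}$, and after the $\beta'$-interpolation its H\"older variation on a cylinder $a\in\alpha_{m,n}$ is controlled by $|s|^{\beta'}\big(\sum_{j=0}^{m-1}\infa(\varphi\circ G^j)\,\theta_0^{m-j}\big)^{\beta'}d_{\theta_0}(\cdot,\cdot)^{\beta'}$. Multiplying by $\mu_Z(a)\supa|v|$ and summing over all $m\le n$ and all $a\in\alpha_{m,n}$ produces (after pulling out $d_\theta(z,z')$) a quantity of the form $\sum_{m,j}\theta^{m-j}\big|1_{\{\sigma_m=n\}}|v|\,\varphi^{\beta'}\circ G^j\big|_1$. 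Your claim that this is ``dominated by $C|v|_{L^1(Y)}$'' via bounded distortion and $\sum_a\mu_Z(a)e^{\eps\sigma(a)}<\infty$ is not correct: $\varphi$ is nonintegrable ($\varphi\in L^p$ only for $p<\beta$), so $|v\,\varphi^{\beta'}\circ G^j|_1$ cannot be bounded by $|v|_1$ times a constant, and bounding it by $|v|_\infty|\varphi^{\beta'}|_1$ destroys the Lasota--Yorke structure (it costs $\|v\|_{\cB(Y)}$ with a coefficient that does not go to zero). This is precisely the quantity $Q$ in the paper's Lemma~\ref{lem:LY}, and controlling it by $|v|_1+\gamma_1^n\|v\|_{\cF_\theta(Z)}$ is the bulk of the work: one needs the three-regime splitting (exponential smallness of $\theta^j$ for $j\ge\kappa n$; exponential smallness of $\mu_Z(\sigma_m=n)$ for $m\le 2\kappa n$, which rests on the B\'alint--Gou\"ezel estimates $\sum_m\theta^m\mu_Z(\sigma_m=n)=O(\gamma_0^n)$ and $\mu_Z(\sigma_{\kappa n}\ge n)=O(\gamma_0^n)$ of Lemma~\ref{lem:tech}; and, for $m-j\ge\kappa n$, an application of $(R^G)^{m-j}$ to decouple the factor involving $v$ from the factor involving $\varphi^{\beta'}$, yielding $\sum_k|1_{\{\sigma_{m-j}=k\}}v|_1\,|1_{\{\sigma_j=n-k\}}\varphi^{\beta'}|_1$). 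None of this is present in your proposal, and the fixed-$n_0$ reduction via Remark~\ref{rmk:H1} does not circumvent it, since even for a single large $n_0$ the cross term above is not bounded by $C|v|_{L^1}$ without some additional device. The paper instead proves the full $n$-dependent inequality by the Gou\"ezel decomposition $\hat R(s)^n=\sum_{n_1+n_2+n_3=n}\hat A(s)_{n_1}\hat T(s)_{n_2}\hat B(s)_{n_3}+\hat E(s)_n$, with the induced-operator Lasota--Yorke inequality as the core.
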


\begin{proof}
If $\tau$ satisfies the strengthened (H3) condition
$|\tau_\ell(z)-\tau_\ell(z')|\le Cd_{\theta_0}(z,z')$ for $a\in\alpha$, $z,z'\in a$, $\ell\le \sigma(a)$, 
then this is proved in~\cite[Propositions~3.5 and~3.7]{BalintGouezel06}.
The general case is proved in Appendix~\ref{app:Young}.
\end{proof}

\subsection{A reformulation of (A1)(iii) for Young towers}

In this subsection, we obtain a useful reformulation of (A1)(iii) in the special case of Young towers, using ideas in~\cite[Section~3]{BalintGouezel06}.
The results only make use of the norm $\|\;\|_{w,\infty}$ so the extra factor of $\varphi^{\beta'}$ in the definition of $\|\;\|_{\cB(Y)}$ does not affect these arguments.
Define
\[
\textstyle \psi:Y\to\R^+, \qquad \psi(z,\ell)=\tau_\ell(z)=\sum_{j=0}^{\ell-1}\tau(F^jz).
\]

\begin{lemma}  \label{lem:zeta}
Let $\beta_-\in(0,\beta)$, 
$p\in[1,\infty)$. 
There exists $\delta>0$ such that $|\zeta(s)-e^{-s\psi}|_p=O(|s|^{\beta_-})$
for all $s\in\barH\cap B_\delta(0)$.
\end{lemma}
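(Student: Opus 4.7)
The plan is to derive an explicit formula for $\zeta(s)$ that exposes its relationship with $e^{-s\psi}$. Iterating the eigenvalue equation $\hat R(s)\zeta(s)=\lambda(s)\zeta(s)$ for $\ell\ge 1$, using that $F^{-1}(z,\ell)=\{(z,\ell-1)\}$ together with the cocycle identity $\psi(z,\ell-1)+\tau(F^{\ell-1}z)=\psi(z,\ell)$, one immediately gets
\[
\zeta(s)(z,\ell)=\lambda(s)^{-\ell}\,e^{-s\psi(z,\ell)}\,g(s)(z),\qquad g(s):=\zeta(s)(\cdot,0).
\]
Substituting back into the $\ell=0$ case of the eigenvalue equation (and using $\tau_\sigma=\varphi$ at the top of the tower) yields the fixed-point equation
\[
\Lambda(s)g(s):=R^G\bigl(\lambda(s)^{-\sigma}\,e^{-s\varphi}\,g(s)\bigr)=g(s)
\]
on $Z$, with $\Lambda(0)=R^G$ and $g(0)\equiv 1$. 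This reduces the problem from the tower to the base.

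Next, I would apply Keller--Liverani perturbation theory to $\Lambda(s)$ using a Gibbs--Markov Lipschitz space on $Z$ as the strong norm and $L^\infty(\mu_Z)$ as the weak norm. The Gibbs--Markov smoothing estimate $\|R^G u\|_{L^\infty(\mu_Z)}\le C\sum_{a\in\alpha}\mu_Z(a)\sup_a|u|$, combined with (H3) controlling the variation of $\varphi$ on each partition element, reduces the perturbation norm to an $L^1$-type bound on $u=\lambda(s)^{-\sigma}e^{-s\varphi}-1$. A H\"older split exploiting the $\beta$-tails of $\varphi$, the exponential tails of $\sigma$, and the asymptotic $1-\lambda(s)\sim c_\beta s^\beta$ (which gives $|\lambda(s)|^{-\sigma}\le e^{C\sigma|s|^\beta}$ for $|s|$ small) yields
\[
\sum_{a\in\alpha}\mu_Z(a)\sup_a\bigl|\lambda(s)^{-\sigma}e^{-s\varphi}-1\bigr|=O(|s|^\beta),
\]
hence $\|\Lambda(s)-R^G\|_{\mathrm{strong}\to L^\infty}=O(|s|^\beta)$. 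Keller--Liverani applied to the simple isolated eigenvalue $1$ of $R^G$ then yields $|g(s)-1|_{L^\infty(\mu_Z)}=O(|s|^{\beta_-})$ for any $\beta_-<\beta$.

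To assemble, write
\[
\zeta(s)-e^{-s\psi}=e^{-s\psi}\bigl[(\lambda(s)^{-\ell}-1)+\lambda(s)^{-\ell}(g(s)-1)\bigr].
\]
Since $|e^{-s\psi}|\le 1$ on $\barH$ and $|\lambda(s)^{-\ell}-1|\le\ell|1-\lambda(s)||\lambda(s)|^{-\ell}\le C\ell|s|^\beta e^{C\ell|s|^\beta}$, summing $\sum_{\ell<\sigma}$ and integrating against $\mu=(\mu_Z\times\mathrm{counting})/\bar\sigma$ gives $|\lambda(s)^{-\ell}-1|_{L^p(\mu)}=O(|s|^\beta)$ using the exponential tail of $\sigma$. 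Bounding the $(g(s)-1)$-term pointwise in $z$ gives an $L^p$ contribution of order
\[
|g(s)-1|_{L^\infty(\mu_Z)}\cdot\Bigl(\bar\sigma^{-1}\!\int_Z\sigma\,|\lambda(s)|^{-\sigma p}\,d\mu_Z\Bigr)^{1/p}=O(|s|^{\beta_-}),
\]
completing the proof.

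The main obstacle is the Keller--Liverani step for $\Lambda(s)$: the factor $\lambda(s)^{-\sigma}$ grows like $e^{C\sigma|s|^\beta}$ on $\barH$, so one must carefully trade off this growth against the exponential tail of $\sigma$, and combine the Gibbs--Markov smoothing of $R^G$ with (H3) to obtain the $O(|s|^\beta)$ rate in the $L^\infty$ weak norm rather than the weaker $O(|s|^{\beta/q})$ rate that a naive $L^q$-based perturbation bound would give.
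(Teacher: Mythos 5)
Your propagation formula $\zeta(s)(z,\ell)=\lambda(s)^{-\ell}e^{-s\psi(z,\ell)}\zeta(s)(z,0)$ and the final $L^p$ assembly (bounding $|\lambda(s)^{-\ell}-1|\le \ell|1-\lambda(s)||\lambda(s)|^{-\ell}$ and summing against the exponential tails of $\sigma$) coincide with the paper's proof. Where you genuinely diverge is in the treatment of the base function $g(s)=\zeta(s)(\cdot,0)$: the paper obtains $|1_{Y_0}(\zeta(s)-1)|_\infty=O(|s|^{\beta_-})$ by adapting Lemmas~3.12--3.15 of B\'alint--Gou\"ezel (an argument carried out on the tower), whereas you derive the fixed-point equation $g(s)=R^G\bigl(\lambda(s)^{-\sigma}e^{-s\varphi}g(s)\bigr)$ on $Z$ and propose Keller--Liverani for the induced family $\Lambda(s)$. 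This reduction to the base is attractive and in principle workable, but as written it has a gap.

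The gap is the normalization of $g(s)$. Keller--Liverani controls the spectral projection of $\Lambda(s)$, hence determines its eigenfunction only up to a scalar; since $\Lambda(s)g(s)=g(s)$ exactly, what the argument yields is $|g(s)-c_s|_{L^\infty(\mu_Z)}=O(|s|^{\beta_-})$ for some constant $c_s$ (e.g.\ $c_s=\int_Z g(s)\,d\mu_Z$), and you still must show $c_s=1+O(|s|^{\beta_-})$. The normalization of $g(s)$ is inherited from $\int_Y\zeta(s)\,d\mu=1$, so this requires feeding the propagation formula back into that identity and using $\int_Z\sigma\,\varphi^{\beta_-}\,d\mu_Z<\infty$ together with the exponential tails of $\sigma$ --- this is exactly the ``$c_s=1+O(|s|^{\beta_-})$'' step in the paper, which there is supplied by $|\zeta(s)-1|_1=O(|s|^{\beta_-})$ from the tower-level Keller--Liverani analysis. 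Without it, $|g(s)-1|_\infty=O(|s|^{\beta_-})$ does not follow. Two further points: (i) the obstacle you flag is real and not fully resolved --- Keller--Liverani for $\Lambda(s)$ needs uniform (in $s$ and $n$) Lasota--Yorke and weak-norm bounds for $\Lambda(s)^n$, whose weight is $\lambda(s)^{-\sigma_n}e^{-s\varphi_n}$; the naive bound $\int_Z|\lambda(s)|^{-\sigma_n}\,d\mu_Z\le\int_Z e^{Cn|s|^\beta\sigma}\,d\mu_Z$ diverges once $n\gg|s|^{-\beta}$, so one needs a separate perturbation argument for the positive operator $v\mapsto R^G(|\lambda(s)|^{-\sigma}v)$ giving $\int_Z|\lambda(s)|^{-\sigma_n}\,d\mu_Z\le CM^n$ with $M=1+O(\delta^\beta)$; (ii) your intermediate perturbation bound should read $O(|s|^{\beta_-})$ rather than $O(|s|^\beta)$, since $\varphi^{\beta}$ need not be integrable.
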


\begin{proof}  
We sketch the main steps following~\cite{BalintGouezel06}.
For notational convenience, write $\lambda_s=\lambda(s)$ and $\zeta_s=\zeta(s)$.
First,~\cite[Lemmas~3.12, 3.13 and~3.14]{BalintGouezel06}
go through unchanged except that $t$, $w_t$ and $\Delta_0$ are now $s$, $\zeta_s$ and $Y_0$, and error rates $|t|$ are replaced by $|s|^{\beta_-}$.  As in~\cite[Lemma~3.15]{BalintGouezel06}, we obtain
that $|1_{Y_0}(\zeta_s-c_s)|_\infty=O(|s|^{\beta_-})$ where
$c_s\in\R$ and $c_s\to1$.  Since $|\zeta_s-1|_1=O(|s|^{\beta_-})$ (see the discussion after Lemma~\ref{lem:continfA}) it follows easily from the proof of~\cite[Lemma~3.15]{BalintGouezel06} that
$c_s=1+O(|s|^{\beta_-})$ and hence that
\begin{align} \label{eq:base}
|1_{Y_0}(\zeta_s-1)|_\infty=O(|s|^{\beta_-}).
\end{align}

Now $\hat R(s)\zeta_s=\lambda_s\zeta_s$, so for $(z,\ell)\in Y\setminus Y_0$,
\[
\zeta_s(z,\ell)=\lambda_s^{-1}\big(R(e^{-s\tau}\zeta_s)\big)(z,\ell)
=\lambda_s^{-1}e^{-s\tau(z,\ell-1)}\zeta_s(z,\ell-1).
\]
Inductively,
$\zeta_s(z,\ell) =\lambda_s^{-\ell}e^{-s\psi(z,\ell)}\zeta_s(z,0)$
for all $(z,\ell)\in Y$.   
Hence
\begin{align*}
|\zeta_s(z,\ell)-e^{-s\psi(z,\ell)}|
& \le |\lambda_s^{-\ell}\zeta_s(z,0)-1|
\le |\lambda_s^{-\ell}-1||\zeta_s(z,0)|+|\zeta_s(z,0)-1|.
\end{align*}
Now $|\zeta_s(z,0)|\le \|\zeta_s\|_{w,\infty}$ which is bounded
and $|\zeta_s(z,0)-1|=O(|s|^{\beta_-})$ by~\eqref{eq:base}.
Hence
\begin{align*}
|\zeta_s(z,\ell)-e^{-s\psi(z,\ell)}|
& \ll |\lambda_s^{-\ell}-1|+|s|^{\beta_-}
 \ll |\lambda_s^{\ell}-1|+|s|^{\beta_-}
\\ & \ll \big((1+O(|s|^{\beta_-}))^\ell-1\big)+|s|^{\beta_-}
\ll |s|^{\beta_-}\big\{\ell(1+O(|s|^{\beta_-}))^\ell+1\big\}.
\end{align*}
For $|s|<\delta$ sufficiently small,
it now follows from exponential tails that
\[
|\zeta_s-e^{-s\psi}|_p^p
\le |s|^{p{\beta_-}}\textstyle \sum_{\ell\ge 0} \mu(Y_\ell)\big\{\ell^p(1+O(|s|^{\beta_-}))^{p\ell}+1\big\}\ll |s|^{p{\beta_-}},
\]
as required.
\end{proof}

\begin{cor} \label{cor:zeta}
Let $\beta_+\in(\beta,2\beta)$.  Then (A1)(iii) is equivalent to the
condition
\begin{align} \label{eq:zeta}
\textstyle \big|\int_Y(e^{-s\tau}-1)(e^{-s\psi}-1)\,d\mu\big| =O(|s|^{\beta_+})
\quad\text{for $s\in\barH\cap B_\delta(0)$.}
\end{align}
\end{cor}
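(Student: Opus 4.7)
The plan is to show that the difference
\[
\SMALL \int_Y(e^{-s\tau}-1)(\zeta(s)-1)\,d\mu - \int_Y(e^{-s\tau}-1)(e^{-s\psi}-1)\,d\mu = \int_Y(e^{-s\tau}-1)(\zeta(s)-e^{-s\psi})\,d\mu
\]
is itself $O(|s|^{\beta_+})$. Once this is established, each of (A1)(iii) and~\eqref{eq:zeta} implies the other.

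To estimate the difference, I will apply H\"older's inequality with conjugate exponents $q, q' > 1$ to be chosen. The first factor I control by the elementary pointwise bound $|e^{-s\tau}-1|\le 2|s|^\alpha \tau^\alpha$ (valid on $\barH$ for any $\alpha\in(0,1)$), which gives $|e^{-s\tau}-1|_q\le 2|s|^\alpha |\tau^\alpha|_q$, and this is finite (of size $O(|s|^\alpha)$) provided $\alpha q < \beta$, since then $\tau^\alpha\in L^q(\mu)$ by the tail condition on $\tau$. The second factor is controlled by Lemma~\ref{lem:zeta}, which yields $|\zeta(s)-e^{-s\psi}|_{q'} = O(|s|^{\beta_-})$ for any $\beta_-\in(0,\beta)$ and any $q'\in[1,\infty)$.

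Combining, H\"older gives the difference is $O(|s|^{\alpha+\beta_-})$, so it suffices to choose parameters with $\alpha q<\beta$, $\beta_-<\beta$, $1/q+1/q'=1$, and $\alpha+\beta_-\ge \beta_+$. Since $\beta_+ < 2\beta$ by hypothesis, setting $\alpha=\beta_-=\beta_+/2$ satisfies $\alpha<\beta$ and $\beta_-<\beta$; then any $q\in(1,2\beta/\beta_+)$ satisfies $\alpha q<\beta$, and such $q$ exists since $2\beta/\beta_+>1$. Take $q'$ the conjugate exponent, which is a finite number $>1$, so Lemma~\ref{lem:zeta} applies.

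This delivers $\big|\int_Y(e^{-s\tau}-1)(\zeta(s)-e^{-s\psi})\,d\mu\big| = O(|s|^{\beta_+})$, and the equivalence follows. There is no real obstacle here beyond the parameter bookkeeping; the only place care is needed is in ensuring $\alpha q<\beta$ remains compatible with $\alpha+\beta_-\ge\beta_+$, which is exactly where the hypothesis $\beta_+<2\beta$ is used.
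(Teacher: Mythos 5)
Your proposal is correct and follows essentially the same route as the paper: both reduce the equivalence to bounding $\int_Y(e^{-s\tau}-1)(\zeta(s)-e^{-s\psi})\,d\mu$ via H\"older's inequality, using $|e^{-s\tau}-1|\le 2|s|^{\alpha}\tau^{\alpha}$ on one factor (with the exponent small enough that $\tau^{\alpha q}\in L^1$) and Lemma~\ref{lem:zeta} on the other. The paper takes both exponents equal to some $\beta_-\in(\tfrac12\beta_+,\beta)$ with $r$ close to $1$, while you fix $\alpha=\beta_-=\beta_+/2$; this is only a cosmetic difference in the parameter bookkeeping.
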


\begin{proof}
Let $\beta_-\in(\frac12\beta_+,\beta)$.
By H\"older's inequality and Lemma~\ref{lem:zeta},
for all $r,r'>1$ with $r^{-1}+r'^{-1}=1$,
\[
|(e^{-s\tau}-1)(\zeta(s)-e^{-s\psi})|_1
\le 
2|s|^{\beta_-}|\tau^{\beta_-}|_r |\zeta(s)-e^{-s\psi}|_{r'}
\ll |s|^{2\beta_-}|\tau^{\beta_-}|_r
\le |s|^{{\beta_+}}|\tau^{\beta_-}|_r.
\]
Taking $r$ sufficiently close to $1$, we can ensure that
$|\tau^{\beta_-}|_r<\infty$.
\end{proof}

\subsection{Sufficient conditions for (A1)(iii)}

In this subsection, we give sufficient conditions for~\eqref{eq:zeta} and hence (A1)(iii).

Define $I(s)=(e^{-s\tau}-1)(e^{-s\psi}-1)$.
By Corollary~\ref{cor:zeta}, it suffices to show that $|I(s)|_1=O(|s|^{\beta_+})$ for some $\beta_+>\beta$.
To achieve this, we consider decompositions of the form
\[
Y=\{\tau<\psi^{1-q}\} \cup \{\sigma\ge d_0\log \psi\} \cup
\{\sigma< d_0\log\psi\;\text{and}\;\tau\ge\psi^{1-q}\},
\]
for appropriate choices of $q$ and $d_0$.

Recall that $\psi\in L^p$ for all $p<\beta$.  Hence, for any $q\in(0,1)$,
\begin{align} \label{eq:Yn1}
|1_{\{\tau<\psi^{1-q}\}}I(s)|_1
\le 4|s|^{\beta_+}|1_{\{\tau<\psi^{1-q}\}}\tau^{\beta_+}|_1
\le 4|s|^{\beta_+}|\psi^{\beta_+(1-q)}|_1
\ll |s|^{\beta_+},
\end{align}
for all $\beta_+\in(\beta,1]$ with $\beta_+<\beta/(1-q)$.
Also, since $\mu_Z(\sigma>n)=O(e^{-dn})$,
\begin{align} \label{eq:Yn2}
|1_{\{\sigma\ge d_0\log \psi\}}I(s)|_1
\le 4|s||1_{\{\sigma\ge d_0\log\psi\}}\psi|_1
\le 4|s||e^{d_0^{-1}\sigma}|_1 \ll |s|,
\end{align}
for all $d_0>d^{-1}$.

It remains to estimate
$|1_{\{\sigma< d_0\log\psi\}}1_{\{\tau\ge\psi^{1-q}\}}I(s)|_1$.
Define $\psi_*(z,\ell)=\max_{0\le j\le\ell-1}\tau(F^jz)$.  Then
$\psi_*(z,\ell)\le\psi(z,\ell)\le\sigma(z)\psi_*(z,\ell)$.

\begin{prop} \label{prop:d}
There exists $M_1,M_2>0$ with the property that
if $\sigma < d_0\log\psi$ then $\sigma <M_1\log\psi_*+M_2$.
\end{prop}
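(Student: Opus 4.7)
The plan rests entirely on the elementary inequality $\psi(z,\ell)\le\sigma(z)\,\psi_*(z,\ell)$ that is stated just before the proposition. Taking logarithms of both sides and feeding the hypothesis $\sigma<d_0\log\psi$ into the result, I would derive
\[
\sigma<d_0\log\psi\le d_0\log\sigma+d_0\log\psi_*,
\]
which rearranges to the key inequality $\sigma-d_0\log\sigma<d_0\log\psi_*$. This reduces the problem to absorbing the nuisance $d_0\log\sigma$ term on the left.

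Since the real function $x\mapsto x-d_0\log x$ tends to $+\infty$ with $x$, there exists a threshold $N_0=N_0(d_0)\ge 1$ such that $x-d_0\log x\ge x/2$ for every $x\ge N_0$. If $\sigma\ge N_0$, the displayed inequality then gives $\sigma<2d_0\log\psi_*$. If instead $\sigma<N_0$, the conclusion $\sigma<M_1\log\psi_*+M_2$ is trivial for any nonnegative $M_1$ once $M_2\ge N_0$. Taking $M_1=2d_0$ and $M_2=N_0$ yields the claim in both cases.

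There is essentially no obstacle; the only subtlety is to ensure that the logarithms appearing above are well behaved. This is automatic because $\essinf\tau>1$ forces $\psi_*\ge\essinf\tau>1$ (hence $\log\psi_*>0$) on the set $\{\ell\ge 1\}$, and on the base $Y_0=\{\ell=0\}$ one has $\psi\equiv 0$, so the hypothesis $\sigma<d_0\log\psi$ is vacuous there and nothing needs to be proved.
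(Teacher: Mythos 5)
Your argument is correct and is essentially the paper's proof: both start from $\psi\le\sigma\psi_*$ to get $\sigma-d_0\log\sigma<d_0\log\psi_*$, then absorb the $d_0\log\sigma$ term using that $(\log\sigma)/\sigma$ is small for large $\sigma$ and that small $\sigma$ is handled by the additive constant $M_2$. Your version just spells out the choice $M_1=2d_0$, $M_2=N_0$ and the vacuity of the hypothesis on the base level, which the paper leaves implicit.
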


\begin{proof}
Since $\sigma < d_0\log\psi$, it follows that
$\sigma < d_0\log(\sigma\psi_*)=d_0\log\sigma+d_0\log\psi_*$.  Hence
$\sigma(1-d_0(\log\sigma)/\sigma) < d_0\log\psi_*$ and the result follows.
\end{proof}

From now on, we fix $M_1,M_2>0$ sufficiently large as in Proposition~\ref{prop:d}.  Also we fix $d_*>0$ such that
$M_1\log n+M_2\le d_*\log n$ for all $n\ge2$.  
For $q>0$, define
\[
X_q(n)=\big\{x\in X:[\tau_0(x)]=n\enspace \text{and}\enspace \tau_0(f^jx)\ge n^{1-q} \enspace\text{for some $1\le j< d_*\log n$}\big\}.
\]

\begin{prop} \label{prop:Xn}
Assume that the density $d\mu_X/d\mu_Z$ is bounded below on $Z$.
Suppose that there exists $\beta'\in(\beta,1]$, $q\in(0,1)$,  such that
$\sum_{n=1}^\infty n^{\beta'} \mu_X(X_q(n))<\infty$.
Then (A1)(iii) holds for any 
$\beta_+\in(\beta,\min\{\beta',\beta/(1-q)\})$.
\end{prop}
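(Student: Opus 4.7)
The plan is to invoke Corollary~\ref{cor:zeta}, so that it suffices to prove $|I(s)|_1 = O(|s|^{\beta_+})$ for $I(s) = (e^{-s\tau}-1)(e^{-s\psi}-1)$. I will use the three-piece decomposition $Y = A_1\cup A_2\cup A_3$ already set up just before the proposition, with $A_1 = \{\tau<\psi^{1-q}\}$, $A_2 = \{\sigma\ge d_0\log\psi\}$ for some $d_0>1/d$ large enough to apply Proposition~\ref{prop:d}, and $A_3 = \{\sigma<d_0\log\psi,\ \tau\ge\psi^{1-q}\}$. Estimates~\eqref{eq:Yn1} and~\eqref{eq:Yn2} already handle $A_1$ and $A_2$, giving $O(|s|^{\beta_+})$ for any $\beta_+<\beta/(1-q)$ and $\beta_+\le 1$ respectively, so the remaining task is the $A_3$ contribution.

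On $A_3$ I would place all the smoothing on the $\psi$ factor: $|e^{-s\tau}-1|\le 2$ together with $|e^{-s\psi}-1|\le 2|s\psi|^{\beta_+}$ (valid for $\beta_+\le 1$) yields $|I(s)|\le 4|s|^{\beta_+}\psi^{\beta_+}$, and matters reduce to proving $\int_{A_3}\psi^{\beta_+}\,d\mu<\infty$. To this end I associate each $(z,\ell)\in A_3$ (necessarily $\ell\ge 1$) with a point of some $X_q(m)$: pick $j_0\in\{0,\dots,\ell-1\}$ attaining $\tau_0(f^{j_0}z)=\psi_*(z,\ell)$, and set $y=f^{j_0}z$, $k=\ell-j_0$, $m=[\tau_0(y)]$. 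Then $\tau_0(f^k y)=\tau(z,\ell)\ge\psi(z,\ell)^{1-q}\ge\psi_*(z,\ell)^{1-q}\ge m^{1-q}$ and, by Proposition~\ref{prop:d}, $k\le\ell<\sigma(z)\le d_*\log\psi_*(z,\ell)\le d_*\log(m+1)$; up to a harmless adjustment of $d_*$ this certifies $y\in X_q(m)$. Letting $B_{m,k}\subset A_3$ be the set of $(z,\ell)$ producing a given pair $(m,k)$, one further has $\psi(z,\ell)\le\ell\,\psi_*(z,\ell)\le\sigma(z)(m+1)\le Cm\log m$ on $B_{m,k}$.

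The final ingredient is the measure comparison $\mu(B_{m,k})\le\mu_X(X_q(m))$, which I would establish via the tower shift $S_k(z,\ell)=(z,\ell-k)$: this is $\mu$-preserving and injective on $\{\ell\ge k\}$, while $\pi_Y\circ S_k$ maps $B_{m,k}$ into $X_q(m)$. Combining these with the pushforward identity $(\pi_Y)_*\mu=\mu_X$ yields $\mu(B_{m,k})=\mu(S_k B_{m,k})\le\mu_X(\pi_Y(S_k B_{m,k}))\le\mu_X(X_q(m))$. Summing,
\[
\int_{A_3}\psi^{\beta_+}\,d\mu\le C\sum_{m\ge 2}\sum_{k=1}^{[d_*\log m]} m^{\beta_+}(\log m)^{\beta_+}\mu_X(X_q(m))\le C'\sum_m m^{\beta_+}(\log m)^{\beta_+ + 1}\mu_X(X_q(m)),
\]
which is finite whenever $\beta_+<\beta'$ by absorbing the logarithm into an arbitrarily small power of $m$ and invoking the hypothesis. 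Together with the $A_1,A_2$ estimates this yields~(A1)(iii) through Corollary~\ref{cor:zeta}.

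The main obstacle is precisely this measure comparison on $B_{m,k}$: working directly with $\mu_X(f^k X_q(m))$ is a trap, since for non-invertible $f$ the pushforward inequality $\mu_X(f^k A)\ge\mu_X(A)$ runs the wrong way and the gap can be large; the tower shift $S_k$ sidesteps this entirely. The price is the accumulation of up to $d_*\log m$ copies of $\mu_X(X_q(m))$ per level $m$, and this is absorbed at the very end by the strict inequality $\beta_+<\beta'$.
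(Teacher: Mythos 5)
Your proof is correct, and its skeleton is the paper's: the reduction via Corollary~\ref{cor:zeta}, the same three-piece decomposition with~\eqref{eq:Yn1} and~\eqref{eq:Yn2} disposing of the first two pieces, the pointwise bound $|I(s)|\le 4|s|^{\beta_+}\psi^{\beta_+}$ combined with $\psi\le\sigma\psi_*\ll m\log m$ via Proposition~\ref{prop:d}, and absorption of the logarithms using $\beta_+<\beta'$. The one step you execute differently is the measure comparison. The paper fibers the remaining set over $n=[\psi_*]$ and the level $\ell$, observes it sits inside $\{(z,\ell): z\in A_n,\ 1\le\ell<d_*\log n\}$ with $A_n=\bigcup_{j<d_*\log n}f^{-j}X_q(n)$, and then passes from $\mu$ to $\mu_Z$ to $\mu_X$ using the bounded-below density $d\mu_X/d\mu_Z$ together with $f$-invariance of $\mu_X$ (so $\mu_X(f^{-j}X_q(n))=\mu_X(X_q(n))$ — the preimage route is the standard fix for the non-invertibility issue you flag), collecting a $(\log n)^2$ factor. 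Your tower shift $S_k$, together with the containment $S_kB_{m,k}\subset\pi_Y^{-1}(X_q(m))$ and $(\pi_Y)_*\mu=\mu_X$, gives $\mu(B_{m,k})\le\mu_X(X_q(m))$ in one stroke, with the notable side effect that the bounded-below-density hypothesis is never used; this is a mild strengthening of the proposition. Two minor housekeeping points: the fiber $m=1$ (where $X_q(1)=\emptyset$) must be treated separately, as the paper does with its set $Y(1)$ — there $\psi$ is bounded, so the contribution is trivially finite — and your ``harmless adjustment of $d_*$'' to pass from $d_*\log(m+1)$ to $d_*\log m$ is the same adjustment the paper makes implicitly, so it is indeed harmless.
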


\begin{proof}
For $n\ge2$, define
\[
Y(n)=\big\{(z,\ell)\in Y: [\psi_*(z,\ell)]=n
\enspace\text{and}\enspace
 \sigma(z)<d_*\log n\enspace\text{and}\enspace \tau(z,\ell)\ge n^{1-q} 
\big\}.
\]
Also, set $Y(1)=\{(z,\ell)\in Y:[\psi_*(z,\ell)]=1\;\text{and}\;\sigma(z)< M_1\log 2+M_2\}$.
By Proposition~\ref{prop:d},
\begin{align*}
& |1_{\{\sigma<d_0\log\psi\}}  1_{\{\tau\ge\psi^{1-q}\}}I(s)|_1
  \le |1_{\{\sigma<M_1\log\psi_*+M_2\}}1_{\{\tau\ge\psi_*^{1-q}\}}I(s)|_1
\\ & \qquad  \le 
\textstyle |1_{\{[\psi_*]=1\}}1_{\{\sigma< M_1\log2+M_2\}}I(s)|_1+ \sum_{n\ge 2} |1_{\{[\psi_*]=n\}}1_{\{\sigma<d_*\log n\}}1_{\{\tau\ge n^{1-q}\}}I(s)|_1
\\ & \qquad \textstyle  =\sum_{n\ge 1} |1_{Y(n)} I(s)|_1.
\end{align*}
Let $\beta''<\beta'$.  For $y=(z,\ell)\in Y(n)$,
\begin{align*}
|I(s)(y)|  \le 4|s|^{\beta''}\psi(y)^{\beta''}
& \le 4|s|^{\beta''}\sigma(z)^{\beta''}\psi_*(y)^{\beta''}
\ll |s|^{\beta''}(\log (n+1))^{\beta''}n^{\beta''}.
\end{align*}
It follows that
\[
|1_{\{\sigma<d_0\log\psi\}}  1_{\{\tau\ge\psi^{1-q}\}}I(s)|_1
 \ll 
\textstyle |s|^{\beta''}\sum_{n\ge 1} (\log(n+1))^{\beta''} n^{\beta''} \mu(Y(n)).
\]
Provided
$\sum_{n\ge1} (\log (n+1))^{\beta''}n^{\beta''} \mu(Y(n))<\infty$, we obtain from~\eqref{eq:Yn1} and~\eqref{eq:Yn2} the desired estimate
for $\int_Y I(s)\,d\mu$.

It remains to show that 
$\sum_{n\ge 1} n^{\beta''} \mu(Y(n))<\infty$ for all $\beta''<\beta'$.
For this, it is enough to show for $n\ge2$ that $\mu(Y(n))\ll (\log n)^2\mu_X(X_q(n))$.
Let $y=(z,\ell)\in Y(n)$. Then
$\tau_0(f^\ell z)\ge n^{1-q}$ and
$[\tau_0(f^j z)]= n$ for some $j$ with $0\le j<\ell<d_*\log n$.
Hence $f^jz\in X_q(n)$, and it follows that
\[
Y(n)\subset\big\{(z,\ell)\in Y:z\in A_n,\, 1\le\ell<d_*\log n\big\},
\]
where $A_n=\bigcup_{j=0}^{[d_*\log n]-1}f^{-j}X_q(n)$.
Since $d\mu_Z/d\mu_X\in L^\infty(Z)$,
\begin{align*}
\mu(Y(n)) &  = \bar\sigma^{-1}\int_Z\sum_{\ell=0}^{\sigma(z)-1}1_{\{z\in Z:(z,\ell)\in Y(n)\}}\,d\mu_Z
\le \int_Z\sum_{1\le\ell<d_*\log n}1_{\{z\in Z:z\in A_n\}}\,d\mu_Z
\\ & \le d_*(\log n)\mu_Z(A_n)
 \ll (\log n)\mu_X(A_n)
  \\ & \le (\log n)\sum_{j=0}^{[d_*\log n]-1}\mu_X(f^{-j}X_q(n))
  <d_*(\log n)^2\mu_X(X_q(n)),
\end{align*}
as required.
\end{proof}

\begin{cor} \label{cor:SV}
Assume that the density $d\mu_X/d\mu_Z$ is bounded below on $Z$.
Suppose that there are constants $C>0$, $p,q\in(0,1)$, such that
\begin{align} \label{eq:SV}
& \mu_X(X_q(n)) \le Cn^{-p}\mu_X(x\in X:[\tau_0(x)]=n)
\quad\text{for all $n\ge1$.}
\end{align}
Then (A1)(iii) holds for any  $\beta_+\in(\beta,\min\{1,\beta+p,\beta/(1-q)\})$.
\end{cor}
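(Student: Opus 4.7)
The plan is to deduce this from Proposition \ref{prop:Xn}. That proposition asks for summability of $\sum_n n^{\beta'}\mu_X(X_q(n))$ for some $\beta'\in(\beta,1]$ and concludes that (A1)(iii) holds for $\beta_+\in(\beta,\min\{\beta',\beta/(1-q)\})$. Thus it suffices to verify the sum converges for every $\beta'\in(\beta,\min\{1,\beta+p\})$; letting $\beta'$ approach the supremum of this range then yields the asserted range for $\beta_+$ in the corollary.

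Using \eqref{eq:SV}, I would first reduce to showing $\sum_n n^{\beta'-p}\mu_X([\tau_0]=n)<\infty$, which is trivial when $\beta'\le p$ since $\mu_X$ is a probability measure. The nontrivial regime is $\beta'>p$, and here the naive pointwise bound obtained by differencing the tail hypothesis gives only $\mu_X([\tau_0]=n)=O(n^{-q'})$ with $q'$ the tail error exponent (possibly just above $1$), which is too weak to reach the threshold $\beta+p$. The key step will therefore be summation by parts: setting $T_n=\mu_X(\tau_0\ge n)$, the tail condition gives $T_n\le Cn^{-\beta}$ and $\mu_X([\tau_0]=n)=T_n-T_{n+1}$. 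Abel's formula then converts $\sum_{n=1}^N n^{\beta'-p}(T_n-T_{n+1})$ into boundary terms plus $\sum_{n=2}^N(n^{\beta'-p}-(n-1)^{\beta'-p})T_n$, which, using $n^{\beta'-p}-(n-1)^{\beta'-p}=O(n^{\beta'-p-1})$, is bounded by $C\sum_n n^{\beta'-p-\beta-1}$. This converges exactly when $\beta'<\beta+p$, and the boundary contributions $a_1T_1$ and $a_NT_{N+1}$ are controlled in the same range.

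The hard part is really just recognising that the pointwise decay of $\mu_X([\tau_0]=n)$ is strictly weaker than the average decay $n^{-\beta-1}$ implicit in the leading tail $ct^{-\beta}$, and that summation by parts is exactly the device that lets the series feel this averaged behaviour. With summability established for every $\beta'\in(\beta,\min\{1,\beta+p\})$, Proposition \ref{prop:Xn} delivers (A1)(iii) for each $\beta_+\in(\beta,\min\{\beta',\beta/(1-q)\})$, and taking $\beta'\nearrow\min\{1,\beta+p\}$ completes the proof.
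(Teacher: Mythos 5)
Your proposal is correct and follows essentially the same route as the paper: reduce to Proposition~\ref{prop:Xn} via~\eqref{eq:SV} and verify $\sum_n n^{\beta'-p}\mu_X([\tau_0]=n)<\infty$ for $\beta'\in(\beta,\beta+p)$ using only the tail bound $\mu_X(\tau_0>n)=O(n^{-\beta})$. The paper states this last convergence without proof; your summation-by-parts argument is exactly the standard justification it leaves implicit.
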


\begin{proof}
Let $\beta'\in(\beta,\beta+p)$.  Then
\[
\textstyle \sum_{n\ge 1} n^{\beta'}\mu(X_q(n))
\le C\sum_{n\ge 1} n^{\beta'-p}\mu_X([\tau_0]=n)
<\infty,
\]
since $\mu_X(\tau_0>n)=O(n^{-\beta})$.
\end{proof}

Condition~\eqref{eq:SV} has been verified in the setting of infinite horizon planar dispersing billiards by~\cite[Lemma 16]{SzaszVarju07} (see also~\cite[Lemma 5.1]{ChernovZhang08}).
For Example~\ref{ex:Collet}, it is easier to work with the following related condition.
Define the two new subsets
$X_q^+(n)\subset X$ and $X_q'(n)\subset X$,
\begin{align*}
X_q^+(n) & =\big\{[\tau_0]\ge n\enspace \text{and}\enspace \tau_0\circ f^j\ge n^{1-q} \enspace\text{for some $1\le j<d_*\log n$}\big\}.
\\
X_q'(n) & =\big\{[\tau_0]\ge n\enspace \text{and}\enspace \tau_0\circ f^j\ge n^{1-q} \enspace\text{for some $d_*\log n \le   j<d_*\log (n+1)$}\big\}.
\end{align*}

\begin{prop} \label{prop:A1iii}
Assume that the density $d\mu_X/d\mu_Z$ is bounded below on $Z$.
Suppose that there are constants $C>0$, $p,q\in(0,1)$, such that
\begin{align} \label{eq:A1iii}
 \mu_X(X^+_q(n)) \le Cn^{-(\beta+p)}
\quad\text{and}\quad
 \mu_X(X'_q(n)) \le Cn^{-(\beta+p)},
\end{align}
for all $n\ge1$.
Then (A1)(iii) holds for any  $\beta_+\in(\beta,\min\{1,\beta+p,\beta/(1-q)\})$.
\end{prop}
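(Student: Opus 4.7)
The plan is to reduce to Proposition~\ref{prop:Xn}, which delivers (A1)(iii) for $\beta_+\in(\beta,\min\{\beta',\beta/(1-q)\})$ provided $\sum_{n\ge 1}n^{\beta'}\mu_X(X_q(n))<\infty$ for some $\beta'\in(\beta,1]$. I would aim to verify this summability for every $\beta'\in(\beta,\min\{1,\beta+p\})$, since letting $\beta'$ approach this supremum allows $\beta_+$ to range over all of $(\beta,\min\{1,\beta+p,\beta/(1-q)\})$, matching the claim.

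The geometric heart of the argument is the inclusion
\[
\SMALL \bigsqcup_{m\ge n}X_q(m)\;\subset\;X^+_q(n)\;\cup\;\bigcup_{n'\ge n}X'_q(n'),
\]
which I would establish by casework on the witness index. Given $x\in X_q(m)$ with $m\ge n$ and $j\in[1,d_*\log m)$ such that $\tau_0\circ f^j(x)\ge m^{1-q}$: if $j<d_*\log n$, then $[\tau_0(x)]=m\ge n$ and $\tau_0\circ f^j(x)\ge m^{1-q}\ge n^{1-q}$, so $x\in X^+_q(n)$; otherwise $j$ falls into a unique slot $[d_*\log n',d_*\log(n'+1))$ with $n'\in\{n,\dots,m-1\}$, whence $[\tau_0(x)]=m\ge n'+1>n'$ and $\tau_0\circ f^j(x)\ge m^{1-q}\ge(n')^{1-q}$, giving $x\in X'_q(n')$.

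Writing $T_n=\mu_X\bigl(\bigsqcup_{m\ge n}X_q(m)\bigr)$ and using $\mu_X(X_q(n))=T_n-T_{n+1}$, summation by parts converts the target into
\[
\SMALL \sum_{n\ge 1}n^{\beta'}\mu_X(X_q(n))\;\ll\;T_1+\sum_{n\ge 1}n^{\beta'-1}T_n,
\]
while the inclusion combined with~\eqref{eq:A1iii} yields $T_n\le Cn^{-(\beta+p)}+C\sum_{n'\ge n}(n')^{-(\beta+p)}$. Swapping the order of summation in the resulting double series by Tonelli gives
\[
\SMALL \sum_{n\ge 1}n^{\beta'-1}\sum_{n'\ge n}(n')^{-(\beta+p)}\;=\;\sum_{n'\ge 1}(n')^{-(\beta+p)}\sum_{n=1}^{n'}n^{\beta'-1}\;\ll\;\sum_{n'\ge 1}(n')^{\beta'-\beta-p},
\]
which converges iff $\beta'<\beta+p$; the remaining term $\sum n^{\beta'-\beta-p-1}$ converges under the same condition. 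Proposition~\ref{prop:Xn} then closes the proof.

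The main obstacle is precisely this Tonelli swap. A direct bound on $T_n$ via a term-by-term estimate of the tail $\sum_{n'\ge n}(n')^{-(\beta+p)}$ would produce only $n^{-(\beta+p-1)}$, requiring $\beta+p>1$, which is not assumed and need not hold since both $\beta>\tfrac12$ and $p\in(0,1)$ may be close to their minimal values. Rearranging first and absorbing the weight $n^{\beta'-1}$ into the inner partial sum $\sum_{n\le n'}n^{\beta'-1}\ll(n')^{\beta'}$ gains exactly the one factor of $n'$ needed to match the assumed decay rate, and this is what makes the hypotheses on $X^+_q$ and $X'_q$ tight enough to yield the full range $\beta_+<\beta+p$ in the conclusion.
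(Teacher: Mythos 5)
Your reduction to Proposition~\ref{prop:Xn} and the inclusion $\bigsqcup_{m\ge n}X_q(m)\subset X^+_q(n)\cup\bigcup_{n'\ge n}X'_q(n')$ are both fine, and the Abel-summation step converting $\sum_n n^{\beta'}\mu_X(X_q(n))$ into $T_1+\sum_n n^{\beta'-1}T_n$ is also fine. The proof breaks at the very last step. After the Tonelli swap you arrive at $\sum_{n'\ge1}(n')^{\beta'-\beta-p}$ and assert that this "converges iff $\beta'<\beta+p$". That is not the convergence criterion: $\sum_{n'}(n')^{-s}$ converges iff $s>1$, so your series converges iff $\beta'<\beta+p-1$. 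Since $\beta'>\beta>\tfrac12$ and $p<1$, this condition is never satisfied, and the series diverges for every admissible $\beta'$. The Tonelli rearrangement does not rescue the term-by-term estimate you correctly identified as too weak; it produces exactly the same divergence, just reorganised.

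The missing ingredient is the sparseness of the sets $X_q'(n')$: the set $X_q'(n')$ is empty unless the interval $[d_*\log n',d_*\log(n'+1))$ contains an integer $j$, which forces $n'=[e^{j/d_*}]$. Hence only logarithmically many $n'\le N$ contribute, and $\sum_{n'}(n')^{\beta'}\mu_X(X_q'(n'))\ll\sum_{j\ge1}e^{-j(\beta+p-\beta')/d_*}<\infty$ for every $\beta'<\beta+p$. This is how the paper handles the $X_q'$ contribution (its series $S_2$); the $X_q^+$ contribution is handled there by the telescoping bound $\mu_X(X_q(n))\le\mu_X(X_q^+(n))-\mu_X(X_q^+(n+1))+\mu_X(X_q'(n))$, which, after summation by parts, costs only a factor $n^{\beta'-1}$ against $n^{-(\beta+p)}$ and so converges for $\beta'<\beta+p$. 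If you insert the emptiness observation into your double sum (restricting $n'$ to the values $[e^{j/d_*}]$), your argument closes; as written it does not.
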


\begin{proof}
First note that $X_q^+(n+1)\subset X_q^+(n)\cup X_q'(n)$.
Hence
\begin{align*}
\mu_X(\{X_q^+(n)\cup X_q'(n)\}\setminus X_q^+(n+1))
 & =\mu_X(X_q^+(n)\cup X_q'(n))-\mu(X_q^+(n+1)
\\ & \le \mu_X(X_q^+(n)) -\mu(X_q^+(n+1)) +\mu(X_q'(n)).
\end{align*}
Also, $X_q(n)\subset 
X_q^+(n)\setminus X_q^+(n+1)\subset 
\{X_q^+(n)\cup X_q'(n)\}\setminus X_q^+(n+1)$,
so
\begin{align*} 
\mu_X(X_q(n)) \le \mu_X(X_q^+(n)) -\mu(X_q^+(n+1)) +\mu(X_q'(n)).
\end{align*}

We claim that the series
\[
\textstyle S_1=\sum_{n\ge1} n^{\beta'}\{\mu_X(X_q^+(n)) -\mu(X_q^+(n+1))\}
\quad\text{and}\quad
S_2=\sum_{n\ge1} n^{\beta'}\mu(X_q'(n))
\]
converge for $\beta'\in(\beta,\beta+p)$.
The result then follows from Proposition~\ref{prop:Xn}.

It remains to verify the claim.
Convergence of $S_1$ follows
since $\mu_X(X_q^+(n))\le Cn^{-(\beta+p)}$.
Next, we note that $X_q'(n)=\emptyset$ unless there exists an
integer $j\in[d_*\log n,d_*\log(n+1))$, in which case
$n=n(j)=[e^{j/d_*}]$.
Hence
\begin{align*}
S_2  = \textstyle \sum_{j\ge1}n(j)^{\beta'}\mu(X'_q(n(j)))
& \textstyle \ll \sum_{j\ge1}n(j)^{\beta'}n(j)^{-(\beta+p)}
\\ & \textstyle  = \sum_{j\ge1}[e^{j/d_*}]^{-(\beta+p-\beta')} 
\\ & \ll \sum_{j\ge1}e^{-j(\beta+p-\beta')/d_*} <\infty,
\end{align*}
completing the proof.
\end{proof}

\section{Verification of hypotheses for Example~\ref{ex:AFN}}
\label{sec:AFN}

In this section, we return to 
Example~\ref{ex:AFN} and show that Theorem~\ref{thm:rate}
applies in this case under the assumption that $\tau_0$ is both of bounded variation and H\"older continuous.

The first step is to pass from the original suspension semiflow on
$X^{\tau_0}$ to a suspension of the form
$Y^\tau$ where $Y$ is a probability space and $\tau$ is an nonintegrable roof function.

We take $Y$ to be the interval of domain of the rightmost branch of $f$.
Define $F = f^{\sigma_0}:Y \to Y$ for $\sigma_0 = \min\{ n \ge1 : f^ny \in Y\}$.
Then $\mu=(\mu_X|Y)/\mu_X(Y)$ is an acip for $F$.
The corresponding roof function $\tau:Y \to\R^+$ is given by
$\tau(y) = \sum_{\ell=0}^{\sigma_0(y)-1} \tau_0(f^\ell y)$.
Let $F_t:Y^\tau\to Y^\tau$ be the corresponding suspension semiflow
with infinite invariant measure $\mu^\tau$.

In \cite[Section 9]{BT15}, taking $Z=Y$, a reinduced full branch Gibbs-Markov map $G = F^\sigma:Z\to Z$ is obtained, with inducing
time $\sigma:Z \to \Z^+$ having exponential tails.
Let $\mu_Z$ denote the unique acip.

\begin{prop} \label{prop:AFN1}
Suppose that $\tau_0$ is H\"older with exponent
$\eta\in(0,1)$.  Then
$\mu(\tau > t) = c t^{-\beta} + O(t^{-(1+\beta)\eta})$ for some $c>0$.
\end{prop}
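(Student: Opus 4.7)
The plan is to exploit the return time partition of $Y$. Let $a_n = \{y \in Y : \sigma_0(y) = n\}$; each $a_n$ is an interval and $F$ maps $a_n$ bijectively onto $Y$. Standard analysis of the Pomeau--Manneville map (via Zweim\"uller's AFN theory) gives the tail asymptotic $\mu(\sigma_0 > n) = c_0 n^{-\beta} + O(n^{-(1+\beta)})$ for some $c_0 > 0$, together with the refinement $\mu(a_n) = c_0 \beta n^{-(1+\beta)}(1 + O(n^{-1}))$.

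The geometric core is as follows. Classical estimates for iterations of $x \mapsto x + a x^{1+1/\beta} + O(x^{2+1/\beta})$ near the neutral fixed point $0$ imply that for $y \in a_n$ one has $f^\ell y \asymp (n-\ell)^{-\beta}$ uniformly for $1 \le \ell \le n-1$, and bounded distortion for the full-branch Markov map $F : a_n \to Y$ gives $\diam(f^\ell(a_n)) \ll (n-\ell)^{-(1+\beta)}$. On $a_n$ I would write $\tau(y) = \tau_0(0)\,n + R_n(y)$ with $R_n(y) = \sum_{\ell=0}^{n-1}(\tau_0(f^\ell y) - \tau_0(0))$, and use the H\"older condition on $\tau_0$ to derive two bounds: the pointwise estimate $|\tau_0(f^\ell y) - \tau_0(0)| \ll (n-\ell)^{-\beta\eta}$, yielding $|R_n(y)| \ll n^{1-\beta\eta}$; and the sharper oscillation estimate $|\tau_0(f^\ell y) - \tau_0(f^\ell y')| \ll \diam(f^\ell(a_n))^\eta \ll (n-\ell)^{-(1+\beta)\eta}$, controlling the oscillation of $R_n$ over $a_n$ after summing in $\ell$.

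To finish, I would compute $\mu(\tau > t) = \sum_n \mu(a_n \cap \{\tau > t\})$ by choosing a threshold $N = N(t)$ so as to absorb the systematic mean shift of $R_n$. The contribution of full cells $\sum_{n > N} \mu(a_n)$ then supplies the main term $ct^{-\beta}$ via the $\sigma_0$-tail asymptotic, with $c = c_0 \tau_0(0)^\beta$. The boundary contribution comes from the few indices $n$ near $N$ for which $\tau$ straddles the value $t$ on $a_n$; since consecutive cells differ in average $\tau$ by approximately $\tau_0(0)$, this is bounded by the product of the oscillation of $R_n$ and $\mu(a_n) \asymp n^{-(1+\beta)}$.

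The main obstacle is recovering the claimed error rate. The crude pointwise bound on $R_n$ alone appears to yield only $O(t^{-\beta(1+\eta)})$, and improving to $O(t^{-(1+\beta)\eta})$ requires using the diameter-based oscillation estimate (which provides the extra contraction factor $(n-\ell)^{-\eta}$ compared with the pointwise bound) together with a careful separation between the shift of $N$ produced by the mean of $R_n$ and the boundary straddling produced by its oscillation. I expect some work is needed to verify that a purely pointwise approach does not suffice and that genuine use of cancellation within each fibre average of $R_n$ (and of the monotonicity pattern of cell-averages in $n$) is required to recover the exponent stated in the proposition.
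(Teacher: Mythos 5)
Your overall strategy (compare $\tau$ on the level sets of $\sigma_0$ with $n\tau_0(0)$ and transfer the tail of $\sigma_0$ to the tail of $\tau$) is the right general idea and is close in spirit to the argument of \cite[Proposition~2.7]{MT17} that the paper simply invokes. However, there are two genuine gaps. First, your starting point is false in the setting of this proposition: since $a$ is not an integer, $f$ is nonMarkov, the first return map $F=f^{\sigma_0}$ is \emph{not} full branch, the level sets $a_n=\{\sigma_0=n\}$ are not single intervals mapped bijectively onto $Y$, and there is no ``standard'' derivation of $\mu(\sigma_0>n)=c_0n^{-\beta}+O(n^{-(1+\beta)})$, let alone of the cell asymptotics $\mu(a_n)=c_0\beta n^{-(1+\beta)}(1+O(n^{-1}))$. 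Obtaining second-order tail asymptotics for $\sigma_0$ is precisely the nontrivial input here; the paper gets it from \cite[Lemmas~9.1 and~9.2]{BT15}, which pass through the reinduced Gibbs--Markov map $G=F^\sigma$ and deliver only the weaker error $O(n^{-2\beta})$ (note $2\beta<1+\beta$). Your argument needs either this reduction or a substitute for it; asserting it as standard AFN theory does not suffice.

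Second, the step you yourself flag as incomplete is genuinely incomplete, and the proposed fix does not close it. The systematic part of $R_n$, namely $\sum_{k\le n}(\tau_0(x_k)-\tau_0(0))$ summed along the canonical orbit $x_k\asymp k^{-\beta}$ climbing away from the neutral fixed point, genuinely grows like $n^{1-\beta\eta}$ for a merely H\"older $\tau_0$ (the series converges only if $\beta\eta>1$, which never holds here). This drift shifts the threshold $N(t)$ by $O(t^{1-\beta\eta})$ and hence perturbs $\mu(\sigma_0>N(t))=c_0N(t)^{-\beta}+\dots$ by a term of order $t^{-\beta}\cdot t^{-\beta\eta}=t^{-\beta(1+\eta)}$, which is not a multiple of $t^{-\beta}$ and so cannot be absorbed into the constant $c$. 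The diameter-based oscillation estimate controls only the fluctuation of $\tau$ \emph{within} a cell --- the ``straddling'' term, which is indeed harmless --- not this drift, so ``separating mean from oscillation'' does not remove the $t^{-\beta(1+\eta)}$ contribution. Since $\beta(1+\eta)\ge(1+\beta)\eta$ precisely when $\beta\ge\eta$, your argument as written yields the claimed error exponent only in the range $\eta\le\beta$; for $\eta>\beta$ you have not produced the stated bound, and you would need to import the actual argument of \cite[Proposition~2.7]{MT17} (or impose additional structure on $\tau_0$) to control the drift term.
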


\begin{proof}
Let $\sigma_Z(y) = \sum_{j=0}^{\sigma(y)-1} \sigma_0(F^jy)$.
By~\cite[Lemma 9.1]{BT15},
the tails of $\sigma_Z$ satisfy
\[
\mu_Z(\sigma_Z > n) = c_Z n^{-\beta} + O(n^{-2\beta}) \text{ for some } c_Z > 0.
\]
By~\cite[Lemma 9.2]{BT15},
$\mu(\sigma_0>n)=\mu_Z(\sigma_Z>n)+ O(n^{-(1+\beta)})$,
and hence
\[
\mu(\sigma_0 > n) = c_F n^{-\beta} + O(n^{-2\beta}) \text{ for some } c_F > 0.
\]
This corresponds to~\cite[Condition~(2.3) in the proof of Proposition 2.7]{MT17} and we can now proceed as in the remainder of the proof of~\cite[Proposition 2.7]{MT17}.
\end{proof}

The map $F$ is uniformly expanding, but nonMarkov in general.
Hence we take $\cB(Y) = BV(Y)$
with norm $\|v\|=|v|_\infty+\Var v$
where $\Var$ denotes the variation on $Y$.
In particular, $\cB(Y)$ is embedded in $L^\infty(Y)$.

\begin{prop} \label{prop:AFN2}
Suppose that $\tau_0$ is of bounded variation and $C^\eta$ for some $\eta>0$.
Then conditions (H1)--(H3) are satisfied.
\end{prop}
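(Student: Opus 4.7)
My plan is to verify (H1)--(H3) in turn, using the Gibbs-Markov construction of~\cite[Section~9]{BT15} for (H2), a telescoping argument for (H3), and standard bounded variation technology for (H1).

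For (H2): the construction in~\cite[Section~9]{BT15} realizes $G=F^\sigma:Z\to Z$ (with $Z=Y$) as a full-branch Gibbs-Markov map with $\sigma$ constant on the elements of the associated partition $\alpha$ and exponential tails for $\sigma$, directly yielding (H2)(i)--(iii), with Adler's distortion condition for $f$ providing (H2)(iii). For (H2)(iv), uniform expansion of $F$ gives $d_Y(F^\ell z, F^\ell z') \le \lambda^{-(\sigma(a)-\ell)}d_Y(F^{\sigma(a)}z, F^{\sigma(a)}z') \le C d_Z(Gz,Gz')$ for $0\le\ell<\sigma(a)$, after identifying the metrics on $Y$ and $Z$.

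For (H3): fix $a\in\alpha$ and $z,z'\in a$. Since $a$ is contained in a single branch of $F$ on which $\sigma_0$ is constant, $\tau=\sum_{i=0}^{\sigma_0-1}\tau_0\circ f^i$ is $C^\eta$ there. A standard Adler-type distortion estimate yields $|\tau(F^jz)-\tau(F^jz')|\le C\,\tau(F^jz)\,d_Y(F^jz,F^jz')^\eta$ for each $j$, which combined with (H2)(iv) gives $|\tau(F^jz)-\tau(F^jz')|\le C\,\tau(F^jz)\,d_Z(Gz,Gz')^\eta$. Summing over $j=0,\ldots,\ell-1$ produces $|\tau_\ell(z)-\tau_\ell(z')|\le C\varphi(z)d_Z(Gz,Gz')^\eta$, and the standard Gibbs-Markov distortion bound $\sup_a\varphi\ll\inf_a\varphi$ converts the prefactor to $\inf_a\varphi$.

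For (H1) I take $\cB(Y)=BV(Y)$ with norm $\|v\|_{BV} = |v|_\infty + \Var v$. Condition (i) is the classical compact embedding of $BV([0,1])$ into $L^p$ for all $p<\infty$. For (iii), since $|v|_\infty\le\|v\|_{BV}$, it suffices to bound $|R\tau^{\beta-\eps}|_\infty$; on the branch of $F$ where $\sigma_0=n$, $\tau$ has order $n$ and the Jacobian $|F'|^{-1}$ has order $\mu(\sigma_0=n)\asymp n^{-(1+\beta)}$, so $|R\tau^{\beta-\eps}|_\infty\ll\sum_n n^{-(1+\beta)+(\beta-\eps)} = \sum_n n^{-1-\eps}<\infty$. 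For (ii) at $s=0$, the standard Lasota-Yorke inequality for the piecewise uniformly expanding map $F$ with Adler distortion yields $\|R^n v\|_{BV}\le C(\gamma_0^n\|v\|_{BV}+|v|_{L^1})$ with $\gamma_0\in(0,1)$. For $s\in\barH\cap B_\delta(0)$, I use that $|e^{-s\tau}|\le 1$ and that the variation of $e^{-s\tau}$ on each branch of $F$ is bounded by $|s|$ times the variation of $\tau$ on that branch (finite by the $BV$ assumption on $\tau_0$ and the fact that $\sigma_0$ is constant on branches of $F$); summing branch-by-branch preserves the Lasota-Yorke inequality with an $O(\delta)$ perturbation, and Remark~\ref{rmk:H1} then promotes the bound to all $s\in\barH\cap B_\delta(0)$ provided $\delta$ is small enough.

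The main obstacle is the branch-by-branch summation in (H1)(ii): the variation of the weight $e^{-s\tau_n}$ on an $n$-cylinder of $F$ depends on the full itinerary of $\sigma_0$-values along it, and proving convergence of the resulting sum uniformly in $n$ requires combining the polynomial tail of $\sigma_0$ (via the $BV$-seminorm bound on $\tau_0$) with the geometric contraction rate of inverse branches of $F$. Once the Lasota-Yorke inequality for $\hat R(s)$ has been obtained uniformly in a neighborhood of $0$, the remaining parts of (H1) are immediate.
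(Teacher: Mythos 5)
Your overall architecture matches the paper's: $\cB(Y)=BV(Y)$, the reinduced Gibbs--Markov map of \cite[Section~9]{BT15} for (H2), a telescoping estimate for (H3) (the paper derives $|\tau(F^jz)-\tau(F^jz')|\le \frac12|\tau_0|_\eta\,\tau(F^jz)|F^{j+1}z-F^{j+1}z'|^\eta$ directly from H\"older continuity of $\tau_0$ together with $f'\ge1$ and $\tau_0\ge2$, rather than from an Adler-type distortion estimate, but the resulting inequality and the summation over $j$ are the same as yours), and perturbation from $s=0$ combined with Remark~\ref{rmk:H1} for (H1)(ii). Parts (H1)(i), (H1)(iii), (H2) and (H3) are essentially correct.

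The gap is in (H1)(ii), precisely where you flag your ``main obstacle''. Two points. First, your control of $\Var_I\tau$ is only ``finiteness'', i.e.\ $\Var_I\tau\le\sigma_0(I)\Var\tau_0$; summing this against $\mu(I)$ gives $\Var\tau_0\int_Y\sigma_0\,d\mu=\infty$, since $\mu(\sigma_0>n)\asymp n^{-\beta}$ with $\beta<1$. The missing idea is that the images $f^\ell I$, $0\le\ell<\sigma_0(I)$, are pairwise disjoint, whence $\Var_I\tau\le\sum_{\ell}\Var_{f^\ell I}\tau_0\le\Var\tau_0$ \emph{uniformly in $I$}; this is what makes $\sum_I\mu(I)\Var_I(e^{-s\tau})\le|s|\Var\tau_0$ converge. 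Your proposed substitute --- an itinerary analysis of $e^{-s\tau_n}$ on $n$-cylinders combining polynomial tails with contraction of inverse branches --- is both unnecessary (Remark~\ref{rmk:H1} reduces everything to continuity of the single operator $s\mapsto\hat R(s)$ at $s=0$, so only one iterate is ever involved) and does not by itself repair the divergence above. Second, the sup part of the $BV$ norm also needs care: $|e^{-s\tau}|\le1$ only yields $\sup_I|e^{-s\tau}-1|\le2$, which is an $O(1)$ rather than $o(1)$ perturbation because $\tau$ is unbounded; the paper uses $|e^{-s\tau}-1|\le2|s|^{\beta-\eps}\tau^{\beta-\eps}$ together with $\sup_I\tau\ll\inf_I\tau$ and integrability of $\tau^{\beta-\eps}$ to obtain $\sum_I\mu(I)\sup_I|e^{-s\tau}-1|\ll|s|^{\beta-\eps}$. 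With these two ingredients one gets $\|\hat R(s)-\hat R(0)\|_{\cB(Y)}\ll|s|^{\beta-\eps}$ and Remark~\ref{rmk:H1} applies as you intend.
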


\begin{proof}
(H2): The properties of the reinduced map $G$ are given in \cite[Section 9]{BT15}. For instance, (H2)(ii) and (H2)(iv) follow
from uniform expansion of $F$, and (H2)(iii) from uniform expansion of $F$ combined with
Adler's condition.

\vspace{1ex}
(H3): 
Let $y,y'\in Y$ with $\sigma_0(y)=\sigma_0(y')$.
Since $f'\ge1$ and $\tau_0\ge2$,
\begin{align} \label{eq:tau} \nonumber
|\tau(y)-\tau(y')|
& \le |\tau_0|_\eta \sum_{\ell=0}^{\sigma_0(y)-1}|f^\ell y-f^\ell y'|^\eta
\\ & \le|\tau_0|_\eta \sigma_0(y)|Fy-Fy'|^\eta
\le {\textstyle\frac12} |\tau_0|_\eta \tau(y)|Fy-Fy'|^\eta.
\end{align}

Next let, $z,z'\in a$, $a\in\alpha$.
It follows from~\cite[Section~9]{BT15} that
$\sigma_0(F^\ell z)=\sigma_0(F^\ell z')$ for all 
$0\le \ell\le\sigma(a)-1$.
Hence for $1\le \ell<\sigma(a)$,
\begin{align*}
|\tau_\ell(z) & -\tau_\ell(z')|   \le \sum_{j=0}^{\ell-1}|\tau(F^jz)-\tau(F^jz')|
  \le |\tau_0|_\eta  \sum_{j=0}^{\sigma(a)-1}\tau(F^jz)|F^{j+1}z-F^{j+1}z'|^\eta
 \\ &  \le |\tau_0|_\eta  \sum_{j=0}^{\sigma(a)-1}\tau(F^jz)|Gz-Gz'|^\eta
 = 
|\tau_0|_\eta  \varphi(z) |Gz-Gz'|^\eta
\ll  \infa\varphi |Gz-Gz'|^\eta.
\end{align*}
This is condition (H3).

\vspace{1ex}
(H1): It is standard that $\cB(Y) = BV(Y)$ is compactly embedded in $L^1(Y)$.
Since $\cB(Y)$ is embedded in $L^\infty(Y)$ it follows immediately that
$\cB(Y)$ is compactly embedded in $L^p(Y)$ for all $p$.
Indeed suppose that $v_n\in\cB(Y)$ with $\|v_n\|_{\cB(Y)}=1$.
There exists $v\in\cB(Y)$ with $\|v\|_{\cB(Y)}=1$ and a subsequence $n_k$ such that $v_{n_k}\to v$ in $L^1(Y)$.
Then $|v_{n_k}-v|_\infty\ll \|v_{n_k}-v\|_{\cB(Y)}\le 2$, so
for any $p\ge1$,
\[
\textstyle \int_Y|v_{n_k}-v|^p\,d\mu\le |v_{n_k}-v|^{p-1}_\infty
\int_Y|v_{n_k}-v|\,d\mu
\ll |v_{n_k}-v|_1\to0,
\]
establishing compactness in $L^p(Y)$.
 Also it is standard that for $s=0$ the Lasota-Yorke inequality holds for $p=1$ and hence all $p\ge1$, completing the verification of~(H1)(i).

Let $P:L^1(Y)\to L^1(Y)$ be the ``unnormalized'' transfer operator
given by $\int_Y Pv\,w\,d\Leb=\int_Y v\,w\circ F\,d\Leb$.
The density $d\mu/d\Leb$ lies in $\cB(Y)$ and is bounded above and below.
Recall that
\[
Pv=\sumI \{Jv\}\circ\psi_I 1_{FI},
\]
where $\{I\}$ is the collection of domains corresponding to branches of $F$,
$J=1/|F'|$ and $\psi_I=(F|_I)^{-1}$.

By standard arguments,
\begin{align*}
\|Pv\|_{\cB(Y)} & \le |Pv|_\infty+\Var(Pv)\le \sumI(3\supI(Jv)+\Var_I(Jv))
\\ & \ll \sumI \mu(I)(\supI v+\Var_Iv),
\end{align*}
where $\Var_Iv$ denotes the variation of $v$ on $I$.
Hence
\begin{align} \label{eq:sub} \nonumber
\|(\hat R(s)-\hat R(0))v\|_{\cB(Y)}
& =\|R((e^{-s\tau}-1)v)\|_{\cB(Y)}
\ll \|P((e^{-s\tau}-1)v)\|_{\cB(Y)}
\\ \nonumber & \ll \sumI \mu(I)\{\supI ((e^{-s\tau}-1))v+\Var_I((e^{-s\tau}-1)v)\}
\\ & \ll \|v\|_{\cB(Y)}\sumI \mu(I)\{\supI (e^{-s\tau}-1)+\Var_I(e^{-s\tau}-1)\}.
\end{align}

Since the images $f^\ell I$ are disjoint for $\ell=0,1,\dots, \sigma_0(I)-1$,
it follows that $\Var_I\tau\le \Var\tau_0$.
Also, $|e^{-s\tau(y)}-e^{-s\tau(y')}|\le |s||\tau(y)-\tau(y')|$ for all
$y,y'\in Y$, $s\in\barH$, so
\[
\Var_I(e^{-s\tau}-1)=
\Var_I(e^{-s\tau})\le |s|\Var_I\tau
\le |s|\Var\tau_0,
\quad\text{for all $s\in\barH$.}
\]

Also, it follows as in~\eqref{eq:tau} that
$\supI\tau-\infI\tau\ll \sigma_0|_I\ll \infI\tau$, so
$\supI(e^{-s\tau}-1)\le2
|s|^{\beta-\eps}\supI\tau^{\beta-\eps}
\ll |s|^{\beta-\eps}\infI\tau^{\beta-\eps}$.
Substituting into~\eqref{eq:sub}, for $s\in\barH$ bounded,
\begin{align*}
\|\hat R(s)-\hat R(0)\|_{\cB(Y)}
 & \ll |s|^{\beta-\eps}\sumI \mu(I)\{\infI \tau^{\beta-\eps}+\Var\tau_0\}
 \\ & \textstyle
\le |s|^{\beta-\eps}\{\int_Y \tau^{\beta-\eps}\,d\mu+\Var\tau_0\}
\ll |s|^{\beta-\eps}.
\end{align*}
 Condition~(H1)(ii) now follows from Remark~\ref{rmk:H1}.

Similarly,
\begin{align*}
|R(\tau^{\beta-\eps}v)|_\infty & \ll
|P(\tau^{\beta-\eps}|v|)|\le \sumI \supI(J\tau^{\beta-\eps}|v|)
\\ & \textstyle \ll|v|_\infty\sumI\mu(I) \infI\tau^{\beta-\eps} \le |v|_\infty\int_Y\tau^{\beta-\eps}\,d\mu \ll \|v\|_{\cB(Y)},
\end{align*}
verifying (H1)(iii).
\end{proof}

By Propositions~\ref{prop:AFN1} and~\ref{prop:AFN2},
we can apply Theorem~\ref{thm:rate} with $\cB(Y)=BV(Y)$
and $q=(1+\beta)\eta$ whenever (H4) is satisfied.
If we suppose moreover that $\tau_0$ is $C^1$ and satisfies (UNI),
then the improved result in Theorem~\ref{thm:UNI} applies with $q=1+\beta$.

\section{Verification of hypotheses for Example~\ref{ex:Collet}}
\label{sec:Collet}

In this section, we return to 
Example~\ref{ex:Collet} and show how to apply Theorem~\ref{thm:rateA}.

Recall that $X=[0,1]$ and $f:X\to X$ is a $C^2$ unimodal map, so there is a unique critical point $x_0\in(0,1)$ and $f$ is strictly increasing on $[0,x_0]$ and strictly decreasing on $[x_0,1]$.  We suppose for convenience that
the critical point $x_0$ is nondegenerate, but the general case of non-flat critical points works similarly.
We suppose further that $f$ is {\em Collet-Eckmann}~\cite{ColletEckmann83}: there are constants $C>0$, $\lambda_{{\textsc{\tiny CE}}}>1$ such that $|(f^n)'(fx_0)|\ge C\lambda_{{\textsc{\tiny CE}}}^n$ for all $n\ge1$.
It follows~\cite{Jak81} that there is a unique acip $\mu_X$ that is mixing up to a finite cycle.  We restrict to the case when $\mu_X$ is mixing.
Finally, we suppose that $x_0$ satisfies {\em slow recurrence} in the sense
that $\lim_{n\to\infty}n^{-1}\log |f^nx_0-x_0|=0$.

\begin{rmk}
There are many maps within the logistic family $f_a:x \mapsto ax(1-x)$
satisfying the above conditions.
By~\cite{Jak81}, the set $S\subset [0,4]$ of parameters such that 
$f_a$ does not have an attracting periodic orbit has positive Lebesgue measure.
By~\cite[Theorems A and B]{AM05},
the Collet-Eckmann and slow recurrence conditions hold for a.e.\ $a\in S$.
\end{rmk}

By eg.~\cite{BLS03,Young98}, $f$ is modelled by a Young tower $\pi_Y:Y=Z^\sigma\to X$ as described in Section~\ref{sec:Young}, where the inducing time $\sigma:Z\to\Z^+$ has exponential 
tails.\footnote{This is not the first return time to $Z$ except when $x_0$ is 
non-recurrent, i.e., the Misiurewicz condition holds.} 

Next, recall from Example~\ref{ex:Collet} that the roof function $\tau_0:X\to\R^+$ has the form $\tau_0(x)=g(x)|x-x_0|^{-1/\beta}$ where $\beta\in(\frac12,1)$ and $g:[0,1]\to(1,\infty)$ is differentiable.

\begin{lemma} \label{lem:Collet}
For maps $f$ and roof functions $\tau_0$ as defined above,
\begin{itemize}
\item[(a)] 
$\mu_X(\tau_0 > t) = c t^{-\beta}+O(t^{-2\beta})$
where $c>0$ is a constant given explicitly below.
\item[(b)] (H2) and (H3) hold.
\item[(c)] Hypothesis (A1) holds for the function space defined in Section~\ref{sec:Young}.  In (A1)(iii), we can take any $\beta_+$ in the range $\beta<\beta_+<\min\big\{1,\frac12(\sqrt 5+1)\beta\big\}$.
\end{itemize}
\end{lemma}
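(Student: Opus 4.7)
My plan breaks into three pieces matching the three clauses of the lemma.

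For part (a), since $\tau_0(x)>t$ is equivalent to $|x-x_0|<(g(x)/t)^{\beta}$, the tail set is a small interval centred at $x_0$. The Collet--Eckmann acip $\mu_X$ has density $h=d\mu_X/d\Leb$ which is bounded and of bounded variation near $x_0$ (its singularities sit on the forward critical orbit, away from $x_0$). Taylor expanding both $h$ and $g$ at $x_0$ and integrating then yields
\[
\mu_X(\tau_0>t)=2h(x_0)g(x_0)^{\beta}t^{-\beta}+O(t^{-2\beta}),
\]
so $c=2h(x_0)g(x_0)^{\beta}$.

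For part (b), (H2) is immediate from the full-branch Gibbs--Markov structure of $G=f^{\sigma}:Z\to Z$ furnished by the Young tower, together with uniform expansion and bounded distortion in the symbolic metric $d_Z$. For (H3) I would follow Proposition~\ref{prop:AFN2}: within any partition element $a\in\alpha$, no orbit segment $z,Fz,\dots,F^{\sigma(a)-1}z$ passes through $x_0$, so $\tau_0$ is $C^1$ along these orbits with $|\tau_0'|\ll\tau_0^{1+1/\beta}$ near $x_0$; combining this with uniform expansion of $G$ and the usual Koebe-style distortion estimates yields $|\tau_\ell(z)-\tau_\ell(z')|\ll(\inf_a\varphi)\,d_Z(Gz,Gz')$ for $1\le\ell\le\sigma(a)$.

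For part (c), (A1)(i) and (A1)(ii) are given by Lemma~\ref{lem:Young} applied to the Banach space $\cB(Y)$ on the Young tower. The substance is (A1)(iii), which I would derive from Proposition~\ref{prop:A1iii}. The conditions defining $X_q^{+}(n)$ and $X_q'(n)$ translate (via $\tau_0(x)=g(x)|x-x_0|^{-1/\beta}$) into $|x-x_0|\ll n^{-\beta}$ together with $|f^jx-x_0|\ll n^{-(1-q)\beta}$ for an appropriate range of $j$, so the core task is to bound
\[
P_j(n)=\mu_X\bigl(|x-x_0|<Cn^{-\beta},\;|f^jx-x_0|<Cn^{-(1-q)\beta}\bigr).
\]
I split the $j$-range into three regimes. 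For very small $j$, the image $f^j([x_0-r,x_0+r])$ has diameter $\asymp\lambda_{{\textsc{\tiny CE}}}^{j-1}r^2$ around $f^jx_0$, and slow recurrence forces $|f^jx_0-x_0|\ge e^{-\epsilon j}$, so that $P_j(n)=0$. For intermediate $j$, each inverse branch of $f^j$ on $[x_0-r,x_0+r]\setminus\{x_0\}$ has derivative $\gtrsim\lambda_{{\textsc{\tiny CE}}}^{j-1}|x-x_0|$, which yields $P_j(n)\ll n^{q\beta}\lambda_{{\textsc{\tiny CE}}}^{-(j-1)}$; summing this geometric series from the smallest admissible $j$ gives $\ll n^{-(2-q)\beta}$. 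For large $j$ (above a threshold $c\log n$ chosen so $\lambda^j$ beats $n^{-(2-q)\beta}$), exponential decay of correlations against BV observables yields $P_j(n)\ll n^{-(2-q)\beta}$ directly. Summing over the $O(\log n)$ relevant $j$ gives $\mu_X(X_q^{+}(n))\ll(\log n)\,n^{-(2-q)\beta}$, while $X_q'(n)$ contains at most one integer $j$ so that $\mu_X(X_q'(n))\ll n^{-(2-q)\beta}$ (with $d_*$ chosen big enough). This verifies \eqref{eq:A1iii} with $\beta+p$ any number strictly below $(2-q)\beta$.

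Proposition~\ref{prop:A1iii} then gives (A1)(iii) for any $\beta_+\in(\beta,\min\{1,(2-q)\beta,\beta/(1-q)\})$. Optimising over $q\in(0,1)$: the two nontrivial bounds $(2-q)\beta$ and $\beta/(1-q)$ are simultaneously saturated when $(2-q)(1-q)=1$, that is $q=\tfrac12(3-\sqrt5)$, at which point both equal $\tfrac12(\sqrt5+1)\beta$, giving exactly the range stated in the lemma. The main obstacle is the intermediate-$j$ count: turning the Collet--Eckmann diameter estimate together with slow recurrence into a uniform measure bound over all preimage branches of $f^j$ requires care, since the naive trivial bound $P_j(n)\le\mu_X(|x-x_0|<Cn^{-\beta})$ alone loses the factor $n^{-(1-q)\beta}$ that is decisive for the final exponent.
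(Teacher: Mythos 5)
There are two genuine gaps.

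First, in part (a) your starting premise is false: under only the Collet--Eckmann and slow recurrence hypotheses the forward critical orbit $\{f^nx_0\}$ may accumulate at $x_0$ (slow recurrence merely forbids \emph{exponentially} fast returns), and the density $h=d\mu_X/d\Leb$ carries square-root singularities $|x-f^nx_0|^{-1/2}$ along that orbit, cf.~\eqref{eq:hCE}. Hence $h$ is in general neither bounded nor of bounded variation in any neighbourhood of $x_0$, and the Taylor-expansion argument does not apply. The paper instead integrates the series~\eqref{eq:hCE} term by term over $\{\tau_0>t\}$: the terms with $f^nx_0$ far from $x_0$ produce the leading constant $c=g(x_0)^\beta\sum_n h_n(x_0)|(f^n)'(fx_0)|^{-1/2}|x_0-f^nx_0|^{-1/2}$, while the terms with $f^nx_0$ inside the tail interval are controlled because slow recurrence forces such $n$ to be at least of order $\log t$, so the Collet--Eckmann factor $|(f^n)'(fx_0)|^{-1/2}$ makes their contribution negligible. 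Some version of this bookkeeping is unavoidable; your sketch omits it entirely.

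Second, in part (c) the overall architecture (Lemma~\ref{lem:Young} for (A1)(i),(ii), Proposition~\ref{prop:A1iii} for (A1)(iii), optimisation over $q$ giving the golden-ratio exponent) matches the paper, but the core estimate on $\mu_X(U_n\cap f^{-j}U_{n^{1-q}})$ is exactly where your argument is incomplete, as you concede. The inequality $(f^j)'(x)\gtrsim\lambda_{{\textsc{\tiny CE}}}^{j-1}|x-x_0|$ for all $x$ near $x_0$ is not free: it needs a distortion/binding-period argument whose validity depends on $j$ relative to $|x-x_0|$, and even granted, converting it into a measure bound over all monotonicity branches is the hard step. The paper's mechanism (Lemma~\ref{lem:SV}) is different and avoids derivative estimates: it takes the maximal monotonicity intervals $J\subset J'$ of $f^j$ meeting $U_n\cap f^{-j}U_{n^{1-q}}$, uses slow recurrence to bound $|f^j(J'\cap U_n)|\gg n^{-\alpha d_*}$ from below (the endpoints of $f^jJ'$ are critical values $f^{k_i}x_0$), and then Koebe distortion gives the \emph{proportion} $|J|/|J'\cap U_n|\ll n^{-(1-q)\beta+\alpha d_*}$, which sums over disjoint $J'$ to the desired Lebesgue bound. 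Finally, you also need to convert this Lebesgue bound into a bound on $\mu_X$-measure; since $h$ is unbounded (the same issue as in (a)), this requires a separate argument using~\eqref{eq:hCE}, Collet--Eckmann decay of the coefficients and slow recurrence, which is absent from your proposal. Your correlation-decay treatment of the large-$j$ range is a plausible alternative to part of the paper's argument, but without the intermediate regime and the density conversion the proof does not close.
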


Hence, the required rates of mixing in Example~\ref{ex:Collet} follow from Theorem~\ref{thm:rateA} provided (H4) is satisfied,
with $d_1=\frac{1}{c\pi}\sin\beta\pi$.

We begin by proving Lemma~\ref{lem:Collet}(a).
The density $h=d\mu_X/d\Leb$ is bounded below and has the form
\begin{align}\label{eq:hCE}
\textstyle h(x) = \sum_{n\ge 1} h_n(x) |(f^n)'(fx_0)|^{-1/2} |x-f^nx_0|^{-1/2},
\end{align}
where the $h_n$ are $C^1$ functions supported on one-sided neighbourhoods of $f^nx_0$,
and are uniformly bounded in $n$. 
An upper bound of this form was obtained by~\cite{Now93}.  The simplest way to obtain the precise expression for $h$ is to observe that
\[
 \textstyle h(x) = \sum_{n \ge 2} \tilde h_n(x) \,  |(f^n)'(fx_0)|^{-1/2} (|x-f^nx_0|^{-1/2} + |x-f^{b(n)-1}x_0|^{-1/2})
 \]
 where $\tilde h_n(x) |(f^n)'(fx_0)|^{-1/2} (|x-f^nx_0|^{-1/2} + |x-f^{b(n)-1}x_0|^{-1/2})$
 is the density of $\mu_X$ lifted to the $n$-th level of the Hofbauer tower,
 cf.\ Keller \cite{Kel89}. (Here $b(n) < n$ is an integer determined by the combinatoric properties of $f$.)

Lemma~\ref{lem:Collet}(a)
now follows from the choice of roof function $\tau_0$ and~\eqref{eq:hCE} with
\[
\textstyle c = g(x_0)^\beta  \sum_{n \ge 1} h_n(x_0) |(f^n)'(fx_0)|^{-1/2} |x_0-f^nx_0|^{-1/2} .
\]
(Convergence of this series follows from the Collet-Eckmann and slow recurrence conditions.)

\begin{lemma}
(H2) and (H3) are satisfied.
\end{lemma}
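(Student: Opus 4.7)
The plan is straightforward in structure: (H2) will follow from the Young tower construction, and (H3) will reduce to a pointwise H\"older estimate summed carefully along orbits.

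For (H2), since $f$ is modelled by a Young tower $\pi_Y:Y = Z^\sigma \to X$ with inducing time $\sigma$ having exponential tails and full-branch Gibbs--Markov factor $G = f^\sigma:Z\to Z$, conditions (H2)(i)--(iv) are built into the structure and are part of the standing assumptions collected in Section~\ref{sec:Young}. In particular, (H2)(i)--(iii) are precisely the Gibbs--Markov property of $G$, and (H2)(iv) follows from the transverse contraction $d_M(f^\ell z, f^\ell z') \le C d_{\theta_0}(z, z')$ for $\ell < \sigma(z)$ assumed there. These are classical for Collet--Eckmann maps via~\cite{Young98,BLS03}.

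For (H3), I would decompose $|\tau_\ell(z) - \tau_\ell(z')| \le \sum_{j=0}^{\ell-1} |\tau_0(f^j z) - \tau_0(f^j z')|$ and apply the mean value theorem. Since $\tau_0(x) = g(x)|x-x_0|^{-1/\beta}$ with $g$ differentiable and bounded, $|\tau_0'(x)| \ll \tau_0(x)/|x-x_0| \ll |x-x_0|^{-(1+1/\beta)}$, so each summand is bounded by $C|f^j z - x_0|^{-(1+1/\beta)}|f^j z - f^j z'|$. The transverse contraction provides $|f^j z - f^j z'| \le C d_{\theta_0}(z,z')$, so the task reduces to estimating $\sum_{j<\sigma(z)} |f^j z - x_0|^{-(1+1/\beta)}$ in terms of $\varphi(z) = \sum_j \tau_0(f^j z) \asymp \sum_j |f^j z - x_0|^{-1/\beta}$.

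The main obstacle, and where the Collet--Eckmann hypothesis is essential, is quantifying how close the orbit segment $\{f^j z\}_{j<\sigma(a)}$ can come to $x_0$ within a single cylinder. A standard binding-period analysis (as in the construction of the Young tower for such maps, cf.~\cite{Young98,BLS03}) shows that in a cylinder $a$ with $\sigma(a) = R$ there is a unique closest approach at some time $j^*$, with $|f^{j^*} z - x_0|$ bounded below in terms of $R - j^*$ through the Collet--Eckmann expansion at $fx_0$ and the nondegeneracy of the critical point, while all other iterates $f^j z$ stay uniformly away from $x_0$. Consequently $\varphi(z)$ is dominated by $\tau_0(f^{j^*} z)$, and an effective bound on $\sum_j |f^j z - x_0|^{-(1+1/\beta)}$ in terms of a controlled power of $\varphi(z)$ follows. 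Combining this with the trivial inequality $|\tau_\ell(z) - \tau_\ell(z')| \ll \varphi(z)$ and interpolating produces
\[
|\tau_\ell(z) - \tau_\ell(z')| \le C \varphi(z)\, d_{\theta_0}(z, z')^\eta
\]
for some $\eta > 0$, which is (H3). The precise bookkeeping of the H\"older exponent in the interpolation is the delicate part of the calculation.
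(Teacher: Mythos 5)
Your treatment of (H2) is fine and matches the paper, which likewise notes that (H2) is automatic from the inducing scheme but reviews the specific construction (nested cylinders $a\subset a'$ with Koebe space, following Bruin) precisely because that structure is what makes (H3) work.

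For (H3) there is a genuine gap, and the reduction you propose is the wrong one. After the mean value estimate $|\tau_0(f^jz)-\tau_0(f^jz')|\ll |f^jz-x_0|^{-(1+1/\beta)}|f^jz-f^jz'|$, you bound $|f^jz-f^jz'|$ by an \emph{absolute} constant times $d_{\theta_0}(z,z')$ and are then forced to control $\sum_j|f^jz-x_0|^{-(1+1/\beta)}$, which has a strictly worse singularity than $\varphi(z)\asymp\sum_j|f^jz-x_0|^{-1/\beta}$: at a closest approach $|f^{j^*}z-x_0|=\epsilon$ the offending term is of order $\epsilon^{-1}\epsilon^{-1/\beta}$, i.e.\ as large as $\varphi(z)^{1+\beta}$. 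Interpolating this against the crude bound $|\tau_\ell(z)-\tau_\ell(z')|\ll\varphi(z)$ then yields at best $\varphi(z)^{1+\beta\eta}d_{\theta_0}(z,z')^{\eta}$, which is not (H3); the extra power of $\varphi$ does not go away. (The binding-period picture you invoke is also not accurate for this inducing scheme: within one block the orbit may approach $x_0$ several times, and there is no claim that all other iterates stay uniformly away from $x_0$.) The missing ingredient is a \emph{relative} bound on the separation. Because $f^{\sigma(a)-\ell}$ is a diffeomorphism on $f^\ell a'$, the critical point satisfies $x_0\notin f^\ell a'$, so the interval $[f^\ell z,x_0]$ contains a component of $f^\ell(a'\setminus a)$; Koebe distortion gives that component length at least $\delta|f^\ell a|$, whence $|f^\ell z-x_0|\ge\delta|f^\ell a|$, and a second application of Koebe to $f^{\sigma(a)-\ell}:f^\ell a\to Z$ gives $|f^\ell z-f^\ell z'|/|f^\ell z-x_0|\ll d_{\theta_0}(z,z')$ uniformly in $\ell$. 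Combined with the estimate $|\tau_0(x)-\tau_0(y)|\ll\tau_0(x)|x-y|/|y-x_0|$, this makes each term of the sum $\ll\tau_0(f^\ell z)\,d_{\theta_0}(z,z')$, and summing gives exactly $\varphi(z)\,d_{\theta_0}(z,z')$ — no interpolation and no loss of exponent.
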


\begin{proof}
The existence of inducing schemes for Collet-Eckmann maps is well-known,
see e.g.\ \cite{Jak81, BLS03}, and (H2) follows automatically, but we review the particular construction following \cite{Bruin95}
to ensure that (H3) holds.

Let $Z,\,Z'$ be intervals with $x_0\subset \operatorname{Int}Z$  and $\bar Z\subset \operatorname{Int}Z'$ such that 
trajectories starting in $f(\partial Z)\cup\partial Z'$ do not intersect $\operatorname{Int}Z'$, cf.~\cite[p.~330]{dMvS}.
For a.e.\ $z \in Z$, one can find a minimal integer $\sigma(z) \ge 1$
for which
there are intervals $a\subset a'\subset Z$ with $z\in\operatorname{Int}a$ 
such that
$f^{\sigma(z)}:a' \to Z'$ is a diffeomorphism and $f^{\sigma(z)}(a) = Z$.
Moreover, for any two points, the corresponding intervals $a$ coincide or are disjoint (so $\sigma$
is constant on each $a$).
Define $\alpha=\{a\}$ and $G=f^\sigma$.
Standard Koebe distortion arguments
(see \cite[Chapter IV, Theorem 1.2]{dMvS}) ensure that
\[
\Big| \frac{G'(z)}{G'(z')} - 1 \Big| \le C|Gz-Gz'|
\quad\text{for all $z,z'\in a$,}
\]
for a constant $C$ depending only on $f$ and the intervals $Z,\,Z'$.
It follows from the Collet-Eckmann condition that $\Leb(z\in Z:\sigma(z)>n)=O(e^{-dn})$ for some $d>0$, see e.g.\ \cite{BLS03}.
Now $G= f^\sigma:Z \to Z$ is the required induced Gibbs-Markov map,
and there exists an acip $\mu_Z$ with density above and below.
Moreover, there exists $\theta_0 \in (0,1)$ such that
$|Gz-Gz'|\ll d_{\theta_0}(Gz,Gz')=\theta_0^{-1}d_{\theta_0}(z,z')$
for all $z,z'\in a$, $a\in\alpha$.

The choices of $Z\subset Z'$ and $a \subset a'$
allow us to show (again using standard Koebe distortion estimates)
that there exists $\delta > 0$
such that the components of $f^\ell(a'\setminus a)$ have length at least
$\delta |f^\ell a|$ for each $a \in \alpha$ and $0 \le \ell < \sigma(a)$.
Let $z\in a$.
Since $f^{\sigma(a)-\ell}:f^\ell a'\to Z'$ is a diffeomorphism, $x_0\not\in f^\ell a'$.  Also $f^\ell z\in f^\ell a$ and $\overline{f^\ell a}\subset \operatorname{Int}f^\ell a'$.
Hence the interval
$[f^\ell z,x_0]$ contains a component of $f^\ell(a'\setminus a)$, so
$|f^\ell z-x_0|\ge \delta|f^\ell a|$.  Using Koebe distortion once more for 
$f^{\sigma(a)-\ell}:f^\ell a\to Z$,
\begin{align*} 
 \frac{|f^\ell z-f^\ell z'|}{|f^\ell z-x_0|} 
 \le \frac{|f^\ell z-f^\ell z'|}{\delta|f^\ell a|}
\ll
 \frac{|Gz-Gz'|}{|Z|} \ll
 d_{\theta_0}(z,z')
\enspace\text{for all $z,z'\in a$, $0\le\ell<\sigma(a)$.}
\end{align*}
Also, for $x,y\in X$ with $|x-x_0|\ge |y-y_0|$,
\begin{align*}
|\tau_0(x) & - \tau_0(y)|  \le 
|g(x)-g(y)||x-x_0|^{-1/\beta}\,+\,|g(y)|\big||x-x_0|^{-1/\beta}-|y-x_0|^{-1/\beta}\big|
\\ & \ll |x-x_0|^{-1/\beta}|x-y|\,+\,|x-x_0|^{-1/\beta}|y-x_0|^{-1/\beta}\big||y-x_0|^{1/\beta}-|x-x_0|^{1/\beta}\big|
\\ & \ll \tau_0(x)\big\{|x-y|\,+\,|y-x_0|^{-1/\beta}\big||y-x_0|^{1/\beta}-|x-x_0|^{1/\beta}\big|\big\}
\\ & \ll 
 \tau_0(x)\big\{ |x-y|+|y-x_0|^{-1/\beta}|y-x_0|^{1/\beta-1}|x-y|\big\}
 \ll \tau_0(x)|x-y||y-x_0|^{-1}.
\end{align*}
Hence $|\tau_0(f^\ell z)-\tau_0(f^\ell z')| \ll 
\tau_0(f^\ell z)|f^\ell z-f^\ell z'|/|f^\ell z'-x_0|\ll\tau_0(f^\ell z)d_{\theta_0}(z,z')$, and
\begin{align*}
 |\tau_\ell(z) - \tau_\ell(z')| &\le  \sum_{j=0}^{\ell-1} |\tau_0(f^j z) - \tau_0(f^j z')| 
 \ll \sum_{j=0}^{\ell-1} \tau_0(f^j z) d_{\theta_0}(z,z')
  \le \varphi(z) d_{\theta_0}(z,z').
 \end{align*}
This is (H3).
\end{proof}

\begin{lemma}\label{lem:SV}
Let $p,q\in(0,1)$ with $(1-q)\beta>p$.  Then there exists $C>0$ such that the
conditions in~\eqref{eq:A1iii} hold.
\end{lemma}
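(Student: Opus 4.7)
The plan is to bound $\mu_X(A_j(n))$, where $A_j(n):=\{\tau_0\geq n\}\cap f^{-j}\{\tau_0\geq n^{1-q}\}$, and then sum over $1\leq j<d_*\log n$ (respectively, over the much narrower range for $X_q'(n)$). Since $g$ is bounded above and below, $I_n:=\{\tau_0\geq n\}\subset B_{cn^{-\beta}}(x_0)$ and $J_n:=\{\tau_0\geq n^{1-q}\}\subset B_{cn^{-(1-q)\beta}}(x_0)$ for some $c>0$. Parametrising $x=x_0+t$ on $I_n$, I would combine the nondegeneracy of the critical point $x_0$ with the Koebe-type distortion estimates available for Collet--Eckmann maps with slow recurrence (cf.~\cite{BLS03}) to derive the quadratic expansion
\[
f^jx-f^jx_0=D_jt^2(1+o(1)),\qquad D_j=\tfrac12 f''(x_0)(f^{j-1})'(fx_0),
\]
valid on the range of $j$ for which $f^{j-1}$ has bounded distortion on an $O(n^{-2\beta})$-neighbourhood of $fx_0$.

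Setting $E_j=f^jx_0-x_0$, the condition $x\in A_j(n)$ reads $|E_j+D_jt^2|\leq cn^{-(1-q)\beta}$. I would fix $\epsilon\in(0,(1-q)\beta/d_*)$, so that slow recurrence gives $|E_j|\geq e^{-\epsilon j}\geq n^{-\epsilon d_*}\gg n^{-(1-q)\beta}$ for all $1\leq j<d_*\log n$ and $n$ large. Then $E_j$ and $D_jt^2$ must carry opposite signs and nearly cancel, so $t^2$ is restricted to an interval of length at most $2cn^{-(1-q)\beta}/|D_j|$ centred near $|E_j|/|D_j|$. Converting back to $t$ and using the Collet--Eckmann lower bound $|D_j|\geq C\lambda_{{\textsc{\tiny CE}}}^{j-1}$ would yield
\[
\Leb(A_j(n))\leq \frac{Cn^{-(1-q)\beta}}{\sqrt{|E_j||D_j|}}\leq Cn^{-(1-q)\beta}\lambda_1^{-(j-1)/2},\qquad \lambda_1:=\lambda_{{\textsc{\tiny CE}}}\,e^{-\epsilon}>1.
\]
The additional nonemptiness requirement $|D_j|c^2n^{-2\beta}\gtrsim|E_j|$ forces $j\geq j_{\min}\sim 2\beta\log n/\log\lambda_{{\textsc{\tiny CE}}}$, whence summing the geometric tail gives $\sum_j\Leb(A_j(n))\leq Cn^{-(2-q)\beta+O(\epsilon)}$. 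Since $(1-q)\beta>p$, I would then choose $\epsilon$ small enough that the exponent drops below $-(\beta+p)$, and pass from $\Leb$ to $\mu_X$ using local integrability of $h=d\mu_X/d\Leb$ (the singularities in~\eqref{eq:hCE} accumulate near $x_0$ only through the points $f^kx_0$, with Collet--Eckmann-summable weights $|(f^k)'(fx_0)|^{-1/2}$), which would yield $\mu_X(X_q^+(n))\leq Cn^{-(\beta+p)}$.

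For $X_q'(n)$, the admissible range $[d_*\log n,d_*\log(n+1))$ contains at most a bounded number of integers, so term-by-term application of the same geometric analysis---or alternatively exponential decay of correlations for BV observables on Collet--Eckmann maps (using $\|1_{I_n}\|_{BV},\|1_{J_n}\|_{BV}=O(1)$)---delivers the matching bound. The main obstacle will be ensuring that the quadratic expansion and distortion bounds remain valid across the whole range $1\leq j<d_*\log n$: Koebe distortion may only cover $j\lesssim c\log n$ for some possibly small $c$. For any $j$ beyond that regime, I would fall back on the correlation-decay estimate $|\mu_X(I_n\cap f^{-j}J_n)-\mu_X(I_n)\mu_X(J_n)|\leq C\gamma^j n^{-(1-q)\beta}$, which supplies the missing range provided $d_*$ is chosen large enough (permissible via Proposition~\ref{prop:d}).
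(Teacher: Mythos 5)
Your small-$j$ analysis is essentially sound, but the argument does not cover the full range $1\le j<d_*\log n$, and this is a genuine gap rather than a technicality. The quadratic expansion $f^jx-f^jx_0=D_jt^2(1+o(1))$ presupposes that $f^{j-1}$ is a diffeomorphism with bounded distortion on the $O(n^{-2\beta})$-neighbourhood $f(U_n)$ of $fx_0$; since $|(f^{j-1})'(fx_0)|\gg\lambda_{{\textsc{\tiny CE}}}^{j}$, the image $f^{j-1}(f(U_n))$ grows like $\lambda_{{\textsc{\tiny CE}}}^{j}n^{-2\beta}$ and will in general meet the critical point once $j$ exceeds some $c\log n$, with $c$ determined by $\lambda_{{\textsc{\tiny CE}}}$ and $\beta$ --- whereas $d_*$ is dictated by the exponential tail of $\sigma$ via Proposition~\ref{prop:d} and may be much larger than $c$. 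Your fallback does not close this gap: decay of correlations gives at best $\mu_X(I_n\cap f^{-j}J_n)\le\mu_X(I_n)\mu_X(J_n)+C\gamma^jn^{-(1-q)\beta}$, and at the bottom of the missing range, $j\approx c\log n$, the error term is of order $n^{-c|\log\gamma|-(1-q)\beta}$, which is $\le n^{-(\beta+p)}$ only if $c|\log\gamma|\ge q\beta+p$ --- a condition on constants you do not control. Moreover, ``choosing $d_*$ large enough'' runs the wrong way: enlarging $d_*$ enlarges $X_q^+(n)$ and $X_q'(n)$ and lengthens the range of $j$ to be controlled; it does not improve the correlation estimate.

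The paper's proof sidesteps the folding problem by never treating $f^j|_{U_n}$ as a single branch: it decomposes $U_n\cap f^{-j}U_{n^{1-q}}$ into maximal intervals of monotonicity $J\subset J'$ of $f^j$, observes that $f^jJ'=[f^{k_1}x_0,f^{k_2}x_0]$ has endpoints on the critical orbit, and uses slow recurrence to obtain $|f^j(J'\cap U_n)|\gg n^{-\alpha d_*}$ for \emph{every} branch and every $j<d_*\log n$; Koebe then gives $|J|/|J'\cap U_n|\ll n^{-(1-q)\beta+\alpha d_*}$ uniformly in the branch, and summing over the disjoint $J'$ yields $|U_n\cap f^{-j}U_{n^{1-q}}|\ll n^{-(\beta+p')}$. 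A secondary point: passing from $\Leb$ to $\mu_X$ needs more than local integrability of $h$, since a singularity $|x-f^ix_0|^{-1/2}$ lying inside a set $I$ contributes $|I|^{1/2}\gg|I|$; the paper splits the sum in~\eqref{eq:hCE} at $i=k\log n$, using slow recurrence to keep $f^ix_0$ at distance $\gg e^{-\delta i}$ from $I$ for small $i$ and the Collet--Eckmann weight $\lambda_{{\textsc{\tiny CE}}}^{-i/2}$ to absorb $|I|^{1/2}$ for large $i$. You would need to supply both of these ingredients to make the proof complete.
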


\begin{proof}
Choose $\alpha>0$ small so that  $p'=(1-q) \beta-\alpha d_*> p$.
 Let $U_n = \{ x \in X : [\tau_0(x)] \ge n\}$, so $U_n$ is an interval containing $x_0$ of radius $O(n^{-\beta})$.

We claim that there is a constant $C>0$ such that
\[
\mu_X(U_n\cap f^{-j}U_{n^{1-q}})\le Cn^{-(\beta+p')}
\quad\text{for all $n\ge1$, $j< d_*\log n$.}
\]
Then
$ \mu_X(X_q^+(n))\le 
\sum_{1\le j< d_*\log n}\mu_X(U_n\cap f^{-j}U_{n^{1-q}}) \ll n^{-(\beta+p)}$,
 and
$\mu_X(X_q'(n))\le \sum_{d_*\log n\le j<d_*\log(n+1)}\mu_X(U_n\cap f^{-j}U_{n^{1-q}}) \ll n^{-(\beta+p)}$ as required.

It remains to verify the claim.
 Fix $j < d_* \log n$, and let $J$ be a maximal interval in
$U_n\cap f^{-j} U_{n^{1-q}}$ on which $f^j$ is monotone. 
For each such $J$ let $J'$ be the maximal interval, $J\subset J'\subset X$,
 on which $f^j$ is monotone.
Note that the intervals $J'$ obtained in this way are disjoint.

Since $f^jJ\subset U_{n^{1-q}}$ and $J\subset J'\cap U_n$, it follows that
$f^j(J'\cap U_n)$ intersects $U_{n^{1-q}}$.
Hence we can choose $y\in f^j(J'\cap U_n)$ with $|y-x_0|\ll n^{-(1-q)\beta}$.

Since $J'$ is a maximal interval of monotonicity for $f^j$, 
there exist $k_1, k_2 \le j$ such that $f^jJ' = [f^{k_1}x_0, f^{k_2}x_0]$.
Note that $f^j$ is not monotone on $U_n$ so without loss $f^{k_1}x_0\in f^j U_n$.
By slow recurrence, 
$|f^{k_1}x_0-x_0| \gg e^{-\alpha d_* \log n} = n^{-\alpha d_*}$.
Hence 
\[
|f^j(J'\cap U_n)|\ge |f^{k_1}x_0-y|\ge |f^{k_1}x_0-x_0|-|y-x_0|\gg  n^{-\alpha d_*} -n^{-(1-q)\beta} \gg n^{-\alpha d_*}.
\]
By the Koebe distortion lemma, there is a constant $C$ such that
\begin{align*}
\frac{|J|}{|J'\cap U_n|} \le C \frac{|f^jJ|}{|f^j(J'\cap U_n)|} \ll
 \frac{n^{-(1-q) \beta}}{n^{-\alpha d_*}}=n^{-p'}.
\end{align*}
Hence
\[
\textstyle |U_n\cap f^{-j}U_{n^{1-q}}|=\sum_J|J|\ll n^{-p'}\sum_J |J'\cap U_n|
\le n^{-p'}|U_n|\ll n^{-(\beta+p')}.
\]

Let $I=U_n\cap f^{-j}U_{n^{1-q}}$.
We have shown that $|I|\ll n^{-(\beta+p')}$ and we require that
$\mu_X(I)\ll n^{-(\beta+p')}$.
By~\eqref{eq:hCE} and the Collet-Eckmann condition,
\begin{align*}
\mu_X(I)=\int_I h(x)\,dx \ll 
\sum_{i\ge 1} \lambda_{{\textsc{\tiny CE}}}^{-i/2} \int_I|x-f^ix_0|^{-1/2}dx.
\end{align*}
Note that $\int_I|x-f^ix_0|^{-1/2}dx\le \min\{ |I|d(f^ix_0, I)^{-1/2},\, 2|I|^{1/2}\}$,
where $d(f^ix_0, I)$ denotes the distance between $f^ix_0$ and $I$.
Fix $k>1$ large.  Then $\mu_X(I)\ll S_1+S_2$ where
\begin{align*}
S_1 & = |I|\sum_{1\le i\le k\log n}\lambda_{{\textsc{\tiny CE}}}^{-i/2}d(f^ix_0, I)^{-1/2}
\ll n^{-(\beta+p')}\sum_{1\le i\le k\log n}\lambda_{{\textsc{\tiny CE}}}^{-i/2}d(f^ix_0, I)^{-1/2},
\\
S_2 & =|I|^{1/2}\sum_{i>k\log n}\lambda_{{\textsc{\tiny CE}}}^{-i/2}\ll |I|^{1/2}n^{-(k/2)\log\lambda_{{\textsc{\tiny CE}}}} \ll n^{-(\beta+p')/2}n^{-(k/2)\log\lambda_{{\textsc{\tiny CE}}}}.
\end{align*}
Clearly, $S_2\ll n^{-(\beta+p')}$ for $k$ sufficiently large.

Choose $\delta>0$ small.  (In fact, the choices $k=(\beta+p')/\log\lambda_{{\textsc{\tiny CE}}}$, $\delta=\beta/(2k)$ suffice.)  By slow recurrence, $|f^ix_0-x_0|\gg e^{-\delta i}$.
Also $x_0\in U_n$, $I\subset U_n$ and $|U_n|\ll n^{-\beta}$, so $d(f^ix_0,I)\gg e^{-\delta i}-n^{-\beta}$.
For $i\le k\log n$ with $k$ sufficiently large, $d(f^ix_0,I)\gg e^{-\delta i}$.
Hence $S_1
\le n^{-(\beta+p')}\sum_{i\ge1} (\lambda_{{\textsc{\tiny CE}}}^{-1}e^{\delta })^{i/2}\ll n^{-(\beta+p')}$.
We conclude that $\mu_X(I)\ll n^{-(\beta+p')}$ and the proof is complete.
\end{proof}

Now take $p=\frac12(\sqrt 5-1)\beta$ and $(1-q)\beta =p-\eps$.
Then Lemma~\ref{lem:SV} applies and the range 
$\beta<\beta_+<\min\big\{1,\frac12(\sqrt 5+1)\beta\big\}$
in Lemma~\ref{lem:Collet}(c) follows from Proposition~\ref{prop:A1iii}.

\appendix

\section{Correlation function of a suspension semiflow}
\label{sec:rhohat}

In this appendix, we verify formula~\eqref{eq:rhohat} following~\cite{Pollicott85}.

\begin{prop} \label{prop:rhohat}
$\hat\rho_{v,w}(s)  =\hat J(s)+\int_Y \hat T(s)v_s\,w_s\,d\mu$
for all $s\in\H$.
\end{prop}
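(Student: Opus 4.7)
The plan is to derive the formula by unfolding the semiflow into its ``lap'' decomposition and then recognising the twisted transfer operator. Specifically, I would start from
\[
\hat\rho_{v,w}(s)=\int_0^\infty e^{-st}\int_Y\int_0^{\tau(y)}v(y,u)\,w(F_t(y,u))\,du\,d\mu(y)\,dt,
\]
and, for fixed $(y,u)$, decompose the $t$-integral according to the unique integer $n=n(y,u,t)\ge 0$ with $\tau_n(y)\le u+t<\tau_{n+1}(y)$, where $\tau_n=\sum_{j=0}^{n-1}\tau\circ F^j$. On this piece, $F_t(y,u)=(F^n y,\,u+t-\tau_n(y))$, so the change of variables $r=u+t-\tau_n(y)$ turns the inner integral into $e^{su}e^{-s\tau_n(y)}\int_{I_n}e^{-sr}w(F^n y,r)\,dr$ with $I_n=[0,\tau(F^ny)]$ for $n\ge 1$ and $I_0=[u,\tau(y)]$.

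Next I would use that $v$ and $w$ are supported in $Y\times[0,1]$ and that $\tau>1$, so for $n\ge 1$ the inner integral equals $e^{su}e^{-s\tau_n(y)}w_s(F^n y)$. After integrating in $u$ this contributes $\int_Y e^{-s\tau_n}v_s\,w_s\circ F^n\,d\mu$. The identity $\int_Y R u\cdot h\,d\mu=\int_Y u\cdot h\circ F\,d\mu$ applied iteratively gives $\hat R(s)^n v_s=R^n(e^{-s\tau_n}v_s)$, hence
\[
\sum_{n\ge 1}\int_Y e^{-s\tau_n}v_s\,w_s\circ F^n\,d\mu
=\int_Y(\hat T(s)-I)v_s\,w_s\,d\mu.
\]
Interchanging sum and integral is legitimate because for $\Re s>0$ and $\tau\ge 1$,
$|\hat R(s)v|_{L^1}\le |e^{-s\tau}v|_{L^1}\le e^{-\Re s}|v|_{L^1}$, so $\sum_n\hat R(s)^n$ converges in the $L^1$ operator norm; $w_s\in L^\infty(Y)$ then absorbs the pairing.

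The remaining $n=0$ term equals $\int_Y\int_0^1 v(y,u)e^{su}\int_u^1 e^{-sr}w(y,r)\,dr\,du\,d\mu(y)$, since $w$ vanishes for $r>1$. Writing $\int_u^1=\int_0^1-\int_0^u$ splits this as $\int_Y v_s w_s\,d\mu$ plus precisely $\hat J(s)$ (by the very definition of $\hat J$, with the minus sign from the $-\int_0^u$ piece). Adding the $n=0$ and $n\ge 1$ contributions, the $\int_Y v_s w_s\,d\mu$ terms cancel and one obtains $\hat J(s)+\int_Y\hat T(s)v_s\,w_s\,d\mu$, as required.

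The only substantive obstacle is justifying Fubini and the interchange of the sum over $n$ with the $u$- and $\mu$-integrals; this is why I would first establish the $L^1$-contraction estimate for $\hat R(s)$ on $\H$ as a preliminary, after which the whole calculation is routine bookkeeping. Issues of convergence at the boundary $\Re s=0$ need not be addressed here since the proposition is stated only for $s\in\H$.
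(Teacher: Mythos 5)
Your proof is correct and follows essentially the same route as the paper's Appendix~\ref{sec:rhohat}: decompose by lap number $n$ with $\tau_n(y)\le u+t<\tau_{n+1}(y)$, change variables to pull out $e^{-s\tau_n}$, identify the $n\ge1$ terms with $\int_Y\hat R(s)^nv_s\,w_s\,d\mu$ via the transfer-operator duality, and split the $n=0$ term into $\int_Y v_s w_s\,d\mu+\hat J(s)$ so that the geometric series assembles into $\hat T(s)=(I-\hat R(s))^{-1}$. The only difference is cosmetic (you make explicit the $L^1$-contraction $\|\hat R(s)^n\|_{L^1}\le e^{-n\Re s}$ justifying the interchange of sum and integral, which the paper leaves implicit), so there is nothing to correct.
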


\begin{proof}
First observe that $\rho_{v,w}(t)=\sum_{n=0}^\infty K_n(t)$, where
\begin{align*}
K_n(t) & =\int_{Y^\tau} 1_{\{\tau_n(y)<t+u<\tau_{n+1}(y)\}}v(y,u)\,w\circ F_t(y,u)\,d\mu^\tau 
\\ & = \int_{Y^\tau} 1_{\{\tau_n(y)<t+u<\tau_{n+1}(y)\}}v(y,u)w(F^ny,t+u-\tau_n(y))\,d\mu^\tau.
\end{align*}
For all $n\ge0$,
\begin{align*}
\hat K_n(s) & 
= \int_0^\infty e^{-st}\int_{Y^\tau} 1_{\{\tau_n(y)-u<t<\tau_{n+1}(y)-u\}}v(y,u)w(F^ny,t+u-\tau_n(y))\,d\mu^\tau\,dt.
\end{align*}

When $n\ge1$, we have $u<\tau(y)\le \tau_n(y)$ for all $(y,u)\in Y^\tau$, and hence
\begin{align*}
\hat K_n(s) & 
 =\int_Y\int_0^{\tau(y)}\int_{\tau_n(y)-u}^{\tau_{n+1}(y)-u}e^{-st}v(y,u)w(F^ny,t+u-\tau_n(y))\,dt\,du\,d\mu.
\end{align*}
The substitution $u'=t+u-\tau_n(y)$ yields
\begin{align*}
\hat K_n(s) & =\int_Y\Bigl(\int_0^{\tau(y)}e^{su}v(y,u)\,du\Bigr)
\Bigl(\int_0^{\tau(F^ny)}e^{-su'}w(F^ny,u')\,du'\Bigr)e^{-s\tau_n(y)}\,d\mu
\\ & 
= \int_Y e^{-s\tau_n}v_s\, w_s\circ F^n\,d\mu
= \int_Y \hat R(s)^nv_s\, w_s\,d\mu.
\end{align*}
Also,
\begin{align*}
\hat K_0(s) & 
 =\int_Y\int_0^{\tau(y)}\int_0^{\tau(y)-u}e^{-st}v(y,u)w(y,t+u)\,dt\,du\,d\mu \\
 & =\Big(\int_Y\int_0^{\tau(y)}\int_0^{\tau(y)}
 -\int_Y\int_0^{\tau(y)}\int_0^u\Big)e^{su}v(y,u)e^{-st}w(y,t)\,dt\,du\,d\mu 
 \\
& = \int_Y v_s\,w_s\,d\mu + \hat J(s).
\end{align*}
Hence
$\hat\rho_{v,w}(s)=\hat J(s)+\sum_{n=0}^\infty \int_Y \hat R(s)^nv_s\,w_s\,d\mu
=\hat J(s)+\int_Y (I-\hat R(s))^{-1}v_s\,w_s\,d\mu$.
\end{proof}

\section{Quasicompactness for Young towers}
\label{app:Young}

In this section, we show that (A1)(i),(ii) are satisfied for the function space
$\cB(Y)$ defined in Section~\ref{sec:Young}.
We use the facts that (i) $\tau^p\in L^1$ for all $p<\beta$, 
(ii) $\sigma$ has exponential tails, (iii) $G$ is a full branch Gibbs-Markov map, (iv) $\tau$ satisfies (H3).

Recall that $\cB(Y)$ depends on $\beta',\,\theta,\,\eps$.
Throughout, we fix $\beta'\in(0,\beta)$ and $\theta\in[\theta_0^{\beta'},1)$.

\renewcommand{\thesubsection}{\Alph{section}.\arabic{subsection}}

\subsection{Compact embedding}

In this subsection, we verify (A1)(i).

\begin{prop} \label{prop:AA}
Let $p\in(1,\infty)$. 
Choose $\eps>0$ so that $\int_Z e^{\eps p\sigma}\,d\mu_Z<\infty$.
Then $\cB(Y)$ is compactly embedded in $L^p(Y)$.
\end{prop}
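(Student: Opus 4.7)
The plan is a three-level truncation followed by a diagonal extraction in $L^p$.  Given $\{v_n\}\subset\cB(Y)$ with $\|v_n\|_{\cB(Y)}\le 1$, the bound $|1_{Y_\ell}v_n|_\infty\le e^{\eps\ell}$ together with $\mu(Y_\ell)\le\bar\sigma^{-1}\mu_Z(\sigma>\ell)$ gives
\[
\SMALL\int_{\{\ell\ge N\}}|v_n|^p\,d\mu\le \bar\sigma^{-1}\sum_{\ell\ge N}e^{\eps p\ell}\mu_Z(\sigma>\ell),
\]
which vanishes as $N\to\infty$ uniformly in $n$ thanks to the standing choice $\int_Z e^{\eps p\sigma}\,d\mu_Z<\infty$.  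On the truncated tower $\bigcup_{\ell<N}Y_\ell$ I would then restrict to cells $a\in\alpha$ with $\sigma(a)\le M$; the excluded region has $\mu$-measure at most $N\bar\sigma^{-1}\mu_Z(\sigma>M)$, so its contribution to $\|v_n\|_{L^p}^p$ is at most $e^{\eps pN}$ times this measure, and vanishes as $M\to\infty$.

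On the remaining piece $Y_{N,M}=\{(z,\ell):z\in a\in\alpha,\,\sigma(a)\le M,\,\ell<\min\{N,\sigma(a)\}\}$, each $v_n$ is uniformly bounded by $e^{\eps N}$ and, restricted to any single slice $a\times\{\ell\}$, is uniformly Lipschitz with respect to $d_\theta$ with constant $e^{\eps N}\sup_a\varphi^{\beta'}$.  Applying~(H3) with $\ell=\sigma(a)$ gives $\sup_a\varphi\le(C+1)\inf_a\varphi<\infty$, so this Lipschitz constant is finite on each individual cell (even though it may grow with $a$).  For each fixed slice I would choose a representative $z_b$ in every $k$-cylinder $b\subset a$ and diagonally extract a subsequence of $\{v_n\}$ along which $v_n(z_b,\ell)$ converges for every such $b$.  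The Lipschitz estimate $|v_n(z,\ell)-v_n(z_b,\ell)|\le e^{\eps N}\sup_a\varphi^{\beta'}\theta^k$ for $z\in b$, combined with the splitting $\mu_Z(a)=\sum_b\mu_Z(b)<\infty$ (finitely many ``large'' cylinders where pointwise convergence is used, plus a small-measure remainder controlled by the uniform bound), then upgrades this pointwise convergence to an $L^p(a\times\{\ell\})$-Cauchy property.

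A second diagonal extraction over the countable family of pairs $(a,\ell)$ with $\sigma(a)\le M$ and $\ell<\sigma(a)$ produces a subsequence Cauchy in $L^p$ on every slice; since $\mu_Z(\sigma\le M)<\infty$, truncating further to finitely many cells costs an arbitrarily small $L^p$ error, giving $L^p(Y_{N,M})$-Cauchy behaviour.  A final diagonal extraction in $N$, $M$ and the number of cells assembles these pieces into the required $L^p(Y)$-Cauchy subsequence.  The main obstacle I anticipate is the noncompactness of $(a,d_\theta)$: because $\alpha$ is countably infinite, each cell contains infinitely many $k$-cylinders for every $k$, so Arzel\`a--Ascoli cannot be applied directly to produce uniform convergence on a cell, and one is forced to work directly in $L^p$ via a Fr\'echet--Kolmogorov-type pointwise-plus-Lipschitz argument.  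The remaining ingredients---the exponential tail $\mu_Z(\sigma>\ell)=O(e^{-d\ell})$ and the bounded-distortion consequence of~(H3)---are routine.
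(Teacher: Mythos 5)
Your proposal is correct and follows essentially the same route as the paper's proof: a diagonal extraction at representative points of $k$-cylinders, the $\|\cdot\|_\theta$-Lipschitz bound to control $v_n$ within each cylinder, and the integrability of $e^{\eps p\sigma}$ to dispose of the tail of the tower; the paper likewise notes that classical Arzel\`a--Ascoli does not apply directly and supplies exactly this pointwise-plus-truncation argument. The only difference is organisational --- the paper first obtains a pointwise limit $v$ on all of $Y$ and then passes to $L^p$ by covering most of the measure with finitely many cylinders, whereas you truncate in $\ell$, $\sigma$ and the number of cells up front --- which does not change the substance.
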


\begin{proof}
We have
\begin{align*}
|v|_p^p & \ll \sum_{a\in\alpha}\int_a\sum_{\ell=0}^{\sigma(a)-1}|1_{a\times\{\ell\}}v|_\infty^p\,d\mu_Z
\le \|v\|_{w,\infty}^p\sum_a\mu_Z(a)e^{\eps p \sigma(a)}\ll \|v\|_{w,\infty}^p
\le \|v\|_{\cB(Y)}^p.
\end{align*}
Hence $\cB(Y)$ is embedded in $L^p(Y)$.
Compactness of the embedding is a standard Arzel\`a-Ascoli argument.
Since the setting is nonstandard, we provide the details.

For $k\ge1$, define the partition $\beta_k=\{a\times\{\ell\}: a\in\alpha_k,\,0\le\ell\le\sigma(a)-1\}$ of $Y$.
For each $b\in\bigcup_{k\ge1}\beta_k$,
choose $y_b\in b$.

Let $v_n\in\cB(Y)$ with $\|v_n\|_{\cB(Y)}\le1$.
For each $b=a\times\{\ell\}$, the real
sequence $v_n(y_b)$ is bounded by $e^{\eps\ell}$ and hence has a convergent subsequence.
By a diagonal argument, we can suppose without loss that
$v_n(y_b)$ is convergent for all $b\in \bigcup_{k\ge1}\beta_k$.

Let $y\in Y_\ell$ and choose $b=a\times\{\ell\}\in\beta_k$ containing $y$.
Then $d_\theta(y,y_b)\le\diam a=\theta^k$ and
\begin{align} \label{eq:emb}
|v_n(y) & -v_m(y)|  \le |v_n(y)-v_n(y_b)| + |v_n(y_b)-v_m(y_b)|
+|v_m(y_b)-v_m(y)|
\\ & \le 2e^{\eps\ell}\varphi(y)^{\beta'}d_\theta(y,y_b)+|v_n(y_b)-v_m(y_b)|
\le 2e^{\eps\ell}\varphi(y)\theta^k+|v_n(y_b)-v_m(y_b)|. \nonumber
\end{align}
It follows that $\limsup_{m,n\to\infty}|v_n(y)-v_m(y)|\le 2e^{\eps\ell}\varphi(y)\theta^k$.
Since $k$ is arbitrary, $v_n(y)$ is a Cauchy sequence for each $y$.
Hence there is a function $v:Y\to\R$ such that $v_n\to v$ pointwise.
Moreover, for $y,y'\in Y_\ell$ it is immediate that $v$ inherits from $v_n$ the properties $|v(y)|\le e^{\eps\ell}$ and $|v(y)-v(y')|\le e^{\eps\ell}
\varphi(y)^{\beta'}d_\theta(y,y')$, so $\|v\|_{\cB(Y)}\le1$.

It remains to show that $|v_n-v|_p\to0$.  Let $\delta>0$ and fix $k\ge1$ such that
$\diam b<\delta$ for all $b\in\beta_k$.
By~\eqref{eq:emb},
\[
|v_n(y)-v(y)|\le 2e^{\eps\ell}\varphi(y) \delta+|v_n(y_b)-v(y_b)|
\quad\text{for all $y\in b=a\times\{\ell\}$, $a\in\alpha_k$, $\ell,\,n\ge1$.}
\]
Choose finitely many cylinders $b_j=a_j\times\{\ell_j\}$, $j=1,\dots,m$, with $a_j\in \alpha_k$ such that
$\mu(b_1\cup\cdots\cup b_m)>1-\delta$ and let $\ell_*=\max\{\ell_1,\dots,\ell_m\}$,
$\varphi_*=\sup_{b_1\cup\cdots\cup b_m}\varphi$.
Then for $n$ sufficiently large,
\[
|v_n(y)-v(y)|\le 3e^{\eps \ell_*}\varphi_* \delta
\quad\text{for all $y\in b_1\cup\cdots \cup b_m$}.
\]
Also
\[
|v_n(y)-v(y)|\le |v_n(y)|+|v(y)|\le 2e^{\eps\ell}\le 2e^{\eps\sigma(y)}
\quad\text{for all $y\in b=a\times\{\ell\}\in\beta_k$.}
\]
Hence, writing $Y'=Y\setminus(b_1\cup\cdots\cup b_m)$,
\begin{align*}
|v_n-v|_p^p &
\le \sum_{j=1}^m \int_{b_j}|v_n-v|^p\,d\mu+
\sum_{b\in Y'}\int_b|v_n-v|^p\,d\mu
 \le
(3e^{\eps \ell_*}\varphi_* \delta)^p+2^p
\!\int_{Y'} e^{\eps p\sigma}\,d\mu.
\end{align*}
Since $\mu(Y')<\delta$
and $e^{\eps p\sigma}\in L^1(Y)$,
we have that $\int_Y|v_n-v|^p\,d\mu\ll
\delta^p+g(\delta)$ where $\lim_{\delta\to0}g(\delta)=0$.
Hence $|v_n-v|_p\to0$ as $n\to\infty$.
\end{proof}

\subsection{Lasota-Yorke inequality for $\hat R(s)$}

In this subsection, we verify (A1)(ii).
Choose $\eps>0$ so that $\int_Z e^{\eps \sigma}\varphi^{\beta'}\,d\mu_Z<\infty$.

\begin{theorem} \label{thm:LY}
There exists $\gamma_0\in(0,1)$ and $C>0$ such that
\[
\|\hat R(s)^nv\|_{\cB(Y)}\le C(|v|_{L^1(Y)}+\gamma_0^n \|v\|_{\cB(Y)}),
\]
for all $s\in \barH\cap B_1(0)$, $v\in \cB(Y)$, $n\ge1$.
\end{theorem}

Define 
\[
\hat T(s)_n:L^1(Z)\to L^1(Z), \qquad \hat T(s)_nv=1_Z\hat R(s)^n(1_Zv).
\]
Let $\cF_\theta(Z)$ denote the usual space of observables $v:Z\to\R$ that are $d_\theta$-Lipschitz with norm
$\|v\|_{\cF_\theta(Z)}=|v|_\infty+|v|_{\cF_\theta(Z)}$ where
$|v|_{\cF_\theta(Z)}=\sup_{z\neq z'}|v(z)-v(z')|/d_\theta(z,z')$.

The key step is to estimate $\hat T(s)_n:\cF_\theta(Z)\to\cF_\theta(Z)$.  
For this, we need the following technical lemma.

\begin{lemma} \label{lem:tech}
There exists $\kappa>0$ and $\gamma_0\in(0,1)$ such that
\begin{itemize}
\item[(a)]
$\sum_{m=1}^n\theta^m\mu_Z(\sigma_m=n)=O(\gamma_0^n)$.
\hspace{1em} (b) $\mu_Z(\sigma_{\kappa n}\ge n)=O(\gamma_0^n)$.
\end{itemize}
\end{lemma}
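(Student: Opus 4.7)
The plan is to prove (b) first via a large-deviations estimate built from the Gibbs property, and then deduce (a) as a short corollary.

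For (b), I will use exponential Chebyshev. Since $\sigma$ is constant on elements of $\alpha$, the ergodic sum $\sigma_m$ is constant on each $m$-cylinder $a = a_0\cap G^{-1}a_1\cap\cdots\cap G^{-(m-1)}a_{m-1}$, with value $\sum_{j=0}^{m-1}\sigma(a_j)$. The full-branch Gibbs property~\eqref{eq:GM} gives the inductive bound
\[
\mu_Z(a)\le C^m\prod_{j=0}^{m-1}\mu_Z(a_j).
\]
Indeed, writing $a = a_0\cap G^{-1}a'$ with $a'\in\alpha_{m-1}$, the identity $\mu_Z(a)=\int_{a'} R^G(1_{a_0})\,d\mu_Z$ together with the pointwise bound $R^G(1_{a_0})=\xi\circ(G|_{a_0})^{-1}\le C\mu_Z(a_0)$ yields $\mu_Z(a)\le C\mu_Z(a_0)\mu_Z(a')$, and I iterate. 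Setting $M(\lambda)=\int_Z e^{\lambda\sigma}\,d\mu_Z$ (finite for small $\lambda>0$ by the exponential tails of $\sigma$), this gives
\[
\int_Z e^{\lambda\sigma_m}\,d\mu_Z = \sum_{a\in\alpha_m}\mu_Z(a)e^{\lambda\sigma_m|_a}\le C^m\prod_{j=0}^{m-1}\Bigl(\sum_{a_j\in\alpha}\mu_Z(a_j)e^{\lambda\sigma(a_j)}\Bigr)=(CM(\lambda))^m.
\]
Markov's inequality then produces
\[
\mu_Z(\sigma_{\kappa n}\ge n)\le e^{-\lambda n}(CM(\lambda))^{\kappa n}=\bigl(e^{-\lambda}(CM(\lambda))^\kappa\bigr)^n,
\]
and choosing $\kappa>0$ small enough that $(CM(\lambda))^\kappa<e^\lambda$ gives (b) with $\gamma_0=e^{-\lambda}(CM(\lambda))^\kappa\in(0,1)$.

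Part (a) reduces to (b) via a dichotomy based at $m_0=\lceil\kappa n\rceil$ (with $\kappa$ from (b)). Since $\sigma\ge 1$, the map $m\mapsto\sigma_m(z)$ is strictly increasing for every $z$, so the events $\{\sigma_m=n\}_{m\ge 1}$ are pairwise disjoint and $\sum_{m\ge 1}\mu_Z(\sigma_m=n)\le 1$. For $m\le m_0$, the inclusion $\{\sigma_m=n\}\subset\{\sigma_{m_0}\ge n\}$ and (b) give $\mu_Z(\sigma_m=n)=O(\gamma_0^n)$, whence $\sum_{m\le m_0}\theta^m\mu_Z(\sigma_m=n)=O(\gamma_0^n)$. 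For $m>m_0$, bound $\theta^m\le\theta^{\kappa n}$ and use the disjointness to get $\sum_{m>m_0}\theta^m\mu_Z(\sigma_m=n)\le\theta^{\kappa n}$. Combining yields (a) with $\gamma_0'=\max\{\gamma_0,\theta^\kappa\}<1$.

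The only subtle step is the iterated Gibbs bound $\int e^{\lambda\sigma_m}\,d\mu_Z\le(CM(\lambda))^m$, since one must isolate an $m$-independent constant $C$ per step; this is what permits $\kappa$ to be chosen so small that the $\kappa n$-fold moment is still beaten by $e^{-\lambda n}$. Given this, both parts are essentially formal: (b) is Chebyshev and (a) is a pigeonhole-style split exploiting the strict monotonicity of $\sigma_m$ in $m$.
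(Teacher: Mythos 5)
Your proof is correct, but it takes a genuinely different route from the paper's. The paper obtains both parts by appealing to~\cite[Lemma~3.6]{BalintGouezel06}: for (a) it uses the estimate $\int_{Z\cap F^{-n}Z}\theta^{g_n}\,d\mu_Z=O(\gamma_0^n)$, where $g_n$ counts returns to $Z$ up to time $n$, together with the observation that on $Z\cap F^{-n}Z$ one has $g_n=m$ exactly when $\sigma_m=n$, so the sum in (a) \emph{is} that integral; and (b) is quoted as the main estimate inside the proof of that lemma (the bound on the set $\Gamma_n$). You instead give a self-contained argument: (b) is a Chernoff bound, with the exponential moment $\int_Z e^{\lambda\sigma_m}\,d\mu_Z\le(CM(\lambda))^m$ obtained by iterating the Gibbs inequality~\eqref{eq:GM} over cylinders, and (a) is deduced from (b) by splitting the sum at $m_0=\lceil\kappa n\rceil$, using for $m>m_0$ the disjointness of the events $\{\sigma_m=n\}$ (so their measures sum to at most $1$) and for $m\le m_0$ the inclusion $\{\sigma_m=n\}\subset\{\sigma_{m_0}\ge n\}$. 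Both arguments are sound; yours has the merit of making explicit exactly which hypotheses are used (exponential tails of $\sigma$ plus the Gibbs distortion bound) and of avoiding the tower-weighted estimate of~\cite{BalintGouezel06}, at the cost of carrying out the large-deviation bookkeeping by hand. Two cosmetic points: $\kappa n$ should be read as $\lceil\kappa n\rceil$ throughout (as you do), and the bound $\xi\le C\mu(a)$ in~\eqref{eq:GM} is stated for $\mu$ rather than $\mu_Z$, which is harmless since the two measures are equivalent with densities bounded above and below.
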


\begin{proof} (a) 
Let $g=1_Z$, so $g_n(z)=\sum_{j=0}^{n-1}g(F^jz)$ denotes the number of returns to $Z$ by time~$n$.
By~\cite[Lemma~3.6]{BalintGouezel06},
$\int_{F^{-n}Z} \theta^{g_n(z)}e^{\eps\ell} d\mu(z,\ell)
=O(\gamma_0^n)$ for $\eps$ sufficiently small.  In particular,
$\int_{Z\cap F^{-n}Z} \theta^{g_n}\,d\mu_Z=O(\gamma_0^n)$.
On $Z\cap F^{-n}Z$, we have $g_n\in\{1,\dots,n\}$, and $g_n=m$ if and only if $\sigma_m=n$.  Hence
\[
\int_{Z\cap F^{-n}Z} \theta^{g_n}\,d\mu_Z=\sum_{m=1}^n
\int_{Z\cap F^{-n}Z} 1_{\{\sigma_m=n\}} \theta^m\,d\mu_Z
=\sum_{m=1}^n\theta^m\mu_Z(\sigma_m=n),
\]
and the result follows.

\vspace{1ex}
\noindent (b)
This is the main estimate obtained in the proof of~\cite[Lemma~3.6]{BalintGouezel06} (see the estimate for the set $\Gamma_n$ defined therein).
\end{proof}

\begin{lemma} \label{lem:LY}
There exists $\gamma_0\in(0,1)$ and $C>0$ such that
\[
\|\hat T(s)_nv\|_{\cF_\theta(Z)}\le C(|v|_{L^1(Z)}+\gamma_0^n \|v\|_{\cF_\theta(Z)}),
\]
for all $s\in \barH\cap B_1(0)$, $v\in \cF_\theta(Z)$, $n\ge1$.
\end{lemma}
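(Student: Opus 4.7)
The plan is to reduce Lemma~\ref{lem:LY} to standard Gibbs-Markov Lasota-Yorke theory for the induced twisted transfer operator $\hat R^G(s)w = R^G(e^{-s\varphi}w)$ of $G:Z\to Z$, summed over lap number via Lemma~\ref{lem:tech}.

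First I would decompose $\hat T(s)_n v$ according to the number $m$ of returns to $Z$ within $n$ iterates of $F$. Any preimage of $z'\in Z$ under $F^n$ lying in $Z$ must make exactly $m\in\{1,\dots,n\}$ returns with $\sigma_m=n$, and $\tau_n=\varphi_m$ on $\{\sigma_m=n\}$, so
\[
(\hat T(s)_n v)(z') = \sum_{m=1}^n (T^{(m,n)}(s)v)(z'),\qquad (T^{(m,n)}(s)v)(z') = \sum_{a\in\alpha_m:\,\sigma_m|_a=n} \xi_m(z_a)\,e^{-s\varphi_m(z_a)}\,v(z_a),
\]
where $z_a=(G^m|_a)^{-1}(z')$. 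Each $T^{(m,n)}(s)$ is the part of $\hat R^G(s)^m$ supported on cylinders with $\sigma_m=n$.

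For the sup-norm estimate I would apply the standard mean-replacement trick $v(z_a)=v_a+(v(z_a)-v_a)$ with $v_a=\mu_Z(a)^{-1}\int_a v\,d\mu_Z$, using $|v(z_a)-v_a|\le \theta^m|v|_\theta$ together with $|e^{-s\varphi_m}|\le 1$ and $\xi_m(z_a)\le C\mu_Z(a)$ from~\eqref{eq:GM}. This gives $|T^{(m,n)}(s)v|_\infty \le C\sum_{a:\,\sigma_m|_a=n}\mu_Z(a)|v_a| + C\theta^m|v|_\theta\mu_Z(\sigma_m=n)$. The key observation is that the sets $\{\sigma_m=n\}$ are pairwise disjoint in $m$ (since $\sigma_m$ is strictly increasing in $m$), so the mean parts sum in $m$ to at most $|v|_1$, while Lemma~\ref{lem:tech}(a) bounds the oscillation parts by $O(\gamma_0^n)|v|_\theta$.

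For the Lipschitz bound I would telescope $T^{(m,n)}(s)v(z)-T^{(m,n)}(s)v(z')$ into three pieces arising from differences of $\xi_m$, $e^{-s\varphi_m}$, and $v$. The $\xi_m$-difference is $O(\mu_Z(a)d_\theta(z,z'))$ by~\eqref{eq:GM}, and the $v$-difference contributes $\theta^m|v|_\theta d_\theta(z,z')$ via $d_\theta(z_a,z'_a)=\theta^md_\theta(z,z')$. The main obstacle is the $e^{-s\varphi_m}$ contribution: iterating (H3) gives $|\varphi_m(z_a)-\varphi_m(z'_a)|\le C\varphi_m(z_a)d_{\theta_0}(z,z')$, whose pointwise Lipschitz constant $\varphi_m(z_a)=\tau_n(z_a)$ grows with~$n$. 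To absorb this I would apply the H\"older interpolation $|e^{-sx}-e^{-sy}|\le 2|s|^\alpha|x-y|^\alpha$ with some $\alpha\in[\beta',\beta)$ chosen so that $d_{\theta_0}(z,z')^\alpha\le d_\theta(z,z')$ (using $\theta\ge\theta_0^{\beta'}$). This replaces $\varphi_m(z_a)$ by $\varphi_m(z_a)^\alpha$; since $(a+b)^\alpha\le a^\alpha+b^\alpha$ and $\varphi^\alpha\in L^1$ by Proposition~\ref{prop:rhophi}, one obtains $\sum_a\mu_Z(a)\varphi_m(z_a)^\alpha\le Cm$. The linear factor~$m$ is harmless: $\sum_m m\theta^m\mu_Z(\sigma_m=n)\le\sum_m(\theta')^m\mu_Z(\sigma_m=n)=O(\gamma_0^n)$ for any fixed $\theta'\in(\theta,1)$, by Lemma~\ref{lem:tech}(a) applied with $\theta'$ in place of $\theta$. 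Gathering the contributions yields $\|\hat T(s)_n v\|_{\cF_\theta(Z)}\le C(|v|_{L^1(Z)}+\gamma_0^n\|v\|_{\cF_\theta(Z)})$.
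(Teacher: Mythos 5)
Your decomposition over the lap number $m$, the sup-norm estimate, and the $\xi_m$- and $v$-difference contributions to the Lipschitz bound all match the paper's proof and are fine. The gap is in your treatment of the $e^{-s\varphi_m}$ oscillation, which is the technical heart of the lemma. After the H\"older interpolation, that term is bounded by a constant times
\[
\sum_{m=1}^n\ \sum_{a\in\alpha_m:\ \sigma_m|_a=n}\mu_Z(a)\,|v(z_a)|\,\varphi_m(z_a)^{\alpha}\, d_\theta(z,z'),
\]
and there is no factor $\theta^m$ in it: the exponential difference multiplies the \emph{value} $|v(z_a)|$, not the oscillation of $v$, so your proposed absorption via $\sum_m m\,\theta^m\mu_Z(\sigma_m=n)$ invokes a geometric factor that is simply not present. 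Writing $|v(z_a)|\le \infa|v|+\theta^m|v|_\theta$, the $\theta^m|v|_\theta$ part combined with your bound $\sum_a\mu_Z(a)\varphi_m(z_a)^\alpha\le Cm$ only yields $\sum_m m\theta^m\ll 1$ (a constant, not $\gamma_0^n$, unless one exploits the constraint $\sigma_m=n$ via H\"older and a modified Lemma~\ref{lem:tech}(a)), while the $\infa|v|$ part leaves $\sum_m \big|1_{\{\sigma_m=n\}}|v|\,\varphi_m^{\alpha}\big|_1$, which is controlled neither by $|v|_{L^1}$ (since $\varphi_m^\alpha$ is unbounded) nor by $\gamma_0^n\|v\|_{\cF_\theta}$. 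Your alternative, paying with $|v|_\infty$ to use $\sum_a\mu_Z(a)\varphi_m(z_a)^\alpha\le Cm$, produces $O(n^2)|v|_\infty$, which is not a Lasota--Yorke inequality with weak norm $|v|_{L^1(Z)}$.

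This is precisely the point where the paper works hardest. It does not discard the geometric weights as you do in passing from $\sum_{j=0}^{m-1}\infa(\varphi\circ G^j)\,\theta_0^{m-j}$ to $\varphi_m(z_a)$; it keeps them, reduces the problem to the double sum $Q=\sum_{j,m}\theta^{j}\big|1_{\{\sigma_m=n\}}|v|\,\varphi^{\beta'}\circ G^{m-j}\big|_1$, and then runs a three-case analysis (large $j$, small $m$, large $m-j$) using Lemma~\ref{lem:tech}(b) (i.e.\ $\mu_Z(\sigma_{\kappa n}\ge n)=O(\gamma_0^n)$) together with a decoupling step via $(R^G)^q$ that separates $1_{\{\sigma_{m-j}=k\}}v$ from $1_{\{\sigma_j=n-k\}}\varphi^{\beta'}$ and ultimately yields $|v|_1|\varphi^{\beta'}|_1+\gamma_1^n\|v\|_{\cF_\theta}$. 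None of this machinery can be bypassed by the crude bound $\sum_a\mu_Z(a)\varphi_m(z_a)^\alpha\le Cm$; you would need to supply an argument of comparable depth for this term.
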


\begin{proof}
The entire proof is on $Z$ so we write $|v|_1$, $|v|_\theta$ and $\|v\|_\theta$
instead of $|v|_{L^1(Z)}$, $|v|_{\cF_\theta(Z)}$, and
$\|v\|_{\cF_\theta(Z)}$.

Let $\alpha_{m,n}=\{a\in\alpha_m:\sigma_m(a)=n\}$.
Then 
$Z\cap F^{-n}Z=\bigcup_{m=1}^n\bigcup_{a\in\alpha_{m,n}}a$.
Hence for  $z\in Z$,
\begin{align*}
(\hat T(s)_nv)(z) & =\big(\hat R(s)^n(1_Zv)\big)(z)=\sum_{z'\in Z:F^nz'=z}J_n(z')e^{-s\tau_n(z')}v(z')
\\ & =\sum_{m=1}^n \sum_{a\in\alpha_{m,n}} \xi_m(z_a)e^{-s\tau_n(z_a)}v(z_a).
\end{align*}
(Here $J$ is the Jacobian for $F$ and $J_n=\prod_{j=0}^{n-1}J\circ F^j$.
Recall from Remark~\ref{rmk:GM} that $\xi$ is the Jacobian for $G$ and
$\xi_m=\prod_{j=0}^{m-1}\xi\circ G^j$.)

It follows from~\eqref{eq:GM} and 
Lemma~\ref{lem:tech}(a) that
\begin{align*}
|\hat T(s)_nv|_\infty & \le 
\sum_{m=1}^n \sum_{a\in\alpha_{m,n}} \supa\xi_m\, \supa |v|
 \ll \sum_{m=1}^n \sum_{a\in\alpha_{m,n}} \mu_Z(a) (\infa |v|+\theta^m|v|_{\theta})
\\ & \le \sum_{m=1}^n |1_{\{\sigma_m=n\}}v|_1
+ |v|_{\theta} \sum_{m=1}^n\theta^m \mu_Z(\sigma_m=n)
 \ll |v|_1
+ \gamma_0^n |v|_{\theta}.
\end{align*}

Next,
$|(\hat T(s)_nv)(z)- (\hat T(s)_nv)(z')| \le \sum_{m=1}^n\sum_{a\in\alpha_{m,n}}(I_1+I_2+I_3)$,
where
\begin{align*}
|I_1| & = |\xi_m(z_a)-\xi_m(z_a')|\supa|v|, \qquad
|I_2|  = \supa\xi_m|v(z_a)-v(z_a')|, \\
|I_3| & = \supa\xi_m\,\supa|v| |e^{-s\tau_n(z_a)}-e^{-s\tau_n(z_a')}|.
\end{align*}
By~\eqref{eq:GM},
\begin{align*}
|I_1| & \ll  \mu_Z(a)d_\theta(z,z')(\infa|v|+\theta^m|v|_{\theta}), \qquad
 |I_2|  \ll  \mu_Z(a)|v|_{\theta}\theta^m d_\theta(z,z').
\end{align*}
By Lemma~\ref{lem:tech}(a), these contribute 
$d_\theta(z,z')(|v|_1+\gamma_0^n|v|_{\theta})$ and
$d_\theta(z,z')\gamma_0^n|v|_{\theta}$ respectively to the sum.
Also,
\begin{align*}
 |I_3| & \ll  
\mu_Z(a)(\infa|v|+|v|_{\theta}\theta^m d_\theta(z,z'))
|e^{-s\tau_n(z_a)}-e^{-s\tau_n(z_a')}|=I_3'+I_3'',
\end{align*}
where
$I_3'  \le 2 \mu_Z(a)|v|_{\theta}\theta^m d_\theta(z,z')$
which contributes
 $d_\theta(z,z')\gamma_0^n|v|_{\theta}$ to the sum, 
and
\[
I_3''  =  
\mu_Z(a)\infa|v| |e^{-s\tau_n(z_a)}-e^{-s\tau_n(z_a')}|
\le 2\mu_Z(a)\infa|v| |s|^{\beta'}|\tau_n(z_a)-\tau_n(z_a')|^{\beta'}.
\]

We show below that 
$S=\sum_{m=1}^n \sum_{a\in\alpha_{m,n}} I_3''\ll d_\theta(z,z')(|v|_1+\gamma_0^n\|v\|_{\theta})$.  Then $|\hat T(s)_nv|_{\theta}\ll |v|_1+\gamma_0^n\|v\|_{\theta}$ and the result follows.

It remains to estimate $S$.
For $x,y\in a$, $a\in\alpha_{m,n}$,
\begin{align*} 
 |\tau_n(x) &  -\tau_n(y)|
  =|\varphi_m(x)-\varphi_m(y)|
\le \sum_{j=0}^{m-1}|\varphi(G^jx)-\varphi(G^jy)|
\\ & \ll \sum_{j=0}^{m-1}\infa(\varphi\circ G^j)\,d_{\theta_0}(G^jx,G^jy)
= \sum_{j=0}^{m-1}\infa(\varphi\circ G^j)\,{\theta_0}^{m-j}d_{\theta_0}(F^nx,F^ny).
\end{align*}
Recall that $\theta\ge\theta_0^{\beta'}$, so
$|\tau_n(x) -\tau_n(y)|^{\beta'}\ll \sum_{j=0}^{m-1}\infa(\varphi^{\beta'}\circ G^j)\,{\theta}^{m-j}d_{\theta}(F^nx,F^ny)$.
Hence
\begin{align*}
S & 
 \ll |s|^{\beta'} \sum_{m=1}^n\sum_{a\in\alpha_{m,n}} \mu_Z(a)\infa|v|
\sum_{j=0}^{m-1}\infa(\varphi^{\beta'}\circ G^j)\,\theta^{m-j}d_{\theta}(z,z')
 \le d_{\theta}(z,z')Q,
\end{align*}
where 
\begin{align*}
Q & = \sum_{m=1}^n\sum_{j=0}^{m-1} \theta^{m-j}
\big| 1_{\{\sigma_m=n\}}|v| \varphi^{\beta'}\circ G^j\big|_1
  = \sum_{m=1}^n\sum_{j=1}^{m} \theta^j
\big|1_{\{\sigma_m=n\}}|v| \varphi^{\beta'}\circ G^{m-j}\big|_1
\\  & = \sum_{j=1}^n\sum_{m=j}^{n} Q_{j,m}, \qquad Q_{j,m}=
\theta^j \big|1_{\{\sigma_m=n\}}|v| \varphi^{\beta'}\circ G^{m-j}\big|_1.
\end{align*}
We claim that there exists $\kappa >0$ and $\gamma_0\in(0,1)$ such that
\begin{itemize}
\item[(i)] $Q_{j,m}=O(\theta^{\kappa  n}|v|_\infty)$
for $j\ge \kappa  n$,
\hspace{1.6em}(ii) $Q_{j,m}=O(\gamma_0^n|v|_\infty)$ for $m\le 2\kappa  n$,
\item[(iii)] $Q_{j,m}=O\big(n\theta^{\kappa  n}|v|_{\theta}
+\theta^j\sum_{k=0}^n |1_{\{\sigma_{m-j}=k\}}v|_1
|1_{\{\sigma_j=n-k\}}\varphi^{\beta'}|_1\big)
$ for $m-j\ge \kappa  n$.
\end{itemize}
It then follows that
\begin{align*}
Q & \ll \gamma_1^n\|v\|_{\theta}
+\sum_{j=1}^n\theta^j\sum_{k=0}^n\sum_{m=j}^{n} |1_{\{\sigma_{m-j}=k\}}v|_1
|1_{\{\sigma_j=n-k\}}\varphi^{\beta'}|_1
\\ & \ll \gamma_1^n\|v\|_{\theta}+|v|_1|\varphi^{\beta'}|_1
 \ll \gamma_1^n\|v\|_{\theta}+|v|_1,
\end{align*}
for some $\gamma_1\in(0,1)$,
yielding the desired estimate for $S$.

It remains to verify the claim.
Choose $r>1$ with $r\beta'<\beta$ and conjugate exponent~$r'$.
By H\"older's inequality, 
\[
\big|1_{\{\sigma_m=n\}}|v| \varphi^{\beta'}\circ G^{m-j}\big|_1
\ll \mu_Z(\sigma_m=n)^{1/r'}|v|_\infty|\varphi^{\beta'}|_r
\ll \mu_Z(\sigma_m=n)^{1/r'}|v|_\infty.
\]
Estimate (i) is immediate, and 
estimate (ii) follows by Lemma~\ref{lem:tech}(b) for $\kappa $ sufficiently small.
Let $q=m-j$ and write
\begin{align*}
\big|1_{\{\sigma_m=n\}}|v| \varphi^{\beta'}\circ G^q\big|_1
& = \big|1_{\{\sigma_q+\sigma_j\circ G^q=n\}}|v| \varphi^{\beta'}\circ G^q\big|_1
\\ & \le \sum_{k=0}^n 
\big|1_{\{\sigma_q=k\}}1_{\{\sigma_j=n-k\}}\circ G^q|v| \varphi^{\beta'}\circ G^q\big|_1.
\end{align*}
Let $R^G$ be the transfer operator for $G$.  Then 
\begin{align*}
\big|1_{\{\sigma_q=k\}}1_{\{\sigma_j=n-k\}}\circ G^q|v| \varphi^{\beta'}\circ G^q\big|_1 & = 
\big|(R^G)^q (1_{\{\sigma_q=k\}}|v|) 1_{\{\sigma_j=n-k\}}\varphi^{\beta'}\big|_1
\\ & \le  
|(R^G)^q(1_{\{\sigma_q=k\}}|v|)|_\infty |1_{\{\sigma_j=n-k\}}\varphi^{\beta'}|_1.
\end{align*}
We have
\begin{align*}
|(R^G)^q & (1_{\{\sigma_q=k\}}  |v|)|_\infty
 \le \textstyle \sum_{a\in\alpha_q} \supa\xi_q 1_{\{\sigma_q(a)=k\}}\supa |v|
\\ & \ll \textstyle \sum_{a\in\alpha_q} \mu_Z(a) 1_{\{\sigma_q(a)=k\}}
(\infa |v|+\theta^q|v|_{\theta})
 \le |1_{\{\sigma_q=k\}}v|_1+\theta^q|v|_{\theta}.
\end{align*}
For $q=m-j\ge\kappa  n$, it follows that
\begin{align*}
\big|1_{\{\sigma_m=n\}}|v| \varphi^{\beta'}\circ G^j\big|_1
&\textstyle  \le \sum_{k=0}^n \big(|1_{\{\sigma_{m-j}=k\}}v|_1 +\theta^{\kappa  n}|v|_{\theta}\big)
|1_{\{\sigma_j=n-k\}}\varphi^{\beta'}|_1
\\ &\textstyle  \ll 
n\theta^{\kappa  n}|v|_{\theta}
+\sum_{k=0}^n |1_{\{\sigma_{m-j}=k\}}v|_1
|1_{\{\sigma_j=n-k\}}\varphi^{\beta'}|_1.
\end{align*}
Hence we obtain estimate (iii) completing the proof of the claim.
\end{proof}

Following~\cite{GouezelPhD}, we have the decomposition\footnote{The continuous time analogue of this was used in Section~\ref{sec:rhoG}.}
\begin{align} \label{eq:op}
\hat R(s)^n=\sum_{n_1+n_2+n_3=n}\hat A(s)_{n_1}\hat T(s)_{n_2}\hat B(s)_{n_3} + \hat E(s)_n,
\end{align}
where the operators
\begin{align*}
& \hat A(s)_n:\cF_\theta(Z)\to \cB(Y), \qquad
  \hat B(s)_n:\cB(Y)\to \cF_\theta(Z), \qquad
\hat E(s)_n: \cB(Y)\to \cB(Y),
\end{align*}
are given by
\begin{alignat*}{2}
(\hat A(s)_nv)(x) & =\sum_{\stackrel{F^n\!y=x,\,y\in Z}{Fy\dots,F^n\!y\not\in Z}}\omega(s,n)(y),
& \qquad
& (\hat E(s)_nv)(x)   =\sum_{\stackrel{F^n\!y=x}{ y,\dots, F^n\!y\not\in Z}}\omega(s,n)(y),
\\
(\hat B(s)_nv)(x) & =\sum_{\stackrel{F^n\!y=x,\, F^n\!y\in Z}{ y,\dots,F^{n-1}\!y\not\in Z}}\omega(s,n)(y),
& \qquad
& \omega(s,n)  =J_n e^{-s\tau_n}v.
\end{alignat*}

\newpage
\begin{prop} \label{prop:ABE}
For all $s\in\barH\cap B_1(0)$, $n\ge1$, 
\begin{itemize}
\item[(i)] $\|\hat A(s)_n\|_{\cF_\theta(Z)\to\cB(Y))}=O(e^{-\eps n})$.
\hspace{1em} (ii) $\|\hat E(s)_n\|_{\cB(Y)\to\cB(Y)}=O(e^{-\eps n})$.
\item[(iii)] $\|\hat B(s)_n\|_{\cB(Y)\to\cF_\theta(Z)}=O(e^{-\eps n})$.
\hspace{1em} (iv)
$\sum_{n=0}^\infty |\hat B(s)_n|_{L^1(Y)\to L^1(Z)}\le \bar\sigma$.
\end{itemize}
\end{prop}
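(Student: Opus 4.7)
The plan is to derive explicit formulas for each of the four operators from the Young tower structure, and then estimate them in turn. Since $F$ acts as the identity on the $z$-coordinate while incrementing the level (whenever staying within a column), the intermediate Jacobians $J_n$ equal $1$, and the only nontrivial contribution arises when transitioning from $(z',\sigma(z')-1)$ to $(Gz',0)$, where it is $\xi(z')$. Tracing the domains specified in each sum, one obtains
\begin{align*}
(\hat A(s)_n v)(z,n) & = 1_{\{\sigma(z)>n\}}\, e^{-s\tau_n(z)}v(z),\\
(\hat E(s)_n v)(z,\ell+n) & = 1_{\{\sigma(z)>\ell+n\}}\, e^{-s\tau_n(z,\ell)}v(z,\ell), \quad \ell\ge1,\\
(\hat B(s)_n v)(z) & = \sum_{a\in\alpha,\,\sigma(a)>n} \xi(z_a)\,e^{-s\tau_n(z_a,\sigma(a)-n)}\,v(z_a,\sigma(a)-n),
\end{align*}
with $\hat A(s)_n v$ supported on level $n$, $\hat E(s)_n v$ on levels $\ell+n$, $\ell\ge1$, and $z_a\in a$ denoting the unique $G$-preimage of $z$ in $a$.

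For part (i), the fact that the image lives at level $n$ supplies the factor $e^{-\epsilon n}$ directly in $\|\cdot\|_{w,\infty}$. For the $\theta$-seminorm, splitting the difference of $\hat A(s)_n v$ at $z,z'$ into an $e^{-s\tau_n}$-part and a $v$-part and applying $\beta'$-Hölder estimation in $s$, (H3) gives $|\tau_n(z)-\tau_n(z')|^{\beta'}\ll\varphi(z)^{\beta'}d_{\theta_0}(z,z')^{\beta'}\le\varphi(z)^{\beta'}d_\theta(z,z')$, since $\theta\ge\theta_0^{\beta'}$. The $\varphi^{\beta'}$ factor is precisely what is absorbed by the denominator in $\|\cdot\|_\theta$, giving the bound $e^{-\epsilon n}\|v\|_{\cF_\theta(Z)}$. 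Part (ii) is a direct analogue: the image sits at level $\ell+n$, and the weight $e^{-\epsilon(\ell+n)}$ in $\|\cdot\|_{\cB(Y)}$ absorbs the $e^{\epsilon\ell}$ coming from $\|v\|_{w,\infty}$ and $\|v\|_\theta$, leaving the net factor $e^{-\epsilon n}$.

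For part (iii), the $L^\infty$-bound on $Z$ reduces via $\supa\xi\ll\mu_Z(a)$ to
\[
|(\hat B(s)_n v)(z)|\ll \|v\|_{w,\infty}\,e^{-\epsilon n}\!\int_{\{\sigma>n\}}\! e^{\epsilon\sigma}\,d\mu_Z,
\]
which is $O(e^{-\epsilon n})$ by exponential tails of $\sigma$ (with room to spare, provided $\epsilon<d$). For the $\cF_\theta$-seminorm, the difference is split into three terms: differences of $\xi$ handled by~\eqref{eq:GM}, differences of $e^{-s\tau_n}$ handled by (H3) as in the estimate for $\hat A$ (producing the extra factor $\varphi^{\beta'}$), and differences of $v$ handled by $\|v\|_\theta$. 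Each term is bounded by a constant times $d_\theta(z,z')\|v\|_{\cB(Y)}e^{-\epsilon n}\!\int_{\{\sigma>n\}}\!\varphi^{\beta'}e^{\epsilon\sigma}\,d\mu_Z$, which is finite and decays in $n$ by the choice of $\epsilon$. Part (iv) is a direct $L^1$ computation: the Gibbs-Markov change of variables $\int_Z\xi(z_a)f(z_a)\,d\mu_Z(z)=\int_a f\,d\mu_Z$ along each branch yields $|\hat B(s)_n v|_{L^1(Z)}\le\sum_{a:\,\sigma(a)>n}|v|_{L^1(a\times\{\sigma(a)-n\})}$, and summing over $n\ge1$ reindexes to $\sum_{\ell\ge1}\int_Z1_{\{\sigma>\ell\}}|v(z,\ell)|\,d\mu_Z=\bar\sigma\,|v|_{L^1(Y)}$, using $d\mu=\bar\sigma^{-1}\,d\mu_Z\times\text{counting}$.

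The main technical obstacle I anticipate is ensuring the $\varphi^{\beta'}$-weight in $\|\cdot\|_\theta$ is properly absorbed throughout (i)--(iii). This relies on using $\theta\ge\theta_0^{\beta'}$ to convert the $d_{\theta_0}$-Hölder estimate in (H3) into a $d_\theta$-Lipschitz estimate after raising to the power $\beta'$, and on the choice of $\epsilon$ small enough that $\int e^{\epsilon\sigma}\varphi^{\beta'}\,d\mu_Z<\infty$ (which, together with exponential tails of $\sigma$, drives all the decay estimates).
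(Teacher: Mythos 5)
Your proposal is correct and follows essentially the same route as the paper: the same explicit level-by-level formulas for $\hat A(s)_n$, $\hat E(s)_n$, $\hat B(s)_n$, the same use of (H3) combined with $\beta'$-H\"older interpolation in $s$ and $\theta\ge\theta_0^{\beta'}$ to absorb the $\varphi^{\beta'}$ weight into $\|\cdot\|_\theta$, the same three-term splitting for the Lipschitz estimate on $\hat B(s)_n$, and the same integrability conditions $\int e^{\eps\sigma}\,d\mu_Z<\infty$ and $\int e^{\eps\sigma}\varphi^{\beta'}\,d\mu_Z<\infty$. The only cosmetic differences are in (iv), where the paper packages the same $L^1$ bookkeeping via the first-hitting-time function $\psi(y)=\min\{n\ge0:F^ny\in Z\}$ and positivity of $R$, and where your reindexing should start at $\ell=0$ (including the $n=0$ term $\hat B(s)_0=1_Z$) to recover exactly $\bar\sigma|v|_{L^1(Y)}$.
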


\begin{proof}
\noindent(i)
Note that $(\hat A(s)_n v)(z,\ell)\equiv0$ for all $\ell\neq n$
and $(\hat A(s)_nv)(z,n)=e^{-s\tau_n(z,0)}v(z)$.
Let $y=(z,n),\,y'=(z',n)\in a\times\{n\}$, $a\in\alpha$.
Then $\tau_n(z,0)=\sum_{\ell=0}^{n-1}\tau_0(f^\ell z)$ and
$|\tau_n(z,0)-\tau_n(z',0)|\ll \varphi(z)d_{\theta_0}(z,z')$ by (H3) since $n<\sigma(a)$.
Hence,
\[
|(\hat A(s)_nv)(y)|=|v(z)|\le |v|_\infty \le \|v\|_{\cF_\theta(Z)},
\]
and
\begin{align*}
|(\hat A(s)_nv)(y)-(\hat A(s)_nv) &(y')|
  \le |e^{-s\tau_n(z,0)}-e^{-s\tau_n(z',0)}||v(z)|+|v(z)-v(z')| \\ & \le 
 2|s|^{\beta'}|\tau_n(z,0)-\tau_n(z',0)|^{\beta'}|v(z)|+
|v|_{\cF_\theta(Z)} d_\theta(z,z') \\ & \ll 
\varphi(z)^{\beta'}d_\theta(z,z')\|v\|_{\cF_\theta(Z)}
=\varphi(y)^{\beta'}d_\theta(y,y')\|v\|_{\cF_\theta(Z)}.
\end{align*}
It follows that $\|\hat A(s)_nv\|_{\cB(Y)}\ll e^{-\eps n}\|v\|_{\cF_\theta(Z)}$.

\vspace{1ex}
\noindent(ii)
Note that $(\hat E(s)_nv)(z,\ell)\equiv0$ if $\ell\le n$, while 
$(\hat E(s)_nv)(z,\ell)=e^{-s\tau_n(z,\ell-n)}v(z,\ell-n)$ for $\ell>n$.
Let $y=(z,\ell),\,y'=(z',\ell)\in a\times\{\ell\}$, where $a\in\alpha$ and $n<\ell<\sigma(a)$.
Then
\[
|(\hat E(s)_nv)(y)|=|v(z,\ell-n)|\le \|v\|_{w,\infty} e^{\eps(\ell-n)}.
\]
Also, $\tau_n(z,\ell-n)=\sum_{j=\ell-n}^{\ell}\tau_0(f^jz)$, so
$|\tau_n(z,\ell-n)-\tau_n(z',\ell-n)|\ll \varphi(z)d_{\theta_0}(z,z')$ by (H3) since
$0\le\ell-n<\ell<\sigma(a)$.  Hence
\begin{align*}
|(\hat E(s)_nv)(y)- & (\hat E(s)_nv)(y')|
 \le 2|s|^{\beta'}|\tau_n(z,\ell-n)-\tau_n(z',\ell-n)|^{\beta'}|v(z,\ell-n)|
\\
& +|v(z,\ell-n)-v(z',\ell-n)|
 \ll e^{\eps(\ell-n)} \varphi(z)^{\beta'} d_\theta(y,y')\|v\|_{\cB(Y)},
\end{align*}
and so $\|\hat E(s)_nv\|_{\cB(Y)}\ll e^{-\eps n}\|v\|_{\cB(Y)}$.

\vspace{1ex}
\noindent(iii)
Note that 
$(\hat B(s)_nv)(z)=\sum \xi(z_a)e^{-s\tau_n(z_a,\sigma(a)-n)}v(z_a,\sigma(a)-n)$,
 where
the sum is over $a\in\alpha$ with $\sigma(a)>n$.
By Remark~\ref{rmk:GM}, since $\int_Z e^{\eps\sigma}\,d\mu_Z<\infty$,
\[
|(\hat B(s)_nv)(z)|\ll 
\textstyle \sum_{a\in\alpha} \mu_Z(a)\|v\|_{w,\infty}e^{\eps (\sigma(a)-n)}
\ll e^{-\eps n}\|v\|_{w,\infty}.
\]
Also,
\[
|(\hat B(s)_nv)(z)-(\hat B(s)_nv)(z')| \le I_1+I_2+I_3,
\]
where
\begin{align*}
I_1 & = \sum (\xi(z_a)-\xi(z'_a))e^{-s\tau_n(z_a,\sigma(a)-n)}v(z_a,\sigma(a)-n), \\
I_2 & = \sum \xi(z'_a)
(e^{-s\tau_n(z_a,\sigma(a)-n)}- e^{-s\tau_n(z'_a,\sigma(a)-n)}) v(z_a,\sigma(a)-n), \\
I_3 & = \sum \xi(z'_a)e^{-s\tau_n(z'_a,\sigma(a)-n)}
(v(z_a,\sigma(a)-n) -v(z'_a,\sigma(a)-n)).
\end{align*}
Now, $\tau_n(z_a,\sigma(a)-n)=\sum_{\ell=\sigma(a)-n}^{\sigma(a)-1}\tau_0(f^\ell z)$ so
$|\tau_n(z_a,\sigma(a)-n)-\tau_n(z'_a,\sigma(a)-n)|\ll \infa\varphi\, d_{\theta_0}(z,z')$ by (H3).
Since $\int_Z e^{\eps \sigma}\varphi^{\beta'}\,d\mu_Z<\infty$,
\begin{align*}
|I_1| & \ll \sum \mu_Z(a)d_\theta(z,z')\|v\|_{w,\infty}e^{\eps(\sigma(a)-n)}\ll d_\theta(z,z')\|v\|_{w,\infty}e^{-\eps n}, \\
|I_2| & \ll \sum \mu_Z(a)\infa\varphi^{\beta'}d_\theta(z,z')\|v\|_{w,\infty}e^{\eps(\sigma(a)-n)} \ll d_\theta(z,z')\|v\|_{w,\infty}e^{-\eps n}, \\
|I_3| & \ll \sum \mu_Z(a)\|v\|_{\theta}e^{\eps(\sigma(a)-n)}\infa\varphi^{\beta'} d_\theta(z,z') \ll d_\theta(z,z')\|v\|_{\theta}e^{-\eps n},
\end{align*}
so
\[
|(\hat B(s)_nv)(z)-(\hat B(s)_nv)(z')| \ll e^{-\eps n}\|v\|_{\cB(Y)}d_\theta(z,z'),
\]
and $\|\hat B(s)_nv\|_{\cF_\theta(Z)}\ll e^{-\eps n}\|v\|_{\cB(Y)}$.

\vspace{1ex}
\noindent(iv)
Define $\psi:Y\to\N$, $\psi(y)=\min\{n\ge0:F^ny\in Z\}$.
Then $|\hat B(s)_nv|=|\hat R(s)^n(1_{\{\psi=n\}}v)|\le R^n(1_{\{\psi=n\}}|v|)$.
Hence
$|\hat B(s)_nv|_{L^1(Z)}\le \bar\sigma |1_{\{\psi=n\}}|v||_{L^1(Y)}$.
\end{proof}

\begin{pfof}{Theorem~\ref{thm:LY}}
By Proposition~\ref{prop:ABE} and Lemma~\ref{lem:LY},
it follows from~\eqref{eq:op} that
\begin{align*}
\| & \hat R(s)^nv\|_{\cB(Y)}
 \le \sum_{n_1+n_2+n_3=n} e^{-\eps n_1}\|T_{n_2}B_{n_3}v\|_{\cF_\theta(Z)}+e^{-\eps n}\|v\|_{\cB(Y)}
\\ & \le C\sum_{n_1+n_2+n_3=n} e^{-\eps n_1}|B_{n_3}v|_{L^1(Z)}\,
+C\sum_{n_1+n_2+n_3=n} e^{-\eps n_1}\gamma_0^{n_2}\|B_{n_3}v\|_{\cF_\theta(Z)}
+e^{-\eps n}\|v\|_{\cB(Y)}
\\ & \ll \sum_{n_1+n_3\le n} e^{-\eps n_1}|B_{n_3}v|_{L^1(Z)}\,
+\sum_{n_1+n_2+n_3=n} e^{-\eps n_1}\gamma_0^{n_2}e^{-\eps n_3}\|v\|_{\cB(Y)}
+e^{-\eps n}\|v\|_{\cB(Y)}
\\ & \ll \textstyle  \sum_{n_3\ge 0}|B_{n_3}v|_{L^1(Z)}\, +\gamma_1^{n}\|v\|_{\cB(Y)},
\end{align*}
for some $\gamma_1\in(0,1)$.
Finally, it follows from Proposition~\ref{prop:ABE}(iv) that
$\sum_{n_3\ge 0}|B_{n_3}v|_{L^1(Z)}\ll |v|_{L^1(Y)}$.
\end{pfof}

\paragraph{Acknowledgements}
The research of IM was supported in part by a
European Advanced Grant {\em StochExtHomog} (ERC AdG 320977).
We are grateful for the support of the Erwin Schr\"odinger International Institute 
for Mathematical Physics at the University of Vienna, where part of this research was carried out. We are very grateful to the referee for pointing out numerous inaccuracies and typos, and for very helpful suggestions that greatly enlarged the scope of the paper.

\end{document}